\documentclass[11pt,final]{article}%
\usepackage{stix2}
\usepackage{ifdraft}
\usepackage{appendix}
\usepackage{amsfonts}
\usepackage{amsmath}
\usepackage{amssymb}
\usepackage{amsxtra,amscd}
\usepackage[amsmath,hyperref,thmmarks]{ntheorem}
\usepackage{makeidx}
\usepackage{graphicx}
\usepackage{cprotect}
\usepackage[colorlinks=true,bookmarksnumbered=true,pdfpagemode=None,final]%
{hyperref}%
\setcounter{MaxMatrixCols}{30}
\providecommand{\U}[1]{\protect\rule{.1in}{.1in}}
\ifdraft{}{\usepackage{version}}
\ifdraft{}{\excludeversion{nt}}
\theoremnumbering{arabic}
\theoremheaderfont{\scshape}
\RequirePackage{latexsym}
\theorembodyfont{\slshape}
\theoremseparator{.}
\newtheorem{X}{X}[section]

\newtheorem{conjecture}[X]{Conjecture}
\newtheorem{corollary}[X]{Corollary}

\newtheorem{lemma}[X]{Lemma}

\newtheorem{proposition}[X]{Proposition}

\newtheorem{theorem}[X]{Theorem}
\newtheorem{theoremn}{Theorem}

\newtheorem*{conjecturea}{Conjecture (A)}
\newtheorem*{conjectureb}{Conjecture (B)}
\newtheorem*{conjecturec}{Conjecture (C)}
\newtheorem*{conjectured}{Conjecture (D)}
\theorembodyfont{\upshape}
\newtheorem{application}[X]{Application}
\newtheorem{definition}[X]{Definition}
\newtheorem{example}[X]{Example}
\newtheorem{exercise}[X]{Exercise}

\newtheorem{plain}[X]{}

\newtheorem{question}[X]{Question}
\newtheorem{caution}[X]{Caution}
\newtheorem{remark}[X]{Remark}
\newtheorem{summary}[X]{Summary}

\theorembodyfont{\small}
\newtheorem{aside}[X]{Aside}
\newtheorem*{note}{Notes}
\theorembodyfont{\normalsize}
\theoremstyle{nonumberplain}
\theoremsymbol{\ensuremath{_\Box}}
\newtheorem{proof}{Proof}

\qedsymbol{\ensuremath{_\Box}}

\pagestyle{headings}

\newcommand{\badjustwidth}{\begin{adjustwidth}{5em}{0em}}
\newcommand{\eadjustwidth}{\end{adjustwidth}}

\newcommand{\df}{\smash{\lower.12em\hbox{\textup{\tiny def}}}}
\newcommand{\lsimeq}{\smash{\lower.24em\hbox{$\scriptstyle\simeq$}}}
\newcommand{\lsim}{\smash{\lower.30em\hbox{$\scriptstyle\sim$}}}
\newcommand{\lapprox}{\smash{\lower.30em\hbox{$\scriptstyle\approx$}}}
\newcommand{\CH}{\mathrm{CH}}

\def\mapsto{\DOTSB\mathchar"39AD }

\usepackage{wrapfig}

\usepackage{changepage}

\let\quoteOld\quote
\let\endquoteOld\endquote
\renewenvironment{quotation}{\small\quoteOld}{\endquoteOld}

\overfullrule=0pt

\usepackage[overload]{textcase}

\usepackage{xcolor}
\definecolor{bblue}{rgb}{0.0, 0.0, 0.6}


\usepackage{array}

\usepackage{microtype}

\usepackage{tikz}
\usepackage{tikz-cd}
\usetikzlibrary{matrix,arrows,positioning,decorations.pathmorphing}
\usetikzlibrary{decorations.markings,shapes.geometric}
\tikzset{commutative diagrams/column sep/Huge/.initial=12ex}
\tikzcdset{arrow style=math font}

\usepackage[shortlabels]{enumitem}
\setitemize[1]{label=$\diamond$\hspace{0.07in}}
\setlist{topsep=0.1em,itemsep=0.1em,parsep=0.1em}

\setdescription{font=\normalfont}

\usepackage{url}
\usepackage{verbatim}

\usepackage[notcite,notref]{showkeys}

\usepackage{mathtools}


\usepackage{titletoc,titlesec}
\contentsmargin{2.0em}
\dottedcontents{section}[2.3em]{}{1.8em}{1pc}
\usepackage[nottoc]{tocbibind}

\titleformat*{\section}{\LARGE\bfseries}
\titleformat*{\subsection}{\Large\itshape}
\titleformat*{\subsubsection}{\large\scshape}
\titleformat*{\paragraph}{\scshape}


\usepackage{natbib}
\let\cite\citealt

\hypersetup{linkcolor=bblue,anchorcolor=bblue,citecolor=bblue,filecolor=bblue,urlcolor=bblue}
\hypersetup{pdftitle={Shimura Varieties},pdfauthor={J.S. Milne}}
\hypersetup{pdfsubject={none},pdfkeywords={none}}

\usepackage[papersize={6.5in,10.0in},margin=0.5in,pdftex]{geometry} 


\newcommand\babstract{\begin{abstract}}\newcommand\eabstract{\end{abstract}}
\newcommand{\bcomment}{}
\newcommand{\bfootnotesize}{\begin{footnotesize}}\newcommand\efootnotesize{\end{footnotesize}}
\newcommand{\bquote}{\begin{quote}}\newcommand\equote{\end{quote}}
\newcommand{\bsmall}{\begin{small}}\newcommand\esmall{\end{small}}
\newcommand{\btable}{\begin{table}}\newcommand{\etable}{\end{table}}
\newcommand{\dstyle}{\displaystyle}
\newcommand{\edocument}{
\renewcommand*{\thesubsection}{\upshape\alph{subsection}.}
\setcounter{secnumdepth}{1}
\setcounter{tocdepth}{1}
\begin{document}

\title{Abelian motives and Shimura varieties\\in nonzero characteristic}
\date{October 13, 2025, v2.0}
\author{J.S. Milne}
\maketitle

\hfill\begin{minipage}{4.0in}
\textit{For me, [the Hodge conjecture] is part of the story of motives,
and it is not crucial whether it is true or false. If it is true, that's very good, and it
solves a large part of the problem of constructing motives
in a reasonable way. If one can find another purely algebraic
notion of cycles for which the analogue of the Hodge conjecture
holds,
this will serve the same purpose,
and I would be as happy as if the Hodge conjecture were proved.
For me it is motives, not Hodge, that is crucial.}\\
\hspace*{\fill}Deligne, May 2013.\footnotemark
\end{minipage}\footnotetext{Interview on the award of the Abel prize,
Eur.\ Math.\ Soc.\ Newsl.\ No.\ 89 (2013), 15--23.}\bigskip

\babstract This article represents my attempt to construct a theory of Shimura
varieties as simple and elegant as that in Grothendieck's \textquotedblleft
paradis motiviques\textquotedblright\thinspace, but without assuming the
Hodge, Tate, or standard conjectures.

\bigskip Deligne's theorem (1982)\nocite{deligne1982}, that Hodge classes on
abelian varieties are absolutely Hodge, allows us to construct a category of
abelian motives $\Mot(k)$ over any field $k$ of characteristic zero. This is a
tannakian category over $\mathbb{Q}$ with most of the properties anticipated
for Grothendieck's category of abelian motives, and it equals Grothendieck's
category if the Hodge conjecture is true for abelian varieties. Deligne's
theorem makes it possible to realize Shimura varieties of abelian type with
rational weight as moduli varieties (\cite{milne1994b}), which greatly
simplifies the theory of Shimura varieties in characteristic zero.

The goal of this article is to extend the theory to characteristic $p$.

\bigskip We study elliptic modular curves by realizing them as moduli curves
for elliptic curves. This works, not only in characteristic zero, but also in
mixed characteristic and characteristic $p$. Some Shimura curves cannot be
realized as moduli curves, but a trick of Shimura allows us to deduce their
properties from those that can.

In this article, we suggest an approach that makes the theory of Shimura
varieties of abelian type as simple, at least conceptually, as that of Shimura curves.

\bigskip Much of the work on Shimura varieties over the last thirty years has
been devoted to constructing the theory that would follow from a good notion
of motives, one incorporating the Hodge, Tate, and standard conjectures. These
conjectures are believed to be beyond reach, and may not even be correct as
stated.\footnote{While we all hope that Grothendieck's motivic paradise
exists, it may be too optimistic to expect that it can be constructed using
actual algebraic classes.} I argue in this article that there exists a theory
of motives, accessible to proof, weaker than Grothendieck's, but with many of
the same consequences. \eabstract

\tableoc

\section*{Introduction}

\addcontentsline{toc}{section}{Introduction}

\hfill\begin{minipage}{4.0in}
\textit{To know a scheme is to know its functor of points,
and many properties of a scheme can be seen in its
functor of points.}\\
\hspace*{\fill}\cite{raynaud2014}, describing Grothendieck's view.\nocite{raynaud2014}
\end{minipage}\bigskip

In his Corvallis article (1979)\nocite{deligne1979}, Deligne introduced the
notion of a connected Shimura variety and its canonical model, and showed that
a Shimura variety has a canonical model if and only if its associated
connected Shimura variety has a canonical model. In particular, if two Shimura
varieties have isomorphic associated connected Shimura varieties, and one has
a canonical model, then both do. Starting from the Shimura varieties that are
moduli varieties for abelian varieties, he was able to prove in this way the
existence of canonical models for a large class of Shimura varieties, now said
to be of abelian type. His proof is a tour de force. It does not give a
description of the canonical model but only a characterization of it in terms
of reciprocity laws at the special points.

Later it was realized (\cite{milne1994b}) that the Shimura varieties of
abelian type with rational weight are exactly the moduli varieties of abelian
motives with additional structure. This allows us to prove the existence of
canonical models for these Shimura varieties by a simple descent argument and
it describes the canonical model as a moduli variety. The theory can be
extended to varieties with nonrational weight by applying a ``trick'' of Shimura.

The approach in the last paragraph applies only to Shimura varieties in
characteristic zero because the abelian motives are defined using Deligne's
theory of absolute Hodge classes, which works only in characteristic zero.

In this article, I outline a program to extend Deligne's theory of absolute
Hodge classes to characteristic $p$, thereby obtaining a good theory of
abelian motives in mixed characteristic. Once completed, this will make
possible similar simplifications in the theory of Shimura varieties in mixed characteristic.

\smallskip Throughout, $\mathbb{Q}^{\mathrm{al}}$ is an algebraic closure of
$\mathbb{Q}$ and $w$ a prime of $\mathbb{Q}^{\mathrm{al}}$ lying over $p$. The
residue field at $w$ is an algebraic closure $\mathbb{F}$ of $\mathbb{F}_{p}$.
For a variety $X$, we write $H_{\mathbb{A}{}}^{r}(X)$ for the restricted
product of the \'{e}tale cohomology groups $H_{\text{\'{e}t}}^{r}%
(X,\mathbb{Q}{}_{\ell})$ with the de Rham cohomology group (characteristic
$0$) or crystalline cohomology group (characteristic $p$).

\subsection{Statement of the conjectures}

In this subsection, we fix a collection $\mathscr{s}$ of abelian varieties
over $\mathbb{Q}^{\mathrm{al}}$ with good reduction at $w$, including the CM
abelian varieties. Every $A$ in $\mathcal{s}$ specializes to an abelian
variety $A_{0}$ over $\mathbb{F}{}$.

\begin{conjecturea}
\label{c2copy} Let $A\in\mathscr{s}$ and let $\gamma$ be an absolute Hodge
class on $A$. For all Lefschetz classes $\delta$ on $A_{0}$ of complementary
codimension, $\langle\gamma_{0}\cdot\delta\rangle\in\mathbb{Q}{}$.
\end{conjecturea}

In more detail, an absolute Hodge class on $A$ is an element $\gamma$ of
$H_{\mathbb{A}}^{2\ast}(A)(\ast)$, and its specialization $\gamma_{0}$ is an
element of $H_{\mathbb{A}{}}^{2\ast}(A_{0})(\ast)$. If $\delta_{1}%
,\ldots,\delta_{r}$, where $r=\dim(\gamma)$, are divisor classes on $A_{0}$,
then%
\[
\langle\gamma_{0}\cdot\delta_{1}\cdot\cdot\cdot\delta_{r}\rangle\in
H_{\mathbb{A}}^{2d}(A_{0})(d)\simeq\mathbb{A}{}_{f}^{p}\times\mathbb{Q}{}%
_{w}^{\mathrm{al}},\quad d=\dim A.
\]
The conjecture says that it lies in $\mathbb{Q}{}\subset\mathbb{A}{}_{f}%
^{p}\times\mathbb{Q}{}_{w}^{\mathrm{al}}$.

\begin{conjecturec}
There exists a unique family of a graded $\mathbb{Q}$-subalgebras
$\mathcal{R}^{\ast}(A)$ of $H_{\mathbb{A}}^{2\ast}(A)(\ast)$, indexed by the
abelian varieties over $\mathbb{F}$, whose elements we call rational Tate
classes, satisfying the following conditions:

\begin{description}
\item[(R1)] for any morphism $f$ of abelian varieties over $\mathbb{F}$,
$f^{\ast}$ and $f_{\ast}$ map rational Tate classes to rational Tate classes;

\item[(R2)] divisor classes are rational Tate classes;

\item[(R3)] absolute Hodge classes on abelian varieties in $\mathscr{s}$
specialize to rational Tate classes;

\item[(R4)] the inclusion $\mathcal{R}{}^{\ast}(A)\hookrightarrow
H_{\mathbb{A}}^{2\ast}(A)(\ast)$ induces an injection%
\[
\mathcal{R}{}^{\ast}(A)\otimes\mathbb{A}_{f}\rightarrow H_{\mathbb{A}{}%
}^{2\ast}(A)(\ast).
\]

\end{description}
\end{conjecturec}

\begin{conjectured}
There exists a $\mathbb{Q}$-linear tannakian category $\Mot(\mathbb{F}{})$ and
exact tenor functors $\LMot(\mathbb{F})\rightarrow\Mot(\mathbb{F}%
)\rightarrow\Mot(\mathbb{F};\mathbb{A}_{f})$ and $\Mot^{\mathscr{s}}%
(\mathbb{Q}^{\mathrm{al}})\rightarrow\Mot(\mathbb{F})$ such that
\[
\begin{tikzcd}
\LMot^{\mathscr{s}}(\mathbb{Q}^{\mathrm{al}})\arrow{d}{R}\arrow{r}
&\Mot^{\mathscr{s}}(\mathbb{Q}^{\mathrm{al}})\arrow{d}{R}\arrow{rd}
{\xi_f}\\
\LMot(\mathbb{F})\arrow{r}\arrow[bend right=20]{rr}[swap]{\xi_f}
&\Mot(\mathbb{F})\arrow{r}
&\Mot(\mathbb{F};\mathbb{A}_{f})\end{tikzcd}
\]
commutes and%
\[
\Mot(\mathbb{F})_{(\mathbb{A}_{f})}\longrightarrow\Mot(\mathbb{F};\mathbb{A}%
{}_{f})
\]
is faithful.
\end{conjectured}

Here $\LMot^{\mathscr{s}}(\mathbb{Q}^{\mathrm{al}})$ and $\LMot(\mathbb{F})$
are categories of abelian motives defined using Lefschetz classes,
$\Mot^{\mathscr{s}}(\mathbb{Q}^{\mathrm{al}})$ is the category of abelian
motives defined using absolute Hodge classes, and $\Mot(\mathbb{F}%
;\mathbb{A}_{f})$ is the category of abelian motives defined using Tate
classes. See later for precise definitions.

\subsection{Equivalence of the conjectures}

Conjecture C obviously implies Conjecture A because $\gamma_{0}$ and $\delta$
are both rational Tate clases, and in we prove a converse statement
(\ref{m5}): \bquote
Assume that Conjecture A holds for CM abelian varieties over $\mathbb{Q}%
{}^{\mathrm{al}}$; let $\mathcal{T}{}^{\ast}(A)$ denote the $\mathbb{A}{}_{f}%
$-algebra of Tate classes on an abelian variety $A$ over $\mathbb{F}{}$; then
there exists a unique family $\mathcal{R}{}^{\ast}(A)$ of $\mathbb{Q}{}%
$-structures on the $\mathcal{T}^{\ast}(A)$ satisfying (R1) and (R2) and such
that absolute Hodge classes on CM abelian varieties over $\mathbb{Q}%
^{\mathrm{al}}$ specialize to elements of $\mathcal{R}{}^{\ast}$.\equote

Given Conjecture C, we define $\Mot(\mathbb{F}{})$ to be the category of
motives based on the abelian varieties over $\mathbb{F}{}$ using the rational
Tate classes as correspondences. Conversely, we can recover the family
$(\mathcal{R}^{\ast}(A))$ from $\Mot(\mathbb{F})$ by setting
\[
\mathcal{R}^{r}(A)=\Hom_{\Mot(\mathbb{F})}(\1,h^{2r}(A)(r)).
\]

\subsection{Proof of the uniqueness of rational Tate classes}

For an abelian variety $A$ over $\mathbb{F}{}$ and a prime number $l$, we let
$\mathcal{T}_{l}^{\ast}(A)$ denote the $\mathbb{Q}{}_{l}$-algebra of Tate
classes on $A$,%
\[
\mathcal{T}_{\ell}^{r}(A)\overset{\df}{=}\bigcup_{A_{1}/k_{1}}H_{\text{\'{e}t}%
}^{2r}(A,\mathbb{Q}{}_{\ell}(r))^{\Gal(\mathbb{F}{}/k_{1})}\qquad
\mathcal{T}_{p}^{r}(A)\overset{\df}{=}\bigcup_{A_{1}/k_{1}}H_{p}^{2r}%
(A_{1})(r)^{F},
\]
where $A_{1}/k_{1}$ runs over the models of $A$ over finite subfields $k_{1}$
of $\mathbb{F}{}$. We let $\mathcal{T}{}^{\ast}(A)$ denote the restricted
product of the $\mathcal{T}{}_{l}^{\ast}(A)$ (an $\mathbb{A}{}_{f}$-algebra).

Tate conjectured that the algebraic cycles on $A$ modulo numerical equivalence
provide a $\mathbb{Q}{}$-structure on $\mathcal{T}^{\ast}(A)$. Here we prove
this for rational Tate classes (assuming they exist).

\begin{theorem}
\label{f1}Let $(\mathcal{R}^{\ast}(A))_{A\in\mathscr{s}{}}$ be a family of
rational Tate classes, as in Conjecture C. Then, for all $A\in\mathscr{s}$,
the map%
\[
\mathcal{R}{}^{\ast}(A)\otimes\mathbb{A}{}_{f}\rightarrow H_{\mathbb{A}%
}^{2\ast}(A)(\ast)
\]
has image $\mathcal{T}^{\ast}(A)$, i.e., $\mathcal{R}{}^{\ast}(A)$ is a
$\mathbb{Q}$-structure on $\mathcal{T}{}^{\ast}(A)$.
\end{theorem}

\begin{proof}
This can be proved by the argument in used in \cite{milne1999b} to show that
if the Hodge conjecture holds for CM abelian varieties then the Tate
conjecture holds for abelian varieties over $\mathbb{F}{}$. See
\cite{milne2009}, Theorem 2.2.
\end{proof}

\begin{theorem}
\label{f2}Let $(\mathcal{R}_{1}^{\ast}(A))_{A\in\mathscr{s}{}}$ and
$(\mathcal{\mathcal{R}{}}_{2}^{\ast}(A))_{A\in\mathscr{s}{}}$ be two families
as in the statement of Conjecture C. Then $\mathcal{R}_{1}^{\ast
}(A)=\mathcal{\mathcal{R}{}}_{2}^{\ast}(A)$ as $\mathbb{Q}{}$-subalgebras of
$H_{\mathbb{A}{}}^{2\ast}(A)(\ast)$.
\end{theorem}

\begin{proof}
If $\mathcal{R}_{1}^{\ast}(A)\subset\mathcal{R}_{2}^{\ast}(A)$, then they are
equal because they are both both $\mathbb{Q}{}$-structures $\mathcal{T}%
{}^{\ast}(A)$. The family $(\mathcal{R}{}_{1}^{\ast}(A)\cap
\mathcal{\mathcal{R}{}}_{2}^{\ast}(A))_{A\in\mathscr{s}{}}$ obviously
satisfies the conditions (R$_{1}$), (R$_{2}$), (R$_{3}$), and (R$_{4}$), and
so equals both $\mathcal{R}{}_{1}^{\ast}(A)$ and $\mathcal{\mathcal{R}{}}%
_{2}^{\ast}(A)$.
\end{proof}

\begin{aside}
For an abelian variety $A$ over $\mathbb{F}$,${}$ let $\mathcal{A}{}^{\ast
}(A)$ denote the $\mathbb{Q}{}$-subalgebra of $H_{\mathbb{A}{}}^{2\ast
}(A)(\ast)$ of algebraic classes. The family $(\mathcal{A}{}^{\ast}(A))$
satisfies (R1), (R2), and (R4). If it satisfies (R3), i.e., absolute Hodge
classes specialize to algebraic classes, then the Tate conjecture holds for
abelian varieties over $\mathbb{F}{}$.
\end{aside}

Let $\Mot(\mathbb{F}{};\mathbb{A}{}_{f})$ denote the category based on the
abelian varieties over $\mathbb{F}{}$, using the $\mathbb{A}{}_{f}$-algebras
of Tate classes as correspondences.

\begin{theorem}
\label{f3}Assume Conjecture D. The canonical functor
$\Mot(\mathbb{F)\rightarrow}\Mot(\mathbb{F}{};\mathbb{A}{}_{f})$ induces an
equivalence of categories%
\[
\Mot(\mathbb{F}{})_{(\mathbb{A}{}_{f})}\rightarrow\Mot\left(  \mathbb{F}%
{};\mathbb{A}{}_{f}\right)  .
\]
In other words, $\Mot(\mathbb{F)\rightarrow}\Mot(\mathbb{F}{};\mathbb{A}{}%
_{f})$ is a $\mathbb{Q}{}$-structure on the $\mathbb{A}{}_{f}$-linear
tannakian category $\Mot(\mathbb{F}{};\mathbb{A}{}_{f})$.
\end{theorem}

\begin{proof}
Restatement of Theorem \ref{f1}.
\end{proof}

\subsection{Consequences of the conjectures}

We list some consequences of the conjectures. For more details, see \S 6 and
later sections. We add question marks as a reminder that these statements
depend on conjectures.

\begin{plain}
[?]\label{f4a} Deligne (2006)\nocite{deligne2006} notes that the following
would be a \textquotedblleft particularly interesting corollary of the Hodge
conjecture\textquotedblright:

\bquote Let $A$ be an abelian variety over $\mathbb{F}$. Lift $A$ in two
different ways to characteristic $0$, to complex abelian varieties $A_{1}$ and
$A_{2}$ defined over $\mathbb{C}$. Pick Hodge classes $\gamma_{1}$ and
$\gamma_{2}$ on $A_{1}$ and $A_{2}$ of complementary dimension. Interpreting
$\gamma_{1}$ and $\gamma_{2}$ as $\ell$-adic cohomology classes, one can
define the intersection number $\kappa$ of the reductions of $\gamma_{1}$ and
$\gamma_{2}$ over $\mathbb{F}$. Is $\kappa$ a rational number independent of
$\ell$? \equote

\noindent The answer is yes, because the reductions of $\gamma_{1}$ and
$\gamma_{2}$ are both rational Tate classes on the abelian variety $A$.
\end{plain}

\begin{plain}
[?]\label{f4}Let $l\neq p$. For a certain countable subfield $Q$ of
$\mathbb{Q}_{l}$, Andr\'{e} defines a $Q$-linear tannakian category of motives
$\Mot(\mathbb{F};Q{})$ such that $\Mot(\mathbb{F}{};Q)_{(\mathbb{Q}{}_{l}%
)}\simeq\Mot(\mathbb{F}{};\mathbb{\mathbb{Q}{}}_{l})$. The category
$\Mot(\mathbb{F})$ is a $\mathbb{Q}$-structure on $\Mot(\mathbb{F};Q)$, that
is,
\[
\Mot(\mathbb{F})_{(Q)}\simeq\Mot(\mathbb{F}{};Q).
\]
Thus, rational Tate classes become motivated over the field $Q$.
\end{plain}

\begin{plain}
[?]\label{f5}The category $\Mot(\mathbb{F})$ and the functor $R\colon
\Mot^{\mathscr{s}}(\mathbb{Q}^{\mathrm{al}})\rightarrow\Mot\left(
\mathbb{F}{}\right)  $ have the properties predicted by the Hodge, Tate, and
standard conjectures. In particular, $\Mot(\mathbb{F})$ is a tannakian
category over $\mathbb{Q}{}$ with fundamental group the Weil-number protorus
$P$, and there is a unique polarization on $\Mot(\mathbb{F})$ for which $R$
becomes a functor of polarized tannakian categories.
\end{plain}

\begin{plain}
[?]\label{f6}Define $\Mot(\mathbb{F}_{q})$ to be the tannakian category over
$\mathbb{Q}{}$ whose objects are pairs $(M,\pi_{M})$ with $M$ and object of
$\Mot(\mathbb{F}{})$ and $\pi_{M}$ is a (Frobenius) endomorphism compatible
with the action of $P$ on $M$. Then $\Mot(\mathbb{F}_{q})$ is a tannakian
category over $\mathbb{Q}{}$ with the properties predicted by the Tate and
standard conjectures.
\end{plain}

\begin{plain}
[?]\label{f7}Let $M$ be an abelian motive over a number field $L\subset
\mathbb{Q}^{\mathrm{al}}$ (in the sense of absolute Hodge classes), and let
$G_{M}$ denote the Mumford--Tate group of the Hodge structure $\omega_{B}(M)$.
Let $\ell\neq p$ be a prime number. After possibly replacing $L$ with a finite
extension, we obtain a representation $\rho\colon\Gal(\mathbb{Q}%
{}^{\mathrm{al}}/L)\rightarrow G_{M}(\mathbb{Q}{}_{\ell})$. Assume that $M$
has good reduction at the primes of $L$ lying over $p$ (i.e., satisfies the
N\'eron condition). On applying $\rho$ to the Frobenius elements at such
primes, we get a conjugacy class in $G_{M}(\mathbb{Q}{}_{\ell})$, and hence an
element $\gamma_{\ell}\in(clG_{M})(\mathbb{Q}{}_{\ell})$. This element lies in
$(clG_{M})(\mathbb{Q})$ and is independent of $\ell$.
\end{plain}

\begin{plain}
\label{f8}We can extend the notion of an abelian motive to certain schemes,
for example:

\begin{itemize}
\item Let $S$ be a connected smooth scheme of finite type over a field $k$ of
characterstic zero, and let $\eta$ be the generic point of $S$. An abelian
motive over $S$ is an abelian motive over $\kappa(\eta)$ such that the action
of $\pi_{1}(\eta,\bar{\eta})$ on $\omega_{f}(M)$ factors through $\pi
_{1}(S,\bar{\eta}).$

\item Let $S$ be a local scheme with closed point $s$ such that $\kappa
(s)\subset\mathbb{F}{}$. An abelian motive over $S$ is a triple $(M,N,\varphi
)$, where $M$ is an abelian motive over $\kappa(s)$, $N$ is a $p$-shtuka (in
the sense of Scholze), and $\varphi$ is an isomorphism from the $p$-shtuka of
$M$ to $N_{s}$.
\end{itemize}
\end{plain}

\begin{plain}
\label{f9}As noted earlier, over $\mathbb{C}$ the Shimura varieties of abelian
type with rational weight are exactly the moduli schemes of abelian motives
with additional structure. This description extends to subfields of
$\mathbb{C}$, and Conjecture A allows us to extend it to mixed characteristic.
From a ``universal'' abelian motive over a Shimura variety, we can construct
the standard principal bundle, and hence a theory of automorphic vector
bundles. To understand Shimura varieties of abelian type without rational
weight, we apply Shimura's trick.
\end{plain}

\begin{plain}
\label{f11}We say that an abelian motive $M$ over $\mathbb{Q}^{\mathrm{al}}$
satisfies the N\'{e}ron condition if, for some model $M_{1}$ of $M$ over a
number field $K\subset\mathbb{Q}^{\mathrm{al}}$, the action of $\pi
_{1}(\Spec
K)$ on $\omega_{\ell}(M_{1})$ factors through $\pi_{1}(\Spec(\mathcal{O}%
_{K,w})$ (fundamental groups with respect to the base point $\Spec(\mathbb{Q}%
{}^{\mathrm{al}})$). The functor $R\colon\Mot^{\mathscr{s}}(\mathbb{Q}%
{}^{\mathrm{al}})\rightarrow\Mot(\mathbb{F}{})$ should extend to the category
$\Mot^{w}(\mathbb{Q}^{\mathrm{al}})$ of abelian motives over $\mathbb{Q}$
satisfying the N\'{e}ron condition.
\end{plain}

\begin{plain}
\label{f10} \textsc{Caveat.} The conjectures do not imply that algebraic
classes on abelian varieties over $\mathbb{F}$ are rational Tate classes.
Indeed, this would imply Grothendieck's standard conjecture of Hodge type for
abelian varieties. Nor do they imply the second of Deligne's \textquotedblleft
particularly interesting corollaries\textquotedblright\ (the intersection
number of the reduction of $\gamma_{1}$ with an algebraic cycle on $A$ is rational).
\end{plain}

\subsection{Positive results}

\begin{plain}
\label{f12}We have proved that the family $(\mathcal{R}{}^{\ast}(A))$ is
unique if it exists (see above).
\end{plain}

\begin{plain}
\label{f13}A commutative diagram of tannakian categories
\[
\begin{tikzcd}
\LCM(\mathbb{Q}^{\mathrm{al}})\arrow{d}{R}\arrow{r}
&\Mot^{\mathscr{s}}(\mathbb{Q}^{\mathrm{al}})\arrow{d}{R}\arrow{r}
&\Mot(\mathbb{Q}^{\mathrm{al}};\mathbb{Q}_{\ell})\arrow{d}{R}\\
\LMot(\mathbb{F})\arrow{r}
&\Mot(\mathbb{F})\arrow{r}
&\Mot(\mathbb{F};\mathbb{Q}_{\ell})
\end{tikzcd}
\]
would give rise to a commutative diagram of bands
\[
\begin{tikzcd}
T&G\arrow{l}\arrow{r}&G^\prime\\
L\arrow{u}&P\arrow[dashed]{u}[swap]{u}\arrow{l}\arrow{r}&P_{\mathbb{Q}_l}\arrow{u}
\end{tikzcd}
\]
We prove (unconditionally) that such a diagram exists, and that $u$ is
independent of $l$ (Theorem \ref{p1}). As a consequence, we find that an
abelian motive over a number field with good reduction (suitably defined)
defines a compatible system of $l$-adic representations.\footnote{The absence
of a proof of this last assertion has been cited by Deligne as one indication
that his theory of abelian motives is solely a characteristic zero theory.}
\end{plain}

\begin{plain}
\label{f14}A necessary condition that the morphism of bands $u\colon
P\rightarrow G$ arise from a morphism of tannakian categories $\Mot^{w}%
(\mathbb{Q}{}^{\mathrm{al}})\rightarrow\Mot(\mathbb{F}{})$ is that the
(conjectural) class of $\Mot(\mathbb{F}{})$ in $H^{2}(\mathbb{Q}{},P)$ map to
the trivial class in $H^{2}(\mathbb{Q}{},G)$. For this, see \S 10.
\end{plain}

\subsection{A variational approach to proving Conjecture A}

An obvious approach to proving Conjecture A is to mimic Deligne's proof (1982)
that all Hodge classes on abelian varieties are absolutely Hodge. Recall that
this has four main steps.

\begin{description}
\item[Step 0:] Behaviour of absolute Hodge classes in families:

\bquote Let $f\colon X\rightarrow S$ be a smooth proper map of complex
varieties with $S$ smooth and connected, and let $\gamma$ be a global section
of $R^{2n}f_{\ast}\mathbb{Q}(n)$. If $\gamma_{s}$ is a Hodge class
(resp.\ absolute Hodge class) for one $s\in S(\mathbb{C})$, then it is a Hodge
class (resp.\ absolute Hodge class) for every $s\in S(\mathbb{C})$.\equote

\item[Step 1:] Split Weil classes on abelian varieties are absolutely Hodge.
\bquote Let $(A_{1},\nu_{1},\lambda_{1})$ be a split Weil triple relative to a
CM-algebra $E$, and let $f\colon A\rightarrow S$ be the (universal) Weil
family containing $(A_{1},\nu_{1},\lambda_{1})$. Because the Weil family is
split, it contains the triple $(A_{2},\nu_{2},\lambda_{2})=(B\otimes
\mathcal{O}_{E},\ldots)$, where $B$ can be any abelian variety of dimension
$\dim(A_{1})/[E\colon\mathbb{Q}]$. The Weil classes on such a triple
$(A_{2},\nu_{2},\lambda_{2})$ are algebraic, in particular, absolutely Hodge.
Now apply Step 0.\equote

\item[Step 2:] Use Step 1 to show that Hodge classes on CM abelian varieties
are absolutely Hodge. \bquote Since pullbacks of absolute Hodge classes are
absolute Hodge, this follows from Theorem \ref{a6}: every Hodge class on a CM
abelian variety is a sum of pullbacks of split Weil classes on CM abelian
varieties. \equote

\item[Step 3:] Use Steps 0 and 2 to prove that all Hodge classes on abelian
varieties are absolutely Hodge. \bquote Let $A_{1}$ be an abelian variety over
$\mathbb{C}{}$ and $\gamma_{1}$ a Hodge class on $A_{1}$. Choose a
polarization $\lambda_{1}$ of $A_{1}$. There exists a family of polarized
abelian varieties $f\colon A\rightarrow S$ containing $(A_{1},\lambda_{1})$
and such that $\gamma_{1}$ extends to a global section of $R^{2n}f_{\ast
}\mathbb{A}{}(n)$ (\cite{deligne1982}, 6.1). Moreover, $S$ is a Shimura
variety, and so for some $s\in S(\mathbb{C})$ in the same connected component
as $(A_{1},\lambda_{1})$, $A_{s}$ is of CM-type. Now apply Steps 0 and
2.\equote

\end{description}

We try to mimic the above argument to prove Conjecture A

\begin{description}
\item[Step 0:] In \S \ref{CB}, we prove some variational theorems, for
example: \bquote Let $f\colon A\rightarrow S$ be an abelian scheme over a
connected normal scheme of finite type over an algebraically closed field $k$,
and let $\delta$ be a global section of $R^{2n}f_{\ast}\mathbb{A}(n)$ such
that $\delta$ is fixed by the Lefschetz group $L(A_{\bar{\eta}})$. If
$\delta_{s}$ is Lefschetz for one closed $s\in S$, then it is Lefschetz for
every closed $s\in S$ (Theorem \ref{c13}).\equote

\item[Step 1:] Conjecture A holds for split Weil classes. \bquote This is
unknown and is the crux.\equote\bquote Let $(A_{1},\nu_{1},\lambda_{1})$ be a
split Weil triple relative to a CM-algebra $E$, and let $f\colon A\rightarrow
S$ be the Weil family containing $(A_{1},\nu_{1},\lambda_{1})$ --- it is
defined over $\mathbb{Q}{}^{\mathrm{al}}$. If $(A_{1},\nu_{1},\lambda_{1}%
)_{0}$ lifts (up to isogeny) to a triple in the family whose Weil classes
satisfy Conjecture A, then the same is true for $(A_{1},\nu_{1},\lambda_{1}%
)$.\equote\bquote Let $\delta_{1}$ be a Lefschetz class on $(A_{1})_{0}$ of
dimension $d=\dim A_{1}/[E\colon\mathbb{Q}{}]$, and let $(A_{2},\nu
_{2},\lambda_{2})$ be a member of the family of the form $B\otimes
\mathcal{O}{}_{E}$. There exists a complete smooth curve $C$ in $S_{0}$ such
that $(A_{1},\nu_{1},\lambda_{1})_{0}$ and $(A_{2},\nu_{2},\lambda_{2})_{0}$
lie in the pullback $f_{C}\colon A_{C}\rightarrow C$ of $f_{0}\colon
A_{0}\rightarrow S_{0}$. If $C$ can be chosen so that $\delta_{1}$ extends to
a global section $\delta$ of $R^{2d^{\prime}}f_{C\ast}\mathbb{A}(d^{\prime})$,
$d+d^{\prime}=\dim A_{1}$, fixed by $L(A_{\bar{\eta}})$, then, for all Weil
classes $\gamma$ on $A/S$,%
\[
\langle(\gamma_{1})_{0}\cdot\delta_{1}\rangle=\langle(\gamma_{2})_{0}%
\cdot\delta_{2}\rangle
\]
lies in $\mathbb{Q}{}$ because $\gamma_{2}$ is algebraic and $\delta_{2}$ is
Lefschetz (Step 0). \equote

\item[Step 2:] Use Step 1 to show that Conjecture A holds for CM abelian
varieties. \bquote This again follows from Theorem \ref{a6} because pullbacks
of Hodge classes satisfying Conjecture A satisfy Conjecture A (this uses that
pushforwards of Lefschetz classes on abelian varieties are Lefschetz; see
\ref{x5}).\equote

\item[Step 3:] Use Step 2 to show that Conjecture A holds for all abelian
varieties over $\mathbb{Q}^{\mathrm{al}}$ with good reduction at $w$.

\bquote This is largely known. Let $A_{1}$ be an abelian variety over
$\mathbb{Q}^{\mathrm{al}}$ with good reduction at $w$, and let $\gamma_{1}$ be
a Hodge class on $A_{1}$. Choose a polarization $\lambda_{1}$ of $A_{1}$.
There exists a universal family of polarized abelian varieties $f\colon
A\rightarrow S$ over $\mathbb{Q}{}^{\mathrm{al}}$ containing $(A_{1}%
,\lambda_{1})$ in which $\gamma_{1}$ extends to a global section of
$R^{2n}f_{\ast}\mathbb{A}{}(n)$. Moreover, $S$ is a Shimura variety. Under
some hypotheses on $S$, the pair $(A_{1},\lambda_{1})_{0}$ lifts (up to
isogeny) to a CM pair $(A_{2},\lambda_{2})$ in the family in the family
(Kisin, Vasiu, \ldots). Now $\langle(\gamma_{1})_{0}\cdot\delta\rangle=$
$\langle(\gamma_{2})_{0}\cdot\delta\rangle$, which lies in $\mathbb{Q}$ by
Step 2.\footnote{Thus, to extend the reduction map from $\CM(\mathbb{Q}%
{}^{\mathrm{al}})$ to $\Mot^{w}(\mathbb{Q}{}^{\mathrm{al}})$, we need to know
that points on Shimura varieties over $\mathbb{F}{}$ lift to (up to isogeny)
to CM-points. There is a partial converse: given $R\colon\Mot^{w}(\mathbb{Q}%
{}^{\mathrm{al}})\rightarrow\Mot(\mathbb{F}{})$, it is possible to show that
every point in $\Sh(\mathbb{F}{})$ lifts to a CM point (at least when the
derived group is simply connected\ldots). See \cite{langlandsR1987}, 5.4;
\cite{milne1992}, 4.8.}\equote

\end{description}

For an inductive approach to proving Conjecture A, see Theorem \ref{x19}. For
an alternative variational approach, see \ref{c13}.

\subsection{Outline of the article}

After collecting various preliminaries in \S 1, we discuss in \S \S 2,3
possible proofs of Conjecture A, and a variant Conjecture B, for CM abelian
varieties. As outlined above, one approach is to mimic the argument in
\cite{deligne1982}.

In \S 4, we show that Conjecture A for CM abelian varieties implies Conjecture
C for CM abelian varieties. In particular, it implies that we have the notion
of a rational Tate class on an abelian variety over $\mathbb{F}$ and that
Hodge classes on CM abelian varieties specialize to rational Tate classes. In
\S 5 we investigate how to extend this last property to more general abelian
varieties. This comes down to a question of the existence of CM-lifts of
abelian varieties in families.

In \S 6, we describe the motivic paradise that results from Conjecture A. In
particular, we show how (given Conjecture A), it is possible to construct
tannakian categories of abelian motives over various schemes. This allows us
in the remaining sections to realize Shimura varieties of abelian type with
rational weight as moduli varieties, even in mixed characteristic (still in progress).

\clearpage

\subsection{Notation}

Throughout,
\[
l=2,3,\ldots,p,\ldots,\text{ or }\infty,
\]
and $\ell$ is a prime number $\neq p$,
\[
\ell\neq p,\infty.
\]
We sometimes let $\mathbb{Q}_{\infty}=\mathbb{R}$.

The symbol $\mathbb{Q}^{\mathrm{al}}$ denotes an algebraic closure of
$\mathbb{Q}$, $w$ denotes a prime of $\mathbb{Q}{}^{\mathrm{al}}$ lying over
$p$, and $\mathbb{F}{}$ is the residue field at $w$. We let $\iota$ or
$z\mapsto\bar{z}$ denote complex conjugation on $\mathbb{C}$ and its subfields.

Let $R$ be a $\mathbb{Q}{}$-algebra. By a $\mathbb{Q}{}$-structure on an
$R$-module $M$, we mean a $\mathbb{Q}{}$-subspace $V$ such that the map
$r\otimes v\mapsto rv\colon R\otimes_{\mathbb{Q}{}}V\rightarrow M$ is an isomorphism.

By a Hodge class on an abelian variety $A$ over a field of characteristic
zero, we mean an absolute Hodge class in the sense of \cite{deligne1982}. We
write $\mathcal{B}^{\ast}(A)$ for the $\mathbb{Q}$-algebra of Hodge classes on
$A$ and $\mathcal{D}^{\ast}(A)$ for the Lefschetz classes (elements of the
$\mathbb{Q}$-algebra generated by divisor classes).

We often regard abelian varieties as objects in the category with
homomorphisms $\Hom^{0}(A,B)$ (in which isogenies become isomorphisms).

For a complete smooth variety over a perfect field $k$ of characteristic $p$,
we let $H_{p}^{r}(X)$ denote the crystalline cohomology group $H^{r}(X/B)$,
where $B$ is the field of fractions of the ring of Witt vectors of $k$. It is
an $F$-isocrystal over $k$.

For a complete smooth variety $X$ over an algebraically closed field of
characteristic zero,%
\[
H_{\mathbb{A}}^{\ast}(X)\overset{\df}{=}\Big(\varprojlim_{m}H_{\text{\'{e}t}%
}^{\ast}(X,\mathbb{Z}{}/m\mathbb{Z}{})\otimes_{\mathbb{Z}{}}\mathbb{Q}%
{}\Big)\times H_{\mathrm{dR}}^{\ast}(X),
\]
and for a complete smooth variety $X_{0}$ over $\mathbb{F}{}$,%
\[
H_{\mathbb{A}{}}^{\ast}(X)\overset{\df}{=}\Big(\varprojlim_{p\nmid
m}H_{\text{\'{e}t}}^{\ast}(X,\mathbb{Z}{}/m\mathbb{Z}{})\otimes_{\mathbb{Z}{}%
}\mathbb{Q}{}\Big)\times H_{p}^{\ast}(X)\otimes_{B(\mathbb{F}{})}\mathbb{Q}%
{}_{w}^{\mathrm{al}},
\]
where $\mathbb{Q}{}_{w}^{\mathrm{al}}$ is the completion of $\mathbb{Q}%
{}^{\mathrm{al}}$ at $w$. If $X$ has good reduction at $w$ to $X_{0},$ then%
\begin{align*}
H_{\text{\'{e}t}}^{\ast}(X,\mathbb{Z}{}/m\mathbb{Z}{})  &  \simeq
H_{\text{\'{e}t}}^{\ast}(X_{0},\mathbb{Z}{}/m\mathbb{Z}{})\text{ for all
}m\text{ not divisible by }p\text{, }\\
H_{\mathrm{dR}}^{\ast}(X)\otimes_{\mathbb{Q}{}^{\mathrm{al}}}\mathbb{Q}{}%
_{w}^{\mathrm{al}}  &  \simeq H_{p}^{\ast}(X_{0})\otimes_{B(\mathbb{F}{}%
)}\mathbb{Q}{}_{w}^{\mathrm{al}},
\end{align*}
and so there is a canonical specialization map $H_{\mathbb{A}{}}%
^{i}(X)\rightarrow H_{\mathbb{A}}^{i}(X_{0})$.

For a connected normal scheme $S$, we let $\eta$ denote the generic point of
$S$ and $\bar{\eta}$ a geometric generic point (if $\eta=\Spec(K)$, then
$\bar{\eta}=\Spec(K^{\mathrm{sep}})$).

The quotient of an affine group scheme $G$ by its action on itself by inner
automorphisms is denoted by $clG$.

Commutative diagrams of categories and functors are required to commute ``on
the nose'' --- the maps of objects and arrows defined by two paths are
required to coincide.

\medskip$X=Y$ means that $X$ equals $Y$;

$X\simeq Y$ means that $X$ is isomorphic to $Y$ with a specific isomorphism
(often only implicitly described);

$X\approx Y$ means that $X$ is isomorphic to $Y$.

\clearpage

\section{Preliminaries}

\label{P}

We collect various preliminaries, partly to fix notation.

\subsection{Tannakian categories}

\begin{plain}
\label{t1}We assume that the reader is familiar with tannakian categories. Let
$\mathsf{T}$ be a tannakian category over a field $k$. Recall that the
fundamental group of $\mathsf{T}{}$ is the group scheme $\pi(\mathsf{T}{})$ in
$\mathsf{T}$ (better, $\Ind\mathsf{T}{}$) such that $\omega(\pi(\mathsf{T}%
))=\mathcal{A}ut^{\otimes}(\omega)$ for every fibre functor $\omega$. If
$\pi(\mathsf{T})$ is commutative, then it is an affine group scheme over $k$
in the usual sense.
\end{plain}

We briefly review the theory of quotients of tannakian categories
(\cite{milne2007d}).

\begin{plain}
\label{t2} Let $k$ be a field. An exact tensor functor $q\colon\mathsf{T}%
{}\rightarrow\mathsf{Q}$ of tannakian categories over $k$ is said to be a
quotient functor if every object of $\mathsf{Q}{}$ is a subquotient of an
object in the image of $q$. Then the full subcategory $\mathsf{T}^{q}$ of
$\mathsf{T}{}$ consisting of the objects that become trivial\footnote{That is,
isomorphic to a direct sum of copies of $\mathbb{1}$} in $\mathsf{Q}{}$ is a
tannakian subcategory of $\mathsf{T}{}$, and $X\rightsquigarrow\Hom(\1,qX)$ is
a $k$-valued fibre functor $\omega^{q}$ on $\mathsf{T}{}^{q}$. In particular
$\mathsf{T}{}^{q}$ is neutral. For $X,Y$ in $\mathsf{T}{}$,%
\begin{equation}
\Hom(qX,qY)\simeq\omega^{q}(\mathcal{H}{}om(X,Y)^{H})\text{,} \label{ep4}%
\end{equation}
where $H$ is the subgroup of $\pi(\mathsf{T})$ such that $\mathsf{T}%
^{q}=\mathsf{T}^{H}$.

Conversely, every $k$-valued fibre functor $\omega_{0}$ on a tannakian
subcategory $\mathsf{S}$ of $\mathsf{T}$ arises (as above) from a well-defined
quotient functor $\mathsf{T}\to\mathsf{T}{}/\omega_{0}$. For example, when
$\mathsf{T}{}$ is semisimple, we can take $\mathsf{T}{}/\omega_{0}$ to be the
pseudo-abelian hull of the category with one object $qX$ for each object $X$
of $\mathsf{T}{}$ and whose morphisms are given by (\ref{ep4}).
\end{plain}

\begin{plain}
\label{t3}Let $q\colon\mathsf{T}\rightarrow\mathsf{Q}$ be a quotient functor.
Fix a unit object $\1$ in $\mathsf{T}{}$, and let $\omega^{q}$ denote the
fibre functor $\Hom(\1,q(-))$ on $\mathsf{T}{}^{q}$. A fibre functor $\omega$
on $\mathsf{Q}{}$ defines a fibre functor $\omega\circ q$ on $\mathsf{T}{}$,
and the unique isomorphism of fibre functors $\Hom(\1,-)\rightarrow
\omega|\mathsf{Q}{}^{\pi(\mathsf{Q}{})}$ determines an isomorphism $\omega
^{q}\rightarrow(\omega\circ q)|\mathsf{T}{}^{q}$. Conversely, a pair
consisting of a fibre functor $\omega^{\prime}$ on $\mathsf{T}{}$ and an
isomorphism $\omega^{q}\rightarrow\omega^{\prime}|\mathsf{T}{}^{q}$ arises
from a unique fibre functor on $\mathsf{Q}$.
\end{plain}

\subsection{CM abelian varieties}

\begin{plain}
\label{t0} Let $A$ be an abelian variety over an algebraically closed field
$k$. The reduced degree\footnote{The reduced degree of a simple $Q$-algebra
$R$ with centre $C$ is $[C\colon Q][R\colon C]^{1/2}$.} of the $\mathbb{Q}%
$-algebra $\End^{0}(A)$ is $\leq2\dim A$; when equality holds the abelian
variety is said to be CM. An isotypic abelian variety is CM if and only if
$\End^{0}(A)$ contains a field of degree $2\dim A$ over $\mathbb{Q}{}$, and an
arbitrary abelian variety is CM if and only if each isotypic isogeny factor of
it is. Equivalent conditions:

\begin{enumerate}
\item \ the $\mathbb{Q}$-algebra $\End^{0}(A)$ contains an \'{e}tale
subalgebra of degree $2\dim A$ over $\mathbb{Q}{}$;

\item \ for a Weil cohomology $X\rightsquigarrow H^{\ast}(X)$ with coefficient
field $Q$, the centralizer of $\End^{0}(A)$ in $\End_{Q}(H^{1}(A))$ is
commutative (in which case it equals $C(A)\otimes_{\mathbb{Q}{}}Q$, where
$C(A)$ is the centre of $\End^{0}(A)$);

\item \ (characteristic zero) the Mumford-Tate group of $A$ is commutative
(hence a torus);

\item \ (characteristic $p\neq0$) $A$ is isogenous to an abelian variety
defined over $\mathbb{F}$ (theorems of Tate and Grothendieck).
\end{enumerate}
\end{plain}

\subsection{Abelian motives}

\subsubsection{Definition}

\label{AM}

To define a category of motives, we need a collection of smooth projective
varieties over a field $k$ and a theory of correspondences, i.e., a graded
$Q$-algebra $\mathcal{C}{}^{\ast}(X)$ for each variety $X$ in the collection
together with, for each morphism $\phi\colon X\rightarrow Y$, pull-back and
push-forward maps\footnote{In the formulas, we are assuming, for simplicity,
that the varieties have pure dimension.}%
\begin{align*}
\phi^{\ast}\colon\mathcal{C}{}^{\ast}(Y)  &  \rightarrow\mathcal{C}{}^{\ast
}(X)\\
\phi_{\ast}\colon\mathcal{C}{}^{\ast}(X)  &  \rightarrow\mathcal{C}^{\ast+\dim
Y-\dim X}(Y)
\end{align*}
satisfying certain obvious axioms. Here $Q$ is usually a field. The
corresponding category of motives $M(k)$ is defined as follows: an object is a
triple $(X,e,m)$, where $X$ is a variety in the collection, $e$ is an
idempotent in the $Q$-algebra $\mathcal{C}{}^{\dim X}(X\times X)$, and
$m\in\mathbb{Z}{}$; morphisms are defined by%
\[
\Hom((X,e,m),(Y,f,n))=f\cdot\mathcal{C}^{n-m+\dim X}(X\times Y)\cdot e\text{.}%
\]
The category $M(k)$ has a tensor product structure%
\[
(X,e,m)\otimes(Y,f,n)=(X\times Y,e\otimes f,m+n)
\]
with unit object%
\[
\1\overset{\df}{=}(\Spec k,\id,0).
\]
In all the cases in this article, the K\"unneth components of the diagonal lie
in $\mathcal{C}^{\ast}$, and we use them to modify the commutativity
constraint. In good cases, $M(k)$ is a tannakian category over the $Q$.

Let $k$ be an algebraically closed field of characteristic zero. We define the
category $\Mot(k)$ of \emph{abelian motives} over $k$ to be the category of
motives based on the abelian varieties over $k$ using the (absolute) Hodge
classes as correspondences.

\subsubsection{CM motives}

Let $k$ be an algebraically closed field of characteristic zero${}$. We define
the category $\CM(k)$ of \emph{CM motives} over $k$ to be the category of
motives based on the CM abelian varieties over $k$ using the (absolute) Hodge
classes as correspondences.

\begin{plain}
\label{b1}Let $K$ be a CM subfield of $\mathbb{\mathbb{Q}{}}^{\mathrm{al}}{}$,
finite and Galois over $\mathbb{Q}{}$. The Serre torus $S^{K}$ is the quotient
of $(\mathbb{G}_{m})_{K/\mathbb{Q}{}}$ such that
\[
X^{\ast}(S^{K})=\{f\colon\Gal(K/\mathbb{Q})\rightarrow\mathbb{Z}{}\mid
f(\sigma)+f(\iota\circ\sigma)=\text{constant}\}.
\]
The constant value ~$f(\sigma)+f(\iota\circ\sigma)$ is called the weight of
$f$. For $K^{\prime}\supset K$, there is a norm map $S^{K^{\prime}}\rightarrow
S^{K}$, and the Serre protorus is%
\[
S\overset{\df}{=}\varprojlim S^{K}.
\]

\end{plain}

\begin{plain}
\label{b1a} The category of CM motives over $k$ is a tannakian category over
$\mathbb{Q}{}$. The choice of an embedding $k\hookrightarrow\mathbb{C}{}$
defines a (Betti) fibre functor $\omega_{B}$ on $\CM(k)$, and%
\[
S\simeq\mathcal{A}{}ut^{\otimes}(\omega_{B}).
\]
In particular, we see that the functor (extension of the base field),%
\[
\CM(k)\rightarrow\CM(\mathbb{C}{})
\]
is an equivalence of tensor categories.
\end{plain}

\subsubsection{Abelian motives over the complex numbers}

Let $\omega_{B}$ denote the Betti fibre functor on $\Mot(\mathbb{C})$, and let
$G=\mathcal{A}ut^{\otimes}(\omega_{B})$. The functor
\[
\left(  \omega_{B}\right)  _{\left(  \mathbb{R}{}\right)  }{}\colon
\Mot(\mathbb{C})_{(\mathbb{R})}\rightarrow\Vc_{\mathbb{R}}%
\]
factors canonically through the category $\Hdg_{\mathbb{R}{}}$ of polarizable
real Hodge structures, and so defines a homomorphism $h\colon\mathbb{S}%
{}\rightarrow G_{\mathbb{R}{}}$, where $\mathbb{S}{}\overset{\df}{=}%
(\mathbb{G}_{m})_{\mathbb{C}{}/\mathbb{R}{}}$ is the Deligne torus. We wish to
describe the pair $(G,h)$. Let%
\[
\bar{h}\colon\mathbb{S}{}\rightarrow G_{\mathbb{R}}^{\mathrm{ad}}%
\]
denote the composite of $h$ with the quotient map $G_{\mathbb{R}{}}\rightarrow
G_{\mathbb{R}}^{\mathrm{ad}}$.

Consider the following conditions on a homomorphism $h\colon\mathbb{S}%
{}\rightarrow H$ of connected algebraic groups over $\mathbb{R}{}$:

\begin{description}
\item[\textmd{SV1:}] the Hodge structure on the Lie algebra of $H$ defined by
$\Ad\circ h$ is of type
\[
\{(1,-1),(0,0),(-1,1)\};
\]

\item[\textmd{SV2:}] $\inn(h(i))$ is a Cartan involution of $H^{\text{ad}}$.
\end{description}

\begin{theorem}
\label{a1}Let $(G,h)$ be the pair attached to $(\Mot(\mathbb{C}),\omega_{B})$
as above.

\begin{enumerate}
\item The quotient of $G$ by its derived group is the Serre protorus $(S,h)$.

\item Let $H$ be a semisimple algebraic group over $\mathbb{Q}{}$ and $\bar
{h}\colon\mathbb{S}{}/\mathbb{G}_{m}\rightarrow H_{\mathbb{R}{}}^{\mathrm{ad}%
}$ a homomorphism generating $H^{\mathrm{ad}}$ and satisfying the conditions
SV1,2. The pair $(H,\bar{h})$ is a quotient of $(G^{\mathrm{der}},\bar{h})$ if
and only if there exists an isogeny $H^{\prime}\rightarrow H$ with $H^{\prime
}$ a product of almost-simple groups $H_{i}^{\prime}$ over $\mathbb{Q}$ such
that either

\begin{enumerate}
\item $H_{i}^{\prime}$ is simply connected of type $A$, $B$, $C$, or
$D^{\mathbb{R}{}}$, or

\item $H_{i}^{\prime}$ is of type $D_{n}^{\mathbb{H}{}}$ ($n\geq5$) and equals
$\Res_{F/\mathbb{Q}{}}H_{0}$ for $H_{0}$ the double covering of an adjoint
group that is a form of $\SO(2n)$.
\end{enumerate}
\end{enumerate}
\end{theorem}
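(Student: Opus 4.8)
The plan is to prove (a) by Tannakian duality and (b) by reducing to the classification of symplectic representations of simple groups.

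\emph{Part (a).} The quotient $G/G^{\mathrm{der}}$ is the largest pro-torus quotient of $G$, so by the quotient theory of \ref{t2} it is the fundamental group of the Tannakian subcategory $\Mot(\mathbb{C})^{G^{\mathrm{der}}}$ of objects on which $G^{\mathrm{der}}$ acts trivially. I would identify this subcategory with $\CM(\mathbb{C})$: for an abelian variety $A$, Deligne's theorem that Hodge classes are absolutely Hodge gives that the Tannakian subcategory generated by $hA$ has fundamental group $\MT(A)$, with the surjection $G\twoheadrightarrow\MT(A)$ carrying $G^{\mathrm{der}}$ onto $\MT(A)^{\mathrm{der}}$. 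Since $hA$ is built from $H^{1}(A)$ by tensor operations, $G^{\mathrm{der}}$ acts trivially on $hA$ iff $\MT(A)$ is a torus iff $A$ is CM (\ref{t0}(c)). Hence $\Mot(\mathbb{C})^{G^{\mathrm{der}}}=\CM(\mathbb{C})$, whose fundamental group is the Serre group $S$ by Deligne's theorem on CM motives (the Main Theorem of Complex Multiplication); the induced homomorphism $\mathbb{S}\to S_{\mathbb{R}}$ is, by construction of $S$, the canonical one. This gives (a).

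\emph{Part (b).} Since $G=\varprojlim_{A}\MT(A)$, the adjoint group $G^{\mathrm{ad}}$ is a product of $\mathbb{Q}$-simple adjoint groups, each a quotient of some $\MT(A)^{\mathrm{ad}}$, and $\bar h$ is the product of the associated homomorphisms $\mathbb{S}/\mathbb{G}_{m}\to\MT(A)^{\mathrm{ad}}_{\mathbb{R}}$. A semisimple $H$ over $\mathbb{Q}$ with a surjection $G^{\mathrm{der}}\twoheadrightarrow H$ respecting $\bar h$ is then isogenous to a product of finitely many of the $\mathbb{Q}$-simple factors $H_{i}'$ of $G^{\mathrm{der}}$, so (b) reduces to determining these factors, \emph{together with their isogeny type}, as pairs $(H_{i}',\bar h_{i})$. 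The embedding $\MT(A)\hookrightarrow\GSp(H^{1}(A))$ equips each simple factor with a polarizable Hodge structure of weight $1$ and type $\{(1,0),(0,1)\}$, and then SV1 and SV2 are automatic (SV1 because the Hodge type of the Lie algebra is forced to be $\{(1,-1),(0,0),(-1,1)\}$, SV2 because $H^{1}(A)$ is polarizable). For the converse I would realize each listed factor inside some $\MT(A)^{\mathrm{ad}}$: Siegel moduli for type $C$, unitary (PEL) abelian varieties for type $A$, the Kuga--Satake construction via Clifford algebras for types $B$ and $D^{\mathbb{R}}$ (attaining the simply connected covers), and Deligne's construction for $D_{n}^{\mathbb{H}}$ with $n\geq 5$ (attaining only the specific double covering), assembling the various $A$'s with the quotient-functor formalism of \ref{t2} and \ref{t3} so that $G^{\mathrm{der}}\twoheadrightarrow H$ respects $\bar h$.

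\emph{The classification input and the main obstacle.} The heart of (b) is the ``only if'' direction: the classification of $\mathbb{Q}$-simple groups admitting a symplectic representation whose associated $\mathbb{S}$-Hodge structure has weight $1$ and type $\{(1,0),(0,1)\}$. This is Satake's classification, refined by Deligne: the possible types are exactly $A$, $B$, $C$, $D^{\mathbb{R}}$, and $D_{n}^{\mathbb{H}}$ with $n\geq 5$, all exceptional types and $D_{n}^{\mathbb{H}}$ with $n\leq 4$ being excluded for want of a representation of the correct Hodge type. The genuinely delicate point --- and the one I expect to be the main obstacle --- is the rationality constraint in type $D_{n}^{\mathbb{H}}$: the half-spin representation through which such a factor must act on $H^{1}$ of an abelian variety is $\mathbb{Q}$-rational only when the factor has the form $\Res_{F/\mathbb{Q}}H_{0}$ with $H_{0}$ the specified double covering of a form of $\SO(2n)$; pinning this down, and checking that it is genuinely attained by an abelian motive, is where the real work lies, the remaining constructions and the Tannakian bookkeeping being comparatively routine.
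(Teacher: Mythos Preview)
The paper does not actually give a proof of this theorem: its entire proof reads ``See \cite{milne1994b}, 1.27.'' So there is no in-paper argument to compare your proposal against; your sketch is an attempt to reconstruct what lies behind that citation.

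Your outline is essentially the standard route and is what one finds in the cited reference. For (a), identifying $G/G^{\mathrm{der}}$ with the fundamental group of $\Mot(\mathbb{C})^{G^{\mathrm{der}}}=\CM(\mathbb{C})$ is correct, though the equality $\Mot(\mathbb{C})^{G^{\mathrm{der}}}=\CM(\mathbb{C})$ needs slightly more than what you wrote: you checked it on objects of the form $hA$, but an arbitrary abelian motive $M$ with torus Mumford--Tate group is only a subquotient of tensor powers of some $hA$ with $A$ not CM, and one must still argue that such an $M$ lies in the subcategory generated by CM abelian varieties (this uses that the connected centre of $\MT(A)$ is realized by CM abelian varieties, or an equivalent reduction). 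For (b), your reduction to the Satake--Deligne classification of symplectic representations of Hodge type $\{(1,0),(0,1)\}$ is exactly the argument in \cite{milne1994b}, and you have correctly identified the delicate point as the rationality constraint in type $D_{n}^{\mathbb{H}}$, where only the specific double cover $\Res_{F/\mathbb{Q}}H_{0}$ (and not the simply connected cover) admits a $\mathbb{Q}$-rational half-spin representation of the required Hodge type.
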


\noindent The homomorphism $G\rightarrow S$ is defined by the exact tensor
functor $\CM(\mathbb{C}{})\hookrightarrow\Mot(\mathbb{C}{})$.

\begin{proof}
See \cite{milne1994b}, 1.27.
\end{proof}

\subsubsection{Abelian motives over $\mathbb{Q}^{\mathrm{al}}$}

Take $\mathbb{Q}^{\mathrm{al}}$ to be the algebraic closure of $\mathbb{Q}$ in
$\mathbb{C}$, and let $\omega_{B}$ be the Betti fibre functor defined by the
inclusion $\mathbb{Q}^{\mathrm{al}}\hookrightarrow\mathbb{C}$.

\begin{theorem}
\label{a2}Let $(G,h)$ be the pair attached to $(\Mot(\mathbb{Q}^{\mathrm{al}%
}),\omega_{B})$. The quotient of $G$ by its derived group is $S$, and the
algebraic quotients of $(G^{\mathrm{der}},\bar{h})$ have the same description
as in Theorem \ref{a1}.
\end{theorem}

\begin{proof}
(a) Almost by definition, the motivic Galois group of $\CM(\mathbb{C}{})$ is
the Serre group $S$. The functor $A\rightsquigarrow A_{\mathbb{C}{}}$ defines
an equivalence of categories $\CM(\mathbb{Q}^{\mathrm{al}})\rightarrow
\CM(\mathbb{C}{})$, and so the motivic Galois group of $\CM(\mathbb{Q}%
^{\mathrm{al}})$ is also $S$.

(b) Let $H$ be a semisimple algebraic group over $\mathbb{Q}{}$ and $\bar{h}$
a homomorphism $\mathbb{S}{}/\mathbb{G}{}_{m}\rightarrow H_{\mathbb{R}{}%
}^{\mathrm{ad}}$ generating $H^{\mathrm{ad}}$ and satisfying the conditions
(SV1,2). Let $H\rightarrow\GL_{V}$ be a symplectic representation of
$(H,\bar{h})$. Recall (\cite{milne2013b}, 10.9), that this means that there
exists a commutative diagram%
\[
\begin{tikzcd}
H\arrow{d}\arrow{rd}\\
(H^{\mathrm{ad}},\bar{h})&(G,h)\arrow{l}\arrow{r}{\rho}&(G(\psi),D(\psi))
\end{tikzcd}
\]
in which $\psi$ is a nondegenerate alternating form on $V$, $G$ is a reductive
group (over $\mathbb{Q}$), and $h$ is a homomorphism $\mathbb{S}{}\rightarrow
G_{\mathbb{R}{}}$; the homomorphism $H\rightarrow G$ is required to have image
$G^{\mathrm{der}}$. In particular, $G^{\mathrm{ad}}\simeq H^{\mathrm{ad}}$.

Consider the map of Shimura varieties $\Sh(G,X)\rightarrow\Sh(H^{\mathrm{ad}%
},X^{\mathrm{ad}})$ corresponding to the map $(G,h)\rightarrow(H^{\mathrm{ad}%
},\bar{h})$. This is defined over $\mathbb{Q}{}^{\mathrm{al}}$, and induces a
finite surjective map
\[
\Sh_{K}(G,X)^{\circ}\rightarrow\Sh_{K^{\mathrm{ad}}}(H^{\mathrm{ad}%
},X^{\mathrm{ad}})^{\circ}.
\]
Since $H^{\mathrm{ad}}$ is a quotient of $G$, there is an abelian motive (+
structure) corresponding to a point of the second connected Shimura variety,
which lifts to the first Shimura variety.
\end{proof}

\begin{caution}
\label{a3}The theorem characterizes the algebraic quotients of
$G^{\mathrm{der}}$, where $G$ is the motivic Galois group of $\Mot(\mathbb{Q}%
{}^{\mathrm{al}})$. The algebraic quotients of $H^{\mathrm{der}}$, where $H$
is the motivic Galois group of $\Mot(\mathbb{C})$ have exactly the same
description, but the two groups are not isomorphic. For example,
$H^{\mathrm{der}}$ has an uncountably product of copies of $\SL_{2}$ as a
quotient, but $G^{\mathrm{der}}$ has only a countable product as a quotient.
\end{caution}

\subsubsection{Abelian motives with good reduction.}

We say that an abelian motive over $\mathbb{Q}^{\mathrm{al}}$ has \emph{good
reduction} at $w$ if some model of it over a subfield of $\mathbb{Q}%
{}^{\mathrm{al}}$ satisfies the N\'{e}ron condition. We let $\Mot^{w}%
(\mathbb{Q}{}^{\mathrm{al}})$ denote the full subcategory of $\Mot(\mathbb{Q}%
^{\mathrm{al}})$ whose objects have good reduction at $w$. Note that
$\CM(\mathbb{Q}{}^{\mathrm{al}})\subset\Mot^{w}(\mathbb{Q}{}^{\mathrm{al}})$.

\begin{theorem}
\label{b14}Let $(G,h)$ be the pair attached to $(\Mot^{w}(\mathbb{Q}%
^{\mathrm{al}}),\omega_{B})$. The quotient of $G$ by its derived group is $S$,
and the algebraic quotients of $(G^{\mathrm{der}},\bar{h})$ have the same
description as in Theorem \ref{a1}.
\end{theorem}

\begin{proof}
Omitted for the moment.
\end{proof}

In particular, $G^{\mathrm{der}}$ is not simply connected unless we exclude
abelian varieties of type $D^{\mathbb{H}{}}$ in the definition of
$\Mot^{w}(\mathbb{Q}^{\mathrm{al}})$.

\begin{aside}
\label{b14a} It will be important to extend everything in this article to
abelian varieties over $\mathbb{Q}^{\mathrm{al}}$ with (stable) bad reduction
at $w$.
\end{aside}

\subsection{The Weil-number protorus}

\begin{plain}
\label{b2a}Let $q\in p^{\mathbb{N}{}}$. An algebraic number is a Weil
$q$-number of weight $m$ if

\begin{enumerate}
\item for every homomorphism $\rho\colon\mathbb{Q}{}[\pi]\hookrightarrow
\mathbb{C}{}$, $\rho(\pi)\cdot\overline{\rho(\pi)}=q^{m},$ and

\item for some $n>0,$ $q^{n}\pi$ is an algebraic integer.
\end{enumerate}

\noindent The set of Weil $q$-numbers in $\mathbb{Q}{}^{\mathrm{al}}$ is
denoted by $W(q)$. It is a subgroup of the multiplicative group of
$\mathbb{Q}{}^{\mathrm{al}}$, stable under the action of $\Gamma
\overset{\df}{=}\Gal(\mathbb{Q}{}^{\mathrm{al}}/\mathbb{Q}{})$. We let $P(q)$
denote the affine group scheme of multiplicative type with%
\[
X^{\ast}(P(q))=W(q).
\]
There is a universal element $\pi_{\text{univ}}\in P(q)(\mathbb{Q})$ such that
$\chi(\pi_{\text{univ}})=\pi$ if $\chi$ is the character of $P(q)$
corresponding to $\pi$ --- it is the element corresponding to the inclusion
map under%
\[
P(q)(\mathbb{Q}{})\simeq\Hom(W(q),\mathbb{Q}^{\mathrm{al}\times}
)^{\Gal\mathbb{Q}^{\mathrm{al}}/\mathbb{Q}{})}.
\]

\end{plain}

\begin{plain}
\label{b2}If $n|n^{\prime}$, then $\pi\mapsto\pi^{n^{\prime}/n}$ is a
homomorphism $W(p^{n})\rightarrow W(p^{n^{\prime}})$. Define%
\[
W(p^{\infty})=\varinjlim W(p^{n}).
\]
There is an action of $\Gal(\mathbb{Q}{}^{\mathrm{al}}/\mathbb{Q}{})$ on
$W(p^{\infty})$, and the protorus $P$ over $\mathbb{Q}$ such that%
\[
X^{\ast}(P)=W(p^{\infty})
\]
is called the Weil-number protorus.
\end{plain}

\subsection{The Shimura--Taniyama homomorphism}

\begin{plain}
\label{b3}Again let $K$ be a CM subfield of $\mathbb{Q}{}^{\mathrm{al}}$,
finite and Galois over $\mathbb{Q}{}$. After possibly enlarging $K$, we may
suppose that $\iota$ is not in the decomposition group at $w_{K}$. Let
$\mathfrak{p}{}$ be the prime ideal of $\mathcal{O}{}_{K}$ corresponding to
$w_{K}$. For some $h$, $\mathfrak{p}{}^{h}$ will be principal, say,
$\mathfrak{p}{}^{h}=(a)$. Let $\alpha=a^{2n}$, where $n=(U(K)\colon U(K_{+}%
))$. Then, for $f\in X^{\ast}(S^{K})$, $f(\alpha)$ is independent of the
choice of $a$, and it is a Weil $p^{2nf(\mathfrak{p}{}/p)}$-number of weight
equal to the weight of $f$. The map $f\mapsto f(\alpha)\colon X^{\ast}%
(S^{K})\rightarrow W^{K}(p^{\infty})$ is a surjective homomorphism
(\cite{milne2001}, A.8), and so corresponds to an injective homomorphism
$\rho^{K}\colon P^{K}\rightarrow S^{K}$. On passing to the direct limit over
the $K\subset\mathbb{Q}{}^{\mathrm{al}}$, we obtain an injective homomorphism%
\[
\rho\colon P\rightarrow S,
\]
called the \emph{Shimura--Taniyama homomorphism}.
\end{plain}

\begin{plain}
\label{b12}The category $\CM(\mathbb{Q}^{\mathrm{al}})$ of CM motives over
$\mathbb{Q}{}^{\mathrm{al}}$ is the subcategory $\Mot(\mathbb{Q}^{\mathrm{al}%
})$ generated by the abelian varieties of CM-type. For any embedding
$\mathbb{Q}{}^{\mathrm{al}}\hookrightarrow\mathbb{C}{}$, the functor
$\CM(\mathbb{Q}{}^{\mathrm{al}})\rightarrow\CM(\mathbb{C}{})$ is an
equivalence of categories, and so the fundamental group of $\CM(\mathbb{Q}%
{}^{\mathrm{al}})$ is the Serre group. All CM abelian varieties over
$\mathbb{Q}{}^{\mathrm{al}}$ have good reduction at $w$ (because they satisfy
the N\'{e}ron condition).
\end{plain}

\begin{plain}
\label{b13}The Shimura-Taniyama homomorphism $P\rightarrow S$ (see \ref{b3})
has a geometric description. Let $A$ be a CM abelian variety over a subfield
$k$ of $\mathbb{\mathbb{Q}}^{\mathrm{al}}$. After possibly enlarging $k$, we
may suppose that $A$ has good reduction at $w$ and that some set of generators
for the Hodge classes on $A_{\mathbb{Q}{}^{\mathrm{al}}}$ and its powers is
defined over $k$. Under these assumptions, there is an endomorphism $F$ of $A$
reducing mod $p$ to the Frobenius endomorphism of $A_{0}$. Moreover, $F$ lies
in the Mumford--Tate group of $A$, and so defines a homomorphism
$P\rightarrow\MT(A)$. For varying $A$, these homomorphism define a
homomorphism $P\rightarrow\plim\MT(A)=S$. The theory of complex multiplication
(Shimura, Taniyama, Weil) shows that this agrees with the Shimura--Taniyama
homomorphism $\rho$ defined earlier.
\end{plain}

\begin{plain}
\label{b13a}The Galois groupoid attached to the category $\CM(\mathbb{Q}{})$
and its Betti fibre functor is called the Taniyama group. There is an explicit
description of it, due to Deligne and Langlands. See \cite{milne1990}, \S 6.
\end{plain}

\subsection{The Galois representations defined by abelian motives}

\begin{quotation}
A morphism $\Mot^{\mathscr{s}}(\mathbb{Q}{}^{\mathrm{al}})\rightarrow
\Mot(\mathbb{F}{})$ as in Conjecture D gives rise to a morphism $P\rightarrow
G$ of affine bands. Using a recent result of Kisin and Zhou (2025a) we show
(unconditionally) that such a morphism exists.
\end{quotation}

In this subsection, $\mathbb{Q}{}^{\mathrm{al}}$ is the algebraic closure of
$\mathbb{Q}{}$ in $\mathbb{C}{}$. We define the Mumford--Tate group of an
abelian motive $M$ (or abelian variety $A)$ over $\mathbb{Q}{}^{\mathrm{al}}$
to be the Mumford--Tate group of the rational Hodge structure $\omega_{B}(M)$
(or $\omega_{B}(A)$).

An abelian motive over $\mathbb{Q}{}^{\mathrm{al}}$ has \emph{visibly good
reduction} at $w$ if it is of the form $(A,e,m)$ with $A$ an abelian variety
with good reduction at $w$. Let $\Mot^{vis}(\mathbb{Q}{}^{\mathrm{al}})$
denote the category of abelian motives over $\mathbb{Q}{}^{\mathrm{al}}$ with
visibly good reduction at $w$, and let
\[
G=\mathcal{A}{}ut^{\otimes}(\omega_{B}\mid\Mot^{vis}(\mathbb{Q}{}%
^{\mathrm{al}})).
\]

Let $\ell\neq p$. Let $\Mot^{vis}(\mathbb{Q}{}^{\mathrm{al}};\mathbb{Q}%
{}_{\ell})$ and $\Mot(\mathbb{F}{};\mathbb{Q}{}_{\ell})$ denote the categories
of abelian motives defined using the $\ell$-adic Tate classes as
correspondences. They are $\mathbb{Q}{}_{\ell}$-linear tannakian categories
with canonical $\mathbb{Q}{}_{\ell}$-valued fibre functors $\omega_{\ell}$,
and we let%
\begin{align*}
G_{\ell}  &  =\mathcal{\mathcal{A}{}}ut^{\otimes}(\omega_{\ell}\mid
\Mot^{vis}(\mathbb{Q}^{\mathrm{al}};\mathbb{Q}_{\ell}))\\
P_{\ell}  &  =\mathcal{\mathcal{A}}ut^{\otimes}(\omega_{\ell}\mid
\Mot(\mathbb{F};\mathbb{Q}{}_{\ell})).
\end{align*}
From the reduction functor
\[
\Mot^{vis}(\mathbb{Q}^{\mathrm{al}};\mathbb{Q}{}_{\ell})\rightarrow
\Mot(\mathbb{F}{};\mathbb{Q}_{\ell}),
\]
we get a morphism $u_{\ell}\colon P_{\ell}\rightarrow G_{\ell}$ of affine
bands over $\mathbb{Q}_{\ell}$.

Suppose that there exists a commutative diagram of tannakian categories%
\[
\begin{tikzcd}
\Mot^{vis}(\mathbb{Q}^{\mathrm{al}})\arrow{r}\arrow{d}
&\Mot^{vis}(\mathbb{Q}^{\mathrm{al}};\mathbb{Q}_{\ell})\arrow{d}\\
\Mot(\mathbb{F})\arrow{r}&\Mot(\mathbb{F}{};\mathbb{Q}{}_{\ell}),
\end{tikzcd}
\]
such that%
\[
\Mot(\mathbb{F}{})_{(\mathbb{Q}{}_{\ell})}\rightarrow\Mot(\mathbb{F}%
;\mathbb{Q}_{\ell})
\]
is a tensor equivalence. From such a diagram, we get a commutative diagram of
bands%
\begin{equation}
\begin{tikzcd} G\arrow{r}&G_\ell\\ P\arrow{r}\arrow{u}[swap]{u} &P_\ell\arrow{u}[swap]{u_\ell} \end{tikzcd} \tag{*}%
\end{equation}
such that%
\[
P_{\mathbb{Q}_{\ell}}\rightarrow P_{\ell}%
\]
is an isomorphism. Here $P$ is the Weil-number protorus.

Using the main result of \cite{kisinZ2021}, we show that the diagram (*)
exists unconditionally.

\begin{theorem}
\label{p1}There exists a unique morphism of bands%
\[
u\colon P\rightarrow G
\]
over $\mathbb{Q}$ such that (*) commutes. It is independent of $\ell\neq p.$
\end{theorem}

\begin{proof}
To give a morphism $P\rightarrow G$ of bands amounts to giving a conjugacy
class of homomorphisms $P_{\mathbb{Q}{}^{\mathrm{al}}}\rightarrow
G_{\mathbb{Q}{}^{\mathrm{al}}}$ stable under the action of $\Gal(\mathbb{Q}%
^{\mathrm{al}}/\mathbb{Q})$.

Let $A$ be an abelian variety over $\mathbb{Q}{}^{\mathrm{al}}$ with good
reduction at $w$, and let $G_{A}$ be its Mumford--Tate group. From a model of
$A$ over a number field $L\subset\mathbb{Q}{}^{\mathrm{al}}$, we get a
Frobenius element $\gamma_{\ell}$ in $(clG_{A})(\mathbb{Q}{}_{\ell})$
(possibly after extending $L$). According to the main theorem of
\cite{kisinZ2021}, this arises from an element $\gamma\in(clG_{A}%
)(\mathbb{Q}{})$ that is independent of $\ell$. Let $\mathbb{F}_{q}%
\subset\mathbb{F}$ be the residue field at $w|L$. For each lift $\tilde
{\gamma}$ of $\gamma$ to $G_{A}(\mathbb{Q}{}^{\mathrm{al}})$, there is a
unique homomorphism $P(q)_{\mathbb{Q}{}^{\mathrm{al}}}\rightarrow
(G_{A})_{\mathbb{Q}{}^{\mathrm{al}}}$ of affine group schemes sending the
universal element $\pi_{\mathrm{univ}}\in P(q)(\mathbb{Q})$ to $\gamma$ (apply
Lemma \ref{p2}, if this is not obvious). As we vary the lift $\tilde{\gamma}$
of $\gamma$, we get a conjugacy class of homomorphisms $P(q)_{\mathbb{Q}%
^{\mathrm{al}}}\rightarrow(G_{A})_{\mathbb{Q}{}^{\mathrm{al}}}$ stable under
$\Gal(\mathbb{Q}{}^{\mathrm{al}}/\mathbb{Q}{})$, which we regard as a morphism
of affine bands $P(q)\rightarrow G_{A}$. Now pass to the inverse limit over
larger and larger abelian varieties,
\[
A_{1}\subset A_{2}\subset\cdots,
\]
and use that
\[
P=\varprojlim_{q} P(q)\quad\text{and}\quad G=\varprojlim_{A} G_{A},
\]
to obtain the required morphism of bands $P\rightarrow G$.
\end{proof}

\begin{lemma}
\label{p2}Let $k$ be a field and $R$ a $k$-algebra. Let $X$ and $Y$ be
algebraic $k$-schemes with $X$ reduced and $Y$ separated, and let
$\Sigma\subset X(k)$ be Zariski dense in $|X|$. A morphism of $R$-schemes
$\phi\colon X_{R}\rightarrow Y_{R}$ arises from a morphism of $k$-schemes if
and only if $\phi(\Sigma)\subset Y(k)$.
\end{lemma}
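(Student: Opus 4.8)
The plan is to reduce the statement to the affine case and then to the case $Y=\mathbb{A}^1$, where it becomes a concrete assertion about polynomial functions. First, the ``only if'' direction is trivial: if $\phi$ is the base change of a $k$-morphism $\phi_0\colon X\to Y$, then $\phi_0(\Sigma)\subset Y(k)$ by hypothesis and $\phi$ agrees with $\phi_0$ on $k$-points, so $\phi(\Sigma)\subset Y(k)$. For the ``if'' direction, suppose $\phi(\Sigma)\subset Y(k)$. Since $X$ is reduced and $\Sigma$ is Zariski dense in $|X|$, a $k$-morphism $X\to Y$ is determined by its effect on $\Sigma$, so uniqueness will be automatic once existence is shown; hence it suffices to construct $\phi_0$ locally and glue, which is where separatedness of $Y$ enters. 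So the first step is to cover $Y$ by affine opens $V_i$ and $X$ by affine opens $U_j$ with $\phi(U_{j,R})\subset V_{i(j),R}$, reducing to the case $X=\Spec B$ and $Y=\Spec A$ affine over $k$, with $B$ reduced. One must be slightly careful: $\Sigma\cap |U_j|$ need not be dense in $|U_j|$ for every $j$ a priori, but since each nonempty open of an irreducible component meets the dense set $\Sigma$, one can arrange (possibly after passing to the irreducible components of $X$, which are themselves reduced) that the relevant density persists; I would spell this out by working one component of $X$ at a time.

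The core step is the affine case. Here $\phi$ corresponds to an $R$-algebra homomorphism $\phi^\sharp\colon A\otimes_k R\to B\otimes_k R$, equivalently to a $k$-algebra homomorphism $\psi\colon A\to B\otimes_k R$. I need to show $\psi(A)\subset B\otimes 1$. Fix $a\in A$; write $\psi(a)=\sum_{m} b_m\otimes r_m$ with the $r_m\in R$ linearly independent over $k$. For each point $x\in\Sigma$, evaluation gives a $k$-algebra map $\mathrm{ev}_x\colon B\to k$, and the hypothesis $\phi(x)\in Y(k)$ says precisely that $(\mathrm{ev}_x\otimes\mathrm{id}_R)\circ\psi$ lands in $k\otimes R$ after identification, i.e.\ $\sum_m \mathrm{ev}_x(b_m)\, r_m \in k\cdot(r_{m_0}\otimes 1)$-span in a way that forces all but one coefficient to vanish — more precisely, $(\mathrm{ev}_x\otimes\mathrm{id})(\psi(a))$ is the image of a scalar in $k$, which by linear independence of the $r_m$ forces $\mathrm{ev}_x(b_m)=0$ for all $m$ corresponding to $r_m\notin k$. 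Thus each such $b_m$ vanishes at every point of $\Sigma$; since $\Sigma$ is dense in $|X|=\Spec B$ and $B$ is reduced, $b_m=0$. Hence $\psi(a)\in B\otimes 1$, giving the desired $k$-algebra map $A\to B$ and the morphism $\phi_0$.

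The main obstacle is not the affine computation, which is essentially the elementary fact quoted in the footnote of Theorem \ref{a6} (a $k$-subspace is detected after faithfully flat base change), but rather the gluing bookkeeping: ensuring that the affine reduction can be carried out compatibly across an open cover when $\Sigma$ may distribute unevenly among the components and opens of $X$. The separatedness of $Y$ is exactly what guarantees that the locally constructed $k$-morphisms agree on overlaps (two $k$-morphisms to a separated scheme agreeing on the dense reduced subscheme $\Sigma$ agree), so the glued object is a genuine morphism $X\to Y$ whose base change to $R$ equals $\phi$ — the latter again by density of $\Sigma$ and reducedness of $X_R$ (note $X_R$ is reduced if $X$ is reduced and $k\to R$ is, say, a localization or field extension; in general one checks equality of the two $R$-morphisms on the schematically dense $\Sigma$, using that $X$ reduced makes $\Sigma$ schematically dense in $X$ and hence $\Sigma_R$ schematically dense in $X_R$ by flatness). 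I would close by remarking that in the intended application $R=\mathbb{A}_f$, $k=\mathbb{Q}$, $X$ is a variety of conjugacy classes and $\Sigma$ the set of Frobenius classes, for which density is the substantive input supplied by the cited work of Noot and others.
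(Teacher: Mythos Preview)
Your affine computation is correct, but the reduction to the affine case has a real gap. You assert that one can cover $X$ by affine opens $U_j$ with $\phi(U_{j,R})\subset V_{i(j),R}$; however, $\phi^{-1}(V_{i,R})$ is open in $X_R$ with no reason to be of the form $W_R$ for $W\subset X$ open until you already know that $\phi$ descends. For $\sigma\in\Sigma$ the whole fibre $\{\sigma\}_R$ does land in some $V_{i,R}$, so such a $U_j\ni\sigma$ exists \emph{provided} the projection $X_R\to X$ is closed --- but that needs $\Spec R\to\Spec k$ to be universally closed, which fails for general $R$ (for instance $R=\mathbb{A}_f$). Without this, the $U_j$ you can produce cover only a (possibly proper) open subset containing $\Sigma$, and you have no mechanism to extend $\phi_0$ across the rest of $X$. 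Your argument is complete when $Y$ is affine (no cover of $Y$ is needed, so any affine cover of $X$ works), but not for the general separated $Y$ of the lemma. Incidentally, your worry that $\Sigma\cap U_j$ might fail to be dense in $U_j$ is misplaced: a dense subset of $X$ meets every nonempty open, so its trace on any open $U$ is automatically dense in $U$.

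The paper avoids all of this by using faithfully flat descent rather than a direct construction. Since any nonzero $k$-algebra $R$ is free, hence faithfully flat, over $k$, the morphism $\phi$ descends if and only if $\pr_1^{\ast}\phi=\pr_2^{\ast}\phi$ as maps $X_{R\otimes_k R}\to Y_{R\otimes_k R}$. Now schematic density does the work globally, with no affine reduction: $X$ reduced makes the Zariski-dense $\Sigma$ schematically dense in $X$, hence (by flat base change) its pullback is schematically dense in $X_{R\otimes_k R}$; the two pullbacks of $\phi$ agree there because each $\phi(\sigma)$ lies in $Y(k)$, and then agree everywhere because $Y$ is separated. You already invoke exactly this density-plus-separatedness device at the very end (to check $(\phi_0)_R=\phi$); the paper's point is that, applied at the outset via the descent criterion, it is the whole proof.
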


\begin{proof}
The necessity is obvious. Let $S=\Spec(k)$ and $T=\Spec(R)$. For the
sufficiency, we have to show that $\pr_{1}^{\ast}(\phi)=\pr_{2}^{\ast}(\phi)$,
where $\pr_{1}$ and $\pr_{2}$ are the projections $T\times_{S}T\rightarrow T$.
Because $X$ is reduced, $\Sigma$ is schematically dense in $X$, and so its
inverse image $\Sigma^{\prime}$ in $X\times_{S}(T\times_{S}T)$ is
schematically dense. As $Y$ is separated and $\pr_{1}^{\ast}(\phi)$ and
$\pr_{2}^{\ast}(\phi)$ agree on $\Sigma^{\prime}$, they must be equal.
\end{proof}

\begin{application}
\label{p7}Let $M$ be an abelian motive over a number field $L\subset
\mathbb{Q}^{\mathrm{al}}$ with good reduction at $w|L$, and let $G_{M}%
\overset{\df}{=}\mathcal{A}ut^{\otimes}(\omega_{B}\mid\langle M_{\mathbb{Q}%
^{\text{al}}}\rangle^{\otimes})$ be its Mumford--Tate group. Let $\ell\neq p$
be a prime number. After possibly replacing $L$ with a finite extension, we
obtain a representation $\rho\colon\Gal(\mathbb{Q}{}^{\mathrm{al}%
}/L)\rightarrow G_{M}(\mathbb{Q}{}_{\ell})$, and hence a Frobenius element
$\gamma_{\ell}\in(clG_{M})(\mathbb{Q}{}_{\ell})$. Assume that $M$ has visibly
good reduction at $w|L$, i.e., that $M=(A,e,m)$, where $A$ is an abelian
variety over $L$ with good reduction at $w|L$. In the proof of Theorem
\ref{p1}, we constructed a morphism of bands $P(q)\rightarrow G_{A}$. As
$G_{M}$ is a quotient of $G_{A}$, this gives us a morphism of bands
$P(q)\rightarrow G_{M}$, and hence a morphism of schemes $P(q)\rightarrow
clG_{M}$. The image of $\pi_{\text{univ}}\in P(q)(\mathbb{Q}{})$ in $\left(
clG_{M}\right)  (\mathbb{Q}{})$ maps to $\gamma_{\ell}\in(clG_{M}%
)(\mathbb{Q}{}_{\ell})$ and is independent of $\ell$. Thus, we obtain a
version of the main theorem of \cite{kisinZ2021} for abelian motives. (See
also \ref{p7c} and Theorem \ref{d9c} below.)
\end{application}

\subsubsection{Notes}

\begin{plain}
\label{p7a} Of course, it would be better to \textit{deduce} Theorem \ref{p1}
from the existence of a diagram of tannakian categories.
\end{plain}

\begin{plain}
\label{p7b} Note that we proved Theorem \ref{p1} by deducing a statement about
a \textit{cofinal} collection of algebraic quotients of $G$ from the theorem
of Kisin and Zhou, and then we proved \ref{p7} by applying Theorem \ref{p1} to
\textit{all} algebraic quotients of $G$.
\end{plain}

\begin{plain}
\label{p7c} By applying the results of \cite{kisinZ2025}, it is possible to
extend some of the above results to all abelian motives with good reduction at
$w$, and even include $p$. Specifically, from an abelian motive $M$ over a
number field $L\subset\mathbb{Q}{}^{\mathrm{al}}$, we get a Weil--Deligne
representation with values in $G_{M}(\mathbb{Q}{}_{l})$, which becomes a
Galois representation when $M$ satisfies the N\'{e}ron condition. By applying
the main theorem of ibid.\ we get a morphism of affine bands $P(q)\rightarrow
G_{M}$ with the correct local properties. Now consider the image of
$\pi_{\text{univ}}$ in $\left(  clG_{M}\right)  (\mathbb{Q}{})$.
\end{plain}

\begin{plain}
\label{p7d} For CM abelian varieties, this is all much easier, because the
Frobenius endomorphism on $A_{0}$ lifts to $A$. See \ref{b13}.
\end{plain}

\begin{plain}
\label{p7e} From the morphism of affine bands $P\rightarrow G$ we get an
action of $P$ on the objects of $\Mot^{w}(\mathbb{Q}^{\mathrm{al}})$, and
hence we can define $\Mot^{w}(\mathbb{Q}{}^{\mathrm{al}})^{P}$. Constructing a
commutative diagram%
\[
\begin{tikzcd}
\Mot^{w}(\mathbb{Q}^{\mathrm{al}})\arrow{r}\arrow{d}{R}
&\Mot^{w}(\mathbb{Q}^{\mathrm{al}};\mathbb{Q}_{\ell})\arrow{d}{R}\\
\Mot(\mathbb{F})\arrow{r}&\Mot(\mathbb{F}{};\mathbb{Q}{}_{\ell}),
\end{tikzcd}
\]
amounts to showing that the $\mathbb{Q}{}_{\ell}$-valued fibre functor on
$\Mot^{w}(\mathbb{Q}^{\mathrm{al}};\mathbb{Q}{}_{l})^{P}$ defined by the
right-hand arrow becomes $\mathbb{Q}{}$-valued when restricted to
$\Mot^{w}(\mathbb{Q}{}^{\mathrm{al}})^{P}$ (see \ref{t2}). In other words, for
each object $M$ of $\Mot^{w}(\mathbb{Q}^{\mathrm{al}})^{P}$ we need a
canonical $\mathbb{Q}$-structure
\[
\text{\textquotedblleft}\Hom_{\Mot(\mathbb{F})}%
(\1,R(M))\text{\textquotedblright}
\]
on $\Hom_{\Mot(\mathbb{F};\mathbb{Q}_{\ell})}(\1,R(M))$.
\end{plain}

\begin{plain}
\label{p7f}We have a well-defined morphism of affine bands $u\colon
bP\rightarrow bG$. Does this arise from a morphism of tannakian categories
$\Mot^{w}(\mathbb{Q}^{\mathrm{al}})\rightarrow\Mot(\mathbb{F}{})$ making the
diagram in (*) commute up to isomorphism? This is question in cohomology. For
what can be said about it, see the discussion preceding Theorem \ref{s0} below.

Similarly, one can ask whether there exists a $\mathbb{Q}{}$-valued functor
$\omega_{0}$ on $\Mot^{w}(\mathbb{Q}{}^{\mathrm{al}})^{P}$ such that
$\omega_{0}\otimes\mathbb{Q}_{l}\approx\omega_{l}$ for all $l$? Again, this is
a question in cohomology. For what can be said about it, see Theorem \ref{s14} below.
\end{plain}

\begin{plain}
\label{p7g}For related results, see
\cite{noot2009,noot2013,laskar2014,commelin2019,kisinZ2021,kisinZ2025}
\end{plain}

\subsection{Lefschetz motives}

\begin{plain}
\label{l1}For an adequate equivalence relation $\sim$ and a smooth projective
variety $X$ over a field $k$, we let $\mathcal{D}_{\sim}^{\ast}(X)$ denote the
$\mathbb{Q}{}$-subalgebra of $\mathrm{\CH}_{\sim}^{\ast}(X)$ generated by the
divisor classes. The elements of $\mathcal{D}_{\sim}^{\ast}(X)$ are called
\emph{Lefschetz classes} on $X$ (for the relation $\sim$).
\end{plain}

\begin{plain}
\label{l2}Let $k$ be an algebraically closed field. For abelian varieties, the
equivalence relation on Lefschetz classes defined by any Weil cohomology
theory coincides with that defined by numerical equivalence (\cite{milne1999a}%
, 5.3). In particular, it is independent of the Weil cohomology theory, and so
we set%
\[
\mathcal{D}{}^{\ast}(A)=\mathcal{D}{}_{\hom}^{\ast}(A)=\mathcal{D}%
{}_{\mathrm{num}}^{\ast}(A)
\]
when $A$ is an abelian variety. If $f\colon A\rightarrow B$ is a morphism of
abelian varieties (as varieties) then $f^{\ast}$ and $f_{\ast}$ preserve
Lefschetz classes (ibid. 5.5). This allows us to define $\LMot(k)$ to be the
category based on the abelian varieties over $k$ using the Lefschetz classes
as correspondences. It is a semisimple tannakian category over $\mathbb{Q}$
through which the Weil cohomologies factor.
\end{plain}

\begin{plain}
\label{l3}Let $H$ be a Weil cohomology theory with coefficient field $Q$, and
let $\omega_{H}$ be the fibre functor on $\LMot(k)$ it defines. For $A$ an
abelian variety, $\langle hA\rangle^{\otimes}$ denotes the tannakian
subcategory of $\LMot(k)$ generated by $hA$. Define the \emph{Lefschetz group}
of $A$ by%
\[
L(A)=\mathcal{A}{}ut^{\otimes}(\omega_{H}|\langle hA\rangle^{\otimes}).
\]
It is an algebraic group over $Q$.
\end{plain}

\begin{plain}
\label{l4}Let $C(A)$ be the centralizer of $\End^{0}(A)$ in $\End(H^{1}(A)$.
The Rosati involution $^{\dagger}$ of any ample divisor on $A$ preserves
$C(A)$, and its action on $C(A)$ is independent of the ample divisor. Now
$L(A)$ is the algebraic group over $Q$ such that%
\[
L(A)(Q)=\{\gamma\in C(A)\mid\gamma^{\dagger}\gamma\in Q^{\times}\}.
\]
For a CM abelian variety $A$, $C(A)=C_{0}(A)\otimes_{\mathbb{Q}{}}Q$, where
$C_{0}(A)$ is the centre of $\End^{0}(A)$. In this case, we let $L(A)$ denote
the algebraic group over $\mathbb{Q}{}$ such that%
\[
L(A)(\mathbb{Q}{})=\{\gamma\in C_{0}(A)\mid\gamma^{\dagger}\gamma\in
\mathbb{Q}{}^{\times}\}.
\]
Recall (\ref{t0}) that all abelian varieties $A$ over $\mathbb{F}$ are CM, and
that $C_{0}(A)=\mathbb{Q}\{\pi_{A}\}$.
\end{plain}

\begin{plain}
\label{l5}The inclusion $\mathcal{D}{}^{\ast}(A)\rightarrow H^{2\ast}%
(A)(\ast)$ induces an isomorphism%
\[
\mathcal{D}{}^{\ast}(A)\otimes_{\mathbb{Q}{}}Q\rightarrow H^{2\ast}%
(A)(\ast)^{L(A)},
\]
i.e., $\mathcal{D}{}^{\ast}(A)$ is a $\mathbb{Q}{}$-structure on $H^{2\ast
}(A)(\ast)^{L(A)}$ (ibid.\ 4.5, 5.3). An element of $H^{2\ast}(A)(\ast)$ is
said to be \emph{Lefschetz} if it is in the image of $\mathcal{D}{}^{\ast
}(A)\rightarrow H^{2\ast}(A)(\ast)$ and \emph{weakly Lefschetz} if it is in
the image of $\mathcal{D}{}^{\ast}(A)\otimes_{\mathbb{Q}{}}Q\rightarrow
H^{2\ast}(A)(\ast)$. Thus, an element of $H^{2\ast}(A)(\ast)$ is Lefschetz if
it is in the $\mathbb{Q}{}$-algebra generated by the divisor classes and
weakly Lefschetz if is fixed by $L(A)$.

Similarly, an element of $H_{\mathbb{A}{}}^{2\ast}(A)(\ast)$ is
\emph{Lefschetz} (resp.~\emph{weakly Lefschetz) }if it is in the image of
$\mathcal{D}{}^{\ast}(A)\rightarrow H_{\mathbb{A}{}}^{2\ast}(A)(\ast)$
(resp.~$\mathcal{D}{}^{\ast}(A)\otimes_{\mathbb{Q}{}}\mathbb{A}{}\rightarrow
H_{\mathbb{A}{}}^{2\ast}(A)(\ast)$).
\end{plain}

\begin{question}
\label{l6}Let $E$ be a CM field and $\mathbb{F}{}$ an algebraic closure of
$\mathbb{F}_{p}$. Does there exist a simple abelian variety $A$ over
$\mathbb{F}$ such that $\End^{0}(A)$ has centre $E$?
\end{question}

A positive answer would allow us to describe the fundamental group of
$\LMot(\mathbb{F}{})$.\footnote{In arXiv:2505.09589, the following weaker
statement is proved (Theorem 1.9): Let $E$ be a CM field. There exists a prime
number p and a simple abelian variety $A$ over $\mathbb{F}_{p}^{\mathrm{al}}$
such that $\End^{0}(A)$ has centre $E$.}

\begin{aside}
\label{l6a}Let $X$ be a smooth projective variety of dimension $d$ over an
algebraically closed field. A hyperplane section of $X\subset\mathbb{P}{}^{n}$
defines an isomorphism%
\[
L^{d-2r}\colon H^{2r}(X,\mathbb{Q}{}_{\ell}(r))\rightarrow H^{2d-2r}%
(X,\mathbb{Q}{}_{\ell}(d-r))
\]
for $r\leq d/2$ (strong Lefschetz theorem). In analogy with the standard
conjecture of Lefschetz type, one can ask whether%
\[
L^{d-2r}\colon\mathcal{D}{}^{r}(X)\rightarrow\mathcal{D}{}^{d-r}(X)
\]
is an isomorphism for $r<d/2$. Apart from abelian varieties, for which it is
proved in \cite{milne1999a}, this is known for only a few special varieties,
for example, toric varieties, full flag varieties, and products of such
varieties, and it fails already for blow-ups of such varieties and for partial
flag varieties.
\end{aside}

\begin{note}
The original source of the above theory is \cite{milne1999a,milne1999b}. For a
more recent exposition, see \cite{kahn2024}.
\end{note}

\subsection{Weil classes}

In this subsection, $H^{r}(A)=H^{r}(A,\mathbb{C})\simeq H^{r}(A,\mathbb{Q}%
{})\otimes\mathbb{C}{}$.

\begin{plain}
\label{a4}(\cite{deligne1982}, \S 5.) Let $A$ be a complex abelian variety and
$\nu$ a homomorphism from a CM-algebra $E$ into $\End^{0}(A)$. If $H^{1,0}(A)$
is a free $E\otimes_{\mathbb{Q}}\mathbb{C}$-module, then $\dim_{E}%
H^{1}(A,\mathbb{Q})$ is even, equal to $2d$ say, and the subspace
$W_{E}(A)\overset{\df}{=}\bigwedge\nolimits_{E}^{2d}H^{1}(A,\mathbb{Q}{})$ of
$H^{2d}(A,\mathbb{Q}{})$ consists of Hodge classes (\cite{deligne1982}, 4.4).
We say that $(A,\nu)$ is of \emph{Weil type. }When $[E\colon\mathbb{Q}]=2$,
these classes were studied by Weil (1977),\nocite{weil1977} and for this
reason are called \emph{Weil classes}. A polarization of $(A,\nu)$ is a
polarization $\lambda$ of $A$ whose Rosati involution stabilizes $\nu(E)$ and
acts on it as complex conjugation. We then call $(A,\nu,\lambda)$ a \emph{Weil
triple}. The Riemann form of such a polarization can be written%
\[
(x,y)\mapsto\Tr_{E/\mathbb{Q}{}}(f\phi(x,y))
\]
for some totally imaginary element $f$ of $E$ and $E$-hermitian form $\phi$ on
$H_{1}(A,\mathbb{Q}{})$. If $\lambda$ can be chosen so that $\phi$ is split
(i.e., admits a totally isotropic subspace of dimension $d$), then
$(A,\nu,\lambda)$ is said to be of \emph{split Weil type}.
\end{plain}

\begin{example}
\label{a4a}Let $E$ be a CM-algebra over $\mathbb{Q}{}$, and let $A=B\otimes
\mathcal{O}{}_{E}$, where $B$ is an abelian variety of dimension $d$. With the
obvious action $\nu$ of $E$, this is of Weil type. The Weil classes on it are
algebraic (\cite{deligne1982}, 4.5).
\end{example}

\begin{plain}
\label{a5}(\cite{deligne1982}, \S 5.) Let $E$ be a CM-field, let $\phi
_{1},\ldots,\phi_{2p}$ be CM-types on $E$, and let $A_{i}$ be a complex
abelian variety of CM-type $(E,\phi_{i})$. If $\sum_{i}\phi_{i}(s)=p$ for all
$s\in T\overset{\df}{=}\Hom(E,\mathbb{\mathbb{Q}}^{\mathrm{al}}{})$, then
$A\overset{\df}{=}\prod\nolimits_{i}A_{i}$, equipped with the diagonal action
of $E$, is of split Weil type. Let $I=\{1,\ldots,2p\}$. Then $A$ has complex
multiplication by the CM-algebra $E^{I}$, and $\Hom(E^{I},\mathbb{C})=I\times
T$. When tensored with $\mathbb{\mathbb{C}{}}$, the inclusion $W_{E}%
(A)\hookrightarrow H^{2p}(A,\mathbb{Q}{})$ becomes,%
\[
\begin{tikzcd}
W_{E}(A)\otimes\mathbb{C}\arrow[hook]{r}\arrow[equals]{d}
&H^{2p}(A)\arrow[equals]{d}\\
\dstyle\bigoplus_{t\in T}H^{2p}(A)_{I\times \{t\}}\arrow[hook]{r}
&\dstyle\bigoplus_{\substack{J\subset I\times T\\|J|=2p\\}}%
H^{2p}(A)_{J}.
\end{tikzcd}
\]

\end{plain}

\begin{example}
\label{a5a} Let $(A,\nu,\lambda)$ be a Weil triple relative to the CM field
$Q$, and let
\begin{align*}
\SU(\phi)  &  =\{a\in\SL_{Q}(V(A))\mid\phi(ax,ay)=\phi(x,y)\}\\
U(\phi)  &  =\{a\in\GL_{Q}(V(A))\mid\phi(ax,ay)=\phi(x,y)\}\\
\GU(\phi)  &  =\{a\in\SL_{Q}(V(A))\mid\phi(ax,ay)=\mu(a)\phi(x,y),~\mu(a)\in
Q^{\times}\}
\end{align*}
(unitary similitudes). When $(A,\nu,\lambda)$ is general, there is an exact
commutative diagram%
\[
\begin{tikzcd}
&1\arrow{d}&1\arrow{d}&1\arrow{d}\\
1\arrow{r}&\SU(\phi)\arrow{r}\arrow{d}&\GU(\phi)\arrow{r}\arrow{d}
&Q^{\times}\arrow{d}\arrow{r}&1\\
1\arrow{r}&\MT(A)\arrow{r}\arrow{d}&L(A)\arrow{r}\arrow{d}
&Q^{\times}\arrow{r}&1\\
&\mathbb{G}_{m}\arrow{r}\arrow{d}&\mathbb{G}_{m}\arrow{d}\\
&1&1%
\end{tikzcd}
\]
It follows that for a general $A$,

\begin{enumerate}
\item the Weil classes are Hodge classes but not Lefschetz classes;

\item if the Weil classes are algebraic, then the Hodge conjecture holds for
$A$ and its powers.
\end{enumerate}
\end{example}

\begin{theorem}
[\cite{andre1992}]\label{a6} Let $A$ be a complex abelian variety of CM-type.
There exist CM abelian varieties $A_{\Delta}$ of split Weil type and
homomorphisms $f_{\Delta}\colon A\rightarrow A_{\Delta}$ such that every Hodge
class $t$ on $A$ can be written as a sum $t=\sum f_{\Delta}^{\ast}(t_{\Delta
})$ with $t_{\Delta}$ a split Weil class on $A_{\Delta}$.
\end{theorem}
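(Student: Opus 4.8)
The plan is to reduce the general case to the known cases (Lefschetz classes and split Weil classes) by induction on $\dim A$, using the representation theory of the Mumford–Tate torus. Since $A$ is of CM-type, its Mumford–Tate group $T = \MT(A)$ is a torus, and the space of Hodge classes in $H^{2p}(A,\mathbb{Q}(p))$ is the $T$-invariant subspace; more generally the Hodge classes on all powers $A^n$ are governed by the characters of $T$. The key input is a theorem of André (the statement being proved here is essentially his) together with Deligne's Proposition~\ref{a5}: a Hodge class of Weil type on a product $\prod A_i$ of CM abelian varieties, with the CM-types summing to the constant function $p$, is of split Weil type. So the task is to show that an arbitrary Hodge class can be ``assembled'' from such configurations.

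First I would set up the induction: the base case is $\dim A \le 1$, where the only Hodge classes are the Lefschetz (= divisor) classes, which are themselves split Weil classes of the trivial kind. For the inductive step, write $A$ up to isogeny as a product of simple CM abelian varieties and let $E$ be a CM field large enough that $E$ acts on $A$ (e.g. a Galois CM field containing the reflex fields of all the CM-types appearing). Decompose $H^1(A,\mathbb{Q})$ as an $E\otimes\mathbb{Q}$-module; the Hodge classes live in exterior powers, and via the combinatorial description in Proposition~\ref{a5} each Hodge class corresponds to a $T$-stable collection of subsets $J \subset I\times T$ of the appropriate size. Second, I would analyze which collections of CM-types can occur: a single Hodge class decomposes over $\mathbb{C}$ into components indexed by subsets $J$, and the condition that the class be rational (Galois-stable) forces these subsets to come in Galois orbits; one then checks that each orbit can be realized, after adding auxiliary CM abelian varieties with complementary CM-types, as the split Weil space $W_E(A_\Delta) = \bigwedge_E^{d} H^1(A_\Delta,\mathbb{Q})$ of a CM abelian variety $A_\Delta = \prod_\Delta A_i$ satisfying the hypothesis $\sum \phi_i = p$ of Proposition~\ref{a5}. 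The homomorphism $f_\Delta\colon A \to A_\Delta$ is the one exhibiting $A$ as an isogeny factor of $A_\Delta$ (projection away from the auxiliary factors, followed by the diagonal), so that $f_\Delta^*$ pulls $W_E(A_\Delta)$ back into $H^{2p}(A,\mathbb{Q})$.

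Third, having produced for each Galois orbit a split Weil class $t_\Delta$ on some $A_\Delta$ and a homomorphism $f_\Delta$, I would verify that $t = \sum_\Delta f_\Delta^*(t_\Delta)$: both sides are rational classes, and after tensoring with $\mathbb{C}$ and using the explicit decomposition $H^{2p}(A)\otimes\mathbb{C} = \bigoplus_J H^{2p}(A)_J$, the right-hand side matches $t$ component by component because we built the $A_\Delta$ precisely so that $f_\Delta^*$ is the identity on the relevant components and zero on the others. The main obstacle, and the technical heart of the argument, is the second step: organizing the CM-type bookkeeping so that \emph{every} Galois orbit of weight-$p$ subsets that appears in a rational Hodge class — not just those already in ``Weil position'' — is realized as a genuine split Weil space after enlarging by auxiliary abelian varieties, and simultaneously controlling the dimensions so that the parity condition ``$d = \dim_E H^1$ is even'' needed in Proposition~\ref{a4} is met. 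This is exactly where André's argument does real work, combining a reduction to primitive classes with a clever choice of the $\phi_i$; I would follow that route rather than attempt anything new here, and simply cite \cite{andre1992} for the combinatorial core while spelling out the reduction to it.
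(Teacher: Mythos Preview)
Your outline has the right flavor (decompose the Hodge-class space via the character theory of the CM torus, then realize each piece as the pullback of a Weil space), but two concrete things are off. First, the induction on $\dim A$ is a red herring: nothing in the argument reduces the dimension, and you never say how the inductive hypothesis would be invoked. Second, and more importantly, your construction of $A_{\Delta}$ is not the one that works. You propose $A_{\Delta}=\prod_{\Delta}A_{i}$ built from ``auxiliary CM abelian varieties with complementary CM-types,'' with $f_{\Delta}$ a projection-then-diagonal; this leaves you with the very combinatorial problem you flag as the ``main obstacle'' (arranging the CM-types to sum to $p$ and meeting the parity condition). The paper sidesteps this entirely by a Serre tensor construction: after passing to an isogenous product so that $A$ has CM by the algebra $E=\prod_{i}\End^{0}(A_{i})$, one fixes a CM Galois field $K$ splitting $E$, sets $S=\Hom(E,K)$, and for each subset $\Delta\subset S$ of size $2p$ defines $A_{\Delta}=A\otimes_{E}K_{\Delta}$ with $K_{\Delta}=\prod_{s\in\Delta}K_{s}$ and $f_{\Delta}\colon A\to A_{\Delta}$ the canonical map. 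This places $(A_{\Delta},\text{diagonal }K)$ exactly in the situation of \ref{a5}, so $A_{\Delta}$ is automatically of split Weil type---no auxiliary varieties, no CM-type balancing.

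The remainder is then a direct linear-algebra computation rather than a citation: over $K$ one has $H^{2p}(A)=\bigoplus_{\Delta}H^{2p}(A)_{\Delta}$, the Hodge piece $\mathcal{B}^{p}(A)\otimes K$ is the sub-sum over those $\Delta$ with $|t\Delta\cap\Phi|=p$ for all $t\in\Gal(K/\mathbb{Q})$, and one checks that $f_{\Delta}^{\ast}$ carries $W_{K}(A_{\Delta})\otimes K$ onto $\bigoplus_{t}H^{2p}(A)_{t\Delta}$. Summing over the admissible $\Delta$ gives $\sum_{\Delta}f_{\Delta}^{\ast}W_{K}(A_{\Delta})\otimes K=\mathcal{B}^{p}(A)\otimes K$, and since both sides are $\mathbb{Q}$-subspaces of $H^{2p}(A,\mathbb{Q})$ one descends to $\mathbb{Q}$. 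Note in particular that a \emph{single} finite family $(A_{\Delta},f_{\Delta})_{\Delta}$ works uniformly for every Hodge class $t$ on $A$; your orbit-by-orbit, class-by-class construction obscures this.
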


\begin{proof}
Let $p\in\mathbb{N}{}$. After replacing $A$ with an isogenous variety, we may
suppose that it is a product of simple abelian varieties $A_{i}$ (not
necessarily distinct). Let $E=\prod_{i}\End^{0}(A_{i})$. Then $E$ is a
CM-algebra, and $A$ is of CM-type $(E,\phi)$ for some CM-type $\phi$ on $E$.
Let $K$ be a CM subfield of $\mathbb{\mathbb{Q}{}}^{\mathrm{al}}{}$, finite
and Galois over $\mathbb{Q}$, splitting the centre of $\End^{0}(A)$, and let
$S=\Hom(E,K)$. Then%
\[
e\otimes c\leftrightarrow(se\cdot c)_{s}\colon E\otimes K\simeq\prod_{s\in
S}K_{s},
\]
where $K_{s}$ denotes the $E$-algebra $(K,s)$. Let $T=\Gal(K/\mathbb{Q})$.

Let $H^{i}(A)=H^{i}(A_{\mathbb{C}{}},\mathbb{Q}{})\otimes_{\mathbb{Q}{}}K$.
Then%
\[
H^{2p}(A)=\bigoplus\nolimits_{\Delta}H^{2p}(A)_{\Delta},
\]
where $\Delta$ runs over the subsets $\Delta$ of $S$ of order $2p$, and
\[
\mathcal{B}^{p}\otimes_{\mathbb{Q}{}}K=\bigoplus\nolimits_{\Delta}%
H^{2p}(A)_{\Delta},
\]
where $\Delta$ runs over the subsets satisfying
\begin{equation}
\left\vert t\Delta\cap\Phi\right\vert =p\quad\text{all }t\in\Gal(K/\mathbb{Q}%
{})\text{.} \label{eq2}%
\end{equation}
Let $K_{\Delta}=\prod\nolimits_{s\in\Delta}K_{s}$, and let $A_{\Delta
}=A\otimes_{\mathbb{Q}{}}K_{\Delta}$. Then $A_{\Delta}$ is of split Weil type
relative to the diagonal action of $K$. Let $f_{\Delta}\colon A\rightarrow
A_{\Delta}$ be the homomorphism such that
\[
H_{1}(f_{\Delta})\colon H_{1}(A,\mathbb{Q}{})\rightarrow H_{1}(A_{\Delta
},\mathbb{Q}{})\simeq H_{1}(A,\mathbb{Q})\otimes_{E}K_{\Delta}%
\]
is the obvious inclusion. There is a diagram%
\[
\begin{tikzcd}
W_{K}(A_{\Delta})\otimes\mathbb{Q}^{\mathrm{al}}\arrow[hook]{r}\arrow[equals]{d}
&H^{2p}(A_\Delta)\arrow[equals]{d}\\
\dstyle\bigoplus_{t\in T}H^{2p}(A_\Delta)_{\Delta\times \{t\}}\arrow[hook]{r}
&\dstyle\bigoplus_{\substack{J\subset \Delta\times T\\|J|=2p\\}}%
H^{2p}(A_\Delta)_{J},
\end{tikzcd}
\]
where $H^{2p}(A_{\Delta})_{J}$ is the $1$-dimensional subspace of
$H^{2p}(A_{\Delta})$ on which $a\in K_{\Delta}$ acts as $\prod\nolimits_{j\in
J}j(a)$. Note that $a\in E$ acts on $H^{2p}(A_{\Delta})_{\Delta\times\{t\}}$
as multiplication by $\prod\nolimits_{s\in\Delta}(t\circ s)(a)$, and so
$f_{\Delta}^{\ast}\colon H^{2p}(A_{\Delta})\rightarrow H^{2p}(A)$ maps
$H^{2p}(A_{\Delta})_{\Delta\times\{t\}}$ onto $H^{2p}(A)_{t\circ\Delta}$.
Therefore,%
\[
H^{2p}(A)_{\Delta}\subset f_{\Delta}^{\ast}(W_{K}(A_{\Delta}))\otimes
_{\mathbb{Q}{}}K\subset\mathcal{B}{}^{p}(A)\otimes_{\mathbb{Q}{}}K.
\]
As the subspaces $H^{2p}(A)_{\Delta}$ for $\Delta$ satisfying (\ref{eq2}) span
$\mathcal{B}{}^{p}(A)\otimes\mathbb{C}$, this shows that%
\[
\sum_{\Delta\text{ satisfies (\ref{eq2}) }}f_{\Delta}^{\ast}(W_{K}(A_{\Delta
}))\otimes_{\mathbb{Q}{}}K=\mathcal{B}{}^{p}(A)\otimes_{\mathbb{Q}{}%
}K\mathbb{{}}\text{.}%
\]
As $f_{\Delta}^{\ast}(W_{K}(A_{\Delta}))$ and $\mathcal{B}{}^{p}(A)$ are both
$\mathbb{Q}$-subspaces of $H^{2p}(A,\mathbb{Q}{})$, it follows (from
\ref{a6a}) that%
\[
\sum_{\Delta\text{ satisfies (\ref{eq2}) }}f_{\Delta}^{\ast}(W_{K}(A_{\Delta
}))=\mathcal{B}{}^{p}(A)\text{.}%
\]
See \cite{milne2020a} for more details.
\end{proof}

\begin{lemma}
\label{a6a}Let $W$ and $W^{\prime}$ be subspaces of a $k$-vector space $V$,
and let $R$ be a $k$-algebra. If $W\otimes_{k}R\subset W^{\prime}\otimes_{k}R$
(inside $V\otimes R$), then $W\subset W^{\prime}.$
\end{lemma}

\begin{proof}
Because $k$ is a field, the lattice of subspaces of $V$ is preserved when we
tensor with $R$.
\end{proof}

\subsection{Weil families}

\begin{plain}
\label{w1}Let $(A_{1},\nu_{1},\lambda_{1})$ be a split Weil triple relative to
a CM field $E$, and let $f\colon A\rightarrow S$ (a polarized abelian scheme
over $S$ with an action of $E)$ be the universal Weil family containing
$(A_{1},\nu_{1},\lambda_{1})$ (\cite{deligne1982}, 4.8). Here $S$ is a smooth
variety over $\mathbb{C}{}$ and the fibres of $f$ are split Weil triples
(including $(A_{1},\nu_{1},\lambda_{1})$). There is a local subsystem
$W_{E}(A/S)$ of $R^{2d}f_{\ast}\mathbb{Q}{}(d)$ such that $W_{E}%
(A/S)_{s}=W_{E}(A_{s})$ for all $s\in S(\mathbb{C}{})$. Here $d=\dim
(A)/[E\colon\mathbb{Q}{}]$.
\end{plain}

\begin{plain}
\label{w1a}Let $B$ be an abelian variety over $\mathbb{C}{}$ of dimension $d$.
Then $B\otimes\mathcal{O}{}_{E}$ with its action of $E$ and a suitable
polarization is a Weil triple $(A_{2},\nu_{2},\lambda_{2})$. Because
$(A_{1},\nu_{1},\lambda_{1})$ is split, $(A_{2},\nu_{2},\lambda_{2})$ lies in
the universal family containing $(A_{1},\nu_{1},\lambda_{1})$.
\end{plain}

\begin{plain}
\label{w2}The variety $S$ has a unique model over $\mathbb{Q}{}^{\mathrm{al}}$
with the property that every CM-point $s\in S(\mathbb{C})$ lies in
$S(\mathbb{Q}{}^{\mathrm{al}})$. This follows from the general theory of
Shimura varieties; or from the general theory of locally symmetric varieties
(Faltings; \cite{peters2017}); or (best) from descent theory
(\cite{milne1999c}, 2.3) using that $S$ is a moduli variety over $\mathbb{C}%
{}$ and that the moduli problem is defined over $\mathbb{Q}{}^{\mathrm{al}}$.
The morphism $f$ is also defined over $\mathbb{Q}{}^{\mathrm{al}}$, and we
will now simply write $f\colon A\rightarrow S$ for the family over
$\mathbb{Q}{}^{\mathrm{al}}$. There is a $\mathbb{Q}{}$-local subsystem
$W_{E}(A/S)$ of $R^{2d}f_{\ast}\mathbb{\mathbb{A}{}}(d)$ such that
$W_{E}(A/S)_{s}=W_{E}(A_{s})$ for all $s\in S(\mathbb{Q}^{\mathrm{al}})$.
\end{plain}

\begin{plain}
\label{w3}Now assume that $(A_{1},\nu_{1},\lambda_{1})$ and $B$ are defined
over $\mathbb{Q}{}^{\mathrm{al}}$. Let $s_{1}$ and $s_{2}$ be the points of
$S(\mathbb{Q}{}^{\mathrm{al}})$ corresponding to $(A_{1},\nu_{1},\lambda_{1})$
and $(A_{2},\nu_{2},\lambda_{2})$. We assume that $A_{1}$ and $B$ both have
good reduction at $w$. We also assume that $s_{1}$ and $s_{2}$ lie in the same
connected component of $S$, and we replace $S$ with that connected component.
\end{plain}

\begin{plain}
\label{w4}For technical reasons, we now assume that $E$ is contains an
imaginary quadratic field in which the prime $p$ splits.
\end{plain}

\begin{plain}
\label{w5}The family $f\colon A\rightarrow S$ (without the action of $E$)
defines a morphism from $S$ into a moduli variety $M$ over $\mathbb{Q}%
{}^{\mathrm{al}}$ for polarized abelian varieties with certain level
structures. Let $\mathcal{M}{}$ be the corresponding moduli scheme over
$\mathcal{O}{}_{w}$ and $\mathcal{M}{}^{\ast}$ its minimal compactification
(\cite{chaiF1990}). Let $\mathcal{S}{}^{\ast}$ be the closure of $S$ in
$\mathcal{M}{}^{\ast}$. The complement of $\mathcal{S}_{\mathbb{F}{}}^{\ast
}\cap\mathcal{M}{}_{\mathbb{F}{}}$ in $\mathcal{S}{}_{\mathbb{F}{}}^{\ast}$
has codimension at least two (\cite{andre2006a}, 2.4.2).
\end{plain}

\begin{plain}
\label{w6}Recall that $s_{1}$ and $s_{2}$ are points in $S(\mathbb{Q}%
{}^{\mathrm{al}})$ such that $A_{s_{1}}=A_{1}$ and $A_{s_{2}}=B\otimes
\mathcal{O}{}_{E}$. As $A_{1}$ and $B$ have good reduction, the points extend
to points $\mathscr{s}_{1}$ and $\mathscr{s}_{2}$ of $\mathcal{S}{}^{\ast}%
\cap\mathcal{\mathcal{M}}$. Let $\mathcal{\bar{S}}$ denote the blow-up of
$\mathcal{S}{}^{\ast}$ centred at the closed subscheme defined by the image of
$\mathscr{s}_{1}$ and $\mathscr{s}_{2}$, and let $\mathcal{S}{}$ be the open
subscheme obtained by removing the strict transform of the boundary
$\mathcal{S}{}^{\ast}\smallsetminus(\mathcal{S}{}^{\ast}\cap\mathcal{M}{})$.
It follows from \ref{w5} that $\mathcal{S}_{\mathbb{F}{}}$ is connected, and
that any sufficiently general linear section of relative dimension $\dim(S)-1$
in a projective embedding $\mathcal{\bar{S}}\hookrightarrow\mathbb{P}%
{}_{\mathcal{O}{}_{w}}^{N}{}$ is a projective flat $\mathcal{O}{}_{w}$-curve
$\mathcal{C}{}$ contained in $\mathcal{S}{}$ with smooth geometrically
connected generic fibre (Andr\'{e} 2.5.1). Consider $\left(  \mathcal{A}%
|\mathcal{C}{}\right)  _{\mathbb{F}{}}\rightarrow\mathcal{C}_{\mathbb{F}{}}$.
\end{plain}

\begin{plain}
\label{w7}After replacing $\mathcal{C}{}_{\mathbb{F}{}}$ by its normalization
$C$ and pulling back $\left(  \mathcal{A}|\mathcal{C}{}\right)  _{\mathbb{F}%
{}}$, we are in the following situation:

\begin{itemize}
\item $C$ is a complete smooth curve over $\mathbb{F}{};$

\item $\bar{f}\colon\bar{A}\rightarrow C$ is a polarized abelian scheme over
$C$ with a compatible action of $E$;

\item there is a $\mathbb{Q}{}$-local system $W_{E}(\bar{A}/C)$ over $C$ that
is a $\mathbb{Q}{}$-structure on $\bigwedge_{E\otimes\mathbb{A}{}}^{2d}%
R^{1}\bar{f}_{\ast}(\mathbb{A}{})(d);$

\item there are points $1,2\in C(\mathbb{F}{})$ such that $(\bar{A}%
,\bar{\lambda},\bar{\nu})_{1}\approx(A_{1},\lambda_{1},\nu_{1})_{0}$ and
$(\bar{A},\bar{\lambda},\bar{\nu})_{2}\simeq(A_{2},\lambda_{2},\nu_{2})_{0}$.
\end{itemize}
\end{plain}

\subsection{The realization categories}

We construct categories $\mathsf{R}_{l}(\mathbb{F}{})$ for $l=2,3,5,\ldots
,p,\ldots,\infty$. Each is a tannkian category over $\mathbb{Q}_{l}$ with
commutative fundamental group $P_{l}$. There is a canonical homomorphism
$P_{l}\rightarrow P_{\mathbb{Q}_{l}}$ which we can use to modify the category
so that its fundamental group is $P_{\mathbb{Q}{}_{l}}$ --- we denote the new
category by $\mathsf{V}_{l}(\mathbb{F})$.\footnote{An object $\mathsf{V}%
_{l}(\mathbb{F})$ is object of $\mathsf{R}_{l}(\mathbb{F})$ together with an
action of $P_{\mathbb{Q}_{l}}$ compatible with the action of $P_{l}$.} Under
the Tate and standard conjectures, there are (realization) functors $\eta
_{l}\colon\Mot_{\mathrm{num}}(\mathbb{F}{})\rightarrow\mathsf{R}%
_{l}(\mathbb{F}{})$ on Grothendieck's category of numerical motives inducing
equivalences of $\mathbb{Q}{}_{l}$-linear tensor categories%
\[
\Mot_{\mathrm{num}}(\mathbb{F}{})_{(\mathbb{Q}{}_{l})}%
\overset{\lsim}{\longrightarrow}\mathsf{V}_{l}(\mathbb{F}{})\text{.}%
\]

\subsubsection{The realization category at $\ell\neq p,\infty.$}

\begin{plain}
\label{b5}Let $\mathsf{R}_{\ell}(\mathbb{F}_{p^{m}})$ denote the category of
finite-dimensional $\mathbb{Q}{}_{\ell}$-vector spaces equipped with a
continuous semisimple action of $\Gamma_{m}\overset{\df}{=}\Gal(\mathbb{F}%
/\mathbb{F}_{p^{m}})$. It is a tannakian category over $\mathbb{Q}{}_{\ell}$
with the forgetful functor $\omega$ as fibre functor. The affine group scheme
$T_{m}\overset{\df}{=}\mathcal{A}{}ut^{\otimes}(\omega)$ is the algebraic hull
of $\Gamma_{m}$ over $\mathbb{Q}{}_{\ell}$, and $\mathsf{R}_{\ell}%
(\mathbb{F}{}_{p^{m}})\simeq\Rep_{\mathbb{Q}{}_{\ell}}(T_{m})$. In particular,
$T_{m}$ is commutative, and it is of multiplicative type because
$\mathsf{R}_{\ell}(\mathbb{F}{}_{p^{m}})$ is semisimple. On extending scalars
to $\mathbb{Q}{}_{\ell}^{\mathrm{al}}$, we see that $\Rep_{\mathbb{Q}{}_{\ell
}}(T_{m})_{(\mathbb{Q}_{\ell}^{\mathrm{al}})}=\Rep_{\mathbb{Q}_{\ell
}^{\mathrm{al}}}(T_{m})$ is the category of continuous semisimple
representations of $\Gal(\mathbb{F}/\mathbb{F}{}_{p^{m}})$ on
finite-dimensional $\mathbb{Q}{}_{\ell}^{\mathrm{al}}$-vector spaces. One
shows easily that this is the category of diagonalizable representations of
$\Gal(\mathbb{F}/\mathbb{F}{}_{p^{m}})$ on finite-dimensional $\mathbb{Q}%
_{\ell}^{\mathrm{al}}$-vector spaces such that the eigenvalues of the
Frobenius element in $\Gal(\mathbb{F}{}/\mathbb{F}{}_{p^{m}})$ are $\ell$-adic
units. The simple representations are one-dimensional, parametrized by the
units in $\mathcal{O}{}_{\mathbb{Q}{}_{\ell}^{\mathrm{al}}}$. Therefore%
\[
X^{\ast}(T_{m})\simeq\mathcal{O}_{\mathbb{Q}_{\ell}^{\mathrm{al}}}^{\times}.
\]
The map on characters corresponding to $\mathsf{R}{}_{\ell}(\mathbb{F}%
{}_{p^{m}})\rightarrow\mathsf{R}{}_{\ell}(\mathbb{F}{}_{p^{m^{\prime}}})$ is
$a\mapsto a^{m^{\prime}/m}$. Let $T=T_{1}$. There is an exact sequence%
\[
1\rightarrow T^{\circ}\rightarrow T\rightarrow\Gamma_{\mathbb{Q}{}_{\ell}%
}\rightarrow1,
\]
where $\Gamma_{\mathbb{Q}{}_{\ell}}$ is the profinite $\mathbb{Q}{}_{\ell}%
$-group defined by $\Gal(\mathbb{F}{}/\mathbb{F}{}_{p}).$
\end{plain}

\begin{plain}
\label{b6}Let $\mathsf{R}_{\ell}(\mathbb{F}{})=\varinjlim R_{\ell}%
(\mathbb{F}{}_{p^{m}})$. This is a tannakian category over $\mathbb{Q}{}%
_{\ell}$ with a canonical $\mathbb{Q}_{\ell}$-valued fibre functor $\omega$,
namely, the forgetful functor.
\end{plain}

\subsubsection{The crystalline realization category.}

\begin{plain}
\label{b6b}When $(M,F)$ is an isocrystal over $\mathbb{F}{}_{q}$, $q=p^{n}$,
we let $\pi_{M}=F^{n}$. It is an endomorphism of $M$ regarded as a vector
space over the field of fractions of $W(\mathbb{F}{}_{q})$.
\end{plain}

\begin{plain}
\label{b6a}The following conditions on an isocrystal $(M,F)$ over
$\mathbb{F}{}_{q}$ are equivalent:

\begin{enumerate}
\item $(M,F)$ is semisimple, i.e., it is a direct sum of simple isocrystals
over $\mathbb{F}{}_{q}$;

\item $\End(M,F)$ is semisimple;

\item $\pi_{M}$ is a semisimple endomorphism of $M$.
\end{enumerate}

\noindent When these conditions hold, the centre of $\End(M,F)$ is
$\mathbb{Q}{}_{p}[\pi_{M}]$. See, for example, \cite{milne1994a}, 2.10.
\end{plain}

\begin{plain}
\label{b7}Let $\mathsf{R}{}_{p}(\mathbb{F}{}_{q})$ be the category of
semisimple $F$-isocrystals over $\mathbb{F}{}_{q}$. When $V$ is an object of
weight $0$,
\[
V^{F}\overset{\df}{=}\{v\in V\mid Fv=v\}
\]
is a $\mathbb{Q}{}_{p}$-structure on $V$ . The functor $V\rightsquigarrow
V^{F}$ is a $\mathbb{Q}{}_{p}$-valued fibre functor on the tannakian
subcategory of isocrystals of weight $0$
\end{plain}

\begin{plain}
\label{b8}Let $\mathsf{R}{}_{p}(\mathbb{F}{})=\varinjlim\mathsf{R}{}%
_{p}(\mathbb{F}_{q})$. Then $\mathsf{R}{}_{p}(\mathbb{F}{})$ is a semisimple
tannakian category over $\mathbb{Q}_{p}$.
\end{plain}

\begin{caution}
\label{b9}The canonical functor
\[
\mathrm{\varinjlim}\Isoc(\mathbb{F}_{q})\rightarrow\Isoc(\mathbb{F})
\]
is faithful and essentially surjective, but not full. For example, if $A_{1}$
and $A_{2}$ are ordinary elliptic curves over $\mathbb{F}{}_{q}$ with
different $j$-invariants, and $\Lambda_{1}$ and $\Lambda_{2}$ are their
Dieudonn\'{e} isocrystals, then%
\[
\left\{
\begin{array}
[c]{l}%
\Hom_{\varinjlim\Isoc(\mathbb{F}{}_{q})}(\Lambda_{1},\Lambda_{2}%
)=\Hom(A_{1\mathbb{F}{}},A_{2\mathbb{F}{}})=0\text{, but}\\
\Hom_{\Isoc(\mathbb{\mathbb{F}{})}}(\Lambda_{1},\Lambda_{2})\approx
\mathbb{Q}{}_{p}\oplus\mathbb{Q}{}_{p}.
\end{array}
\right.
\]

\end{caution}

\subsubsection{The realization category at infinity.}

\begin{plain}
\label{b10}Let $\mathsf{R}_{\infty}$ be the category of pairs $(V,F)$
consisting of a $\mathbb{Z}$-graded finite-dimensional complex vector space
$V=\bigoplus\nolimits_{m\in\mathbb{Z}}V^{m}$ and an $\iota$-semilinear
endomorphism $F$ such that $F^{2}=(-1)^{m}$ on $V^{m}$. With the obvious
tensor structure, $\mathsf{R}_{\infty}$ becomes a tannakian category over
$\mathbb{R}$ with fundamental group $\mathbb{G}_{m}$. The objects fixed by
$\mathbb{G}_{m}$ are those of weight zero. If $(V,F)$ is of weight zero, then%
\[
V^{F}\overset{\textup{{\tiny def}}}{=}\{v\in V\mid Fv=v\}
\]
is an $\mathbb{R}$-structure on $V$. The functor $V\rightsquigarrow V^{F}$ is
an $\mathbb{R}$-valued fibre functor on $\mathsf{R}_{\infty}^{\mathbb{G}_{m}}$.
\end{plain}

\subsection{The local realizations}

In this subsection, we construct, for each prime $l$ (including $p$ and
$\infty)$, an exact tensor functor $\xi_{l}$ from $\Mot^{w}(\mathbb{Q}%
{}^{\mathrm{al}})$ to the $\mathbb{Q}_{l}$-linear tannakian category
$\mathsf{R}_{l}({\mathbb{F}})$,%
\[
\xi_{l}\colon\Mot^{w}(\mathbb{Q}{}^{\mathrm{al}})\rightarrow\mathsf{R}{}%
_{l}(\mathbb{F}{}).
\]
The main goal of this article is to construct a universal factorization of
these functors through a $\mathbb{Q}$-linear tannakian category,%
\[
\begin{tikzcd}
\Mot^{w}(\mathbb{Q}^{\mathrm{al}})\arrow{r}[swap]{R}
\arrow[rr,bend left=15,"\xi_l"]
&\Mot(\mathbb{F})\arrow{r}[swap]{\eta_{l}}
&\mathsf{R}_{l}(\mathbb{F).}%
\end{tikzcd}
\]
Each of the functors $\xi_{l}$ defines a fibre functor\footnote{See \ref{p1}
for the action of $P$ on $\Mot^{w}(\mathbb{Q}^{\mathrm{al}})$.}
\[
\omega_{l}\colon\Mot^{w}(\mathbb{Q}^{\mathrm{al}})^{P}\rightarrow
\Vc(\mathbb{Q}_{l}),
\]
and, to achieve our goal, we need to define a fibre functor
\[
\omega_{0}\colon\Mot^{w}(\mathbb{Q}^{\mathrm{al}})^{P}\rightarrow
\Vc(\mathbb{Q})
\]
that is a $\mathbb{Q}$-structure on the restricted product of the $\omega_{l}$
(see \S 5).

\subsubsection{The local realization at $\ell$.}

\begin{plain}
\label{b16}For each $\ell\neq p,\infty$, we let $\xi_{\ell}$ and $\omega
_{\ell}$ denote the functors on $\Mot^{w}(\mathbb{Q}^{\mathrm{al}})$ defined
by $\ell$-adic \'{e}tale cohomology.
\end{plain}

\subsubsection{The local realization at $p$.}

\begin{plain}
\label{b17} The map%
\[
(A,e,m)\mapsto e\cdot H_{\mathrm{crys}}^{\ast}(A_{0})(m)
\]
extends to an exact tensor functor
\[
\xi_{p}\colon\Mot^{w}(\mathbb{Q}^{\mathrm{al}})\rightarrow\mathsf{R}%
_{p}(\mathbb{F})\text{.}%
\]
Let $x_{p}$ denote the homomorphism $\mathbb{G}\rightarrow G_{\mathbb{Q}_{p}}$
defined by $\xi_{p}$. We obtain a $\mathbb{Q}_{p}$-valued fibre functor
$\omega_{p}$ on $\Mot^{w}(\mathbb{Q}^{\mathrm{al}})^{\mathbb{G}}$ as follows:%
\[
\begin{tikzcd}[column sep=large]
\Mot^w(\mathbb{Q}^{\mathrm{al}})^{\mathbb{G}}
\arrow[r,"\xi_{p}"']
\arrow[rr,bend left=15,"\omega_p"]
&\mathsf{R}_{p}^{\mathbb{G}}
\arrow[r,"V\rightsquigarrow V^{F}"']
&\Vc(\mathbb{Q}_p).
\end{tikzcd}
\]

\end{plain}

\subsubsection{The local realization at $\infty$.}

\begin{plain}
\label{b15}Let $(V,h)$ be a real Hodge structure, and let $C$ act on $V$ as
$h(i)$. Then the square of the operator $v\mapsto C\bar{v}$ acts as $(-1)^{m}$
on $V^{m}$. Therefore, $\mathbb{C}\otimes_{\mathbb{R}}V$ endowed with its
weight gradation and this operator is an object of $\mathsf{R}_{\infty}.$ We
let
\[
\xi_{\infty}\colon\Mot^{w}(\mathbb{Q}^{\mathrm{al}})\rightarrow\mathsf{R}%
_{\infty},\quad X\rightsquigarrow(\omega_{B}(X)_{\mathbb{R}{}},C),
\]
denote the functor sending $X$ to the object of $\mathsf{R}_{\infty}$ defined
by the real Hodge structure $\omega_{B}(X)_{\mathbb{R}}$. Then $\xi_{\infty}$
is an exact tensor functor, and the cocharacter $x_{\infty}\colon
\mathbb{G}_{m}\rightarrow G_{\mathbb{R}}$ it defines is equal to
$w_{\mathrm{\mathbb{R}}}$. We obtain an $\mathbb{R}$-valued fibre functor
$\omega_{\infty}$ on $\Mot^{w}(\mathbb{Q}^{\mathrm{al}})^{\mathbb{G}_{m}}$ as
follows:%
\[
\begin{tikzcd}[column sep=large]
\Mot^w(\mathbb{Q}^{\mathrm{al}})^{\mathbb{G}_m}
\arrow[r,"\xi_{\infty}"']
\arrow[rr,bend left=15,"\omega_\infty"]
&\mathsf{R}_{\infty}^{\mathbb{G}_m}
\arrow[r,"V\rightsquigarrow V^{F}"']
&\Vc(\mathbb{R}).
\end{tikzcd}
\]

\end{plain}

\subsection{The proper-smooth base change theorem}

Let $S$ be a connected normal scheme and $f\colon X\rightarrow S$ a smooth
proper morphism. Then $R^{r}f_{\ast}\mathbb{Q}{}_{\ell}$ is a locally constant
sheaf. Let%
\[
M=\left(  R^{r}f_{\ast}\mathbb{Q}{}_{\ell}\right)  _{\bar{\eta}}\simeq
H^{r}(X_{\bar{\eta}},\mathbb{Q}{}_{\ell}).
\]
Then%
\[
H^{0}(S,R^{r}f_{\ast}\mathbb{Q}{}_{\ell})=M^{\pi_{1}(S)},
\]
and, for any closed point $s$ of $S$,%
\[
\left(  R^{r}f_{\ast}\mathbb{Q}{}_{\ell}\right)  _{s}=H^{r}(X_{s},\mathbb{Q}%
{}_{\ell})=M^{\pi_{1}(s)}.
\]

\subsection{The Leray spectral sequence}

\begin{theorem}
[Blanchard, Deligne]\label{l7}If $f\colon X\rightarrow S$ is smooth projective
morphism of smooth varieties over $\mathbb{C}{}$, then the Leray spectral
sequence,
\[
H^{p}(S,R^{q}f_{\ast}\mathbb{Q}{})\implies H^{p+q}(X,\mathbb{Q}{}),
\]
degenerates at $E_{2}$.
\end{theorem}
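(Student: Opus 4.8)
The plan is to prove the degeneration by the classical Deligne argument using the hard Lefschetz theorem and the Lefschetz operator on the fibres to produce an $\mathrm{sl}_2$-action compatible with the spectral sequence; I will first reduce to the case where the base is projective and then exploit the semisimplicity this provides. First, note that since the differentials $d_r$ ($r\ge 2$) are maps of Hodge structures (the Leray filtration is a filtration by sub-Hodge-structures on $H^{p+q}(X,\mathbb{Q})$ after passing to an appropriate compactification, or more directly: all the sheaves $R^qf_*\mathbb{Q}$ underlie polarizable variations of Hodge structure), it suffices to show they vanish. The key input is that a relatively ample class $\eta\in H^2(X,\mathbb{Q})$ (pull back a hyperplane class from the projective embedding of $X$, or use $f$ projective directly) gives, by the relative hard Lefschetz theorem of Deligne, isomorphisms
\[
\eta^{q}\colon R^{d-q}f_*\mathbb{Q}\xrightarrow{\ \sim\ } R^{d+q}f_*\mathbb{Q}(q)
\]
where $d=\dim X-\dim S$ is the relative dimension. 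This endows $\bigoplus_q R^qf_*\mathbb{Q}$ with the structure of a finite-dimensional representation of $\mathrm{sl}_2$ in the category of local systems (equivalently, of $\pi_1(S)$-representations), with $\eta$ acting as the raising operator.

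The second step is the standard Deligne–Lefschetz lemma: in an abelian category, if a complex carries an $\mathrm{sl}_2$-action for which the "hard Lefschetz" isomorphism holds on cohomology, then the spectral sequence it sits in degenerates. Concretely, cup product with $\eta$ commutes with the differentials of the Leray spectral sequence (it is induced by a class on the total space, hence compatible with all the edge maps and differentials). So each $d_r$ is $\mathrm{sl}_2$-equivariant. But $d_r\colon E_r^{p,q}\to E_r^{p+r,q-r+1}$ has bidegree that shifts the $\mathrm{sl}_2$-weight (the weight is governed by $q-d$) by $r-1\ge 1$ in the "wrong" direction relative to the structure forced by hard Lefschetz; a weight-raising equivariant map between representations on which $\eta^{|{\rm weight}|}$ is an isomorphism must vanish. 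This is exactly the lemma in Deligne's "Théorème de Lefschetz et critères de dégénérescence de suites spectrales" (Publ. IHES 35, 1968), applied with the local system $R^\bullet f_*\mathbb{Q}$ in place of constant coefficients. Hence $d_r=0$ for all $r\ge 2$.

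The main obstacle is obtaining the relative hard Lefschetz isomorphism with the correct $\pi_1(S)$-equivariance in a self-contained way: over $\mathbb{C}$ one can deduce it fibrewise from the classical hard Lefschetz theorem applied to each $X_s$, but one must check that the resulting isomorphism of local systems $R^{d-q}f_*\mathbb{Q}\xrightarrow{\sim}R^{d+q}f_*\mathbb{Q}(q)$ is a morphism of local systems (not merely a fibrewise isomorphism) — this follows because $\eta^q$ is the morphism of sheaves induced by iterated cup product with the global class $\eta$, and an isomorphism on every stalk is an isomorphism of sheaves. Granting that, the $\mathrm{sl}_2$-formalism on the bigraded object $\bigoplus E_2^{p,q}=\bigoplus H^p(S,R^qf_*\mathbb{Q})$ is automatic since $H^p(S,-)$ is a functor to $\mathbb{Q}$-vector spaces preserving the $\mathrm{sl}_2$-action, and one invokes Deligne's lemma verbatim. (Alternatively, one can avoid hard Lefschetz entirely by using the theory of polarizable variations of Hodge structure: the $E_2$-page is a sum of polarizable — hence semisimple — Hodge structures and the differentials are morphisms of Hodge structure, but pinning down semisimplicity of $H^p(S,-)$ for non-compact $S$ still requires an argument, e.g. via mixed Hodge theory, so the $\mathrm{sl}_2$-route is cleaner.)
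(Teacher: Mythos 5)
Your proof is correct and is essentially the paper's proof: both rest on the relative Lefschetz operator $L=c_1(\mathcal{L})\cup{-}$ acting on the whole spectral sequence, the Lefschetz decomposition of $R^qf_*\mathbb{Q}$ into primitive pieces, and the relative hard Lefschetz isomorphism, with your version repackaging the paper's explicit commutative-square argument for $d_2$ (left vertical $L^{n-q+1}$ zero on primitives, right vertical an isomorphism) as an appeal to Deligne's 1968 $\mathrm{sl}_2$-degeneration lemma. (Your opening sentence announcing a reduction to projective base $S$, and the parenthetical about Hodge structures on $H^p(S,-)$, are strays that the rest of the argument neither uses nor needs --- the Lefschetz mechanism works directly for any smooth base.)
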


\begin{proof}
The relative Lefschetz operator $L=c_{1}(\mathcal{L}{})\cup\cdot$ acts on the
whole spectral sequence, and induces a Lefschetz decomposition%
\[
R^{q}f_{\ast}\mathbb{Q}{}=\bigoplus\nolimits_{r}L^{r}(R^{q-2r}f_{\ast
}\mathbb{Q}{})_{\mathrm{prim}}.
\]
It suffices to prove that $d_{2}\alpha=0$ for $\alpha\in H^{p}(S,(R^{q}%
f_{\ast}\mathbb{Q})_{\mathrm{prim}})$. In the diagram,%
\[
\begin{tikzcd}
H^{p}(S,(R^{q}f_{\ast}\mathbb{Q})_{\mathrm{prim}})\arrow{r}{d_2}
\arrow{d}{L^{n-q+1}}[swap]{0}
&H^{p+2}(S,R^{q-1}f_{\ast}\mathbb{Q})\arrow{d}{L^{n-q+1}}[swap]{\simeq}\\
H^{p}(S,R^{2n-q+2}\mathbb{Q})\arrow{r}{d_2}
&H^{p+2}(S,R^{2n-q+1}\mathbb{Q}),
\end{tikzcd}
\]
the map at left is zero because $L^{n-q+1}$ is zero on $(R^{q}f_{\ast
}\mathbb{Q}{})_{\mathrm{prim}}$ and the map at right is an isomorphism because
$L^{n-q+1}\colon R^{q-1}f_{\ast}\mathbb{Q}{}\rightarrow R^{2n-q+1}f_{\ast
}\mathbb{Q}{}$ is an isomorphism. Hence $d_{2}\alpha=0.$
\end{proof}

Grothendieck conjectured the degeneration of the Leray spectral sequence by
consideration of weights. \cite{blanchard1956} proved the result when the base
is simply connected, and \cite{deligne1968} proved it in general. See also
\cite{GriffithsH1978}, p.~466.

Now consider an abelian scheme $f\colon A\rightarrow S$. For $n\in\mathbb{N}%
{}$, let $\theta_{n}$ denote the endomorphism of $A/S$ acting as
multiplication by $n$ on the fibres. By a standard argument
(\cite{kleiman1968}, p.~374), $\theta_{n}^{\ast}$ acts as $n^{j}$ on
$R^{j}f_{\ast}\mathbb{Q}{}$. As $\theta_{n}^{\ast}$ commutes with the
differentials $d_{2}$ of the Leray spectral sequence $H^{i}(S,R^{j}f_{\ast
}\mathbb{Q}{})\implies H^{i+j}(A,\mathbb{Q}{})$,
\[
\begin{tikzcd}
H^{p}(S,R^{q}f_{\ast}\mathbb{Q})\arrow{r}{d_2}
\arrow{d}{\theta_n}[swap]{n^q}
&H^{p+2}(S,R^{q-1}f_{\ast}\mathbb{Q})
\arrow{d}{\theta_n}[swap]{n^{q-1}}\\
H^{p}(S,R^{q}f_{\ast}\mathbb{Q})\arrow{r}{d_2}
&H^{p+2}(S,R^{q-1}f_{\ast}\mathbb{Q}),
\end{tikzcd}
\]
we see that the spectral sequence degenerates at the $E_{2}$-term and
\[
H^{2r}(A,\mathbb{Q}{})\simeq\bigoplus_{i+j=2r}H^{i}(S,R^{j}f_{\ast}%
\mathbb{Q})
\]
with $H^{i}(S,R^{j}f_{\ast}\mathbb{Q}{})$ the subspace of $H^{2r}%
(A,\mathbb{Q})$ on which $\theta_{n}$ acts as $n^{j}$. As $\theta_{n}^{\ast}$
preserves algebraic classes, this induces a decomposition%
\[
a\!H^{2r}(A,\mathbb{Q}{})\simeq\bigoplus_{i+j=2r}a\!H^{i}(S,R^{j}f_{\ast
}\mathbb{Q})
\]
of the subspaces of algebraic classes.

\begin{theorem}
[\cite{deligne1971}, 4.1.1]\label{l8}Let $f\colon X\rightarrow S$ be a smooth
proper morphism of smooth varieties over $\mathbb{C}$.

\begin{enumerate}
\item The Leray spectral sequence
\[
H^{r}(S,R^{s}f_{\ast}\mathbb{Q})\Rightarrow H^{r+s}(X,\mathbb{Q})
\]
degenerates at $E_{2}$; in particular, the edge morphism
\[
H^{n}(X,\mathbb{Q})\rightarrow\Gamma(S,R^{n}f_{\ast}\mathbb{Q})
\]
is surjective.

\item If $\bar{X}$ is a smooth compactification of $X$ with $\bar
{X}\smallsetminus X$ a union of smooth divisors with normal crossings, then
the canonical morphism
\[
H^{n}(\bar{X},\mathbb{Q})\rightarrow H^{0}(S,R^{n}f_{\ast}\mathbb{Q})
\]
is surjective.

\item Let $(R^{n}f_{\ast}\mathbb{Q})^{0}$ be the largest constant local
subsystem of $R^{n}\pi_{\ast}\mathbb{Q}$ (so $(R^{n}f_{\ast}\mathbb{Q}%
)_{s}^{0}=\Gamma(S,R^{n}f_{\ast}\mathbb{Q})$ for all $s\in S(\mathbb{C}{})$).
For each $s\in S$, $(R^{n}f_{\ast}\mathbb{Q})_{s}^{0}$ is a Hodge substructure
of $(R^{n}f_{\ast}\mathbb{Q})_{s}=H^{n}(X_{s},\mathbb{Q})$, and the induced
Hodge structure on $\Gamma(S,R^{n}f_{\ast}\mathbb{Q})$ is independent of $s$.
\end{enumerate}

\noindent In particular, the map
\[
H^{n}(\bar{X},\mathbb{Q})\rightarrow H^{n}(X_{s},\mathbb{Q})
\]
has image $(R^{n}f_{\ast}\mathbb{Q})_{s}^{0}$, and its kernel is independent
of $s$.
\end{theorem}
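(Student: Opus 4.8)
The plan is to follow Deligne, deriving the three parts in order: part (a) is the degeneration, and parts (b) and (c) are then squeezed out of it using Deligne's mixed Hodge structures. For part (a) I would first treat the case where $f$ is projective, where the argument of Theorem~\ref{l7} applies. A relatively ample sheaf gives a relative Lefschetz operator $L$ (cup-product with its first Chern class) acting on the whole Leray spectral sequence, and hard Lefschetz on the fibres gives the relative primitive decomposition $R^{q}f_{\ast}\mathbb{Q}=\bigoplus_{r}L^{r}(R^{q-2r}f_{\ast}\mathbb{Q})_{\mathrm{prim}}$. The computation of Theorem~\ref{l7} in fact kills every differential $d_{r}$ with $r\geq2$, not merely $d_{2}$: for $\alpha\in H^{p}(S,(R^{q}f_{\ast}\mathbb{Q})_{\mathrm{prim}})$ apply $L^{n-q+r-1}$, with $n$ the fibre dimension; since $n-q+r-1>n-q$ this annihilates the source, while on the target $H^{p+r}(S,R^{q-r+1}f_{\ast}\mathbb{Q})$ of $d_{r}$ it is the hard Lefschetz isomorphism $R^{q-r+1}f_{\ast}\mathbb{Q}\cong R^{2n-q+r-1}f_{\ast}\mathbb{Q}$, so $d_{r}\alpha=0$; as $L$ commutes with the differentials and everything is built from primitive classes, $d_{r}=0$. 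Hence the sequence degenerates at $E_{2}$, and from $E_{\infty}^{0,n}=E_{2}^{0,n}$ the edge morphism $H^{n}(X,\mathbb{Q})\rightarrow\Gamma(S,R^{n}f_{\ast}\mathbb{Q})$ is surjective. For general proper smooth $f$ I would reduce to this case: by Chow's lemma and resolution there is a projective birational $\pi\colon X'\rightarrow X$ with $X'$ smooth, hence (generic smoothness in characteristic zero) projective and smooth over a dense open of $S$; the identity $\pi_{\ast}\pi^{\ast}=\mathrm{id}$ exhibits $Rf_{\ast}\mathbb{Q}$ as a direct summand of $R(f\pi)_{\ast}\mathbb{Q}$ compatibly with the Leray filtrations, and one transports degeneration back to all of $S$, using that $R^{q}f_{\ast}\mathbb{Q}$ is a polarizable variation of Hodge structure on $S$. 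I expect this transport step, and the variation-of-Hodge-structure formalism it rests on, to be the main obstacle; it is not needed for the abelian schemes of the later sections, where $f$ is projective.

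For part (b), take the compactification $\bar{X}$ as given, $\bar{X}$ smooth and proper with $\bar{X}\smallsetminus X$ a normal crossings divisor (Nagata together with Hironaka). Then $H^{n}(\bar{X},\mathbb{Q})$ is pure of weight $n$; the mixed Hodge structure on $H^{n}(X,\mathbb{Q})$ has weights $\geq n$; and $W_{n}H^{n}(X,\mathbb{Q})$ equals the image of $H^{n}(\bar{X},\mathbb{Q})\rightarrow H^{n}(X,\mathbb{Q})$. The restriction $b\colon H^{n}(X,\mathbb{Q})\rightarrow H^{n}(X_{s},\mathbb{Q})$ is a morphism of mixed Hodge structures with target pure of weight $n$, so strictness gives $b(H^{n}(X,\mathbb{Q}))=b(W_{n}H^{n}(X,\mathbb{Q}))$, which is the image of $H^{n}(\bar{X},\mathbb{Q})$ under restriction to the fibre. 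By part (a) the left-hand side is $\Gamma(S,R^{n}f_{\ast}\mathbb{Q})$, so $H^{n}(\bar{X},\mathbb{Q})\rightarrow H^{0}(S,R^{n}f_{\ast}\mathbb{Q})$ is already surjective.

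For part (c) and the last assertion, write $r_{s}\colon H^{n}(\bar{X},\mathbb{Q})\rightarrow H^{n}(X_{s},\mathbb{Q})$ for restriction along $X_{s}\hookrightarrow\bar{X}$. Both spaces are pure Hodge structures of weight $n$ ($\bar{X}$ and $X_{s}$ being smooth and proper), so the image of $r_{s}$ is a Hodge substructure of $H^{n}(X_{s},\mathbb{Q})$; by part (b) this image is $\Gamma(S,R^{n}f_{\ast}\mathbb{Q})=H^{n}(X_{s},\mathbb{Q})^{\pi_{1}(S,s)}=(R^{n}f_{\ast}\mathbb{Q})_{s}^{0}$, which is the first claim of (c). For independence of $s$: factor $r_{s}$ as $H^{n}(\bar{X},\mathbb{Q})\rightarrow H^{n}(X,\mathbb{Q})\rightarrow\Gamma(S,R^{n}f_{\ast}\mathbb{Q})\hookrightarrow H^{n}(X_{s},\mathbb{Q})$, in which the middle (edge) map does not depend on $s$ and the last map is injective; hence $\Ker(r_{s})$ is the preimage in $H^{n}(\bar{X},\mathbb{Q})$ of the kernel of the edge map, independent of $s$. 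Therefore the image of $r_{s}$ is isomorphic, as a Hodge structure, to the fixed quotient $H^{n}(\bar{X},\mathbb{Q})/\Ker(r_{s})$, so the Hodge structure induced on $\Gamma(S,R^{n}f_{\ast}\mathbb{Q})$ is independent of $s$; this proves (c) and the final displayed statement.

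The only ingredients I would quote rather than prove are the facts from Deligne's mixed Hodge theory used in (b)--(c) --- existence and functoriality of mixed Hodge structures, the weight bounds for smooth and for smooth proper varieties, the description of $W_{n}H^{n}(X)$, and strictness of morphisms --- together with the variation-of-Hodge-structure input for the reduction in (a). All the rest, in particular part (a) for projective $f$, is the Lefschetz bookkeeping above, routine once one notices that the power of $L$ killing a degree-$q$ primitive class is, on the target of $d_{r}$, exactly a hard Lefschetz isomorphism.
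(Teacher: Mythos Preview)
Your proposal is correct and matches the paper's approach. The paper itself gives essentially no proof beyond the one-line remark ``Part (b) follows from (a) and the theory of weights,'' relying on the preceding Theorem~\ref{l7} for the projective case of (a) and on the citation to \cite{deligne1971} for everything else; your sketch is a faithful expansion of exactly that outline, and your concern about the proper-to-projective reduction in (a) is one the paper simply sidesteps (the later applications only need the projective case).
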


Part (b) follows from (a) and the theory of weights. There is an $\ell$-adic
variant of Theorem \ref{l8}.

\begin{theorem}
\label{l9}Let $S$ be a smooth connected scheme over an algebraically closed
field $k$, let $f\colon X\rightarrow S$ be a smooth projective morphism, and
let $\bar{X}$ be a smooth projective compactification of $X$. For all $n$, the
canonical map%
\[
H^{n}(\bar{X},\mathbb{Q}{}_{\ell})\rightarrow H^{0}(S,R^{n}f_{\ast}%
\mathbb{Q}{}_{\ell})
\]
is surjective.
\end{theorem}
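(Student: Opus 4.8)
The plan is to carry the weight argument underlying Theorem~\ref{l8}(b) over to $\ell$-adic cohomology. Set $Z=\bar X\smallsetminus X$, a closed subscheme of $\bar X$, let $j\colon X\hookrightarrow\bar X$ and $i\colon Z\hookrightarrow\bar X$ be the inclusions, and put $Q=H^{0}(S,R^{n}f_{\ast}\mathbb{Q}_{\ell})$. Since the data $(S,X,\bar X,f,j)$ is of finite presentation over $k$ and $\ell$-adic cohomology is unchanged under extension of the algebraically closed base field, we may assume $k=\bar{k}_{0}$ for some $k_{0}$ finitely generated over the prime field. If $\operatorname{char}k=0$, embed $k_{0}$ in $\mathbb{C}$ and reduce to $k=\mathbb{C}$: then $Q$ carries a Hodge structure pure of weight $n$ (by Theorem~\ref{l8}(c) it is a sub-Hodge-structure of $H^{n}(X_{s},\mathbb{Q})$ for a fibre $X_{s}$, which is pure since $X_{s}$ is smooth projective), the edge morphism $H^{n}(X,\mathbb{Q})\to Q$ is surjective (Theorem~\ref{l8}(a)), and the argument of the last paragraph below applies verbatim with mixed Hodge structures in place of mixed $\ell$-adic modules; the $\ell$-adic statement then follows by tensoring with $\mathbb{Q}_{\ell}$ and using the comparison isomorphism. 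So from now on assume $\operatorname{char}k=p$. By a standard spreading-out --- constructibility, smooth and proper base change, generic base change, and specialization of fundamental groups --- we may assume $S,X,\bar X,f$ are defined over a finite field $\mathbb{F}_{q}$ with $k=\overline{\mathbb{F}}_{q}$.

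Over $\overline{\mathbb{F}}_{q}$ I would extract three inputs from Deligne's theory of weights (Weil~II). \emph{(i) $Q$ is pure of weight $n$.} The sheaf $\mathcal{F}\overset{\df}{=}R^{n}f_{\ast}\mathbb{Q}_{\ell}$ is lisse (smooth and proper base change) and pure of weight $n$. Since $\pi_{1}$ of the geometric base is normal in the arithmetic $\pi_{1}$, the geometric monodromy invariants $H^{0}(\overline{S},\mathcal{F})=\mathcal{F}_{\bar\eta}^{\pi_{1}(\overline{S})}$ form a lisse subsheaf $\mathcal{V}\subseteq\mathcal{F}$ with trivial geometric monodromy; as a pure lisse sheaf is geometrically semisimple, its trivial isotypic part splits off, so $\mathcal{V}$ is a direct summand of $\mathcal{F}$ and is pure of weight $n$. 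Viewed through its (purely arithmetic) Frobenius action, $Q=H^{0}(\overline{S},\mathcal{V})=\mathcal{V}_{\bar\eta}$ is then pure of weight $n$. \emph{(ii) The edge morphism $H^{n}(X,\mathbb{Q}_{\ell})\to Q$ is surjective.} In the Leray spectral sequence $H^{r}(S,R^{s}f_{\ast}\mathbb{Q}_{\ell})\Rightarrow H^{r+s}(X,\mathbb{Q}_{\ell})$, the term $E_{2}^{0,n}=Q$ is pure of weight $n$ by (i), whereas $E_{2}^{r,n-r+1}=H^{r}(S,R^{n-r+1}f_{\ast}\mathbb{Q}_{\ell})$ has weights $\geq n+1$ (Weil~II estimate for the cohomology of the smooth $S$ with pure coefficients). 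Since $E_{r}^{0,n}$ is a subobject of $E_{2}^{0,n}$ and $E_{r}^{r,n-r+1}$ a subquotient of $E_{2}^{r,n-r+1}$, each differential leaving $E_{r}^{0,n}$ ($r\geq2$) is a morphism from a weight-$n$ object to an object of weights $\geq n+1$, hence zero; so $E_{\infty}^{0,n}=E_{2}^{0,n}$, which is the assertion. \emph{(iii) Weight bounds.} As $X$ is smooth, $H^{n}(X,\mathbb{Q}_{\ell})$ has weights $\geq n$ and $H^{n}_{c}(X,\mathbb{Q}_{\ell})$ weights $\leq n$, and $\operatorname{im}\!\bigl(H^{n}_{c}(X,\mathbb{Q}_{\ell})\to H^{n}(X,\mathbb{Q}_{\ell})\bigr)=W_{n}H^{n}(X,\mathbb{Q}_{\ell})$.

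Now I would assemble these. From the short exact sequence $0\to j_{!}\mathbb{Q}_{\ell}\to\mathbb{Q}_{\ell,\bar X}\to i_{\ast}\mathbb{Q}_{\ell,Z}\to0$ on $\bar X$ one gets
\[
\cdots\to H^{n}_{c}(X,\mathbb{Q}_{\ell})\xrightarrow{\ a\ }H^{n}(\bar X,\mathbb{Q}_{\ell})\to H^{n}(Z,\mathbb{Q}_{\ell})\to\cdots,
\]
and the forget-supports map $H^{n}_{c}(X,\mathbb{Q}_{\ell})\to H^{n}(X,\mathbb{Q}_{\ell})$ is the composite of $a$ with the restriction $H^{n}(\bar X,\mathbb{Q}_{\ell})\to H^{n}(X,\mathbb{Q}_{\ell})$; hence, using (iii), $W_{n}H^{n}(X,\mathbb{Q}_{\ell})\subseteq\operatorname{im}\!\bigl(H^{n}(\bar X,\mathbb{Q}_{\ell})\to H^{n}(X,\mathbb{Q}_{\ell})\bigr)$. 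The reverse inclusion holds because this last image, being a quotient of the pure weight-$n$ module $H^{n}(\bar X,\mathbb{Q}_{\ell})$ contained in the weight-$\geq n$ module $H^{n}(X,\mathbb{Q}_{\ell})$, is pure of weight $n$ and so lies in $W_{n}H^{n}(X,\mathbb{Q}_{\ell})$. Thus $W_{n}H^{n}(X,\mathbb{Q}_{\ell})=\operatorname{im}\!\bigl(H^{n}(\bar X,\mathbb{Q}_{\ell})\to H^{n}(X,\mathbb{Q}_{\ell})\bigr)$. Let $\pi\colon H^{n}(X,\mathbb{Q}_{\ell})\twoheadrightarrow Q$ be the edge morphism of (ii). Since $H^{n}(X,\mathbb{Q}_{\ell})$ has weights $\geq n$ while $Q$ is pure of weight $n$, the induced map $H^{n}(X,\mathbb{Q}_{\ell})/W_{n}H^{n}(X,\mathbb{Q}_{\ell})\to Q/\pi\bigl(W_{n}H^{n}(X,\mathbb{Q}_{\ell})\bigr)$ goes from an object of weights $>n$ to a pure object of weight $n$, hence vanishes; therefore $Q=\pi\bigl(W_{n}H^{n}(X,\mathbb{Q}_{\ell})\bigr)=\pi\bigl(\operatorname{im}(H^{n}(\bar X,\mathbb{Q}_{\ell})\to H^{n}(X,\mathbb{Q}_{\ell}))\bigr)$, which is the image of the canonical map $H^{n}(\bar X,\mathbb{Q}_{\ell})\to Q$; so that map is surjective. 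The substantive obstacle is not the final bookkeeping (nor the cheap vanishing-of-Hom steps) but the Weil~II inputs, above all the purity of $Q$ in (i) --- which uses both purity of $R^{n}f_{\ast}\mathbb{Q}_{\ell}$ and geometric semisimplicity of pure lisse sheaves --- together with the weight estimates in (ii) and (iii); the reduction to a finite field, though routine, is the most laborious part to write out carefully.
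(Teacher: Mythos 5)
Your proof is correct and takes essentially the route the paper intends: reduce by specialization to $k=\mathbb{C}$ or to $k=\overline{\mathbb{F}}_{q}$, and then run the Hodge/Weil~II weight argument that underlies Deligne's 1971 theorem. The paper's own proof is a three-line sketch that defers the specialization to Andr\'e (2006a, 1.1.1) and the weight argument to the phrase \textquotedblleft the same argument as in the case $k=\mathbb{C}$ applies when one takes weights in the sense of [Weil~II],\textquotedblright\ so you have supplied exactly the details it leaves implicit.
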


\begin{proof}
When $k$ has characteristic zero, this follows from the case $k=\mathbb{C}{}$.
When $k=\mathbb{F}{}$, the same argument as in the case $k=\mathbb{C}{}$
applies when one takes weights in the sense of \cite{deligne1980}. Otherwise,
in characteristic $p$, it can be proved by a specialization argument (see
\cite{andre2006a}, 1.1.1).
\end{proof}

\begin{aside}
\label{l10}Let $f\colon X\rightarrow S$ be a smooth projective morphism of
smooth algebraic varieties over $\mathbb{C}{}$. Then the Leray spectral
sequence degenerates at $E_{2}$, so%
\[
H^{r}(X,\mathbb{Q}{})\approx\bigoplus_{i}H^{i}(S,R^{r-i}f_{\ast}\mathbb{Q}%
{}).
\]
Moreover, $H^{r}(X,\mathbb{Q}{})$ is equipped with a mixed Hodge structure.
Each summand $H^{i}(S,R^{r-i}f_{\ast}\mathbb{Q}{})$ is equipped with a pure
Hodge structure if $S$ is complete, but not in general otherwise.
\end{aside}

\section{Conjecture A}

\label{CA}

In this section, we state the rationality conjecture (Conjecture A), and we
suggest possible proofs of it.

Throughout, $\mathbb{Q}^{\mathrm{al}}$ is the algebraic closure of
$\mathbb{Q}{}$ in $\mathbb{C}{}$, $w$ is a prime of $\mathbb{Q}{}%
^{\mathrm{al}}$ lying over $p$, and $\mathbb{F}{}$ is the residue field at $p$.

\subsection{Statement of the conjecture}

\begin{definition}
\label{x1}Let $X$ be a smooth projective variety over $\mathbb{Q}%
{}^{\mathrm{al}}$ with good reduction at $w$ to a variety $X_{0}$ over
$\mathbb{F}{}$. An absolute Hodge class $\gamma$ on $X$ is $w$\emph{-rational}
if $\langle\gamma_{0}\cdot\delta\rangle\in\mathbb{Q}{}$ for all Lefschetz
classes $\delta$ on $X_{0}$ of complementary dimension.
\end{definition}

Note that algebraic classes are $w$-rational.

\begin{conjecturea}
\label{c2} Let $A$ be an abelian variety over $\mathbb{Q}^{\mathrm{al}}$ with
good reduction at $w$. All (absolute) Hodge classes on $A$ are $w$%
-rational.\footnote{The conjecture should also be stated, \textit{mutatis
mutandis}, for abelian varieties over $\mathbb{Q}^{\mathrm{al}}$ with bad
(semistable) reduction.}
\end{conjecturea}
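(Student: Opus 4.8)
The plan is to reduce, by André's theorem, to the case of split Weil classes, and then to treat that case directly, mirroring Deligne's proof that split Weil classes are absolutely Hodge.

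First, two elementary inputs. (i) Within the abelian world, $w$-rationality is stable under the operations needed below: if $\gamma$ is $w$-rational then so are $f^{\ast}\gamma$ and $f_{\ast}\gamma$ for a homomorphism $f$ of abelian varieties with good reduction, and so is $\gamma\cup\beta$ for $\beta$ a Lefschetz class --- by the functoriality of specialization and the projection formula, each reduces to the fact that pullback and pushforward along homomorphisms and cup product with Lefschetz classes preserve Lefschetz classes, which is built into the tannakian category $\LMot$ (\ref{l2}, \ref{l3}). (ii) Algebraic classes, and in particular all Lefschetz classes, are $w$-rational (the remark after Definition \ref{x1}). Now apply André's results reducing Hodge classes on abelian varieties to Lefschetz classes and split Weil classes (Theorem \ref{a6} being the CM case), together with the facts that CM abelian varieties automatically have good reduction (\ref{b12}) and that the auxiliary abelian varieties inherit good reduction at $w$. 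By (i) and (ii), Conjecture (A) reduces to: \emph{every split Weil class $t$ on a CM abelian variety $A$ of split Weil type is $w$-rational}.

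For the split Weil case I would induct on the integer $p$ with $\dim_{E}H^{1}(A)=2p$, writing $A=\prod_{i=1}^{2p}A_{i}$ with $A_{i}$ of CM-type $(E,\phi_{i})$, $\sum_{i}\phi_{i}=p$, and $t\in W_{E}(A)=\bigwedge_{E}^{2p}H^{1}(A,\mathbb{Q})$ (notation of \ref{a5}). When $p=1$ the hermitian form is hyperbolic, $A$ is isogenous to a product $B\times\widehat{B}$, and $W_{E}(A)$ is the $E$-span of the class of the Poincaré bundle; thus $t$ is a Lefschetz class and is $w$-rational by (ii). For $p\geq2$: if two of the $\phi_{i}$ are complementary, say $\phi_{j}+\phi_{k}=1$, then $W_{E}(A)$ is the image under cup product (of pullbacks) of $W_{E}(A_{j}\times A_{k})\otimes_{E}W_{E}\bigl(\prod_{i\neq j,k}A_{i}\bigr)$, whose first factor is a Lefschetz class and whose second is a split Weil class of parameter $p-1$; then $\langle t_{0}\cdot\delta\rangle$ breaks into terms $\langle t''_{0}\cdot(\beta_{0}\cup\delta)\rangle$ with $t''$ the smaller split Weil class and $\beta$ Lefschetz, rational by the inductive hypothesis and (i). This leaves the ``primitive'' configurations --- those with no complementary pair of $\phi_{i}$, which genuinely occur once $[E:\mathbb{Q}]\geq 6$ and $p\geq2$ --- and there one must argue directly: in the coordinates of \ref{a5}, $W_{E}(A_{0})\otimes\overline{\mathbb{Q}}_{\ell}=\bigoplus_{\tau\in T}H^{2p}(A_{0})_{I\times\{\tau\}}$ inside $\bigoplus_{J}H^{2p}(A_{0})_{J}$, the Frobenius $\pi_{A_{0}}\in E$ acts on each summand through an explicit product of embeddings, and pairing with a Lefschetz class $\delta$ of complementary degree extracts only $\delta_{I\times(T\smallsetminus\tau)}$ for each $\tau$; the point would be that the $\mathbb{Q}$-rationality of $t_{0}\in W_{E}(A_{0},\mathbb{Q})$ and of $\delta\in\mathcal{D}^{\ast}(A_{0})$ forces the scalars in $\langle t_{0}\cdot\delta\rangle=\sum_{\tau}(\cdots)$ to be a Galois-conjugate family in a fixed number field built from $E$, its reflex, and the splitting field $K$, independent of $\ell$, so that the sum is a trace down to $\mathbb{Q}$.

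The real obstacle, I expect, is exactly this direct argument, and its source is the Lefschetz classes on $A_{0}$ that do not lift to characteristic zero. When $A_{0}$ is ordinary the computation can be bypassed: the Serre--Tate canonical lift $A^{\mathrm{can}}$ carries all of $\End(A_{0})$, hence all of $\mathcal{D}^{\ast}(A_{0})$, and --- being CM with the same field and type --- also a split Weil class specializing to $t_{0}$; so for $\delta\in\mathcal{D}^{\ast}(A_{0})$ with lift $\widetilde{\delta}$ (algebraic) on $A^{\mathrm{can}}$, $\langle t_{0}\cdot\delta\rangle=\langle t^{\mathrm{can}}\cdot\widetilde{\delta}\rangle$ is a cup product in characteristic zero of an absolute Hodge class with an algebraic class, hence lies in $\mathbb{Q}$. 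When $A_{0}$ is not ordinary no lift carries all of its divisor classes, and one must instead control the crystalline (slope) contribution to the pairing and match it against the $\ell$-adic contributions for every $\ell\neq p$; making that matching simultaneously rational and independent of $\ell$ is the crux, and is presumably why the statement is posed as a conjecture. (The variational approach of \S 4 to the weaker conjecture is, in effect, a way of deforming the non-ordinary case toward cases where the lifting argument applies.)
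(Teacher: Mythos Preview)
This statement is \emph{Conjecture A}; the paper does not prove it. What the paper offers (\S3.4) is a conditional approach for the CM case: reduce via Andr\'e (Theorem \ref{a6}) to split Weil classes and run an induction on the \emph{codimension} of $\gamma$ (Proposition \ref{x17}, Theorem \ref{x19}), the inductive step depending on the open Question \ref{Q2} about weak liftability of Lefschetz classes on $A_0$. Your proposal is a different induction, on the Weil parameter $p=\dim_E H^1(A)/2$; the complementary-pair step is clean, and you are right that the obstruction concentrates in the ``primitive'' configurations with no complementary pair of CM-types.

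There are two genuine gaps beyond what you flag. First, the passage from arbitrary $A$ with good reduction to CM $A$ is not justified: Deligne's deformation argument (\cite{deligne1982}, \S6) transports Hodge classes along families over $\mathbb{C}$, but nothing in it controls the pairing with Lefschetz classes on the reduction at $w$; the statement you need is essentially Theorem \ref{r22}, which the paper can only formulate conditionally. Second, and more seriously, your direct attack on the primitive case is circular. When you invoke ``the $\mathbb{Q}$-rationality of $t_0\in W_E(A_0,\mathbb{Q})$'', there is no such intrinsic object: over $\mathbb{F}$ the only $\mathbb{Q}$-structure available on the Weil space is the one transported from $W_E(A,\mathbb{Q})$ via the specialization isomorphism, and that depends on the lift $A$. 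The Lefschetz class $\delta$ is rational for a \emph{different} $\mathbb{Q}$-structure, namely $\mathcal{D}^\ast(A_0)$. That these two structures are compatible is precisely the content of Conjecture A, so the ``Galois-conjugate family \ldots\ trace down to $\mathbb{Q}$'' step assumes what is to be proved. (Your ordinary-case paragraph is close to \ref{c4a}; note, though, that the Serre--Tate lift $A^{\mathrm{can}}$ need not be isogenous to the given $A$, so one must also invoke something like \ref{t11} to match $t_0$ with the specialization of a Weil class on $A^{\mathrm{can}}$.)
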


In more detail, a Hodge class on $A$ is an element $\gamma$ of $H_{\mathbb{A}%
{}}^{2\ast}(A)(\ast)$, and its specialization $\gamma_{0}$ is an element of
$H_{\mathbb{A}{}}^{2\ast}(A_{0})(\ast)$. If $\delta_{1},\ldots,\delta_{r}$,
where $r=\dim(\gamma)$, are divisor classes on $A_{0}$, then%
\[
\langle\gamma_{0}\cdot\delta_{1}\cdots\delta_{r}\rangle\in H_{\mathbb{A}}%
^{2d}(A_{0})(d)\simeq\mathbb{A}{}_{f}^{p}\times\mathbb{Q}{}_{w}^{\mathrm{al}%
},\quad d=\dim A.
\]
The conjecture says that it lies in $\mathbb{Q}{}\subset\mathbb{A}{}_{f}%
^{p}\times\mathbb{Q}{}_{w}^{\mathrm{al}}$.

\begin{plain}
\label{c5} If $\gamma$ is algebraic, then $\gamma_{0}$ is algebraic, and so
Conjecture A holds for $\gamma$. Thus, Conjecture A holds for $A$ if the Hodge
conjecture holds for $A$, for example, if $A$ has no exotic Hodge classes.
\end{plain}

\begin{proposition}
\label{c4a}If $\End(A)\simeq\End(A_{0}),$ then Conjecture $A$ holds for $A$
and its powers.
\end{proposition}

\begin{proof}
The hypothesis implies that $L(A)\simeq L(A_{0})$. If follows that, for all
$n$, the specialization map $\mathcal{D}^{\ast}(A^{n})\rightarrow
\mathcal{D}^{\ast}(A_{0}^{n})$ becomes an isomorphism when tensored with
$\mathbb{Q}_{\ell}$ (any $\ell\neq p$),%
\[
\begin{tikzcd}
\mathcal{D}^{\ast}(A^n)\otimes_{\mathbb{Q}}\mathbb{Q}_{\ell}\arrow{r}{\simeq}\arrow{d}
&H^{2\ast}(A^n,\mathbb{Q}_{\ell}(\ast))^{L(A)}\arrow{d}{\simeq}\\
\mathcal{D}^{\ast}(A^n_0)\otimes_{\mathbb{Q}{}}\mathbb{Q}{}_{\ell}\arrow{r}{\simeq}
&H^{2\ast}(A^n_0,\mathbb{Q}_{\ell}(\ast))^{L(A_0)}.
\end{tikzcd}
\]
Therefore, it is an isomorphism, i.e., every Lefschetz class $\delta$ on
$A_{0}^{n}$ lifts uniquely to a Lefschetz class $\delta^{\prime}$ on $A^{n}$.
If $\gamma$ is a Hodge class on $A^{n}$ and $\delta$ is a Lefschetz class on
$A_{0}$ of complementary dimension, then%
\[
\langle\gamma_{0}\cup\delta\rangle=\langle\gamma\cup\delta^{\prime}\rangle
\in\mathbb{Q}{}.
\]

\end{proof}

The hypothesis in \ref{c4a} holds if $A$ is CM and $A_{0}$ is a product of
simple ordinary abelian varieties, no two of which are isogenous.

\begin{proposition}
\label{x6}Let $X$ and $Y$ be smooth projective varieties over $\mathbb{Q}%
{}^{\mathrm{al}}$ with good reduction at $w$, and let $f\colon X\rightarrow Y$
be a morphism. If $\gamma$ is $w$-rational on $X$, then $f_{\ast}\gamma$ is
$w$-rational on $B$.
\end{proposition}

\begin{proof}
Let $\delta$ be a Lefschetz class on $Y$ of complementary dimension to
$f_{\ast}\gamma$. Then%
\[
\langle\left(  f_{\ast}\gamma\right)  _{0}\cdot\delta\rangle=\langle f_{0\ast
}\gamma_{0}\cdot\delta\rangle=\langle f_{0\ast}(\gamma_{0}\cdot f_{0}^{\ast
}\delta)\rangle=\langle\gamma_{0}\cdot f_{0}^{\ast}\delta\rangle\in
\mathbb{Q}{}%
\]
because $f_{0}^{\ast}\delta$ is Lefschetz.
\end{proof}

\begin{proposition}
\label{x5}Let $A$ and $B$ be abelian varieties over $\mathbb{Q}{}%
^{\mathrm{al}}$ with good reduction at $w$, and let $f\colon A\rightarrow B$ a
morphism. If $\gamma$ is $w$-rational on $B$, then $f^{\ast}\gamma$ is
$w$-rational on $A$.
\end{proposition}

\begin{proof}
Let $\delta$ be a Lefschetz class on $A_{0}$ of complementary dimension to
$f^{\ast}\gamma$. Then%
\[
\langle(f^{\ast}\gamma)_{0}\cdot\delta\rangle=\langle f_{0}^{\ast}\gamma
_{0}\cdot\delta\rangle=\langle f_{0\ast}(f_{0}^{\ast}\gamma_{0}\cdot
\delta)\rangle=\langle\gamma_{0}\cdot f_{0\ast}\delta\rangle\in\mathbb{Q}{}%
\]
because $f_{0\ast}\delta$ is Lefschetz (\cite{milne1999b}, 5.5).
\end{proof}

Let $X$ be a smooth projective variety of dimension $n$ and $L$ a Lefschetz
operator. The following is one form of the Lefschetz standard conjecture:
\[
A(X,L): \text{The map $L^{n-2r}\colon A^{r}(X)\rightarrow A^{n-r}(X)$ is an
isomorphism for all } r\leq n/2.
\]
Equivalently, a cohomology class $\gamma$ lies in $A^{r}(X)$ if $L^{n-2r}%
\gamma$ lies in $A^{n-r}(X)$.

\begin{proposition}
\label{x11}Assume that $X$ has good reduction at $w$. If $A(X_{0},L_{0})$
holds for Lefschetz classes on $X_{0}$, then $A(X,L)$ holds for $w$-rational classes.
\end{proposition}

\begin{proof}
To see this, let $\gamma$ be a Hodge class on $X$ of codimension $r$ such that
$L^{n-2r}\gamma$ is $w$-rational, and let $\delta$ be a Lefschetz class on
$X_{0}$ of codimension $n-r$. Then $\delta=L_{0}^{n-2r}\delta^{\prime}$ for
some Lefschetz class $\delta^{\prime}$ of codimension $r$ on $X_{0}$, and%
\[
\langle\gamma_{0}\cdot\delta\rangle=\langle\gamma\cdot L_{0}^{n-2r}%
\delta^{\prime}\rangle=\langle(L^{n-2r}\gamma)_{0}\cdot\delta^{\prime}%
\rangle\in\mathbb{Q}\text{.}%
\]
In particular, we see that $A(X,L)$ holds for $w$-rational classes when $X$ is
an abelian variety. Therefore, $\ast$ preserves $w$-Lefschetz classes on an
abelian variety, the Hodge standard conjecture holds, and conjecture $D(X)$ holds.
\end{proof}

\begin{corollary}
\label{x11a}Let $A$ be an abelian variety over $\mathbb{Q}{}^{\mathrm{al}}$
with good reduction at $w$. The Lefschetz standard conjecture holds for
$w$-rational classes on $A$.
\end{corollary}

\begin{proof}
The Lefschetz standard conjecture holds for Lefschetz classes on abelian varieties.
\end{proof}

\subsection{Nifty abelian varieties}

We let $\MT(A)$ denote the Mumford--Tate group of an abelian variety, and
$\SMT(A)$ its special Mumford--Tate group.

\begin{definition}
\label{n1}Let $A$ be an abelian variety over $\mathbb{Q}{}^{\mathrm{al}}$ with
good reduction at $w$. We say that $A$ is \emph{nifty} if $\MT(A)\cdot
L(A_{0})=L(A)$; equivalently, $\SMT(A)\cdot S(A_{0})=S(A)$.
\end{definition}

As $\SMT(A)=\SMT(A^{r})$, $S(A_{0})=S(A_{0}^{r})$, and $S(A)=S(A^{r})$ for all
$r\geq1$, $A^{r}$ is nifty if $A$ is.

\begin{example}
\label{n2}An abelian variety $A$ is nifty if $\MT(A)=L(A)$, i.e., if all Hodge
classes on $A$ and its powers are Lefschetz. There is a large literature
listing abelian varieties satisfying this condition.
\end{example}

\begin{example}
\label{n3}If $\End^{0}(A)=\End^{0}(A_{0})$, then $A$ is nifty. This only
happens when $A$ is CM.
\end{example}

\begin{proposition}
\label{n4}Nifty abelian varieties satisfy Conjecture A.
\end{proposition}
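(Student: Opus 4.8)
The plan is to peel the ``Lefschetz part'' off the Hodge class $\gamma$ on $A$ and to use niftiness to kill the remainder, so that $\langle\gamma_0\cdot\delta\rangle$ becomes an honest intersection number of Lefschetz classes on $A_0$. Fix $\gamma\in\mathcal{B}^{r}(A)$ and $\delta\in\mathcal{D}^{n-r}(A_0)$, $n=\dim A$; we must show $\langle\gamma_0\cdot\delta\rangle\in\mathbb{Q}$ at every place $l\neq\infty$. By \ref{l2} the pairing $\mathcal{D}^{r}(A)\times\mathcal{D}^{n-r}(A)\rightarrow\mathcal{D}^{n}(A)\simeq\mathbb{Q}$ is nondegenerate, so the idempotent $e$ projecting $H^{2r}(A)(r)$ onto the weakly Lefschetz classes $\mathcal{D}^{r}(A)\otimes Q$ (see \ref{l5}) is represented by a $\mathbb{Q}$-rational Lefschetz correspondence on $A$; being Lefschetz it is a Hodge correspondence, so $c\overset{\df}{=}e(\gamma)$ is again a Hodge class and $e(\gamma-c)=0$. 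Writing $c=\sum_{i}\langle\gamma\cdot\epsilon_{i}^{\vee}\rangle_{A}\,\epsilon_{i}$ for a $\mathbb{Q}$-basis $\{\epsilon_{i}\}$ of $\mathcal{D}^{r}(A)$ with dual basis $\{\epsilon_{i}^{\vee}\}$ in $\mathcal{D}^{n-r}(A)$, and using that the cup-product pairing of Hodge classes on $A$ lies in $\mathcal{B}^{n}(A)=\mathbb{Q}$, we see that $c$ is in fact a $\mathbb{Q}$-rational Lefschetz class on $A$; hence its reduction $c_{0}$ is a $\mathbb{Q}$-rational Lefschetz class on $A_0$ and $\langle c_{0}\cdot\delta\rangle\in\mathcal{D}^{n}(A_0)\simeq\mathbb{Q}$. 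It therefore suffices to prove $\langle(\gamma-c)_{0}\cdot\delta\rangle=0$.

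This is where niftiness enters. Work at an $\ell\neq p$; the specialization isomorphism identifies $H^{2*}_{\ell}(A)(*)$ with $H^{2*}_{\ell}(A_0)(*)$ compatibly with cup product, trace, and with the actions of $\Hg(A)$ (fixing $\gamma$, hence $\gamma-c$) and $S(A_0)$ (fixing $\delta$), viewed as subgroups of the reductive group $S(A)$. Decompose $H^{2r}_{\ell}(A_0)(r)$ and $H^{2(n-r)}_{\ell}(A_0)(n-r)$ into $S(A)$-isotypic components; since the Poincar\'{e} pairing is $S(A)$-equivariant with trivial target and $\gamma-c$ has no $S(A)$-invariant component, $\langle(\gamma-c)_{0}\cdot\delta\rangle=\sum_{\rho\neq\1}\langle(\gamma-c)_{0,\rho}\cdot\delta_{\rho^{\vee}}\rangle$, and each summand factors through the canonical pairing $\rho\times\rho^{\vee}\rightarrow\mathbb{Q}_{\ell}$ restricted to $\rho^{\Hg(A)}\times(\rho^{\vee})^{S(A_0)}$. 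Fix $\rho\neq\1$, $v\in\rho^{\Hg(A)}$, $w\in(\rho^{\vee})^{S(A_0)}$. Since the set-product $\Hg(A)\cdot S(A_0)=S(A)$ is a group we also have $S(A)=S(A_0)\cdot\Hg(A)$; writing $g=sh$ with $s\in S(A_0)$, $h\in\Hg(A)$ gives $w(gv)=w(s(hv))=w(sv)=w(v)$, so $w$ is constant on the $S(A)$-orbit of $v$. Because $\rho$ is a nontrivial irreducible representation, $\rho^{S(A)}=0$, and hence the $S(A)$-stable affine span of that orbit is all of $\rho$; so $0$ is an affine combination $\sum c_{i}g_{i}v$ with $\sum c_{i}=1$, whence $w(v)=\sum c_{i}\,w(g_{i}v)=0$. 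Thus every summand vanishes and $\langle(\gamma-c)_{0}\cdot\delta\rangle=0$.

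Consequently $\langle\gamma_0\cdot\delta\rangle=\langle c_0\cdot\delta\rangle$ at each $\ell\neq p$; the same computation with crystalline coefficients gives the equality in the $\mathbb{Q}_{w}^{\mathrm{al}}$-component as well, and the common value $\langle c_0\cdot\delta\rangle$ is a rational intersection number of Lefschetz classes on $A_0$, independent of the Weil cohomology used because homological and numerical equivalence agree on Lefschetz classes over $\mathbb{F}$ (\ref{l2}). Hence $\langle\gamma_0\cdot\delta\rangle$ lies in $\mathbb{Q}\subset\mathbb{A}_{f}^{p}\times\mathbb{Q}_{w}^{\mathrm{al}}$, which is Conjecture A for $A$; since $A^{r}$ is nifty whenever $A$ is, it also holds for all powers of $A$. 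The step I expect to require the most care is the crystalline realization at $p$: one must check that the specialization of an absolute Hodge class, together with the induced $\Hg(A)$-action, sits inside $H^{2r}_{\mathrm{crys}}(A_0)(r)$ exactly as in the $\ell$-adic picture, so that the isotypic argument above applies verbatim over $\mathbb{Q}_{w}^{\mathrm{al}}$; everything else is formal, given \ref{l2} and $\mathcal{B}^{n}(A)=\mathbb{Q}$.
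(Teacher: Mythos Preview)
Your proof is correct and takes a genuinely different route from the paper's. The paper argues categorically: it forms the tannakian quotient $\langle A\rangle_{H}/\omega$ using that niftiness makes $\MT(A)/P\rightarrow L(A)/L(A_{0})$ an isomorphism (with $P=\MT(A)\cap L(A_{0})$), and observes that $\gamma_{0}\cdot\delta$ then lives in the $\mathbb{Q}$-algebra $\Hom(\1,hA_{0})$ of that quotient. You instead work directly with the Poincar\'e pairing: you split off the Lefschetz component $c$ of $\gamma$ explicitly via the dual-basis formula (getting a rational Lefschetz class because $\mathcal{B}^{n}(A)=\mathbb{Q}$), and then kill $\langle(\gamma-c)_{0}\cdot\delta\rangle$ by an isotypic-decomposition argument, using that $S(A)=S(A_{0})\cdot\Hg(A)$ forces any $\Hg(A)$-fixed vector in a nontrivial irreducible $S(A)$-module to pair to zero with any $S(A_{0})$-fixed covector.

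Both arguments rest on the same group-theoretic fact, but yours is more elementary (no quotient-category machinery) and more explicit about where the rationality actually comes from. It also handles the non-CM case directly, whereas the paper restricts to CM to keep the groups commutative. The paper's approach, on the other hand, fits naturally into its larger tannakian framework and foreshadows the construction of $\Mot(\mathbb{F})$ in \S5. Your orbit/affine-span argument for the vanishing is clean; note that it really uses irreducibility (so that the $S(A)$-stable subspace $V_{0}=\mathrm{span}\{gv-v\}$ is either $0$ or all of $\rho$), and that working over $\bar{\mathbb{Q}}_{\ell}$ is harmless since the vanishing descends. Your caveat about the crystalline component is well placed but routine: the $\Hg(A)$-action transfers via the Betti--de Rham and Berthelot--Ogus comparisons, and the de Rham component of an absolute Hodge class is by definition $\MT(A)$-fixed.
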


\begin{proof}
Let $A$ be a nifty abelian variety over $\mathbb{Q}^{\mathrm{al}}$, and let
$\langle A\rangle_{H}$ and $\langle A\rangle_{L}$ denote the tannakian
subcategories of $\Mot(\mathbb{Q}^{\mathrm{al}})$ and $\LMot(\mathbb{Q}
{}^{\mathrm{al}})$ generated by $A$. To simplify, we assume that $A$ is CM (so
that the groups involved are commutative). Let $P$ denote the kernel of the
homomorphism $\MT(A)\rightarrow L(A)/L(A_{0})$, and consider the diagrams
\[
\begin{tikzcd}
\langle A\rangle_{H}^{P}\arrow{d}
&\langle A\rangle_{L}^{L(A_{0})}\arrow{l}\arrow{d}
&\MT(A)/P\arrow{r}&L(A)/L(A_{0})\\
\langle A\rangle_{H}\arrow[dashed]{d} & \langle A\rangle_{L}\arrow{l}\arrow{d} &
\MT(A)\arrow[hook]{r}\arrow{u} & L(A)\arrow{u}\\
\langle A_{0}\rangle & \langle A_{0}\rangle_{L}\arrow[dashed]{l}
& P\arrow[hook]{r}\arrow[hook]{u} & L(A_{0})\arrow[hook]{u}
\end{tikzcd}
\]
of tannakian categories and their fundamental groups. The category $\langle
A_{0}\rangle_{L}$ is a quotient of $\langle A\rangle_{L}$, and we let $\omega$
denote the fibre functor on $\langle A\rangle_{L}^{L(A_{0})}$ corresponding to
it. Because the homomorphism $\MT(A)/P\rightarrow L(A)/L(A_{0})$ is an
isomorphism, the functor $\langle A\rangle_{L}^{L(A_{0})}\rightarrow\langle
A\rangle_{H}^{P}$ is an equivalence of tensor categories, and so $\omega$
defines a fibre functor on $\langle A\rangle_{H}^{P}$, which we again denote
by $\omega$. Define $\langle A_{0}\rangle$ to be the quotient of $\langle
A\rangle_{H}/\omega$ of $\langle A\rangle_{H}$.

Let $\gamma$ be a Hodge class on $A$ and $\delta$ a Lefschetz class on $A_{0}$
of complementary dimension. Then it is obvious from the diagram that
$(\gamma_{\mathbb{A}})_{0}\cdot\delta_{\mathbb{A}{}}\in\mathbb{Q}$ because the
intersection takes place inside the $\mathbb{Q}$-algebra%
\[
\Hom(\1,hA_{0}),
\]
where $hA_{0}$ is the object of $\langle A_{0}\rangle\overset{\df}{=}\langle
A\rangle_{H}/\omega$ defined by $A_{0}$.
\end{proof}

\subsection{Weil families}

\begin{definition}
\label{l11}Let $(A,\nu,\lambda)$ be a Weil triple over $\mathbb{Q}%
{}^{\mathrm{al}}$.

\begin{enumerate}
\item We say $(A,\lambda,\nu)$ has \emph{good reduction} at $w$ if $A$ has
good reduction at $w$ (then $\lambda$ specializes to a polarization on
$\lambda_{0})$.

\item We say that $(A,\lambda,\nu)$ is CM if $A$ is CM.
\end{enumerate}
\end{definition}

\begin{definition}
\label{t9}Let $(A_{1},\lambda_{1},\nu_{1})$ and $(A_{2},\lambda_{2},\nu_{2})$
be two Weil triples over $k\subset\mathbb{C}$ relative to the CM-algebra $E$.
We say that $(A_{1},\lambda_{1},\nu_{1})$ and $(A_{2},\lambda_{2},\nu_{2})$
\emph{lie in the same Weil family} if there exists an $E$-linear isomorphism
\[
H_{1}(A_{1},\mathbb{Q}{})\rightarrow H_{1}(A_{2},\mathbb{Q}{})
\]
under which the Riemann forms of $\lambda_{1}$ and $\lambda_{2}$ correspond up
to an element of $\mathbb{Q}^{\times}$.
\end{definition}

In fact, they do then lie in the same Weil family (see \cite{deligne1982},
proof of Theorem 4.8).

\begin{lemma}
\label{t11}Let $(A_{1},\lambda_{1},\nu_{1})$ and $(A_{2},\lambda_{2},\nu_{2})$
be Weil triples over $\mathbb{Q}{}^{\mathrm{al}}$ having good reduction to
isogenous triples over $\mathbb{F}$. If $(A_{1},\lambda_{1},\nu_{1})$ and
$(A_{2},\lambda_{2},\nu_{2})$ lie in the same Weil family, then the $E$-vector
spaces of Weil classes on $A$ and $A^{\prime}$ specialize to the same
$E$-subspace of
\[
H_{\mathbb{A}}^{2d}((A_{1})_{0})(d)\simeq H_{\mathbb{A}}^{2d}((A_{2}%
)_{0})(d).
\]

\end{lemma}

\begin{proof}
Let $f\colon A\rightarrow S$ be the universal family over $\mathbb{C}{}$
containing $(A_{1},\lambda_{1},\nu_{1})$ and $(A_{2},\lambda_{2},\nu_{2})$
(see \cite{deligne1982}, 4.8). Then $f$ has a unique model over $\mathbb{Q}%
{}^{\mathrm{al}}$, also denoted $f\colon A\rightarrow S$, such that the
special points of $S(\mathbb{C}{})$ lie in $S(\mathbb{Q}{}^{\mathrm{al}})$.
The Weil classes form a local $\mathbb{Q}{}$-subsystem $W_{E}(A/S)$ of
$R^{2d}f_{\ast}\mathbb{A}{}(d)$, where $d=\dim A/[E\colon\mathbb{Q}{}]$. By
assumption, the points in $S(\mathbb{Q}{}^{\mathrm{al}})$ corresponding to
$(A_{1},\lambda_{1},\nu_{1})$ and $(A_{2},\lambda_{2},\nu_{2})$ specialize to
the same point in $S(\mathbb{F})$ (or a translate), from which the statement follows.
\end{proof}

\subsection{An inductive approach to proving Conjecture A}

We suggest an approach to proving Conjecture A for CM abelian varieties by
induction on the codimension of $\gamma$.

\begin{proposition}
\label{x17}Let $A$ be an abelian variety over $\mathbb{Q}{}^{\mathrm{al}}$
with good reduction at $w$, and let $r\in\mathbb{N}$. If all Hodge classes of
codimension $r$ on $A$ are $w$-rational, then the same is true of the Hodge
classes of dimension $r$ on $A$.
\end{proposition}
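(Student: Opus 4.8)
The plan is to reduce the statement to two inputs that are already available: the hard Lefschetz theorem on $A$, and the Lefschetz standard conjecture for Lefschetz classes on the abelian variety $A_{0}$ over $\mathbb{F}{}$ (\cite{milne1999b}; cf.\ \ref{l6a}, \ref{x11}). Write $d=\dim A$; a Hodge class of codimension $r$ is an element of $\mathcal{B}^{r}(A)\subset H_{\mathbb{A}{}}^{2r}(A)(r)$, and a Hodge class of dimension $r$ is one of codimension $d-r$. Since $A$ has good reduction at $w$, after passing to a model over a number field we fix an abelian scheme model of $A$ at $w$ with special fibre $A_{0}$, together with a polarization; this gives a class $L\in\mathcal{D}^{1}(A)$ reducing to the class $L_{0}\in\mathcal{D}^{1}(A_{0})$ of an ample divisor. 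Because $L$ is algebraic, cup product with powers of $L$ is compatible with all the realization functors and with the specialization maps, so $(L^{k}\gamma)_{0}=L_{0}^{k}\gamma_{0}$ and $\langle L\alpha\cdot\beta\rangle=\langle\alpha\cdot L\beta\rangle$ for classes of complementary dimension. Put $s=\min(r,d-r)$ and $t=\max(r,d-r)$, so that $t-s=|d-2r|$, $s\le d/2\le t$, and $s+t=d$. I will prove that the conditions ``every Hodge class of codimension $s$ on $A$ is $w$-rational'' and ``every Hodge class of codimension $t$ on $A$ is $w$-rational'' are equivalent; the proposition is one of the two implications (``$s\Rightarrow t$'' when $r\le d/2$, and ``$t\Rightarrow s$'' when $r\ge d/2$).

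Suppose first that every Hodge class of codimension $s$ is $w$-rational, and let $\gamma\in\mathcal{B}^{t}(A)$. Cup product with $L^{t-s}$ is a morphism $h^{2s}(A)(s)\to h^{2t}(A)(t)$ in $\Mot(\mathbb{Q}^{\mathrm{al}})$, and it is an isomorphism because it is so in the Betti realization (hard Lefschetz for abelian varieties); hence there is a unique $\gamma'\in\mathcal{B}^{s}(A)$ with $L^{t-s}\gamma'=\gamma$, and $\gamma'$ is $w$-rational by hypothesis. For any Lefschetz class $\delta\in\mathcal{D}^{s}(A_{0})$ of complementary dimension to $\gamma_{0}$,
\[
\langle\gamma_{0}\cdot\delta\rangle=\langle L_{0}^{t-s}\gamma_{0}'\cdot\delta\rangle=\langle\gamma_{0}'\cdot L_{0}^{t-s}\delta\rangle\in\mathbb{Q}{},
\]
since $L_{0}^{t-s}\delta\in\mathcal{D}^{t}(A_{0})$ is a Lefschetz class of complementary dimension to $\gamma'_{0}$ and $\gamma'$ is $w$-rational. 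Thus $\gamma$ is $w$-rational.

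Conversely, suppose every Hodge class of codimension $t$ is $w$-rational, and let $\gamma\in\mathcal{B}^{s}(A)$. Then $L^{t-s}\gamma\in\mathcal{B}^{t}(A)$ (cup product with the Hodge class $L^{t-s}$), so $L^{t-s}\gamma$ is $w$-rational by hypothesis. Given a Lefschetz class $\delta\in\mathcal{D}^{t}(A_{0})$ of complementary dimension to $\gamma_{0}$, the Lefschetz standard conjecture for Lefschetz classes on the abelian variety $A_{0}$ (\cite{milne1999b}) provides a unique $\delta'\in\mathcal{D}^{s}(A_{0})$ with $L_{0}^{t-s}\delta'=\delta$, and then
\[
\langle\gamma_{0}\cdot\delta\rangle=\langle\gamma_{0}\cdot L_{0}^{t-s}\delta'\rangle=\langle(L^{t-s}\gamma)_{0}\cdot\delta'\rangle\in\mathbb{Q}{},
\]
since $L^{t-s}\gamma$ is a $w$-rational class of codimension $t$ and $\delta'$ is a Lefschetz class of complementary dimension. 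This establishes the equivalence, hence the proposition.

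I do not expect a genuine obstacle here: all the weight is carried by the two cited theorems. The only points needing a little care are (i) that a polarization of the chosen abelian scheme model furnishes an ample class on $A$ specializing to an ample class on $A_{0}$ (so that $A(A_{0},L_{0})$ for Lefschetz classes applies on the special fibre), and (ii) that $L^{t-s}$, being an isomorphism in $\Mot(\mathbb{Q}^{\mathrm{al}})$ induced by an algebraic class, carries $\mathcal{B}^{s}(A)$ isomorphically onto $\mathcal{B}^{t}(A)$ and is compatible with the specialization maps used in Definition \ref{x1}; both are routine.
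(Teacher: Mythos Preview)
Your proof is correct and follows essentially the same strategy as the paper: both argue via hard Lefschetz on $A$ together with the Lefschetz standard conjecture for Lefschetz classes on the abelian variety $A_{0}$. The only cosmetic difference is in the case $2r>g$: the paper phrases the inverse Lefschetz step in terms of the operator $\Lambda$ (a Lefschetz correspondence by \cite{milne1999a}, 5.9), while you invoke directly the isomorphism $L_{0}^{t-s}\colon\mathcal{D}^{s}(A_{0})\to\mathcal{D}^{t}(A_{0})$ from \cite{milne1999b}; these are two formulations of the same fact.
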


\begin{proof}
Let $g=\dim A$. Let $\gamma$ be a Hodge class of dimension $r$ on $A$, and let
$\delta$ be a Lefschetz class on $A_{0}$ of codimension $r$. Let $\lambda$ be
a polarization of $A$, and let $\xi$ be the corresponding ample divisor.

Suppose first that $2r\leq g$. In this case, there is an isomorphism%
\[
\xi^{g-2r}\colon H_{\mathbb{A}{}}^{2r}(A)(r)\rightarrow H_{\mathbb{A}{}%
}^{2g-2r}(A)(g-r).
\]
As the Lefschetz standard conjecture holds for Hodge classes, we can write
$\gamma=\xi^{g-2r}\cdot\gamma^{\prime}$, where $\gamma^{\prime}$ is a Hodge
class of codimension $r$ on $A$. Now
\[
\langle\gamma_{0}\cdot\delta\rangle=\langle(\xi^{g-2r}\cdot\gamma^{\prime
})_{0}\cdot\delta\rangle=\langle\gamma_{0}^{\prime}\cdot(\xi_{0}^{g-2r}%
\cdot\delta)\rangle,
\]
which lies in $\mathbb{Q}{}$ by the hypothesis.

When $2r>g$, we can replace the Lefschetz operator $L\colon x\mapsto\xi\cdot
x$ in the argument with its quasi-inverse $\Lambda$ (\cite{kleiman1994},
\S 4). As $\Lambda$ is a Lefschetz class (\cite{milne1999a}, 5.9), the same
argument applies.
\end{proof}

\begin{definition}
\label{x18}Let $(A,\nu,\lambda)$ be a CM Weil triple over $\mathbb{Q}%
{}^{\mathrm{al}}$. We say that a divisor $d$ on $A_{0}$ is \emph{liftable} if
there exists a CM Weil triple $(A_{1},\nu_{1},\lambda_{1})$ in the same Weil
family as $(A,\nu,\lambda)$ and a divisor $d_{1}$ on $A_{1}$ such that
$(A_{1},\nu_{1},\lambda_{1},d_{1})_{0}$ is isogenous to $(A_{0},\nu
_{0},\lambda_{0},d)$. We say that a Lefschetz class $\delta$ of codimension
$r$ on $A_{0}$ is \emph{weakly liftable} if it is in the $\mathbb{Q}$ span of
the classes $d_{1}\cdots d_{r}$ with at least one of the $d_{i}$ liftable.
\end{definition}

\begin{question}
\label{Q2}Let $(A,\nu,\lambda)$ be a CM Weil triple over $\mathbb{Q}%
{}^{\mathrm{al}}$. Is every Lefschetz class of codimension $\dim
(A)/[E\colon\mathbb{Q}{}]$ on $A_{0}$ weakly liftable?
\end{question}

\begin{theorem}
\label{x19}If Question \ref{Q2} has a positive answer, then Conjecture A is
true for all CM abelian varieties.
\end{theorem}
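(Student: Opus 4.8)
The plan is to reduce Conjecture A for CM abelian varieties to the induction scheme already set up in \ref{x17}, using André's splitting theorem \ref{a6} and the notion of (weak) liftability from \ref{x18} to handle the base step of the induction. Since \ref{x17} lets us pass from codimension~$r$ to dimension~$r$, it suffices to verify $w$-rationality of all Hodge classes of codimension $\leq g/2$ on every CM abelian variety $A$ (where $g=\dim A$), and the natural induction is on the codimension $r$. For $r=0$ there is nothing to prove; for $r=1$ Hodge classes are divisor classes, hence algebraic, hence $w$-rational by the remark after Definition~\ref{x1}. So assume $r\geq 2$ and that $w$-rationality is known for all CM abelian varieties in codimensions $<r$.

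First I would reduce to split Weil classes. Given a Hodge class $\gamma$ of codimension $r$ on a CM abelian variety $A$, Theorem~\ref{a6} produces CM abelian varieties $A_\Delta$ of split Weil type and morphisms $f_\Delta\colon A\to A_\Delta$ with $\gamma=\sum f_\Delta^\ast(t_\Delta)$, where each $t_\Delta$ is a split Weil class on $A_\Delta$. By \ref{x5}, $w$-rationality is preserved under pullback along homomorphisms of abelian varieties with good reduction; and one checks that the $A_\Delta$ appearing in the proof of \ref{a6} (being products $A\otimes_{\mathbb Q}K_\Delta$ with $A$ already having good reduction at $w$) again have good reduction at $w$ after enlarging the base field. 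So it suffices to prove that every split Weil class $t$ of codimension $r$ on a CM abelian variety $A'$ of split Weil type, with good reduction at $w$, is $w$-rational. Here $r=\dim_E H^1(A',\mathbb Q)/2$ for the relevant CM field $E$, so we are exactly in the situation of Question~\ref{Q2}.

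Now I invoke the positive answer to Question~\ref{Q2}. Let $t$ be the $E$-span of Weil classes on $A'$ (codimension $r$), and let $\delta$ be a Lefschetz class of complementary codimension $g-r$ on $A'_0$; we must show $\langle t_0\cdot\delta\rangle\in\mathbb Q$. By the hard Lefschetz / Lefschetz-standard-conjecture formalism for Lefschetz classes on abelian varieties (which holds unconditionally, \ref{l5}, \ref{l6a}, \cite{milne1999b}), and since Weil classes in complementary degree pair with Lefschetz classes, it is enough to treat $\delta$ of codimension exactly $r$ — more precisely, after applying a power of the Lefschetz operator (itself a Lefschetz class, so harmless by the computation in \ref{x11}) we reduce to pairing $t_0$ against a Lefschetz class of codimension $r$ on $A'_0$, which by bilinearity we may take to be a monomial $d_1\cdots d_r$ in divisor classes. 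By Question~\ref{Q2} this monomial is weakly liftable: it is a $\mathbb Q$-linear combination of monomials $d_1'\cdots d_r'$ in which at least one $d_i'$, say $d_1'$, is liftable, i.e.\ comes by specialization from a divisor $D_1$ on some CM Weil triple $(A_1,\nu_1,\lambda_1)$ in the same Weil family as $(A',\nu',\lambda')$. By \ref{t11}, the Weil classes on $A'$ and on $A_1$ specialize to the same $E$-subspace of $H^{2r}_{\mathbb A}(A'_0)(r)=H^{2r}_{\mathbb A}((A_1)_0)(r)$; so replacing $A'$ by $A_1$ we may assume $d_1$ itself lifts to a divisor $D_1$ on $A'$. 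Then $D_1\cdot d_2\cdots d_r$ has codimension $r$ on $A'$; cap it with $t$: the class $D_1\cup t$ is a Hodge class on $A'$ of codimension $r+1$ supported on the divisor $D_1$, hence $=\iota_\ast(\gamma')$ for $\iota\colon D_1'\hookrightarrow A'$ the inclusion of a smooth model and $\gamma'$ a Hodge class of codimension $r$ on $D_1'$; iterating with $d_2,\dots$ (or, more cheaply, using the projection formula $\langle (D_1\cup t)_0\cdot(d_2\cdots d_r)\rangle=\langle t_0\cdot(D_1)_0\cdot d_2\cdots d_r\rangle$ and regarding $t_0\cdot d_2\cdots d_r$ as a class of codimension $<r$ obtained from a CM abelian variety of smaller dimension) the intersection number falls under the induction hypothesis in codimension $<r$, hence lies in $\mathbb Q$. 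This closes the induction.

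The main obstacle is the geometric input of Question~\ref{Q2}: producing, for an arbitrary Lefschetz (monomial) class of the critical codimension $r=\dim_E H^1/2$ on $A'_0$, a divisor factor that lifts to characteristic $0$ within the same Weil family. The combinatorics of split Weil types (the description in \ref{a5}, \ref{a6} of $W_E(A)\otimes\mathbb C$ inside $H^{2r}(A)$) should let one write explicit divisors on members of the Weil family whose mod-$p$ reductions generate enough of $\mathcal D^1(A'_0)$, but controlling that the relevant monomials are covered — and not merely some monomials — is exactly the content of the question, and is where I expect the real work (and the need for the question mark) to lie. The rest of the argument is a bookkeeping exercise combining \ref{x5}, \ref{x11}, \ref{x17}, \ref{t11}, André's theorem, and the unconditional Lefschetz theory for abelian varieties.
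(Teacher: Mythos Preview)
Your overall strategy matches the paper's exactly: induct on the codimension $r$, reduce to split Weil classes on a CM Weil triple via Andr\'e's theorem \ref{a6} and \ref{x5}, invoke Question~\ref{Q2} to peel off one liftable divisor factor, use \ref{t11} to transport the Weil class to the new member of the Weil family, and close the induction. The gap is in your degree bookkeeping at the final step.

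When you write $\delta=\xi_0^{g-2r}\delta'$ with $\delta'$ Lefschetz of codimension $r$, the factor $\xi_0^{g-2r}$ cannot be discarded: the total codimension must remain $g$, and your subsequent pairing $\langle(D_1\cup t)_0\cdot(d_2\cdots d_r)\rangle$ has total codimension $(r+1)+(r-1)=2r$, which equals $g$ only when $[E\colon\mathbb Q]=2$. The paper instead keeps this factor and \emph{lifts} it: since the polarization $\lambda$ is part of the Weil-triple data, the ample class $\xi_0$ lifts to $\xi_1$ on $A_1$. Writing $\delta'=d\cdot\delta''$ with $d$ lifting to $d_1$, and using \ref{t11} to write $\gamma_0=(\gamma_1)_0$ for a Weil class $\gamma_1$ on $A_1$, one obtains
\[
\gamma_0\cdot\delta \;=\; (\gamma_1\cdot\xi_1^{\,g-2r}\cdot d_1)_0\cdot\delta''.
\]
Now $\gamma_1\cdot\xi_1^{\,g-2r}\cdot d_1$ is a Hodge class on the CM abelian variety $A_1$ of codimension $r+(g-2r)+1=g-(r-1)$, i.e.\ of \emph{dimension} $r-1$, paired against the Lefschetz class $\delta''$ of codimension $r-1$. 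So it is not the bare induction hypothesis that applies, but the induction hypothesis for codimension $r-1$ combined with Proposition~\ref{x17} (which passes from codimension $r-1$ to dimension $r-1$). This is also why your opening use of \ref{x17} to ``reduce to $r\le g/2$'' is misplaced: \ref{x17} is needed inside each induction step, not as a preliminary reduction.

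Your two proposed endgames both fail as written: a smooth divisor $D_1'\hookrightarrow A'$ is not an abelian variety, so the induction hypothesis for CM abelian varieties is unavailable there; and the class ``$t_0\cdot d_2\cdots d_r$'' has codimension $2r-1$, not $<r$. Once the $\xi^{g-2r}$ is carried along correctly, your argument becomes exactly the paper's.
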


\begin{proof}
We have to prove that every Hodge class $\gamma$ on a CM abelian variety $A$
is $w$-rational. Hodge classes of codimension $\leq1$ are $w$-rational because
they are algebraic. We prove the theorem by induction on the codimension of
$\gamma$. Let codim$(\gamma)=r>1$, and assume that every Hodge class of codim
$<r$ on a CM abelian variety is $w$-rational.

After Theorem \ref{a6} and \ref{x5}, we may suppose that $\gamma$ is a Weil
class on a CM Weil triple $(A,\nu,\lambda)$, still of codimension $r$, so
$r=\dim A/[E\colon\mathbb{Q}]$.

Let $g=\dim A$ and let $\delta$ be a Lefschetz class of codimension $g-r$ on
$A_{0}$. We have to show that $\langle\gamma_{0}\cdot\delta\rangle
\in\mathbb{Q}{}$.

Let $\xi$ be the divisor class on $A$ attached to $\lambda$. The isomorphism
(strong Lefschetz)%
\[
\xi_{0}^{g-2r}\colon H_{\mathbb{A}{}}^{2r}(A_{0})(r)\rightarrow H_{\mathbb{A}%
{}}^{2g-2r}(A_{0})(g-r)
\]
induces an isomorphism%
\[
\mathcal{D}^{r}(A_{0})\rightarrow\mathcal{D}{}^{g-r}(A_{0})
\]
on Lefschetz classes (\cite{milne1999a}, 5.9). Therefore,%
\[
\delta=\xi_{0}^{g-2r}\cdot\delta^{\prime}%
\]
with $\delta^{\prime}$ a Lefschetz class of codimension $r$ on $A_{0}$.

Since we are assuming that Question \ref{Q2} has a positive answer, we may
suppose that $\delta^{\prime}=d\cdot\delta^{\prime\prime}$ with $d$ a liftable
divisor on $A_{0}$. This means that there exists a CM Weil triple $(A_{1}%
,\nu_{1},\lambda_{1})$ in the same Weil family as $(A,\nu,\lambda)$ and a
divisor class $d_{1}$ on $A_{1}$ such that $(A_{0},\nu_{0},\lambda_{0},d)$ is
isogenous to $(A_{1},\nu_{1},\lambda_{1},d_{1})_{0}$. According to \ref{t11},
there exists a Weil class $\gamma_{1}$ on $A_{1}$ such that $\gamma
_{0}=(\gamma_{1})_{0}$. Now%
\[
\langle\gamma_{0}\cdot\delta\rangle=\langle\gamma_{0}\cdot\xi_{0}^{g-2r}%
\cdot\delta^{\prime}\rangle=\langle\gamma_{0}\cdot\xi_{0}^{g-2r}\cdot
d\cdot\delta^{\prime\prime}\rangle=\langle(\gamma_{1}\cdot\xi_{1}^{g-2r}\cdot
d_{1})_{0}\cdot\delta^{\prime\prime}\rangle.
\]
This lies in $\mathbb{Q}{}$ because $\gamma_{1}\cdot\xi_{1}^{g-2r}\cdot d_{1}$
is a Hodge class of codimension%
\[
r+g-2r+1=g-(r-1)
\]
on $A$, hence of dimension $r-1$, and so we can apply Proposition \ref{x17}
and the induction hypothesis.
\end{proof}

\begin{plain}
\label{x22}If Conjecture A holds for CM abelian varieties over $\mathbb{Q}%
{}^{\mathrm{al}}$, then Conjecture C holds for the same class. Once the
arguments in \S 5 have been completed, this will imply that Conjecture C holds
for all abelian varieties over $\mathbb{C}$ with good reduction at $w$.

Thus, an affirmative answer to Question \ref{Q2} would allow us to extend
Deligne's theory of absolute Hodge classes on abelian varieties to
characteristic $p$. As Tate once wrote in a similar
context,\footnote{\cite{tate1965}, p.~107. Tate's question was answered
(negatively) by Mumford.} \textquotedblleft we have a completely down-to-earth
question which could be explained to a bright freshman and which should be
settled one way of the other.\textquotedblright
\end{plain}

\subsubsection{A variant inductive argument}

\begin{plain}
\label{x30}Of course, there are variations of the above argument, for example,
by dropping CM in the definition of liftable.
\end{plain}

\subsubsection{Notes}

\begin{plain}
\label{x20}Let $(A,\lambda)$ be a polarized abelian variety over a field $k.$
Then%
\[
\mathrm{NS}^{0}(A)\simeq\{\alpha\in\End^{0}(A)\mid\alpha^{\dagger}=\alpha\}
\]
(\cite{mumford1970}, p.~208). Thus, Questions \ref{Q2} can be restated in
terms of symmetric endomorphisms, or even in terms of the subalgebras they
generate. Not all subfields of endomorphism algebras of abelian varieties can
be lifted to characteristic zero. For example, a subfield $E$ of $\End^{0}(A)$
such that $[E:\mathbb{Q}]=2\dim A$ must be CM in characteristic zero, but need
not be so in characteristic $p$. In particular, a real quadratic subfield of
the endomorphism algebra of an elliptic curve does not lift to characteristic
zero. However, with some obvious restrictions, every endomorphism lifts (up to
isogeny). See, for example, \cite{zink1983}, 2.7.
\end{plain}

\begin{plain}
\label{x23}It may be possible to use the natural Jordan algebra structure on
$\mathrm{NS}^{0}(A)$ (\cite{mumford1970}, p.~208).
\end{plain}

\begin{plain}
\label{x31}Let $A$ be an abelian variety over $\mathbb{F}$. If $A_{1}$ is a
model of $A$ over a finite subfield $k$ of $\mathbb{F}$ such that
$\End_{k}^{0}(A_{1})=\End_{\mathbb{F}{}}^{0}(A)$, then we let $\mathbb{Q}%
\{\pi\}$ denote the $\mathbb{Q}$-subalgebra of $\End_{\mathbb{F}}^{0}(A)$
generated by the Frobenius endomorphism of $A_{1}$ (relative to $k$). It is
independent of the choice of the model.
\end{plain}

\begin{theorem}
\label{x24}Let $A_{0}$ be a simple abelian variety over $\mathbb{F}{}$, and
let $L$ be a CM subfield of $\End^{0}(A)$ such that

\begin{enumerate}
\item $L$ contains $\mathbb{Q}{}\{\pi\}$,

\item $L$ splits $\End^{0}(A)$, and

\item $[L\colon\mathbb{Q}{}]=2\dim A$.
\end{enumerate}

\noindent Then, up to isogeny, $A_{0}$ lifts to an abelian variety $A$ in
characteristic zero such that $L\subset\End^{0}(A).$
\end{theorem}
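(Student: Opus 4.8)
The plan is to realize $A_0$ inside a suitable Shimura variety and lift a point. First I would use the theory of Honda--Tate to find a finite field $k$ and a model $A_1/k$ with $\End_k^0(A_1)=\End_{\mathbb F}^0(A)$, so that $\mathbb Q\{\pi\}$ makes sense. Condition (c) says that $L$ is a maximal commutative subalgebra of $\End^0(A)$ of the expected degree; condition (b) says $L$ splits the division algebra $\End^0(A)$, so that $\End^0(A)\otimes_{\mathbb Q}L$ is a matrix algebra and $H^1$-type cohomology becomes a free $L$-module of rank $2\dim A/[L:\mathbb Q]=1$ over the relevant coefficient ring; condition (a) guarantees that the Frobenius $\pi$ lies in $L$, hence that the crystalline and $\ell$-adic Frobenius data are compatible with the $L$-action. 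Together these say that $(A_1,L\hookrightarrow\End_k^0(A_1))$ is an object of the PEL-type moduli problem attached to a torus $T\subset \Res_{L/\mathbb Q}\mathbb G_m$ with a CM-type-like cocharacter; the Shimura datum $(T,h)$ is determined by the $p$-adic place data (the slopes of the isocrystal) together with a choice of lifting of the CM type at the remaining places.

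Next I would invoke Honda--Tate theory in its PEL form: the reduction map from the set of $\mathbb F$-points of the special fibre of the integral canonical model of $\Sh_K(T,h)$ (for appropriate level $K$) to isogeny classes of abelian varieties over $\mathbb F$ with the prescribed $L$-structure is surjective onto those whose Frobenius lies in $L$ and has the prescribed slopes. Concretely, the Frobenius $\pi\in L$ together with conditions (a)--(c) pins down a Weil number whose associated isogeny class over $\mathbb F$ is that of $A_0$ with its $L$-action; by the Shimura--Taniyama formula the $p$-adic valuations of $\pi$ read off the Newton polygon, which is exactly the slope datum encoded in the cocharacter $\mu_h$. Since $(T,h)$ is of CM type, its canonical model has $\mathbb Q^{\mathrm{al}}$-points (indeed $\overline{\mathbb Q}$-points), and any such point gives a CM abelian variety $A$ over $\mathbb Q^{\mathrm{al}}$ with $L\subset\End^0(A)$ whose reduction at $w$ is isogenous to $A_0$. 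This is essentially \ref{b13} applied in reverse, together with the fact that CM abelian varieties have good reduction everywhere (\ref{b12}).

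The main obstacle is surjectivity of the reduction map at the level of the torus $T$, i.e.\ showing that the Weil number attached to $\pi\in L$ actually occurs on the special fibre of $\Sh(T,h)$ for \emph{some} valid choice of $h$ — equivalently, that one can choose a $p$-adic CM type on $L$ (a lifting of the slope decomposition of the $\mathfrak p$-adic places to a partition $\Phi\sqcup\iota\Phi$ of $\Hom(L,\mathbb Q^{\mathrm{al}})$) which is globally consistent, i.e.\ arises from an actual CM type. For the places of $L$ above $p$ this is forced by the slopes; the freedom lies at the archimedean/split places, and one must check that the resulting $\Phi$ satisfies $\Phi\sqcup\iota\Phi=\Hom(L,\mathbb Q^{\mathrm{al}})$ — this uses that $L$ is CM, which follows because $[L:\mathbb Q]=2\dim A$ forces $L$ to contain its own maximal totally real subfield with the right index once it splits $\End^0(A)$. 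Granting that combinatorial step (which is the content of ordinary Honda--Tate when $A_0$ is ordinary, and of Honda's general lifting theorem otherwise), the rest is bookkeeping: one lifts $\pi$ to an endomorphism $F$ of $A$ as in \ref{b13}, checks it reduces to Frobenius, and concludes $A$ has the required properties up to isogeny.
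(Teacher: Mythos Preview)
The paper's proof is a single-line citation: ``See \cite{tate1968}, Thm 2.'' In other words, the statement \emph{is} Honda's lifting theorem (in Tate's formulation), and the paper treats it as a known input, not something to be proved here.

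Your proposal dresses the statement up in Shimura-variety language --- a torus $T$, a PEL datum, an integral canonical model, surjectivity of reduction --- but when you reach the one nontrivial step (choosing a CM type $\Phi$ on $L$ compatible with the $p$-adic slope data), you yourself write that this ``is the content of \ldots\ Honda's general lifting theorem otherwise.'' So the argument is not an alternative proof: it is a reduction of the theorem to itself. The Shimura-variety scaffolding (integral models of zero-dimensional varieties, reduction maps on torus Shimura data) adds no independent leverage; everything genuine is deferred to exactly the result being cited.

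A minor point: you spend effort deducing that $L$ is CM from (b) and (c), but this is already part of the hypothesis (``let $L$ be a CM subfield''). If you want to give a self-contained proof rather than a citation, what is actually needed is the construction of the CM type: one must show that for each $p$-adic place $v$ of $L$ the slope $\mathrm{ord}_v(\pi)/\mathrm{ord}_v(q)$ lies in $[0,1]$ and that these local constraints can be completed to a CM type $\Phi$ with $\sum_{\sigma\in\Phi}\sigma$ inducing the correct $p$-adic valuations --- this is the Shimura--Taniyama congruence run backwards, and is the actual content of Honda's argument.
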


\begin{proof}
See \cite{tate1968}, Thm 2.
\end{proof}

\begin{theorem}
[?]\label{x25}Let $(A,\nu,\lambda)$ be a Weil triple over $\mathbb{Q}%
{}^{\mathrm{al}}$ with respect to $E$. Assume that the degree of $\lambda$ is
prime to $p$ and that $p$ is unramified in $E$. Let $R\subset\End_{E}^{0}(A)$
be a product of CM fields respecting the polarization and of degree $2\dim A$
over $\mathbb{Q}{}$. There exists a Weil triple $(A^{\prime},\nu^{\prime
},\lambda^{\prime})$ over $\mathbb{Q}{}^{\mathrm{al}}$ in the same family as
$(A,\nu,\lambda)$ equipped with an action of $R$ such that $(A^{\prime}%
,\nu^{\prime},\lambda^{\prime},R)_{0}$ is isogenous to $(A,\nu,\lambda,R)_{0}$.
\end{theorem}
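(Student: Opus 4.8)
The statement is most naturally read with $R$ a CM order inside $\End_{E}^{0}(A_{0})$ (so that $A_{0}$, the reduction of $A$ at $w$, is CM by $R$, with $R$ containing $\nu_{0}(E)$ and respecting $\lambda_{0}$): we must then lift $(A_{0},\nu_{0},\lambda_{0},R)$, up to isogeny, to a Weil triple lying in the Weil family of $(A,\nu,\lambda)$. The plan is to realize $(A,\nu,\lambda)$ and the desired lift as points of one PEL Shimura variety and to produce the lift by deformation theory, so that membership in the correct Weil family becomes automatic. Concretely, I would attach to $(A,\nu,\lambda)$ the PEL datum $(E,\nu,V,\psi)$ with $V=H_{1}(A,\mathbb{Q})$ and $\psi=\Tr_{E/\mathbb{Q}}(f\phi)$ the Riemann form of $\lambda$ ($\phi$ its $E$-hermitian part), and the Shimura datum $(G,X)$ with $G=\GU(\phi)$. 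The points of $\Sh_{K}(G,X)$ are exactly the Weil triples in the Weil family of $(A,\nu,\lambda)$, equipped with a $K$-level structure. Since $p$ is unramified in $E$ and $\deg\lambda$ is prime to $p$, one may take $K=K^{p}K_{p}$ with $K_{p}$ hyperspecial, and the PEL moduli interpretation (Kottwitz, Rapoport--Zink) gives a smooth integral model $\mathcal{S}$ at $w$. After choosing a level structure, $(A,\nu,\lambda)$ defines a point $x\in\Sh_{K}(G,X)(\mathbb{Q}^{\mathrm{al}})$; as $A$ has good reduction at $w$ and the $\mathcal{O}_{E}$-action and prime-to-$p$ polarization extend to the N\'eron model, $x$ extends to $\bar x\in\mathcal{S}(\mathcal{O}_{w})$ with reduction $\bar x_{0}=[(A_{0},\nu_{0},\lambda_{0}),\dots]\in\mathcal{S}(\mathbb{F})$.

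Next, $R$ makes $\bar x_{0}$ a special point of the special fibre $\mathcal{S}_{\mathbb{F}}$, and I would lift it using Grothendieck--Messing and Serre--Tate theory. Deformations of $(A_{0},\nu_{0},\lambda_{0})$ over $W(\mathbb{F})$ correspond to $\mathcal{O}_{E}$-stable, $\psi$-isotropic lifts of the Hodge filtration $\mathrm{Fil}^{1}D(A_{0})\subset D(A_{0})$; the tautological filtration is automatically $\End(A_{0})$-stable, hence $\mathcal{O}_{R}$-stable, and $R$ respects the polarization, so one looks for an $\mathcal{O}_{R}$-stable such lift. When $A_{0}$ is ordinary this is the Serre--Tate canonical lift, and $\nu_{0}$, $\lambda_{0}$, and the $R$-action all lift with it by Serre--Tate functoriality; in general one must lift the $p$-divisible group $A_{0}[p^{\infty}]$ together with its $\mathcal{O}_{R}$-action and polarization while staying on $\mathcal{S}$. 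Granting this, one obtains $\tilde x\in\mathcal{S}(W(\mathbb{F}))$ lifting $\bar x_{0}$ and carrying an action of $R$.

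Finally, $\tilde x_{\overline{\mathbb{Q}}_{p}}\in\mathcal{S}(\overline{\mathbb{Q}}_{p})=\Sh_{K}(G,X)(\overline{\mathbb{Q}}_{p})$ is a CM point (CM by $R$, with $[R\colon\mathbb{Q}]=2\dim A$), so its abelian variety is CM, hence defined over $\overline{\mathbb{Q}}$; therefore $\tilde x_{\overline{\mathbb{Q}}_{p}}=\iota_{p}(y)$ for a CM point $y\in\Sh_{K}(G,X)(\mathbb{Q}^{\mathrm{al}})$, where $\iota_{p}\colon\mathbb{Q}^{\mathrm{al}}\hookrightarrow\overline{\mathbb{Q}}_{p}$ is the embedding attached to $w$. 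Let $(A',\nu',\lambda')$ be the Weil triple carried by $y$, with $R$ acting via the CM structure. Then $(A',\nu',\lambda')$ lies in the Weil family of $(A,\nu,\lambda)$, since it is a point of $\Sh_{K}(G,X)$, whose PEL datum has $E$-hermitian form $\phi$ (here one uses the Hasse principle for hermitian forms over the CM field $E$); and, because $\iota_{p}(y)=\tilde x_{\overline{\mathbb{Q}}_{p}}$ and $\tilde x$ reduces to $\bar x_{0}$ with its $R$-action, $A'$ has good reduction at $w$ with $(A',\nu',\lambda',R)_{0}$ isogenous (indeed isomorphic up to level structure) to $(A_{0},\nu_{0},\lambda_{0},R)$. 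This gives the theorem.

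The step carrying all the weight is the existence, in the non-ordinary case, of the $\mathcal{O}_{R}$-equivariant lift of $\bar x_{0}$ inside $\mathcal{S}$ --- a purely local problem at $p$: lifting $A_{0}[p^{\infty}]$ with its $\mathcal{O}_{R}$-action and polarization to $W(\mathbb{F})$, equivalently exhibiting an admissible, $\mathcal{O}_{R}$-stable lift of the Hodge filtration. This is exactly the type of question treated in CM-lifting theory (Chai--Conrad--Oort), where ``residual'' obstructions can appear, particularly when $R$ is ramified at $p$. The hypotheses ``$p$ unramified in $E$'' and ``$\deg\lambda$ prime to $p$'' secure the smooth model and an $\mathcal{O}_{E}$-nice integral structure, and the Weil-type hypothesis forces the Hodge filtration to be balanced over $E$; I would try to exploit this by reducing, via \ref{a5}, to the split case, where the relevant $p$-divisible group is isogenous to a product of Lubin--Tate-type pieces admitting explicit CM lifts. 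Whether this dissolves the obstruction in full generality is precisely what the question mark on the theorem flags, and is the main obstacle.
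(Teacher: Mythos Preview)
The paper does not supply a proof of this statement: Theorem~\ref{x25} carries the question mark, and the paper's entire argument is the single line ``Compare \cite{zink1983}, especially Theorem 2.7.'' In the author's own words (footnote early in the paper), theorems so marked are posted as challenges; the author does not claim to be able to complete them. So there is nothing substantive to compare your proposal against.

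That said, your outline is entirely in the spirit of the hint. Zink's 1983 paper is precisely about lifting abelian varieties together with prescribed endomorphisms from characteristic $p$ to characteristic zero via the deformation theory of $p$-divisible groups, and your Grothendieck--Messing/Serre--Tate step is the natural mechanism. Your additional idea --- packaging the Weil-family constraint as membership in a fixed PEL Shimura variety $\Sh_{K}(\GU(\phi),X)$, so that any lift produced inside the integral model automatically lies in the correct family --- is a genuine contribution beyond the bare Zink reference, since Zink's theorem lifts $(A_{0},R)$ up to isogeny but says nothing about landing in a prescribed hermitian isometry class. The hypotheses ``$p$ unramified in $E$'' and ``$\deg\lambda$ prime to $p$'' are exactly what one needs for the hyperspecial level and the smooth Kottwitz model, so your use of them is apt.

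You have also correctly located the gap: producing an $\mathcal{O}_{R}$-stable, isotropic lift of the Hodge filtration inside the integral model when $A_{0}$ is not ordinary. This is the CM-lifting problem with a polarization and an $E$-linearity constraint, and residual obstructions of Chai--Conrad--Oort type are the issue. Your suggestion to reduce via \ref{a5} to the split case is plausible but not obviously decisive, since the decomposition in \ref{a5} is over $\mathbb{Q}^{\mathrm{al}}$ and need not respect the integral $p$-adic structure. In short: your plan is the right shape, it goes further than the paper does, and you have put your finger on exactly the point the question mark is flagging.
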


\begin{proof}
Compare \cite{zink1983}, especially Theorem 2.7.
\end{proof}

I expect that it will be possible to use Theorem \ref{x19} to prove Conjecture
A for abelian varieties with CM by a field $E$ unramified over $p$. The
general case is less certain.

\subsection{A variational approach to proving Conjecture A for Weil classes}

As explained in the introduction, the key to proving the conjectures is to
prove that split Weil classes are $w$-rational. Not having a proof, here we
simply make some remarks.

Let $(A_{1},\nu_{1},\lambda_{1})$ be a split Weil triple over $\mathbb{Q}%
^{\mathrm{al}}$ (relative to a CM algebra $E$) with good reduction at $w.$

\begin{plain}
\label{w10}If $(A_{1},\nu_{1},\lambda_{1})$ is of the form $(B\otimes
\mathcal{O}{}_{E},\ldots)$, then the Weil classes on $A_{1}$ are $w$-rational
because they are algebraic (\cite{deligne1982}, 4.5).
\end{plain}

\begin{plain}
\label{w12}Consider the Weil family $f\colon A\rightarrow S$ over
$\mathbb{Q}{}^{\mathrm{al}}$ containing $(A_{1},\nu_{1},\lambda_{1})$ (see
\ref{w2}). If $(A_{1},\nu_{1},\lambda_{1})_{0}$ lifts (up to isogeny) to a
triple in the family whose Weil classes are $w$-rational, then the Weil
classes on $A_{1}$ are $w$-rational (obviously).
\end{plain}

\begin{plain}
\label{w13}Let $\delta_{1}$ be a Lefschetz class on $(A_{1})_{0}$. If $B$ and
the family $\bar{f}\colon\bar{S}\rightarrow C$ in \ref{w7} can be chosen so
that $\delta_{1}$ extends to a section $\delta$ of $R^{2d^{\prime}}\bar
{f}_{\ast}\mathbb{A}{}(d^{\prime})$ satisfying the hypothesis of \ref{c13} (or
\ref{g13c}), then, for all global sections $\gamma$ of $W_{E}(A/S),$%
\[
\langle(\gamma_{1})_{0}\cdot\delta_{1}\rangle=\langle(\gamma_{2})_{0}%
\cdot\delta_{2}\rangle,
\]
which lies in $\mathbb{Q}{}$ because $\gamma_{2}$ is algebraic (\ref{w10}) and
$\delta_{2}$ is Lefschetz (\ref{c13}).
\end{plain}

\section{Conjecture B}

\label{CB}

\begin{quotation}
Every CM abelian variety $A$ over $\mathbb{Q}^{\mathrm{al}}$ has good
reduction at $w$ to an abelian variety $A_{0}$ over $\mathbb{F}$. The Hodge
classes on $A$ define a $\mathbb{Q}$-structure on the part of $H_{\mathbb{A}%
}^{2\ast}(A)(\ast)$ fixed by the Mumford--Tate group of $A$, and the Lefschetz
classes on $A_{0}$ define a $\mathbb{Q}$-structure on the part of
$H_{\mathbb{A}}^{2\ast}(A_{0})(\ast)$ fixed by the Lefschetz group of $A_{0}$.
The weak rationality conjecture says that the two structures are compatible.
\end{quotation}

In this section, we state the weak rationality conjecture (Conjecture B),
which is a variant of Conjecture A, and we suggest an variational proof of it
for CM abelian varieties.

\subsection{Statement of the conjecture}

Let $A$ be an abelian variety over $\mathbb{Q}^{\mathrm{al}}$ with good
reduction at $w$. Let $\gamma$ be a Hodge class in $H_{\mathbb{A}{}}^{2\ast
}(A)(\ast)$ and $\gamma_{0}$ its image in $H_{\mathbb{A}{}}^{2\ast}%
(A_{0})(\ast)$.

\begin{definition}
\label{c1} We say that $\gamma$ is $w$\emph{-Lefschetz} if $\gamma_{0}$ is
Lefschetz and \emph{weakly }$w$\emph{-Lefschetz} if $\gamma_{0}$ is weakly Lefschetz.
\end{definition}

Thus $\gamma$ is $w$-Lefschetz (resp.~weakly $w$-Lefschetz) if $\gamma_{0}$ is
in the $\mathbb{Q}{}$-algebra (resp.~$\mathbb{A}{}$-algebra) generated by the
divisor classes.

\begin{conjectureb}
\label{c3} Let $A$ be an abelian variety over $\mathbb{Q}{}^{\mathrm{al}}$
with good reduction at $w$, and let $\gamma$ be a Hodge class on $A$. If
$\gamma$ is weakly $w$-Lefschetz, then it is $w$-Lefschetz.
\end{conjectureb}

\begin{proposition}
\label{c4} Let $A$ be an abelian variety over $\mathbb{Q}{}^{\mathrm{al}}$
with good reduction at $w$, and let $\gamma$ be a Hodge class on $A$. If
$\gamma$ is $w$-rational (e.g., algebraic) and weakly $w$-Lefschetz, then it
is $w$-Lefschetz.
\end{proposition}

\begin{proof}
Let $\gamma$ be a $w$-rational Hodge class of codimension $r$ on $A$. Choose a
$\mathbb{Q}{}$-basis $e_{1},\ldots,e_{t}$ for the space of Lefschetz classes
of codimension $r$ on $A_{0}$, and let $f_{1},\ldots,f_{t}$ be the dual basis
for the space of Lefschetz classes of complementary dimension (here we use
\ref{l2}). If $\gamma$ is weakly $w$-Lefschetz, then $\gamma_{0}=\sum
c_{i}e_{i}$ with $c_{i}\in\mathbb{A}{}$. Now%
\[
\langle\gamma_{0}\cup f_{j}\rangle=c_{j},
\]
and $c_{j}$ lies in $\mathbb{Q}$ because $\gamma$ is $w$-rational.
\end{proof}

\begin{corollary}
\label{c4b}If Conjecture A holds for $A$, then so does Conjecture B.
\end{corollary}

\begin{proof}
Conjecture A says that all the Hodge classes on $A$ are $w$-rational
\end{proof}

If Conjecture B holds for all CM abelian varieties over $\mathbb{Q}%
{}^{\mathrm{al}}$, then so does Conjecture A (see \S 4).

\subsection{Homomorphisms in families}

In this subsection, we study how homomorphisms of abelian varieties behave in families.

We shall need one trivial lemma and two theorems.

\begin{lemma}
\label{c6}Let $Q$ be a field and $R$ a $Q$-algebra. Consider a commutative
diagram of linear maps%
\[
\renewcommand{\arraystretch}{1.3}\begin{tikzcd}
W\arrow{r}\arrow{d}{a}&W^{\prime}\arrow{d}{b}\\
V\arrow{r}&V^{\prime}%
\end{tikzcd}\quad%
\begin{array}
[c]{l}%
W,V\,\,\mathbb{Q}\text{-vector spaces}\\
W^{\prime},V^{\prime}\,\,R\text{-modules},
\end{array}
\]
If either $a$ or $b$ is injective and the horizontal arrows are such that%
\begin{equation}
W\otimes_{\mathcal{Q}}R\overset{\lsimeq}{\longrightarrow}W^{\prime},\quad
V\otimes_{\mathcal{Q}}R\hookrightarrow V^{\prime}, \tag{**}%
\end{equation}
then both $a$ and $b$ are injective, and
\[
W=V\cap W^{\prime}\qquad\text{(intersection in }V^{\prime}).
\]

\end{lemma}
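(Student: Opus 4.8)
The plan is to exploit the identification $W'\simeq W\otimes_{\mathbb{Q}}R$ provided by (**) in order to write the vertical map $b$ explicitly in terms of $a$; after that everything reduces to elementary linear algebra over $\mathbb{Q}$ taking place inside the $R$-module $V'$. First I would note that, since $\mathbb{Q}\hookrightarrow R$, the maps $m\mapsto m\otimes 1$ on $W$ and on $V$ are injective, so on composing with (**) the two horizontal arrows $W\to W'$ and $V\to V'$ are injective; henceforth I regard $W\subseteq W'$, $V\subseteq V'$, and $V\otimes_{\mathbb{Q}}R\subseteq V'$ (the last via the $R$-linear extension of the bottom arrow). Because $W'$ is generated as an $R$-module by the image of $W$ and $b$ is $R$-linear, $b$ is determined by its restriction to $W$, which by commutativity is the composite of $a$ with the bottom arrow $V\hookrightarrow V'$; hence $b$ equals the composite
\[
W'\simeq W\otimes_{\mathbb{Q}}R\xrightarrow{\ a\otimes\mathrm{id}_{R}\ }V\otimes_{\mathbb{Q}}R\hookrightarrow V',
\]
and in particular $b(W')$ is the image in $V'$ of $a(W)\otimes_{\mathbb{Q}}R$.

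From this description the equivalence ``$a$ is injective $\iff$ $b$ is injective'' is immediate: as $\mathbb{Q}$ is a field, $a\otimes\mathrm{id}_{R}$ is injective if and only if $a$ is (for the nontrivial direction, restrict $a\otimes\mathrm{id}_{R}$ to $W\otimes 1$), and the final arrow $V\otimes_{\mathbb{Q}}R\hookrightarrow V'$ is injective. Combined with the hypothesis that at least one of $a,b$ is injective, this gives that both are injective, which in particular makes the intersection in the last assertion meaningful.

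It remains to prove $W=V\cap W'$ inside $V'$. The inclusion $\subseteq$ is clear from commutativity. For $\supseteq$, using the formula for $b$ the claim becomes the purely linear fact that, inside $V\otimes_{\mathbb{Q}}R$, one has $(V\otimes 1)\cap\bigl(a(W)\otimes_{\mathbb{Q}}R\bigr)=a(W)\otimes 1$, which one proves by choosing a $\mathbb{Q}$-complement of $a(W)$ in $V$ and comparing components over $R$. I do not anticipate a genuine obstacle anywhere --- the lemma is \textquotedblleft trivial,\textquotedblright\ as the text says; the only point that deserves a word is the verification that $b$ is $R$-linear and that the square commutes on the nose, which is what forces $b$ to be the displayed composite, the remainder being bookkeeping about $\mathbb{Q}$-structures in $R$-modules.
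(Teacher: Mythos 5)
Your proof is correct and follows essentially the same route as the paper's: identify $W'$ with $W\otimes_{\mathbb{Q}}R$, express $b$ as $a\otimes\mathrm{id}_R$ followed by the inclusion $V\otimes_{\mathbb{Q}}R\hookrightarrow V'$, deduce the equivalence of injectivity of $a$ and $b$ from flatness over the field $\mathbb{Q}$, and prove $W=V\cap W'$ by picking a $\mathbb{Q}$-complement of $a(W)$ in $V$ and comparing components over $R$ (the paper picks a complement of $W$ in $V$, which is the same thing after the silent identification $W\cong a(W)$). Your remark that the argument tacitly requires $b$ to be $R$-linear is well taken; the paper does not state this hypothesis but uses it in exactly the same way when it replaces $V'$ by $V\otimes R$.
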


\begin{proof}
If $b$ is injective, then $a$ is injective because $W\rightarrow W^{\prime
}\xrightarrow{b}V^{\prime}$ is injective. Consider the diagram%
\[
\begin{tikzcd}
W\arrow{r}\arrow{d}{a}
&W\otimes_Q R\arrow{r}{\simeq}\arrow{d}{a\otimes 1}
&W^{\prime}\arrow{d}{b}\\
V\arrow{r}&V\otimes_Q R\arrow[hook]{r}&V^{\prime}%
\end{tikzcd}
\]
If $a$ is injective, then $b$ is injective because $W\otimes
R\xrightarrow{a\otimes1}V\otimes R\rightarrow V^{\prime}$ is injective and
$W\otimes R\rightarrow W^{\prime}$ is surjective. For the remaining statement,
we may replace $W^{\prime}$ and $V^{\prime}$ with $W\otimes R$ and $V\otimes
R$. Let $V=W\oplus U$, and let $v=w+u\in V$. Then
\[
v\otimes1=w\otimes1+u\otimes1\in(W\otimes R)\oplus(U\otimes R).
\]
If $v\otimes1\in W\otimes R$, then $u=0$ and so $v=w\in W$.
\end{proof}

\begin{theorem}
\label{c11}Let $A$ and $B$ be abelian schemes over a connected noetherian
normal scheme $S$. Every homomorphism $A_{\eta}\rightarrow B_{\eta}$ of the
generic fibres extends uniquely to a homomorphism $A\rightarrow B$ over $S$.
\end{theorem}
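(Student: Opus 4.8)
The plan is to deduce this from the structure of the relative Hom-scheme together with the Néron mapping property in codimension one.

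First I record the (easy) uniqueness. Since $S$ is connected, normal and noetherian it is integral, and since $A\to S$ is smooth $A$ is normal; being also connected (its fibres are geometrically connected and $A\to S$ is submersive) $A$ is integral, so the generic fibre $A_{\eta}$ is schematically dense in $A$. As $B\to S$ is separated, two homomorphisms $A\to B$ agreeing on $A_{\eta}$ coincide. This gives uniqueness, and also guarantees that any local extensions one produces are automatically compatible.

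For existence I would work with the scheme $H=\underline{\mathrm{Hom}}_{S}(A,B)$ representing $T\mapsto\mathrm{Hom}_{T}(A_{T},B_{T})$. The two structural inputs I rely on — both classical for abelian schemes (Grothendieck; Mumford, \emph{GIT}, Ch.~6) — are that $H\to S$ is \emph{separated} and \emph{unramified}; the latter is the infinitesimal rigidity of homomorphisms of abelian schemes, and is really the conceptual heart of the statement. Given $f\colon A_{\eta}\to B_{\eta}$, it is a point $[f]\in H$ lying over $\eta$; let $\bar T\subseteq H$ be its closure, equipped with the reduced structure, so $\bar T$ is integral and $\bar T\to S$ is birational (an isomorphism over $\eta$). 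To see $\bar T\to S$ is \emph{proper} I would check the valuative criterion: for a discrete valuation ring $R$ with fraction field $L$, a map $\mathrm{Spec}\,R\to S$, and a lift $\mathrm{Spec}\,L\to\bar T$, the latter is a homomorphism $A_{L}\to B_{L}$ of abelian varieties; since $A_{R}$ is smooth over $R$ and the abelian scheme $B_{R}$ is its own Néron model, this homomorphism extends (uniquely) to $A_{R}\to B_{R}$, i.e.\ to a map $\mathrm{Spec}\,R\to H$, which factors through the closed subscheme $\bar T$ because its generic point does and $\mathrm{Spec}\,R$ is reduced. (One mild point: to have $\bar T\to S$ of finite type one first replaces $H$ by the open-and-closed, finite-type-over-$S$, piece of $\underline{\mathrm{Hom}}$ containing $[f]$, say the piece cut out by a bound on a polarization norm.) Now $\bar T\to S$ is separated (closed in the separated $H$), unramified (composite of a closed immersion with $H\to S$), and proper. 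An unramified morphism has discrete fibres, hence is quasi-finite; a separated quasi-finite proper morphism is finite; and a finite birational morphism onto a normal integral noetherian scheme is an isomorphism. Thus $\bar T\xrightarrow{\sim}S$, and the inverse $S\to\bar T\hookrightarrow H$ is the desired homomorphism $A\to B$ extending $f$.

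The part where all the content sits is the two cited inputs: that $\underline{\mathrm{Hom}}_{S}(A,B)$ is unramified over $S$ (rigidity), and that an abelian scheme over a discrete valuation ring is its own Néron model, so that homomorphisms extend in codimension one. Everything else is formal, exploiting the normality of $S$ (to turn the global extension problem into an isomorphism check after taking closures) together with elementary facts about quasi-finite and proper morphisms. As a self-contained variant one can bypass the Hom-scheme and take $Z\subseteq A\times_{S}B$ to be the scheme-theoretic closure of the graph of $f$; the same two inputs then show $Z\to A$ is proper, unramified (hence finite) and birational, therefore an isomorphism, and $Z$ is the graph of the sought homomorphism $A\to B$.
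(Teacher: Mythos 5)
Your argument is correct, and it rests on the same essential input as the paper's very terse proof: an abelian scheme over a discrete valuation ring is its own N\'eron model, so homomorphisms extend over codimension-one points. The paper states only the one-dimensional case and defers the general reduction to Chai--Faltings I.2.7; you make the reduction explicit, and your packaging is a clean one. You use separatedness and unramifiedness (rigidity) of the Hom-scheme $H=\underline{\Hom}_{S}(A,B)$ to translate the extension problem into showing that the closure $\bar T$ of $[f]$ in $H$ maps isomorphically onto $S$, with the N\'eron property over DVRs supplying properness of $\bar T\to S$ via the valuative criterion and normality of $S$ closing the argument; you also correctly flag and fix the finite-type issue for $H$. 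This buys a uniform argument that never needs to manipulate rational maps, whereas the more traditional route (and what the citation points to) first extends the morphism over all codimension-one points and then invokes a Weil-type extension theorem for rational maps from a normal scheme into a smooth separated group scheme, trading the Hom-scheme formalism for that extension theorem. One small caution on the ``self-contained variant'' at the end: the assertion that the closure $Z$ of the graph has $Z\to A$ unramified is not automatic --- away from the generic fibre $Z$ is not yet known to be a graph --- so quasi-finiteness of $Z\to A$ again needs rigidity or the DVR extension as input, and that variant is therefore not genuinely lighter than your main argument, which is complete as written.
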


\begin{proof}
When $\dim(S)=1$, $B$ is the N\'{e}ron model of $B_{\eta}$, so this follows
from the universal property of such models. For a proof that the general case
follows from this case, see \cite{chaiF1990}, I, Proposition 2.7.
\end{proof}

\begin{theorem}
[Tate, de Jong]\label{c7} Let $G$ and $H$ be $p$-divisible groups over a
connected noetherian normal scheme $S$. Every homomorphism $G_{\eta
}\rightarrow H_{\eta}$ of the generic fibres extends uniquely to a
homomorphism $G\rightarrow H$ over $S$.
\end{theorem}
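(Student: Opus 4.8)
The plan is to reduce the statement to the case where $S$ is the spectrum of a discrete valuation ring, where it is the (deep) theorem of Tate and de~Jong, and to carry out the reduction using the representability of $\Hom$ of finite locally free group schemes together with the normality of $S$. First I would settle uniqueness and the passage from local pieces to $S$. A homomorphism $G\to H$ amounts to a compatible system of homomorphisms $G[p^{n}]\to H[p^{n}]$ of finite locally free $S$-group schemes; since $S$ is connected and normal it is integral, and $G[p^{n}]$, being finite locally free over $S$, has no $\mathcal{O}_{S}$-torsion, so its generic fibre $G[p^{n}]_{\eta}$ is schematically dense in $G[p^{n}]$. As $H[p^{n}]$ is separated over $S$, two homomorphisms $G[p^{n}]\to H[p^{n}]$ agreeing over $\eta$ coincide; this gives the uniqueness, and shows that extensions of $\phi_{\eta}$ built over various opens of $S$ glue. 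Hence it suffices, for each $n$, to extend $\phi_{\eta}[p^{n}]$ to a homomorphism $G[p^{n}]\to H[p^{n}]$ over all of $S$, these then being automatically compatible.

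Fix $n$. The functor sending an $S$-scheme $T$ to the set $\Hom_{T}(G[p^{n}]_{T},H[p^{n}]_{T})$ of homomorphisms of group schemes is represented by a scheme $\mathcal{H}_{n}$ that is a closed subscheme of the vector bundle $\underline{\Hom}_{\mathcal{O}_{S}}\!\big(\mathcal{O}_{H[p^{n}]},\mathcal{O}_{G[p^{n}]}\big)$ over $S$ — cut out by the closed conditions of being a ring- and a coalgebra-homomorphism — so $\mathcal{H}_{n}$ is affine and of finite presentation over $S$. Let $U_{0}\subseteq S$ be the largest open subscheme over which $\phi_{\eta}[p^{n}]$ extends to a section of $\mathcal{H}_{n}$; it exists because $\mathcal{H}_{n}$ is separated over $S$, so such extensions glue. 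If $x\in S$ has codimension $1$, then $\mathcal{O}_{S,x}$ is a discrete valuation ring, and, granting the theorem over it, $\phi_{\eta}$ extends over $\Spec\mathcal{O}_{S,x}$; hence $\phi_{\eta}[p^{n}]$ extends to a section of $\mathcal{H}_{n}$ over $\Spec\mathcal{O}_{S,x}$, which spreads out over an open neighbourhood of $x$ (as $\mathcal{H}_{n}$ is of finite presentation over $S$) and there agrees with the section over $U_{0}$ (compare over $\eta$ and use that $\mathcal{H}_{n}$ is separated). So $U_{0}$ contains every point of codimension $\le1$, whence $S\smallsetminus U_{0}$ has codimension $\ge2$; since $S$ is normal, algebraic Hartogs extends the $U_{0}$-section of the ambient vector bundle over all of $S$, and as $U_{0}$ is schematically dense in the reduced scheme $S$ while $\mathcal{H}_{n}$ is closed, the extended section factors through $\mathcal{H}_{n}$. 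Thus $U_{0}=S$, the homomorphisms $G[p^{n}]\to H[p^{n}]$ assemble, and we are reduced to $S=\Spec R$ with $R$ a discrete valuation ring.

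After faithfully flat descent along $R\to\widehat R$ (using $R=\widehat R\cap K$ inside $\widehat K$) I may assume $R$ complete with fraction field $K$. If $\operatorname{char}(K)=0$, this is Tate's theorem: $G\rightsquigarrow T_{p}(G_{\bar K})$ is a fully faithful functor from $p$-divisible groups over $R$ to $\mathbb{Z}_{p}$-representations of $\Gal(\bar K/K)$, and since the composite $\Hom_{R}(G,H)\hookrightarrow\Hom_{K}(G_{K},H_{K})\hookrightarrow\Hom_{\Gal(\bar K/K)}\!\big(T_{p}G_{\bar K},T_{p}H_{\bar K}\big)$ is bijective, the first arrow is too. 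If $\operatorname{char}(K)=p$, one argues identically with $T_{p}$ replaced by the crystalline Dieudonn\'e crystal $\mathbb{D}(-)$, the relevant full faithfulness being de~Jong's theorem. \textbf{This equal-characteristic input is the genuine obstacle:} it rests on de~Jong's full faithfulness of the crystalline Dieudonn\'e functor, proved through formal and rigid geometry, for which there is no short route; the uniqueness, the gluing, the reduction to a discrete valuation ring, and even the mixed-characteristic case via Tate's Hodge--Tate theory are by comparison formal.
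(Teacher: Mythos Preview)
Your argument is correct, but it does more work than the paper's proof, which consists solely of two citations: Tate's Theorem~4 (1967) for $\operatorname{char}K=0$ and de~Jong's Theorem~2 (1998) for $\operatorname{char}K=p$. The point is that both results are already stated in those papers for an arbitrary noetherian normal (equivalently, integrally closed) base, not merely for a discrete valuation ring; so no reduction is needed at the level of the present paper.

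What you have written is, in effect, a clean reconstruction of the reduction step that Tate and de~Jong themselves perform inside their proofs: schematic density for uniqueness, representability of $\underline{\Hom}(G[p^{n}],H[p^{n}])$ as a closed subscheme of a vector bundle, Hartogs on a normal base to pass from codimension~$\le 1$ to all of $S$, and the descent $R=\widehat R\cap K$ to reach the complete DVR. All of these steps are sound. The genuine depth, as you correctly flag, lies entirely in the complete DVR input --- Tate's Hodge--Tate full faithfulness in mixed characteristic and de~Jong's crystalline full faithfulness in equal characteristic $p$ --- and your write-up makes this dependency explicit, which the paper's bare citation does not.
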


\begin{proof}
Let $\eta=\Spec K$. When $K$ has characteristic zero, this is Theorem 4 of
\cite{tate1967}, and when $K$ has characteristic $p\neq0$, it is Theorem 2 of
\cite{dejong1998}.
\end{proof}

For an abelian scheme $A$ over a scheme $S$ and integer $n>0$, we let
\[
A_{n}=\Ker(n\colon A\rightarrow A).
\]
This is a finite flat group scheme over $S$, and we let $TA$ denote the
projective system $(A_{n})_{n}$. Then $A\rightsquigarrow TA$ is a faithful
functor, compatible with base change.

\begin{theorem}
\label{c8}Let $A$ and $B$ be abelian schemes over a connected normal scheme
$S$ of finite type over a field $k$, and let $u\colon TA\rightarrow TB$ be a
homomorphism. If there exists a closed point $s\in S$ such that $u_{s}\colon
TA_{s}\rightarrow TB_{s}$ equals $Tw$ for some $w\colon A_{s}\rightarrow
B_{s}$, then there exists an integer $n>0$ and a homomorphism $v\colon
A\rightarrow B$ such that $Tv=n\,u$.
\end{theorem}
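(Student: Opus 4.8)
The plan is to move the problem to the generic point $\eta=\Spec K$ of $S$, solve it there with the Tate conjecture over the function field $K$, and descend the solution using the rigidity Theorems \ref{c11} and \ref{c7}. Write $p$ for the characteristic exponent of $k$. The homomorphism $u$ decomposes into its primary parts: for each prime $\ell\neq p$ a morphism $u_{\ell}\colon T_{\ell}A\to T_{\ell}B$ of lisse $\mathbb{Z}_{\ell}$-sheaves on $S$, and, when $p\neq 1$, a morphism $u_{p}\colon A[p^{\infty}]\to B[p^{\infty}]$ of Barsotti--Tate groups over $S$; conversely $u$ is recovered from the family $(u_{\ell})_{\ell}$. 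Since $S$ is normal and connected it is irreducible; fix geometric points $\bar{\eta}$ over $\eta$ and $\bar{s}$ over $s$ together with a path joining them in $S$. By Theorem \ref{c11} (resp.\ Theorem \ref{c7}) the restriction maps $\Hom_{S}(A,B)\to\Hom_{\eta}(A_{\eta},B_{\eta})$ and $\Hom_{S}(A[p^{\infty}],B[p^{\infty}])\to\Hom_{\eta}(A_{\eta}[p^{\infty}],B_{\eta}[p^{\infty}])$ are bijections, and a morphism of lisse sheaves on $S$ is determined by its stalk at $\bar{\eta}$. Hence it suffices to produce $v_{\eta}\in\Hom_{K}(A_{\eta},B_{\eta})$ and an integer $n>0$ with $Tv_{\eta}=n\,u_{\eta}$: then $v_{\eta}$ extends to some $v\colon A\to B$ over $S$ by Theorem \ref{c11}, and the identity $Tv=n\,u$, valid over $\eta$, holds over $S$ by the two displayed rigidity statements.

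Fix once and for all a prime $\ell_{0}\neq p$, write $V_{\ell}(-)=T_{\ell}(-)\otimes_{\mathbb{Z}_{\ell}}\mathbb{Q}_{\ell}$ and $G_{K}=\Gal(K^{\mathrm{sep}}/K)$. The stalk at $\bar{\eta}$ of the $\pi_{1}(S)$-equivariant sheaf map $u_{\ell_{0}}$ is a $G_{K}$-equivariant homomorphism $u_{\eta,\ell_{0}}\colon V_{\ell_{0}}A_{\eta}\to V_{\ell_{0}}B_{\eta}$. By the Tate conjecture for abelian varieties over the finitely generated field $K$ (Faltings in characteristic zero; Zarhin, Faltings and de Jong in characteristic $p$ --- after, if necessary, spreading $A_{\eta},B_{\eta}$ out to a subfield finitely generated over the prime field), there is a unique $v_{0}\in\Hom^{0}_{K}(A_{\eta},B_{\eta})$ with $V_{\ell_{0}}v_{0}=u_{\eta,\ell_{0}}$. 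This is the one genuinely deep step; everything that follows is formal and uses only Theorems \ref{c11}, \ref{c7}, faithfulness of the Tate-module realizations, and linear algebra.

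First I would show that $\mathrm{sp}(v_{0})=w$ in $\Hom^{0}_{k(s)}(A_{s},B_{s})$, where $\mathrm{sp}$ denotes specialization at $s$, defined by extending $v_{0}$ --- after clearing a denominator --- to $S$ via Theorem \ref{c11} and restricting to $s$. Indeed, $v_{0}$ viewed over $S$ gives the lisse-sheaf map $T_{\ell_{0}}v_{0}$, whose stalks at $\bar{\eta}$ and at $\bar{s}$ are $V_{\ell_{0}}v_{0}$ and $V_{\ell_{0}}(\mathrm{sp}(v_{0}))$ and correspond under the chosen path; likewise the stalks of $u_{\ell_{0}}$ are $u_{\eta,\ell_{0}}$ and $V_{\ell_{0}}w$ (the $\ell_{0}$-primary part of the hypothesis $u_{s}=Tw$). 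Since $V_{\ell_{0}}v_{0}=u_{\eta,\ell_{0}}$, transport along the path gives $V_{\ell_{0}}(\mathrm{sp}(v_{0}))=V_{\ell_{0}}w$, and faithfulness of $T_{\ell_{0}}$ on $\Hom^{0}$ of abelian varieties over $k(s)$ yields $\mathrm{sp}(v_{0})=w$. Next I would compare $u_{\eta}$ with $Tv_{0}$ part by part. For $\ell\neq p$: since $T_{\ell}A$ is a lisse sheaf on $S$, the path gives a canonical isomorphism $V_{\ell}A_{\eta}\xrightarrow{\sim}V_{\ell}A_{\bar{s}}$ (and the same for $B$), compatible with the $\pi_{1}(S)$-actions, under which the forgetful inclusion $\Hom_{G_{K}}(V_{\ell}A_{\eta},V_{\ell}B_{\eta})\hookrightarrow\Hom_{\mathbb{Q}_{\ell}}(V_{\ell}A_{\bar{s}},V_{\ell}B_{\bar{s}})$ is injective; both $u_{\eta,\ell}$ (the stalk of $u_{\ell}$ at $\bar{\eta}$) and $V_{\ell}v_{0}$ map to $V_{\ell}w$ under it --- the former because the stalk of $u_{\ell}$ at $\bar{s}$ is the $\ell$-part of $u_{s}=Tw$, the latter because $\mathrm{sp}(v_{0})=w$ --- so $u_{\eta,\ell}=V_{\ell}v_{0}$. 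When $p\neq1$ the $p$-primary parts $u_{\eta,p}$ and $v_{0}[p^{\infty}]$ are (rational) homomorphisms of Barsotti--Tate groups over $\eta$ which extend to $S$ by Theorem \ref{c7} and restrict at $s$ to $w[p^{\infty}]$; since a homomorphism of $p$-divisible groups over the normal connected base $S$ that vanishes on the fibre at $s$ vanishes (over $\Spec\mathcal{O}_{S,s}$ its image is a Barsotti--Tate subgroup of $B[p^{\infty}]$, flat with trivial fibre at $s$, hence of height $0$), specialization at $s$ is injective there and therefore $u_{\eta,p}=v_{0}[p^{\infty}]$. Collecting all primary parts, $u_{\eta}=Tv_{0}$ in $\Hom(TA_{\eta},TB_{\eta})\otimes\mathbb{Q}$.

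Finally I would write $v_{0}=v_{\eta}/n$ with $v_{\eta}\in\Hom_{K}(A_{\eta},B_{\eta})$ and $n>0$; then $Tv_{\eta}=n\,u_{\eta}$ over $\eta$, and extending $v_{\eta}$ to $v\colon A\to B$ over $S$ via Theorem \ref{c11} gives $Tv=n\,u$ over $S$ by the rigidity recalled in the first paragraph, as required. The only obstacle of substance is the appeal to the Tate conjecture over $K$ in the second paragraph; I expect it to be essential, since for $S$ a point the assertion is trivial (the hypothesis then reads $u=Tw$), whereas in positive relative dimension it forces the a priori independent $\ell$-adic pieces of $u_{\eta}$ to come from a single quasi-homomorphism --- exactly the kind of rigidity the Tate conjecture supplies --- while Theorems \ref{c11} and \ref{c7} then do the rest.
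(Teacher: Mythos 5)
Your overall strategy --- reduce to the generic fibre via Theorems \ref{c11} and \ref{c7}, invoke the Tate conjecture (Tate, Zarhin, Faltings) over the finitely generated field $k(\eta)$, and then specialise at $s$ --- is the same as the paper's, and the reduction to $\eta$ and the extension back to $S$ are fine. The genuine gap is in your step producing $v_{0}$. The Tate conjecture over $K$ at a single prime $\ell_{0}$ gives an isomorphism
$\Hom^{0}_{K}(A_{\eta},B_{\eta})\otimes_{\mathbb{Q}}\mathbb{Q}_{\ell_{0}}\to\Hom_{G_{K}}(V_{\ell_{0}}A_{\eta},V_{\ell_{0}}B_{\eta})$;
it therefore exhibits $u_{\eta,\ell_{0}}$ as a $\mathbb{Q}_{\ell_{0}}$-linear combination $\sum c_{i}V_{\ell_{0}}(v_{i})$ of rational homomorphisms, but does \emph{not} produce a single element $v_{0}\in\Hom^{0}_{K}(A_{\eta},B_{\eta})$ with $V_{\ell_{0}}v_{0}=u_{\eta,\ell_{0}}$. (Already for a CM elliptic curve with $\ell_{0}$ split in the CM field, $\End^{0}\otimes\mathbb{Z}_{\ell_{0}}\simeq\mathbb{Z}_{\ell_{0}}\times\mathbb{Z}_{\ell_{0}}$ contains many non-rational $G_{K}$-equivariant maps.) Your later steps lean on this rationality --- notably when you form $V_{\ell}v_{0}$ for $\ell\neq\ell_{0}$ and when you factor $v_{0}=v_{\eta}/n$ --- so the argument collapses at this point.

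What fixes this, and what the paper actually does, is to keep the full ad\`elic module in play from the start and force rationality through linear algebra rather than at one prime. Writing $W=\Hom^{0}(A,B)\simeq\Hom^{0}(A_{\eta},B_{\eta})$, $W'=\Hom(TA,TB)_{\mathbb{Q}}\simeq\Hom(TA_{\eta},TB_{\eta})_{\mathbb{Q}}$, $V=\Hom^{0}(A_{s},B_{s})$, and $V'=\Hom(TA_{s},TB_{s})_{\mathbb{Q}}$, the Tate conjecture over $k(\eta)$ gives $W\otimes_{\mathbb{Q}}\mathbb{A}\xrightarrow{\sim}W'$, faithfulness of the Tate module gives $V\otimes_{\mathbb{Q}}\mathbb{A}\hookrightarrow V'$, and the restriction map $a\colon W\to V$ is injective. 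Lemma \ref{c6} then yields $W=V\cap W'$ inside $V'$. Your $u$ gives a class in $W'$, the hypothesis $u_{s}=Tw$ places its image in $V$, so it lies in $W=\Hom^{0}(A,B)$; clearing a denominator produces $v$ with $Tv=nu$. The linear-algebra lemma is not cosmetic: it is precisely what converts ``$\mathbb{Q}_{\ell}$-linear combination of rational maps at every $\ell$, specialising to a rational map at $s$'' into ``rational map,'' and is the step your single-prime shortcut skips.
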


\begin{proof}
Grothendieck (1966, p.~60)\nocite{grothendieck1966} states this as a
conjecture, but remarks that it is a consequence of the Tate conjecture. We
explain how. In proving the theorem, we may suppose that the field $k$ is a
finitely generated field (ibid.\ 2.2).\footnote{Alternatively, replace
$\Hom(TA,TB)$ etc.\ with $\dlim\Hom(TA^{\prime},TB^{\prime}))$, where the
limit runs over the models of $A\to S$, $B\to S$ over finitely generated
subfields of k).} Consider the diagram%
\[
\begin{tikzcd}
\Hom(A_{\eta},B_{\eta})\arrow{r}&\Hom(TA_{\eta},TB_{\eta})\\
\Hom(A,B)\arrow{r}\arrow{u}[swap]{\simeq}\arrow{d}{a}
&\Hom(TA,TB)\arrow{u}[swap]{\simeq}\arrow{d}{b}\\
\Hom(A_{s},B_{s})\arrow{r}&\Hom(TA_{s},TB_{s}).
\end{tikzcd}
\]
The restriction maps%
\begin{align*}
\Hom(A,B)  &  \rightarrow\Hom(A_{\eta},B_{\eta})\\
\Hom(TA,TB)  &  \rightarrow\Hom(TA_{\eta},TB_{\eta})
\end{align*}
are bijective by Theorems \ref{c11} and \ref{c7}. The top and bottom
horizontal maps induce isomorphisms%
\begin{align}
\Hom^{0}(A_{\eta},B_{\eta})\otimes_{\mathbb{Q}}\mathbb{A}  &  \rightarrow
\Hom(TA_{\eta},TB_{\eta})_{\mathbb{Q}{}}\\
\Hom^{0}(A_{s},B_{s})\otimes_{\mathbb{Q}{}}\mathbb{A}{}  &  \rightarrow
\Hom(TA_{s},TB_{s})_{\mathbb{Q}{}}.
\end{align}
by the Tate conjecture (proved in this case by Tate, Zarhin, and Faltings).
The map $a$ is injective because $\Hom(T_{l}A,T_{l}B)\rightarrow
\Hom(T_{l}A_{s},T_{l}B_{s})$ is obviously injective. On applying Lemma
\ref{c6} to the bottom square, we find that%
\[
\Hom^{0}(A,B)=\Hom^{0}(A_{s},B_{s})\cap\Hom(TA,TB)_{\mathbb{Q}}%
\]
(intersection inside $\Hom(TA_{s},TB_{s})_{\mathbb{Q}}$) as required.
\end{proof}

\begin{corollary}
\label{c18}With the notation of the theorem,%
\[
\Hom^{0}(A,B)=\Hom^{0}(A_{s},B_{s})\cap\Hom(TA_{\eta},TB_{\eta})\otimes
\mathbb{Q}{}%
\]
(intersection inside $\Hom(TA_{s},TB_{s})\otimes\mathbb{Q}{}$).
\end{corollary}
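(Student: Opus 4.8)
The plan is to read Corollary~\ref{c18} off Theorem~\ref{c8}, replacing $\Hom(TA,TB)_{\mathbb{Q}}$ by $\Hom(TA_{\eta},TB_{\eta})\otimes\mathbb{Q}$. The only input needed is the fact, already used in the proof of Theorem~\ref{c8}, that the restriction map
\[
\rho\colon\Hom(TA,TB)\longrightarrow\Hom(TA_{\eta},TB_{\eta})
\]
is bijective. For the part prime to $p$ this follows from the proper-smooth base change theorem together with the surjectivity of $\pi_{1}(\eta)\to\pi_{1}(S)$ ($S$ being normal connected): a morphism of Tate modules that is $\pi_{1}(\eta)$-equivariant is automatically $\pi_{1}(S)$-equivariant, since the kernel of $\pi_{1}(\eta)\to\pi_{1}(S)$ acts trivially on both sides. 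For the $p$-primary part it is Theorem~\ref{c7}.

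First I would tensor $\rho$ with $\mathbb{Q}$, obtaining an isomorphism $\rho_{\mathbb{Q}}\colon\Hom(TA,TB)_{\mathbb{Q}}\xrightarrow{\,\sim\,}\Hom(TA_{\eta},TB_{\eta})\otimes\mathbb{Q}$. Next I would observe that the restriction-to-$s$ map $b_{\mathbb{Q}}\colon\Hom(TA,TB)_{\mathbb{Q}}\to\Hom(TA_{s},TB_{s})\otimes\mathbb{Q}$ appearing in the proof of Theorem~\ref{c8} factors through $\rho_{\mathbb{Q}}$, say $b_{\mathbb{Q}}=\mathrm{sp}_{s}\circ\rho_{\mathbb{Q}}$ with $\mathrm{sp}_{s}\colon\Hom(TA_{\eta},TB_{\eta})\otimes\mathbb{Q}\to\Hom(TA_{s},TB_{s})\otimes\mathbb{Q}$ the specialization map implicit in the statement of the corollary (induced prime to $p$ by a geometric specialization and the base change isomorphisms, and on the $p$-part by reduction of $p$-divisible groups). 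This factorization is immediate, since all three maps are restriction of homomorphisms of sheaves ($\ell\neq p$) and of $p$-divisible groups ($\ell=p$) along the generic point $\eta$ and the closed point $s$ of $S$, and such restriction is functorial. Hence, viewed inside $\Hom(TA_{s},TB_{s})\otimes\mathbb{Q}$, the subspace $\Hom(TA,TB)_{\mathbb{Q}}$ coincides with the image of $\Hom(TA_{\eta},TB_{\eta})\otimes\mathbb{Q}$; substituting this into the identity
\[
\Hom^{0}(A,B)=\Hom^{0}(A_{s},B_{s})\cap\Hom(TA,TB)_{\mathbb{Q}}
\]
of Theorem~\ref{c8} yields the corollary. (Recall that $\Hom(TA_{s},TB_{s})_{\mathbb{Q}}$ and $\Hom(TA_{s},TB_{s})\otimes\mathbb{Q}$ denote the same object.)

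There is essentially no obstacle: the corollary is a formal rephrasing of Theorem~\ref{c8} via the bijection $\rho$. The only point meriting a sentence is the compatibility $b_{\mathbb{Q}}=\mathrm{sp}_{s}\circ\rho_{\mathbb{Q}}$, i.e.\ that extending a homomorphism from $\eta$ to all of $S$ and then restricting to $s$ returns the original restriction to $s$; this is just functoriality of restriction, together with the uniqueness of the extensions supplied by Theorems~\ref{c11} and~\ref{c7}.
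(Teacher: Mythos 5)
The corollary is stated in the paper without proof, as an immediate rephrasing of the identity
$\Hom^{0}(A,B)=\Hom^{0}(A_{s},B_{s})\cap\Hom(TA,TB)_{\mathbb{Q}}$ obtained at the end of the proof of Theorem~\ref{c8}, using the bijectivity of $\Hom(TA,TB)\to\Hom(TA_{\eta},TB_{\eta})$ already cited there (Theorems~\ref{c11} and~\ref{c7}, and for $\ell\neq p$ the local-constancy plus surjectivity of $\pi_{1}(\eta)\to\pi_{1}(S)$). Your argument makes exactly this point --- including the compatibility of the two restriction maps through $s$ --- so it is correct and follows essentially the same route the paper intends.
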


\begin{remark}
\label{c12}A d\'{e}vissage (\cite{grothendieck1966}, 1.2) shows that the
Theorem \ref{c8} is true over any reduced connected scheme $S$, locally of
finite type over $\Spec(\mathbb{Z})$ or a field.
\end{remark}

\subsection{Divisor classes in families}

Let $A$ be an abelian variety. We say that an element of $H_{\mathbb{A}{}%
}^{2n}(A)(n)$ is \emph{algebraic} if it is in the $\mathbb{Q}$-span of the
classes of algebraic cycles, i.e., if it is in the image of the cycle class
map $\mathrm{CH}^{n}(A)\otimes\mathbb{Q}\rightarrow H_{\mathbb{A}}^{2n}(A)(n)$.

\begin{theorem}
\label{c9}Let $f\colon A\rightarrow S$ be an abelian scheme over a connected
normal scheme $S$ of finite type over a field $k$, and let $\gamma$ be a
global section of $R^{2}f_{\ast}\mathbb{A}(1)$. If $\gamma_{s}\in H^{2}%
(A_{s},\mathbb{A}(1))$ is algebraic for one closed $s\in S$, then it is
algebraic for all closed $s\in S$.
\end{theorem}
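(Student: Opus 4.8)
The plan is to reduce the assertion to the corresponding statement about homomorphisms of abelian schemes, Corollary~\ref{c18}. Write $A^{\vee}$ for the dual abelian scheme. On an abelian scheme one has $R^{2}f_{\ast}\mathbb{A}(1)=\wedge^{2}R^{1}f_{\ast}\mathbb{A}(1)$, and the relative Weil pairing identifies $R^{1}f_{\ast}\mathbb{A}(1)$ with the adelic Tate module $\underline{\Hom}(TA,\mathbb{A}(1))$ of $A^{\vee}$; hence $R^{2}f_{\ast}\mathbb{A}(1)$ is canonically a direct summand of $\underline{\Hom}(TA,TA^{\vee})$ --- the ``symmetric'' one, corresponding under the classical identification $\mathrm{NS}^{0}(A)=\Hom^{0}(A,A^{\vee})^{\mathrm{sym}}$ (\cite{mumford1970}; cf.\ \ref{x20}) to the divisor classes. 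For $\ell\neq p$, a global (hence monodromy-invariant) section of the locally constant sheaf $R^{2}f_{\ast}\mathbb{Z}_{\ell}(1)$ is literally a homomorphism of $\mathbb{Z}_{\ell}$-sheaves $T_{\ell}A\rightarrow T_{\ell}A^{\vee}$ over $S$; for the $p$-adic (crystalline, or in characteristic $0$ the $p$-divisible) component, the section gives over the generic point $\eta$ a homomorphism of $p$-divisible groups $A_{\eta}[p^{\infty}]\rightarrow A_{\eta}^{\vee}[p^{\infty}]$ --- by full faithfulness of the Dieudonn\'{e}--crystal functor over $k(\eta)$ (\cite{dejong1998}) in characteristic $p$, and by Faltings in characteristic $0$ --- which extends over $S$ by Theorem~\ref{c7}. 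In this way $\gamma$ defines a class $\bar{\gamma}\in\Hom(TA_{\eta},TA_{\eta}^{\vee})\otimes\mathbb{Q}$ whose image under the realization functors recovers $\gamma_{\eta}$.

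Now invoke the hypothesis. To say that $\gamma_{s}\in H^{2}(A_{s},\mathbb{A}(1))$ is algebraic means that it lies in the image of $\mathrm{CH}^{1}(A_{s})\otimes\mathbb{Q}$, i.e., in $\mathrm{NS}^{0}(A_{s})=\Hom^{0}(A_{s},A_{s}^{\vee})^{\mathrm{sym}}$; equivalently, the image $\bar{\gamma}_{s}$ of $\bar{\gamma}$ in $\Hom(TA_{s},TA_{s}^{\vee})\otimes\mathbb{Q}$ lies in the subspace $\Hom^{0}(A_{s},A_{s}^{\vee})$. Hence, by Corollary~\ref{c18},
\[
\bar{\gamma}\in\Hom^{0}(A_{s},A_{s}^{\vee})\cap\bigl(\Hom(TA_{\eta},TA_{\eta}^{\vee})\otimes\mathbb{Q}\bigr)=\Hom^{0}(A,A^{\vee}),
\]
so $\bar{\gamma}$ comes from an element $v\in\Hom^{0}(A,A^{\vee})$.

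It remains to read off the conclusion. Since $\bar{\gamma}$ lands in the ``symmetric'' summand ($\gamma$ being a section of $\wedge^{2}R^{1}f_{\ast}\mathbb{A}(1)$), and $v\mapsto T_{\ell}v$ is injective and compatible with the duality involution, $v$ is symmetric; it therefore corresponds to a $\mathbb{Q}$-divisor class $\delta$ on $A/S$. The cycle class $\mathrm{cl}(\delta)\in H^{0}(S,R^{2}f_{\ast}\mathbb{A}(1))$ and the section $\gamma$ have the same image at $\eta$, namely $\bar{\gamma}=\gamma_{\eta}$; since a global section of the locally constant sheaf (resp.\ $F$-isocrystal) $R^{2}f_{\ast}\mathbb{A}(1)$ on the connected scheme $S$ is determined by its value at $\eta$, we get $\mathrm{cl}(\delta)=\gamma$. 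In particular, for every closed point $t\in S$, $\gamma_{t}=\mathrm{cl}(\delta_{t})$ is the class of a $\mathbb{Q}$-divisor on $A_{t}$, hence algebraic.

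The only genuinely technical point is the $p$-adic ingredient of the first paragraph: extracting from the crystalline component of the section $\gamma$ a bona fide homomorphism of $p$-divisible groups over $S$ (in characteristic $p$). This rests on the full faithfulness of the Dieudonn\'{e}--crystal functor over finitely generated fields of characteristic $p$, together with de Jong's extension theorem (Theorem~\ref{c7}); the $\ell$-adic ($\ell\neq p$) part is formal, and once the $p$-adic part is granted everything else is a direct consequence of Corollary~\ref{c18} and the classical correspondence between divisor classes and symmetric homomorphisms to the dual.
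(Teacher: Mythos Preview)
Your approach is correct and is the same reduction the paper makes: as the paper itself remarks, Theorem~\ref{c9} is ``essentially equivalent to Theorem~\ref{c8}'' via the correspondence between divisor classes and (symmetric) homomorphisms $A\to A^{\vee}$, and you carry this out by invoking Corollary~\ref{c18} directly.

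The only organizational difference is that the paper, rather than appealing to Corollary~\ref{c18}, re-runs the Lemma~\ref{c6} argument with $\mathrm{NS}^{0}$ in place of $\Hom^{0}$: it builds the diagram
\[
\begin{tikzcd}
\mathrm{NS}^0(A/S)\arrow{r}\arrow{u}{\simeq}\arrow[hook]{d}
&H^{0}(S,R^{2}f_{\ast}\mathbb{A}(1))\arrow{u}\arrow[hook]{d}\\
\mathrm{NS}^0(A_{s})\arrow{r}{\otimes} &H^{2}_{\mathbb{A}}(A_{\bar{s}})(1)^{\pi_{1}(s)}
\end{tikzcd}
\]
and reads off $\mathrm{NS}^{0}(A/S)=\mathrm{NS}^{0}(A_{s})\cap H^{0}(S,R^{2}f_{\ast}\mathbb{A}(1))$. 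This has the mild advantage that it never needs to extract an actual homomorphism of $p$-divisible groups from the crystalline component of $\gamma$ --- the one step you flag as ``the only genuinely technical point''. In the paper's arrangement that step simply does not arise: the Tate conjecture input is applied to $\mathrm{NS}^{0}$ directly. (Alternatively, in your approach you could drop the $p$-component and run Corollary~\ref{c18} with $\mathbb{A}^{p}_{f}$ in place of $\mathbb{A}$; the Lemma~\ref{c6} argument still goes through.)
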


As Grothendieck (1966, p.~66)\nocite{grothendieck1966} notes, because of the
correspondence between endomorphisms of abelian varieties and divisor classes,
this is essentially equivalent to Theorem \ref{c8}.

We explain how to prove Theorem \ref{c9}. Recall that $M\overset{\otimes
}{\longrightarrow}N$ means that $M$ is a $\mathbb{Q}$-structure on $N$, i.e.,
that $M\otimes_{\mathbb{Q}}\mathbb{A}\simeq N$.

For an abelian scheme $A$ over $S$, we let%
\[
\Pic(A/S)\overset{\df}{=}\Pic_{A/S}(S)=\frac{\Pic(A)}{\Pic(S)}.
\]
Recall that, for abelian schemes $A$ and $B$ over a scheme $S,$%
\begin{align*}
\mathrm{DC}_{S}(A,B)  &  \overset{\df}{=}\frac{\Pic(A\times_{S}B/S)}%
{\pr_{1}^{\ast}\Pic(A/S)+\pr_{2}^{\ast}\Pic(B/S)}\\
\mathrm{DC}_{S}(A,B)  &  \simeq\Hom_{S}(A,B^{\vee}),
\end{align*}
and that the map $\mu^{\ast}-\pr_{1}^{\ast}-\pr_{2}^{\ast}\colon
\Pic(A/S)\rightarrow\Pic(A\times_{S}A/S)$ factors through an injection%
\[
\mathrm{NS}(A/S)\hookrightarrow\Pic(A\times_{S}A/S).
\]
Consider the diagram%
\[
\begin{tikzcd}[column sep=huge]
\mathrm{NS}(A_{\eta})
\arrow[shift left=0.6ex]{r}{\mu^\ast-\pr_1^\ast-\pr_2^\ast}
&\mathrm{DC}(A_{\eta},A_{\eta})\arrow[shift left=0.6ex]{l}{\Delta^\ast}
\simeq\Hom(A_{\eta},A_{\eta}^{\vee})\\
\mathrm{NS}(A/S)\arrow{d}\arrow{u}\arrow[shift left=0.6ex]{r}{\mu^\ast-\pr_1^\ast-\pr_2^\ast}
&{\mathrm{DC}_{S}(A,A)}\arrow[shift left=0.6ex]{l}{\Delta^\ast}
\simeq\Hom_{S}(A,A^{\vee})\arrow[hook,shift left=6ex]{d}
\arrow[shift right=6ex]{u}[swap]{\simeq}\\
\mathrm{NS}(A_{s})\arrow[shift left=0.6ex]{r}{\mu^\ast-\pr_1^\ast-\pr_2^\ast}
&\mathrm{DC}(A_{s},A_{s})\arrow[shift left=0.6ex]{l}{\Delta^\ast}
\simeq\Hom(A_{s},A_{s}^{\vee}).
\end{tikzcd}
\]
The composite of each map $\mu^{\ast}-\pr_{1}^{\ast}-\pr_{2}^{\ast}$ with
$\Delta^{\ast}$ is multiplication by $2$. Therefore, after tensoring with
$\mathbb{Q}$, we get the left hand side of the following diagram,
\[
\begin{tikzcd}
\mathrm{NS}^0(A_{\eta})\arrow{r}{\otimes}
&H^{2}_{\mathbb{A}}(A_{\bar{\eta}})(1)^{\pi_{1}(\eta)}\\
\mathrm{NS}^0(A/S)\arrow{r}{d}\arrow{u}{\simeq}\arrow[hook]{d}
&H^{0}(S,R^{2}f_{\ast}\mathbb{A}(1))\arrow{u}{e}\arrow[hook]{d}\arrow{r}
{\simeq}&H^{2}_{\mathbb{A}}(A_{\bar{\eta}})(1)^{\pi_{1}(S)}\\
\mathrm{NS}^0(A_{s})\arrow{r}{\otimes} &H^{2}_{\mathbb{A}}(A_{\bar{s}})(1)^{\pi_{1}(s)}.
\end{tikzcd}
\]
As in the previous case, in proving the theorem, we may suppose that the field
$k$ is finitely generated, which allows us to apply the Tate conjecture (known
in this case) to the top and bottom rows of the diagram. From the diagram, we
see that $d$ is injective. The map $e$ is injective, and it follows from the
diagram that it is an isomorphism. Hence the map $d$ induces an isomorphism%
\[
\mathrm{NS}^{0}(A/S)\otimes\mathbb{A}\longrightarrow H^{0}(S,R^{2}f_{\ast
}\mathbb{A}{}(1)).
\]
On applying Lemma \ref{c6} to the bottom square, we obtain the theorem.

\begin{corollary}
\label{c17}With the notation of the theorem, for any closed point $s$ of $S$,%
\[
\mathrm{NS^{0}}(A/S)=\mathrm{NS^{0}}(A_{s})\cap H^{0}(S,R^{2}f_{\ast
}\mathbb{A}{}(1))
\]
(intersection inside $H_{\mathbb{A}}^{2}(A_{s})(1))$.
\end{corollary}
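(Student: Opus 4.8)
The plan is to read the identity off the diagram chase that establishes Theorem~\ref{c9}, by one more application of Lemma~\ref{c6}. That proof produced a commutative square
\[
\begin{tikzcd}
\mathrm{NS}^{0}(A/S)\arrow{r}{d}\arrow{d}{a}
&H^{0}(S,R^{2}f_{\ast}\mathbb{A}(1))\arrow[hook]{d}{b}\\
\mathrm{NS}^{0}(A_{s})\arrow{r}
&H_{\mathbb{A}}^{2}(A_{s})(1),
\end{tikzcd}
\]
in which $a$ is the specialization map on N\'{e}ron--Severi groups, $b$ is restriction to the fibre at $s$, the bottom horizontal map is the cycle class map on $A_{s}$ (injective after $\otimes_{\mathbb{Q}}\mathbb{A}$, being a $\mathbb{Q}$-structure on the algebraic classes), and the top map $d$ induces an isomorphism $\mathrm{NS}^{0}(A/S)\otimes_{\mathbb{Q}}\mathbb{A}\rightarrow H^{0}(S,R^{2}f_{\ast}\mathbb{A}(1))$ --- precisely the conclusion reached just before the statement of Theorem~\ref{c9}. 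Here $b$ is injective because $H^{0}(S,R^{2}f_{\ast}\mathbb{A}(1))=H_{\mathbb{A}}^{2}(A_{\bar{\eta}})(1)^{\pi_{1}(S)}$ and restriction of a global section of a locally constant sheaf to a point is injective.

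These are exactly the hypotheses of Lemma~\ref{c6} with $W=\mathrm{NS}^{0}(A/S)$, $W^{\prime}=H^{0}(S,R^{2}f_{\ast}\mathbb{A}(1))$, $V=\mathrm{NS}^{0}(A_{s})$, $V^{\prime}=H_{\mathbb{A}}^{2}(A_{s})(1)$ and $R=\mathbb{A}$: the top horizontal arrow is an isomorphism after $\otimes\,\mathbb{A}$, the bottom one is injective after $\otimes\,\mathbb{A}$, and the vertical arrow $b$ is injective. (That $a$ is injective then follows from the lemma; alternatively it is clear, since $\mathrm{NS}^{0}$ embeds via the polarization correspondence into a $\Hom$-group of Tate modules and $\Hom(T_{l}A,T_{l}A^{\vee})\rightarrow\Hom(T_{l}A_{s},T_{l}A_{s}^{\vee})$ is injective, exactly as for the map called $a$ in the proof of Theorem~\ref{c8}.)

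The second assertion of Lemma~\ref{c6} now gives $W=V\cap W^{\prime}$ inside $V^{\prime}$, i.e.
\[
\mathrm{NS}^{0}(A/S)=\mathrm{NS}^{0}(A_{s})\cap H^{0}(S,R^{2}f_{\ast}\mathbb{A}(1))\qquad\text{(intersection in }H_{\mathbb{A}}^{2}(A_{s})(1)\text{).}
\]
I do not expect any genuine obstacle here: the content is entirely contained in the proof of Theorem~\ref{c9}, the only non-formal ingredient being the Tate conjecture for the generic and special fibres (used for the horizontal isomorphism above, and the reason for first reducing to a finitely generated base field $k$); the rest is the formal diagram-chasing packaged in Lemma~\ref{c6}.
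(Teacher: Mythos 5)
Your proof is correct and follows the paper's own (implicit) route exactly: the corollary is read off as the conclusion $W=V\cap W^{\prime}$ of Lemma~\ref{c6} applied to the bottom square of the diagram built in the proof of Theorem~\ref{c9}, using that $d$ becomes an isomorphism after $\otimes_{\mathbb{Q}}\mathbb{A}$ and that the restriction map $b$ (equivalently $a$) is injective. The paper presents Theorem~\ref{c9} as the output of exactly this application of Lemma~\ref{c6}, so Corollary~\ref{c17} is just the identity that application yields; your spelling out of the injectivity of $b$ (global section of a locally constant sheaf on a connected scheme) and the alternative check for $a$ are both sound and match the ambient argument.
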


\begin{remark}
\label{d13}When $f\colon A\rightarrow S$ is an abelian scheme over a connected
normal scheme $S$, we define $\mathcal{D}^{1}(A/S)$ to be the image of
$\mathrm{NS^{0}}(A/S)$ in $H^{0}(S,R^{2}f_{\ast}\mathbb{A}{}(1))$. In the
above proof, when $S$ is of finite type over a field, we obtained a diagram%
\[
\begin{tikzcd}
\mathcal{D}^{1}(A/S)\arrow{r}{\otimes}\arrow[hook]{d}
&H^{0}(S,R^{2}f_{\ast}\mathbb{A}{}(1))\arrow[hook]{d}\\
\mathcal{D}^{1}(A_{s})\arrow{r}{\otimes}
&H_{\mathbb{A}}^{2}(A_{s}(1))^{\pi_{1}(s)}.
\end{tikzcd}
\]

\end{remark}

\begin{aside}
\label{c10}See also Conjecture 1.4 of Grothendieck 1966 and Theorem 0.2 (=
Theorem 1.4) of Morrow 2019.
\end{aside}

\subsection{Algebraic classes in families}

For an abelian variety $A$, we let $\mathcal{A}^{\ast}(A)$ denote the
$\mathbb{Q}{}$-algebra of algebraic classes in $H_{\mathbb{A}{}}^{2\ast
}(A)(n)$.

Let $f\colon A\rightarrow S$ be an abelian scheme over a connected normal
scheme $S$ over a finite field $k$. Let $s$ be a closed point of $S$ and
$\eta$ the generic point. Consider the diagram%
\[
\begin{tikzcd}
\mathcal{A}{}^{n}(A_{\eta})\arrow{r}{\otimes}
&H_{\mathbb{A}}^{2n}(A_{\bar{\eta}})(n)^{\pi_{1}(\eta)}\\
\mathcal{A}^{n}(A/S)\arrow{r}{d}\arrow{u}{c}\arrow{d}
&H^{0}(S,R^{2n}f_{\ast}\mathbb{A}(n))\arrow{r}{\simeq}\arrow{u}{e}\arrow[hook]{d}
&H_{\mathbb{A}}^{2n}(A_{\bar{\eta}})(n)^{\pi_{1}(S)}\\
\mathcal{A}{}^{n}(A_{s})\arrow{r}{\otimes}&H_{\mathbb{A}}^{2n}(A_{s})^{\pi_{1}(s)}%
\end{tikzcd}
\]
The maps $d$ and $e$ are injective, so $c$ is injective.

\begin{theorem}
\label{d14}Assume that $\mathcal{A}{}^{n}(A/S)\rightarrow\mathcal{A}{}%
^{n}(A_{\eta})$ is surjective and that the Tate conjectures holds for
algebraic cycles of codimension $n$ on $A_{\eta}$ and $A_{s}$.Then%
\[
\mathcal{A}{}^{n}(A/S)=\mathcal{A}{}^{n}(A_{s})\cap H^{0}(S,R^{2n}f_{\ast
}\mathbb{A}{}(n))
\]
(intersection inside $H_{\mathbb{A}{}}^{2n}(A_{s})^{\pi_{1}(s)}$).
\end{theorem}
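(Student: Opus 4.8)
The plan is to reproduce the argument of Theorem~\ref{c9}, with the surjectivity hypothesis playing the role that Theorem~\ref{c11} (spreading out of homomorphisms) plays in the divisor case. Concretely, I would apply Lemma~\ref{c6} to the bottom square of the diagram preceding the statement, taking $R=\mathbb{A}$, $W=\mathcal{A}^{n}(A/S)$, $W'=H^{0}(S,R^{2n}f_{\ast}\mathbb{A}(n))$, $V=\mathcal{A}^{n}(A_{s})$, $V'=H_{\mathbb{A}}^{2n}(A_{s})^{\pi_{1}(s)}$, and $a,b$ the left and right vertical maps. The square commutes, being part of the given diagram, so it remains to check: (i) one of the vertical maps, say $b$, is injective; (ii) the bottom horizontal map $\mathcal{A}^{n}(A_{s})\otimes_{\mathbb{Q}}\mathbb{A}\to V'$ is injective; and (iii) the top horizontal map $d$ induces an isomorphism $\mathcal{A}^{n}(A/S)\otimes_{\mathbb{Q}}\mathbb{A}\to H^{0}(S,R^{2n}f_{\ast}\mathbb{A}(n))$. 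Lemma~\ref{c6} then yields at once the injectivity of $a$ (so that specialisation does not kill algebraic classes) and the asserted equality $\mathcal{A}^{n}(A/S)=\mathcal{A}^{n}(A_{s})\cap H^{0}(S,R^{2n}f_{\ast}\mathbb{A}(n))$ inside $H_{\mathbb{A}}^{2n}(A_{s})^{\pi_{1}(s)}$.

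For (i), writing $M=H_{\mathbb{A}}^{2n}(A_{\bar{\eta}})(n)$, the map $b$ is the inclusion $M^{\pi_{1}(S)}\hookrightarrow M^{\pi_{1}(s)}$ induced by $\pi_{1}(s)\to\pi_{1}(S)$, hence injective. For (ii), this is the bottom row of the diagram, which is even an isomorphism by the Tate conjecture for codimension-$n$ cycles on the abelian variety $A_{s}$ over the finite residue field $\kappa(s)$ --- one of our hypotheses. For (iii), I would first recall that $c\colon\mathcal{A}^{n}(A/S)\to\mathcal{A}^{n}(A_{\eta})$ is injective (this is noted just before the statement, since $d$ and $e$ are), and so, by the surjectivity hypothesis, $c$ is an isomorphism. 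Next, the Tate conjecture for codimension-$n$ cycles on $A_{\eta}$, an abelian variety over the finitely generated field $\kappa(\eta)$, again a hypothesis, makes the top row $\mathcal{A}^{n}(A_{\eta})\otimes_{\mathbb{Q}}\mathbb{A}\to M^{\pi_{1}(\eta)}$ an isomorphism. Since the restriction-to-the-generic-fibre map $e\circ d$ agrees with the composite $\mathcal{A}^{n}(A/S)\to\mathcal{A}^{n}(A_{\eta})\hookrightarrow M^{\pi_{1}(\eta)}$, it becomes an isomorphism after $\otimes_{\mathbb{Q}}\mathbb{A}$; as $e$ is injective, exactly as in the proof of Theorem~\ref{c9} the diagram forces $e$ to be an isomorphism, and hence $d\otimes\mathbb{A}$ to be one as well. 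That is (iii), and Lemma~\ref{c6} finishes the proof.

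There is no genuinely new difficulty beyond Theorem~\ref{c9}; everything not already proved there is assumed, so the real content sits in the two hypotheses. The surjectivity of $\mathcal{A}^{n}(A/S)\to\mathcal{A}^{n}(A_{\eta})$ is the essential point and the main obstacle: unlike $\mathrm{NS}^{0}$, where Theorem~\ref{c11} supplies the spreading out of divisor classes for free, a codimension-$n$ algebraic class on the generic fibre need not extend to a ``horizontal'' algebraic class on $A/S$, so this input cannot be dropped and is the thing one would actually have to establish in applications. Beyond that, the only delicate bookkeeping --- inherited verbatim from the proof of Theorem~\ref{c9} --- is to make ``$e$ is an isomorphism'' and the identification $H^{0}(S,R^{2n}f_{\ast}\mathbb{A}(n))=M^{\pi_{1}(S)}$ work uniformly across all components of the restricted product $\mathbb{A}$, including the crystalline one, where ``$H^{0}$'' must be read as horizontal (Frobenius-compatible) sections of the relative crystalline cohomology; and to keep in mind that the Tate conjecture is applied over the field $\kappa(\eta)$ itself, i.e.\ as a $\mathbb{Q}$-structure statement on the \emph{full} $\pi_{1}(\eta)$-invariants of $M$, with no hidden passage to an open subgroup.
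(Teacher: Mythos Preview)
Your proposal is correct and follows essentially the same route as the paper's proof: show that $c$ is an isomorphism (injectivity noted before the statement, surjectivity assumed), use the Tate conjecture for $A_{\eta}$ to make the top row an isomorphism after $\otimes\,\mathbb{A}$, deduce that $e$ and hence $d\otimes\mathbb{A}$ are isomorphisms, and then apply Lemma~\ref{c6} to the lower square with the Tate conjecture for $A_{s}$ supplying the required injectivity of the bottom row. The paper compresses all of this into two sentences; your version simply unpacks the step ``$e$ is an isomorphism'' that the paper (as in the proof of Theorem~\ref{c9}) leaves implicit.
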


\begin{proof}
Under the assumptions, $c$ and $e$ are isomorphisms, so $d$ becomes an
isomorphism when $\mathcal{A}{}^{n}(A/S)$ is tensored with $\mathbb{A}{}$. Now
apply Lemma \ref{c6} to the lower square.
\end{proof}

\begin{corollary}
\label{d15}With the assumptions of the theorem, if $\gamma\in H^{0}%
(S,R^{2n}f_{\ast}\mathbb{A}{}(n))$ is algebraic for one closed $s$, then it is
algebraic for all closed $s.$
\end{corollary}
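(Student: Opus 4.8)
The plan is to derive the corollary as a formal consequence of Theorem \ref{d14}, in the same way that Theorem \ref{c9} and its Corollary \ref{c17} follow from the analogous intersection formula for divisor classes. Grant the assumptions of Theorem \ref{d14}, and suppose $\gamma\in H^{0}(S,R^{2n}f_{\ast}\mathbb{A}(n))$ has the property that $\gamma_{s_{0}}$ is algebraic for some closed point $s_{0}$ of $S$. First I would record that, by definition, $\gamma_{s_{0}}\in\mathcal{A}^{n}(A_{s_{0}})$, while $\gamma$ itself lies in $H^{0}(S,R^{2n}f_{\ast}\mathbb{A}(n))$; hence $\gamma$ lies in the intersection $\mathcal{A}^{n}(A_{s_{0}})\cap H^{0}(S,R^{2n}f_{\ast}\mathbb{A}(n))$ formed, as in the theorem, inside $H_{\mathbb{A}}^{2n}(A_{s_{0}})^{\pi_{1}(s_{0})}$ via the injective maps of the diagram preceding Theorem \ref{d14}. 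By Theorem \ref{d14} applied with $s=s_{0}$, this intersection equals $\mathcal{A}^{n}(A/S)$, so $\gamma\in\mathcal{A}^{n}(A/S)$.

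Next I would invoke that any class in $\mathcal{A}^{n}(A/S)$ restricts, for every closed point $s$ of $S$, to a class in $\mathcal{A}^{n}(A_{s})$: this is exactly the vertical arrow $\mathcal{A}^{n}(A/S)\to\mathcal{A}^{n}(A_{s})$ appearing in that same diagram, which encodes the compatibility of the cycle class map with flat pull-back of cycles along $A_{s}\hookrightarrow A$. Applying this arrow to $\gamma\in\mathcal{A}^{n}(A/S)$ for an arbitrary closed point $s$ shows that $\gamma_{s}\in\mathcal{A}^{n}(A_{s})$, i.e., $\gamma_{s}$ is algebraic. That is the assertion of the corollary.

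I do not expect any real obstacle beyond Theorem \ref{d14} itself; the corollary is purely formal once that theorem is in hand. The only point to keep straight is that the hypotheses used are precisely the ones the corollary presupposes: surjectivity of $\mathcal{A}^{n}(A/S)\to\mathcal{A}^{n}(A_{\eta})$ and the Tate conjecture in codimension $n$ for $A_{\eta}$ and for $A_{s_{0}}$. Note in particular that the Tate conjecture is needed only at the single point $s_{0}$ at which algebraicity is assumed (together with the generic point), and not at the remaining closed points, since the conclusion there rests solely on the unconditionally defined restriction map $\mathcal{A}^{n}(A/S)\to\mathcal{A}^{n}(A_{s})$.
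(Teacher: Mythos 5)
Your proof is correct and follows the paper's own argument almost verbatim: apply the intersection identity of Theorem~\ref{d14} at the single point $s_0$ where $\gamma_{s_0}$ is assumed algebraic to conclude $\gamma\in\mathcal{A}^{n}(A/S)$, then specialize via the canonical map $\mathcal{A}^{n}(A/S)\to\mathcal{A}^{n}(A_{s})$ to every other closed point. Your closing observation---that the Tate hypothesis is only invoked at $\eta$ and at the one chosen point $s_0$, the conclusion at other fibers being unconditional---is a genuine refinement and is essentially the content the paper records separately in \ref{d16}, which goes slightly further by noting that even at $s_0$ only injectivity of $\mathcal{A}^{n}(A_{s_0})\otimes\mathbb{A}\to H^{2n}_{\mathbb{A}}(A_{s_0})^{\pi_1(s_0)}$ is required, not the full Tate conjecture.
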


\begin{proof}
If $\gamma$ is algebraic for one $s$, then the theorem shows that it lies in
$\mathcal{A}{}^{n}(A/S)$, and hence its image in $H_{\mathbb{A}{}}^{2n}%
(A_{s})$ lies in $\mathcal{A}{}^{n}(A_{s})$ for all $s$.
\end{proof}

\subsubsection{Notes}

\begin{plain}
\label{d17}This section is only of heuristic significance. We certainly do not
want to assume the Tate conjecture.
\end{plain}

\subsection{Weakly Lefschetz classes in families}

\begin{plain}
\label{d19}Let $G$ be a group (abstract, profinite, algebraic, \ldots) acting
on a finite-dimensional vector space $V$ over a field $k$ of characteristic
$0$. The $k$-algebra $\left(  \bigotimes^{\ast}V\right)  ^{G}$ is generated by
$G$-invariant tensors of degree $2$ in each of the following cases:

\begin{enumerate}
\item $G=\mathrm{Sp}(\phi)$ with $\phi$ a nondegenerate skew-symmetric form on
$V;$

\item $G=O(\phi)$ with $\phi$ a nondegenerate symmetric form on $V;$

\item $G=\GL(W)$ and $V=W\oplus W^{\vee}$;

\item $T$ is a torus and the weights $\xi_{1},\ldots,\xi_{2m}$ of $T$ on $V$
can be numbered in such a way that the $\mathbb{Z}{}$-module of relations
among the $\xi_{i}$ is generated by the relations $\xi_{i}+\xi_{i+1}=0,\quad
i=1,\ldots,m$.
\end{enumerate}

\noindent See \cite{milne1999a}, 3.6, 3.8.
\end{plain}

\begin{plain}
\label{d20}Let $G$ be a group acting on a finite-dimensional vector space $V$
over a field $k$. If the $k$-algebra $\left(  \bigotimes^{\ast}V\right)  ^{G}$
is generated by $G$-invariant tensors of degree $2$, then the same is true of
$\left(  \bigwedge^{\ast}V\right)  ^{G}$.
\end{plain}

\begin{question}
\label{d21}Let $f\colon A\rightarrow S$ be an abelian scheme over a connected
normal scheme of finite type over an algebraically closed field $k$, and let
$\gamma$ be a global section of $R^{2n}f_{\ast}\mathbb{\mathbb{Q}{}}_{\ell}%
{}(n)$. If $\gamma_{s}$ is weakly Lefschetz for one closed $s\in S$, then is
it weakly Lefschetz for all closed $s\in S$?

After replacing $k$ with a finitely generated subfield, we have a diagram%
\[
\begin{tikzcd}
\mathcal{D}^{1}(A/S)\arrow{d}\arrow{r}&\mathcal{D}^{1}(A/S)\otimes{\mathbb{Q}_\ell}\arrow{r}{\simeq}\arrow[hook]{d}
&H^{0}(S,R^{2}f_{\ast}{\mathbb{Q}_\ell}(1))\arrow[hook]{d}\\
\mathcal{D}^{1}(A_{s})\arrow{r}&\mathcal{D}^{1}(A_{s})\otimes{\mathbb{Q}_\ell}\arrow{r}{\simeq}
&H^2(A_{s},\mathbb{Q}_\ell(1))^{\pi_{1}(s)}
\end{tikzcd}
\]
(see the proof of \ref{c9}), which shows that the answer is positive for
$n=1$. Let $V=H^{1}(A_{\bar{\eta}},\mathbb{Q}{}_{\ell})$. The question comes
down the following. Let $\gamma\in H^{0}(S,R^{2n}f_{\ast}\mathbb{Q}{}_{\ell
}(n))=(\bigwedge^{2n}V)(n)^{\pi_{1}(S)}$. Suppose that, when regarded as an
element of $(\bigwedge^{2n}V)(n)^{\pi_{1}(s)}$, $\gamma$ lies in the
$\mathbb{Q}{}_{\ell}$-algebra generated by $\bigwedge^{2}V(1)^{\pi_{1}(s)}$.
Does this imply that $\gamma$ lies in the $\mathbb{Q}{}_{\ell}$-algebra
generated by $\bigwedge^{2}V(1)^{\pi_{1}(S)}$? The answer is surely negative
in general, but there may be useful conditions on $S$ that ensure that the
answer is positive.
\end{question}

\subsection{Lefschetz groups in families}

\begin{plain}
\label{d18}Let $A$ be an abelian variety over a separably closed field $k$,
and let $\ell$ be a prime number $\neq\mathrm{char}(k)$. Let $C_{\ell}(A)$
denote the centralizer of $\End^{0}(A)$ in $\End_{\mathbb{Q}{}_{\ell}}%
(V_{\ell}A)$. This is a semisimple algebra over $\mathbb{Q}{}_{\ell}$ with an
involution $\dagger$ (defined by any Rosati involution). The Lefschetz group
of $A$ (relative to $H_{\ell}$) is the algebraic group $L(A)$ over
$\mathbb{Q}{}_{\ell}$ with%
\[
L(A)(\mathbb{Q}{}_{\ell})=\{a\in C_{\ell}(A)\mid a^{\dagger}a\in\mathbb{Q}%
{}_{\ell}^{\times}\}
\]
(see \cite{milne1999b}, 4.4). When $k=\mathbb{F}{}$,%
\[
C_{\ell}(A)=C_{0}(A)\otimes_{\mathbb{Q}{}}\mathbb{Q}{}_{\ell},
\]
where $C_{0}(A)$ is the centre of $\End^{0}(A)$, so $C_{0}(A)=\mathbb{Q}%
{}\{\pi\}$.
\end{plain}

\begin{definition}
\label{c10a}Let $f\colon A\rightarrow S$ be an abelian scheme over a connected
normal scheme of finite type over an algebraically closed field $k$, and let
$s\in S(k)$. When we identify $V_{\ell}(A_{s})$ with $V_{\ell}(A_{\eta})$, we
have $C_{\ell}(A_{s})\subset C_{\ell}(A_{\eta})$. We say that $f$ is
\emph{general} if the $\mathbb{Q}_{\ell}$-algebras $C_{\ell}(A_{s})$, $s\in
S(k)$, generate $C_{\ell}(A_{\bar{\eta}})$.
\end{definition}

\begin{example}
\label{c10c}Let $k=\mathbb{F}{}$. For $s\in S(k)$, we have%
\[
\begin{tikzcd}
\End^{0}(A_{s})\arrow[hookleftarrow]{r}&\End^{0}(A/S)\arrow{r}{\simeq}
&\End^{0}(A_{\eta}),
\end{tikzcd}
\]
so $C_{0}(A_{s})\subset C_{\ell}(A_{\eta})$. So $f$ is general if the
$\mathbb{Q}{}$-algebras $C_{0}(A_{s})$ generate the centralizer of
$\End^{0}(A_{\bar{\eta}})$ in $\End_{\mathbb{Q}{}_{\ell}}(V_{\ell}(A_{\eta}))$.
\end{example}

\begin{example}
\label{c10d}Let $f$ be the universal elliptic curve over the affine line
$(k=\mathbb{F}{}$). Then $C_{\ell}(A_{\bar{\eta}})=\End(V_{\ell}A_{\bar{\eta}%
})\approx M_{2}(\mathbb{Q}{}_{\ell})$. On the other hand, $C_{0}(A_{s})$ is
either $\mathbb{Q}{}$ or $F$, where $F$ is an imaginary quadratic number
field, and all quadratic imaginary number fields occur. Therefore, $f$ is general.
\end{example}

\begin{example}
\label{c10e}Let $f\colon A\rightarrow S$ be a constant abelian variety
($k=\mathbb{F}{})$. Then $\End(A_{s})=\End(A_{\eta})$ for all closed $s\in S$.
Therefore $C(A_{s})=C(A_{\eta})$ for all closed $s$, and $f$ is again general.
\end{example}

\begin{proposition}
\label{c10b}If $f\colon A\rightarrow S$ is general, then the algebraic group
$L(A_{\bar{\eta}})$ is generated by its subgroups $L(A_{s})$, $s\in S(k)$.
\end{proposition}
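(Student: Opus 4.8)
The plan is to reinterpret the Lefschetz groups as centralisers inside the symplectic similitude group of $V_{\ell}$ and then reduce the assertion to a statement about Lie algebras.

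First I would set things up. Fix a polarisation of $A/S$; it induces a nondegenerate alternating form $\phi$ on the lisse sheaf $R^{1}f_{\ast}\mathbb{Q}_{\ell}$, hence compatible forms on all stalks, and under the identifications of Definition \ref{c10a} the forms attached to $A_{s}$ and $A_{\bar{\eta}}$ agree. By Theorem \ref{c11} (and its extension to geometric generic fibres) one has $\End^{0}(A_{\bar{\eta}})\hookrightarrow\End^{0}(A_{s})$ for every $s\in S(k)$, so $C_{\ell}(A_{s})\subset C_{\ell}(A_{\bar{\eta}})$ as in \ref{c10a}. Put $G=\GSp(V_{\ell},\phi)$. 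By \ref{l4} and \ref{d18}, $L(A_{s})$ is the centraliser of $\End^{0}(A_{s})$ in $G$ and $L(A_{\bar{\eta}})$ that of $\End^{0}(A_{\bar{\eta}})$, and the Rosati involution $\dagger$ on $C_{\ell}(A_{\bar{\eta}})$ (adjunction for $\phi$) restricts to the one on each $C_{\ell}(A_{s})$. Since $\End^{0}(A_{\bar{\eta}})\subset\End^{0}(A_{s})$ we get $L(A_{s})\subset L(A_{\bar{\eta}})$; let $H$ be the algebraic subgroup of $L(A_{\bar{\eta}})$ generated by the $L(A_{s})$, so that $H\subset L(A_{\bar{\eta}})$ and $H$ contains the central $\mathbb{G}_{m}$ of scalars. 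Also the associative span of each $L(A_{s})$ in $\End(V_{\ell})$ is $C_{\ell}(A_{s})$ (checked factor by factor on the semisimple algebra-with-involution $C_{\ell}(A_{s})$), so that of $H$ is, by generality, all of $C_{\ell}(A_{\bar{\eta}})$; in particular $H$ and $L(A_{\bar{\eta}})$ have the same centraliser $\End^{0}(A_{\bar{\eta}})$ in $\End(V_{\ell})$, and by the double centraliser theorem "general" is equivalent to $\bigcap_{s}\End^{0}(A_{s})=\End^{0}(A_{\bar{\eta}})$.

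Next I would pass to Lie algebras. From the description of $L(A)$ one computes $\Lie L(A)=\mathbb{Q}_{\ell}\cdot\id\oplus C_{\ell}(A)^{-}$, where $C_{\ell}(A)^{-}=\{x:x^{\dagger}=-x\}$; hence $\Lie H$ contains the Lie subalgebra of $\Lie L(A_{\bar{\eta}})$ generated by the $\Lie L(A_{s})$, and it is enough to show that $C_{\ell}(A_{\bar{\eta}})^{-}$ is generated as a Lie algebra by the subspaces $C_{\ell}(A_{s})^{-}$. Here I would invoke the classical structure of Lie algebras of skew elements in a semisimple algebra with involution (Herstein): on each $\dagger$-stable simple block, and on each $\dagger$-swapped pair of blocks, the skew part already generates the block (resp.\ pair) as an associative algebra, the only exceptions being blocks that are commutative fields with trivial involution (plus one or two small orthogonal blocks, whose Lefschetz factor is anyway a torus with an extra component). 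When $k=\mathbb{F}$ the generality hypothesis excludes these exceptional blocks from $C_{\ell}(A_{\bar{\eta}})$: by \ref{d18} and Honda--Tate each $C_{\ell}(A_{s})$ is a product of CM fields with $\dagger$ acting as complex conjugation, and a $\dagger$-stable subalgebra of an exceptional block on which the involution has this shape can only be $\mathbb{Q}_{\ell}$ (or, in the orthogonal case, a single quadratic subfield), so the $C_{\ell}(A_{s})$ could never generate such a block; thus $C_{\ell}(A_{\bar{\eta}})$ has only "good" blocks and $L(A_{\bar{\eta}})$ is connected. Granting this, the generality hypothesis applied block by block gives that the $C_{\ell}(A_{s})^{-}$ generate $C_{\ell}(A_{\bar{\eta}})$ as an associative algebra, and a further application of the structure result (see also \cite{milne1999b}) turns this into the required Lie generation of $C_{\ell}(A_{\bar{\eta}})^{-}$. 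Consequently $\Lie H=\Lie L(A_{\bar{\eta}})$, and since $H\subset L(A_{\bar{\eta}})$ with the latter connected, $H=L(A_{\bar{\eta}})$.

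The hard part is precisely this last reduction, from the purely associative statement encoded by "general" to the Lie-theoretic conclusion: skew elements need not span an algebra with involution, and the gap is concentrated in the commutative-with-trivial-involution (and small orthogonal) blocks of $C_{\ell}(A_{\bar{\eta}})$. Over $\mathbb{F}$ these are ruled out by the interaction of $\dagger$-stability with the CM shape of the $C_{\ell}(A_{s})$ indicated above; for a general base field $k$ one should either add a mild hypothesis or complement the Lie-algebra computation with a direct argument that every connected component of $L(A_{\bar{\eta}})$ is met by some $L(A_{s})$, using the similitude character together with the equality of centralisers established in the second paragraph.
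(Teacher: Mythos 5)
The paper's own proof is a single sentence (``This follows from the above description on $L(A)$''), which gives you nothing to compare against; but your elaboration has replaced that claimed triviality with a Lie-algebraic reduction whose key step is not actually closed, and you say so yourself. Let me be more precise about where the gap is and why the citation you reach for does not fill it.

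Your setup is sound: each $L(A_s)$ is, over $\mathbb{F}$, a torus contained in $L(A_{\bar\eta})$, so the group $H$ they generate is connected, $\Lie H$ is the Lie subalgebra of $\mathbb{Q}_\ell\cdot\id \oplus C_\ell(A_{\bar\eta})^-$ generated by the $\mathbb{Q}_\ell\cdot\id \oplus C_\ell(A_s)^-$, and (granting $L(A_{\bar\eta})$ connected) it suffices to show that the $C_\ell(A_s)^-$ Lie-generate $C_\ell(A_{\bar\eta})^-$. The trouble is the phrase ``a further application of the structure result \dots turns this into the required Lie generation.'' What the Herstein-type theorems you gesture at actually say is that, for a single simple algebra $B$ with involution (away from small exceptional cases), the skew part $B^-$ generates $B$ associatively. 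That is the \emph{wrong direction}. You need: if $\mathfrak k\subset C^-$ is a Lie subalgebra whose associative span is all of $C$, then $\mathfrak k = C^-$. This is false in general. Take $C=M_{2m}(\mathbb{Q}_\ell)$ with the symplectic involution, so $C^-=\mathfrak{sp}_{2m}$, and let $\mathfrak k$ be an $\mathfrak{sl}_2$ embedded via the irreducible $2m$-dimensional representation (which is symplectic, hence $\mathfrak k\subset\mathfrak{sp}_{2m}$). Since $\mathfrak k$ acts irreducibly with commutant $\mathbb{Q}_\ell$, Jacobson density gives $\mathcal A(\mathfrak k)=M_{2m}$, yet $\mathfrak k\subsetneq\mathfrak{sp}_{2m}$ for $m>1$. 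And this $\mathfrak k$ is generated by its one-dimensional abelian (``CM'') subspaces, so the shape of the inputs does not by itself rule it out. Passing from the associative statement encoded by ``general'' to Lie generation of $C_\ell(A_{\bar\eta})^-$ therefore needs a genuine geometric input about how the Frobenius tori of the closed fibres sit inside $L(A_{\bar\eta})$ (e.g.\ a Chebotarev/equidistribution argument, as the paper does invoke in the proof of Proposition~\ref{c10g}); it does not follow formally from the definition of ``general.''

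Separately, your connectedness claim for $L(A_{\bar\eta})$ is also asserted rather than proved. As you note, the group $\{a\in C_\ell(A)\colon a^\dagger a\in\mathbb{Q}_\ell^\times\}$ is disconnected on orthogonal blocks, and $A_{\bar\eta}$ need not be CM, so a priori such blocks can occur (Albert types I, II). Your argument that the $\dagger$-stable CM subalgebras $C_\ell(A_s)$ ``could never generate such a block'' is plausible but is exactly the kind of statement that needs a proof rather than an aside, since if $L(A_{\bar\eta})$ is in fact disconnected then $H$ (being generated by tori) lands in the identity component and the proposition is simply false as stated. In short, the plan is a reasonable unpacking of the paper's terse remark, but both reductions you rely on --- from associative generation to Lie generation, and the connectedness of $L(A_{\bar\eta})$ --- remain open, and the first one cannot be closed by a purely formal appeal to the description of $L(A)$.
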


\begin{proof}
This follows from the above description on $L(A)$.
\end{proof}

\begin{proposition}
\label{c10g}Let $S$ be a smooth projective curve over $\mathbb{F}{}$ and
$f\colon A\rightarrow S$ an abelian scheme such that the $k(\eta)/k$-trace of
$A_{\eta}$ is zero. Then $f$ is general.
\end{proposition}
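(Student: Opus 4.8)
The plan is to prove this by a monodromy argument: descend to a finite field, use Chebotarev density in the arithmetic fundamental group, and feed in the Tate conjecture for abelian varieties over global function fields.

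First I would descend and normalize. Choose a finite field $\mathbb{F}_{q}$, a smooth projective curve $S_{0}$ over $\mathbb{F}_{q}$, and an abelian scheme $A_{0}\to S_{0}$ with $(S_{0},A_{0})\times_{\mathbb{F}_{q}}\mathbb{F}=(S,A)$. After enlarging $\mathbb{F}_{q}$ and replacing $S_{0}$ by a connected finite \'{e}tale cover, I may assume that every endomorphism of $A_{\bar\eta}$ is already defined over the function field $K_{0}=\mathbb{F}_{q}(S_{0})$, i.e.\ $\End^{0}(A_{\bar\eta})=\End^{0}_{K_{0}}(A_{0,\eta_{0}})$ (spreading out, Theorem \ref{c11}, controls the arithmetic endomorphisms along the way). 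Since the $k(\eta)/k$-trace is compatible both with extension of the constant field and with separable extension of the function field (the constant field $\mathbb{F}$ being algebraically closed), the hypothesis that this trace vanishes is preserved under these operations.

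Next I would identify the target algebra with the monodromy algebra. As $A_{0}$ is an abelian scheme over all of $S_{0}$, the Galois action on $V_{\ell}=V_{\ell}(A_{\bar\eta})$ factors through $\pi_{1}(S_{0})$; write $\rho\colon\pi_{1}(S_{0})\to\GL(V_{\ell})$. By the theorems of Tate and Zarhin (the Tate conjecture and semisimplicity for abelian varieties over $K_{0}$, char $p$), $V_{\ell}$ is a semisimple $\pi_{1}(S_{0})$-module and its commutant in $\End_{\mathbb{Q}_{\ell}}(V_{\ell})$ is $\End^{0}_{K_{0}}(A_{0,\eta_{0}})\otimes\mathbb{Q}_{\ell}=\End^{0}(A_{\bar\eta})\otimes\mathbb{Q}_{\ell}$. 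By the double centralizer theorem, the $\mathbb{Q}_{\ell}$-subalgebra $B_{0}:=\mathbb{Q}_{\ell}[\rho(\pi_{1}(S_{0}))]$ generated by the image is exactly the centralizer of $\End^{0}(A_{\bar\eta})\otimes\mathbb{Q}_{\ell}$, that is, $B_{0}=C_{\ell}(A_{\bar\eta})$.

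Now I would populate $B_{0}$ with Frobenius endomorphisms of fibres. By Chebotarev density in $\pi_{1}(S_{0})$ the Frobenius classes $\Frob_{s_{0}}$, $s_{0}\in|S_{0}|$, are dense, so their images generate $B_{0}=C_{\ell}(A_{\bar\eta})$. For $s_{0}\in|S_{0}|$ the operator $\rho(\Frob_{s_{0}})$ is the Frobenius endomorphism $\pi_{s_{0}}$ of the fibre $(A_{0})_{s_{0}}$ over $\kappa(s_{0})$, so $\mathbb{Q}_{\ell}[\rho(\Frob_{s_{0}})]=\mathbb{Q}[\pi_{s_{0}}]\otimes\mathbb{Q}_{\ell}$, where $\mathbb{Q}[\pi_{s_{0}}]$ is the centre of $\End^{0}_{\kappa(s_{0})}((A_{0})_{s_{0}})$. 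If $s\in S(\mathbb{F})$ lies above $s_{0}$, so $(A_{0})_{s_{0}}\times_{\kappa(s_{0})}\mathbb{F}=A_{s}$, then $\End^{0}_{\kappa(s_{0})}((A_{0})_{s_{0}})\hookrightarrow\End^{0}_{\mathbb{F}}(A_{s})$, hence $\mathbb{Q}[\pi_{s_{0}}]\subseteq\End^{0}_{\mathbb{F}}(A_{s})$; and by \ref{d18} one has $C_{\ell}(A_{s})=Z(\End^{0}_{\mathbb{F}}(A_{s}))\otimes\mathbb{Q}_{\ell}$. Thus the one thing still to be shown is that $\pi_{s_{0}}$ is central in the \emph{geometric} endomorphism algebra $\End^{0}_{\mathbb{F}}(A_{s})$, equivalently that $\End^{0}_{\kappa(s_{0})}((A_{0})_{s_{0}})=\End^{0}_{\mathbb{F}}(A_{s})$ — that all geometric endomorphisms of the fibre are already visible over its residue field.

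This last requirement can fail at special fibres (e.g.\ a supersingular one), so the heart of the matter is that the set $\Sigma\subset|S_{0}|$ where it holds is large enough that $\{\rho(\Frob_{s_{0}}):s_{0}\in\Sigma\}$ is still Zariski dense in $\rho(\pi_{1}(S_{0}))$; density one of $\Sigma$ would suffice, and then $C_{\ell}(A_{\bar\eta})=\langle\mathbb{Q}_{\ell}[\pi_{s_{0}}]:s_{0}\in\Sigma\rangle\subseteq\langle C_{\ell}(A_{s}):s\in S(\mathbb{F})\rangle\subseteq C_{\ell}(A_{\bar\eta})$, giving equality and hence that $f$ is general. This is exactly where the vanishing of the $k(\eta)/k$-trace enters: for a trace-zero family the fibres acquire no ``constant'' extra endomorphism structure, and combined with the fact that the geometric endomorphism algebra of an abelian variety over a finite field is already attained over a bounded extension, the locus of $s_{0}$ where the endomorphism algebra is not defined over $\kappa(s_{0})$ should be thin. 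A robust way to organise this is to invoke Serre's theorem that, outside a density-zero set of closed points, the Frobenius torus of $A_{0,\eta_{0}}$ at $s_{0}$ is a maximal torus of $G_{\mathrm{mon}}=\overline{\rho(\pi_{1}(S_{0}))}$, and to arrange the bookkeeping so that maximality of this torus forces $\mathbb{Q}_{\ell}[\pi_{s_{0}}]$ to be a Cartan subalgebra meeting $C_{\ell}(A_{s})$, maximal tori together with Frobenius elements in the remaining components of $G_{\mathrm{mon}}$ then generating $G_{\mathrm{mon}}$. The main obstacle is precisely this step — upgrading ``$\pi_{s_{0}}$ central in the arithmetic endomorphism algebra'' to ``$\pi_{s_{0}}$ central in the geometric one'' for all but a density-zero set of points — and it is there that the trace-zero hypothesis is indispensable.
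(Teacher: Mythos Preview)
Your approach is the same one the paper indicates: descend to a model over a finite subfield of $\mathbb{F}$ and invoke Chebotarev density, together with Tate--Zarhin to identify $C_{\ell}(A_{\bar\eta})$ with the $\mathbb{Q}_{\ell}$-algebra generated by the arithmetic monodromy. The paper's proof is literally the single sentence ``This follows from applying a Chebotarev density theorem to a model of $f$ over a finite subfield of $\mathbb{F}$''; you have supplied the details and, in doing so, correctly isolated the one non-obvious point that sketch glosses over --- namely that $\rho(\Frob_{s_{0}})$ must land in $C_{\ell}(A_{s})=Z(\End^{0}_{\mathbb{F}}(A_{s}))\otimes\mathbb{Q}_{\ell}$ rather than merely in $\End^{0}(A_{s})\otimes\mathbb{Q}_{\ell}$, and that this is where the trace-zero hypothesis is expected to be used. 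Your proposed resolution via Serre's theorem on Frobenius tori is a reasonable way to handle this, and is in the spirit of what the paper's sketch presumably intends.
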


\begin{proof}
This follows from applying a Chebotarev density theorem to a model of $f$ over
a finite subfield of $\mathbb{F}$.
\end{proof}

\begin{question}
\label{c10f}Are all abelian schemes general?
\end{question}

\begin{remark}
\label{c10h}There is an extensive literature on Mumford--Tate groups and their
variation in families (see \cite{milne2013b}, \S 6, for a summary), much of
which carries over to Lefschetz groups.
\end{remark}

\subsection{Lefschetz classes in families}

\begin{theorem}
\label{c13} Let $f\colon A\rightarrow S$ be an abelian scheme over a connected
normal scheme of finite type over an algebraically closed field $k$, and let
$\delta$ be a global section of $R^{2n}f_{\ast}\mathbb{A}(n)$ such that
$\delta$ is fixed by $L(A_{\bar{\eta}})$. If $\delta_{s}$ is Lefschetz for one
closed $s\in S$, then it is Lefschetz for all closed $s\in S$.
\end{theorem}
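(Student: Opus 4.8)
The plan is to prove the stronger assertion that $\gamma$ is itself a \emph{global} Lefschetz class on $A/S$, i.e.\ that $\gamma$ lies in $\mathcal{D}^{n}(A/S)$ --- by which I mean the degree-$n$ part of the $\mathbb{Q}$-subalgebra of $\bigoplus_{m}H^{0}(S,R^{2m}f_{\ast}\mathbb{A}(m))$ generated by $\mathcal{D}^{1}(A/S)$ (cf.\ \ref{d13}). Once this is known, restricting to an arbitrary closed $s$ puts $\gamma_{s}$ in $\mathcal{D}^{n}(A_{s})$, so $\gamma_{s}$ is Lefschetz. Both hypotheses enter: ``$\gamma$ fixed by $L(A_{\bar{\eta}})$'' will give $\gamma\in\mathcal{D}^{n}(A/S)\otimes_{\mathbb{Q}}\mathbb{A}$, and ``$\gamma_{s}$ Lefschetz for one $s$'' will be used to descend the coefficients from $\mathbb{A}$ to $\mathbb{Q}$. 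As a preliminary reduction I would pass to a connected finite \'etale cover of $S$: since $\pi_{1}(S)$ acts on the finitely generated group $\mathrm{NS}(A_{\bar{\eta}})$ through a finite quotient, after such a replacement I may assume this action is trivial, and the replacement is harmless because $k$ is algebraically closed, so a closed point of the cover over a given closed point $s$ of $S$ has fibre exactly $A_{s}$, and being Lefschetz is a property of the fibre. After this reduction every divisor class on $A_{\bar{\eta}}$ is $\pi_{1}(S)$-invariant; combined with $\mathrm{NS}^{0}(A/S)=\mathrm{NS}^{0}(A_{\eta})$ (Theorem \ref{c11}, via the identification of symmetric homomorphisms $A\to A^{\vee}$ over $S$ with divisor classes) and $\mathrm{NS}^{0}(A_{\eta})=\mathrm{NS}^{0}(A_{\bar{\eta}})$ (the cokernel of $\mathrm{NS}(A_{\eta})\to\mathrm{NS}(A_{\bar{\eta}})$ is torsion, lying in a Brauer group), one gets $\mathcal{D}^{1}(A/S)=\mathcal{D}^{1}(A_{\bar{\eta}})$ and hence, taking products, $\mathcal{D}^{m}(A/S)=\mathcal{D}^{m}(A_{\bar{\eta}})$ as subspaces of $H^{0}(S,R^{2m}f_{\ast}\mathbb{A}(m))\subset H_{\mathbb{A}}^{2m}(A_{\bar{\eta}})(m)$, for every $m$.

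Next I would show $\gamma\in\mathcal{D}^{n}(A/S)\otimes_{\mathbb{Q}}\mathbb{A}$. View $\gamma$ as an element of $H_{\mathbb{A}}^{2n}(A_{\bar{\eta}})(n)=\bigl(\bigwedge^{2n}H_{\mathbb{A}}^{1}(A_{\bar{\eta}})\bigr)(n)$, using proper--smooth base change. By hypothesis $\gamma$ is fixed by $L(A_{\bar{\eta}})$, so applying \ref{l5} to each of the standard Weil cohomologies separately --- that is, componentwise over the restricted product $\mathbb{A}$ --- and using that the $L(A_{\bar{\eta}})$-invariants in $\bigwedge^{\ast}H_{\mathbb{A}}^{1}(A_{\bar{\eta}})$ form the algebra $\mathcal{D}^{\ast}(A_{\bar{\eta}})\otimes_{\mathbb{Q}}\mathbb{A}$, which by definition (\ref{l1}) is generated in degree $2$, I obtain $\gamma\in\mathcal{D}^{n}(A_{\bar{\eta}})\otimes_{\mathbb{Q}}\mathbb{A}=\mathcal{D}^{n}(A/S)\otimes_{\mathbb{Q}}\mathbb{A}$ by the previous paragraph. (In particular $\gamma_{s}$ is weakly Lefschetz on $A_{s}$ for \emph{every} closed $s$; the content of the theorem is the rationality improvement.)

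Finally I would extract the rationality using Lemma \ref{c6}. Let $s_{0}$ be a closed point with $\gamma_{s_{0}}$ Lefschetz, and apply the lemma to the commutative square with $W=\mathcal{D}^{n}(A/S)$, $W'=\mathcal{D}^{n}(A/S)\otimes_{\mathbb{Q}}\mathbb{A}$, $V=\mathcal{D}^{n}(A_{s_{0}})$, $V'=H_{\mathbb{A}}^{2n}(A_{s_{0}})(n)$, the vertical maps being restriction to $A_{s_{0}}$ and the horizontal ones the canonical inclusions. Its hypotheses hold: $W\otimes_{\mathbb{Q}}\mathbb{A}\simeq W'$ by construction, $V\otimes_{\mathbb{Q}}\mathbb{A}\hookrightarrow V'$ by \ref{l5}, and $W\to V$ is injective because it factors through the injection $H^{0}(S,R^{2n}f_{\ast}\mathbb{A}(n))\hookrightarrow H_{\mathbb{A}}^{2n}(A_{s_{0}})(n)$ (a global section of a locally constant sheaf is determined by any stalk). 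The lemma then gives that the right vertical map $b$ is injective and that $\mathcal{D}^{n}(A/S)=\mathcal{D}^{n}(A_{s_{0}})\cap\bigl(\mathcal{D}^{n}(A/S)\otimes_{\mathbb{Q}}\mathbb{A}\bigr)$ inside $V'$. Since $\gamma\in\mathcal{D}^{n}(A/S)\otimes_{\mathbb{Q}}\mathbb{A}$ and $b(\gamma)=\gamma_{s_{0}}\in\mathcal{D}^{n}(A_{s_{0}})$, the class $b(\gamma)$ lies in this intersection, so $b(\gamma)=b(x)$ for some $x\in\mathcal{D}^{n}(A/S)$; injectivity of $b$ forces $\gamma=x\in\mathcal{D}^{n}(A/S)$. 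Hence $\gamma$ is a global Lefschetz class and $\gamma_{s}$ is Lefschetz for all closed $s$.

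There is no genuinely hard step: the theorem is essentially \ref{l5} (``fixed by the Lefschetz group $=$ weakly Lefschetz'') together with the formal Lemma \ref{c6}. The two places that require care --- and where I would be most careful writing the full proof --- are checking that the passage to a finite cover really makes the geometric-generic and generic Lefschetz classes coincide (so that $\mathcal{D}^{n}(A/S)=\mathcal{D}^{n}(A_{\bar{\eta}})$), and carrying out the identification \ref{l5} simultaneously over \emph{all} the standard Weil cohomologies --- in particular reconciling the crystalline realization with the $\ell$-adic ones --- so that one may legitimately work over the restricted product $\mathbb{A}$ throughout; neither needs the Tate conjecture.
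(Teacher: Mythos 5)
Your proof is correct and follows essentially the same route as the paper's: define $\mathcal{D}^{n}(A/S)$, show that the hypotheses place $\gamma$ in $\mathcal{D}^{n}(A/S)\otimes_{\mathbb{Q}}\mathbb{A}$, and then extract rationality by Lemma~\ref{c6} applied to a square with $\mathcal{D}^{n}(A/S)$ and $\mathcal{D}^{n}(A_{s_0})$ on the left. The one organizational difference is your preliminary passage to a finite \'etale cover so that $\pi_{1}(S)$ acts trivially on $\mathrm{NS}(A_{\bar\eta})$, which lets you identify $\mathcal{D}^{n}(A/S)$ with $\mathcal{D}^{n}(A_{\bar\eta})$ outright; the paper instead works with the generic fibre $A_{\eta}$, uses surjectivity of $\mathcal{D}^{n}(A/S)\to\mathcal{D}^{n}(A_{\eta})$ (reducing to $n=1$ via Theorem~\ref{c9}), and a diagram chase to identify $\mathcal{D}^{n}(A/S)\otimes\mathbb{A}$ with $H^{0}(S,R^{2n}f_{\ast}\mathbb{A}(n))^{L(A_{\bar\eta})}$, so the cover is avoided. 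Both devices serve the same purpose and both depend on Theorem~\ref{c11}; the remainder (injectivity of the restriction map, application of \ref{l5} over each Weil cohomology, Lemma~\ref{c6}) is the same.
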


\begin{proof}
Let $\mathcal{D}^{\ast}(A/S)$ denote the $\mathbb{Q}$-subalgebra of
\[
H^{0}(S,R^{2\ast}f_{\ast}\mathbb{A}(\ast))\simeq H_{\mathbb{A}}^{2n}%
(A_{\bar{\eta}})(n)^{\pi_{1}(S)}%
\]
generated by the image of $\mathrm{NS}(A/S)$. Consider the diagram%
\[
\begin{tikzcd}
\mathcal{D}^{n}(A_{\eta})\arrow{r}{\otimes}
&H^{2n}(A_{\eta},\mathbb{A}(n))^{L(A_\eta)}\\
\mathcal{D}^{n}(A/S)\arrow{u}{a}\arrow{r}{b}\arrow[hook]{d}
&H^{0}(S,R^{2n}f_{\ast}\mathbb{A}(n))^{L(A_{\overline\eta})}\arrow{u}{c}\arrow[hook]{d}\\
\mathcal{D}^{n}(A_{s})\arrow{r}{\otimes}
&H^{2n}(A_{s},\mathbb{A}{}(n))^{L(A_{s})}.%
\end{tikzcd}
\]
For the top and bottom arrows, see \ref{l5}. The map $a$ is surjective when
$n=1$ (see the proof of Theorem \ref{c9}), and so it is surjective for all
$n$. The maps $b$ and $c$ are injective, from which it follows that $a$ and
$c$ are isomorphisms and that $b$ induces an isomorphism
\[
\mathcal{D}^{n}(A/S)\otimes\mathbb{A}\rightarrow H^{0}(S,R^{2n}f_{\ast
}\mathbb{A}(n))^{L(A_{\overline{\eta}})}.
\]
On applying Lemma \ref{c6} to the bottom square, we find that%
\[
\mathcal{D}^{n}(A/S)=\mathcal{D}{}^{n}(A_{s})\cap H^{0}(S,R^{2n}f_{\ast
}\mathbb{A}(n))^{L(A_{\overline{\eta}})}.
\]
If the element $\delta$ is such that $\delta_{s}\in\mathcal{D}^{n}(A_{s})$ for
one $s$, then it lies in $\mathcal{D}{}^{n}(A/S)$, and so $\delta_{s}%
\in\mathcal{D}^{n}(A_{s})$ for all $s.${}
\end{proof}

\begin{corollary}
\label{g13c}Let $f\colon A\rightarrow S$ be as in the statement of the
theorem. Assume that $f$ is general, and let $\delta$ be a global section of
$R^{2n}f_{\ast}\mathbb{A}(n)$. If $\delta$ is weakly Lefschetz for all closed
$s\in S$ and Lefschetz for one closed $s\in S$, then it is Lefschetz for all
closed $s.$
\end{corollary}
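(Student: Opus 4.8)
The plan is to deduce the corollary from Theorem~\ref{c13}, the only extra input being that genericity upgrades ``weakly Lefschetz at every closed point'' to ``fixed by $L(A_{\bar{\eta}})$''. First I would unwind the hypothesis. For a closed point $s\in S$, the class $\gamma_s\in H_{\mathbb{A}}^{2n}(A_s)(n)$ is weakly Lefschetz precisely when it is fixed by $L(A_s)$ (this is the content of~\ref{l5}). Via the specialization isomorphisms $H_{\mathbb{A}}^{2n}(A_s)(n)\simeq H_{\mathbb{A}}^{2n}(A_{\bar{\eta}})(n)$ (smooth proper base change on the $\ell$-adic factors, and its analogue on the crystalline factor), this identification is compatible with the inclusion $C_{\ell}(A_s)\subset C_{\ell}(A_{\bar{\eta}})$ of Definition~\ref{c10a}, hence with the resulting inclusion of algebraic groups $L(A_s)\hookrightarrow L(A_{\bar{\eta}})$. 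Thus the assumption that $\gamma$ is weakly Lefschetz at every closed $s$ says exactly that the global section $\gamma\in H^0(S,R^{2n}f_\ast\mathbb{A}(n))=H_{\mathbb{A}}^{2n}(A_{\bar{\eta}})(n)^{\pi_1(S)}$ is fixed by each subgroup $L(A_s)$ of $L(A_{\bar{\eta}})$.

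Next I would run the standard stabilizer argument. The stabilizer of $\gamma$ in $L(A_{\bar{\eta}})$ is a Zariski-closed algebraic subgroup that, by the previous step, contains $L(A_s)$ for every closed $s$; since $f$ is general, Proposition~\ref{c10b} shows that these subgroups generate $L(A_{\bar{\eta}})$ as an algebraic group, so the stabilizer is all of $L(A_{\bar{\eta}})$. In other words $\gamma$ is fixed by $L(A_{\bar{\eta}})$. Now Theorem~\ref{c13} applies verbatim: $\gamma$ is fixed by $L(A_{\bar{\eta}})$ and $\gamma_s$ is Lefschetz for one closed $s$, hence $\gamma_s$ is Lefschetz for all closed $s\in S$, which is the assertion.

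The only point requiring care is the compatibility invoked in the first paragraph: that the identifications of the cohomology at the various closed points with the cohomology at the geometric generic point carry each fibrewise group $L(A_s)$ to (the restriction of) the single group $L(A_{\bar{\eta}})$, so that the a priori fibrewise ``weakly Lefschetz'' conditions can legitimately be combined into one statement about $\gamma$. Once this bookkeeping is in place the remainder is formal, and no new geometric input beyond Theorem~\ref{c13} and Proposition~\ref{c10b} is needed.
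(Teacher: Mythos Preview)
Your proposal is correct and follows exactly the paper's approach: the paper's proof is the single line ``The hypotheses imply that $\gamma$ is fixed by $L(A_{\bar{\eta}})$,'' and you have simply unwound why this is so (weakly Lefschetz at $s$ means $L(A_s)$-fixed, and by Proposition~\ref{c10b} the $L(A_s)$ generate $L(A_{\bar{\eta}})$ when $f$ is general), then invoked Theorem~\ref{c13}.
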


\begin{proof}
The hypotheses imply that $\delta$ is fixed by $L(A_{\bar{\eta}})$.
\end{proof}

\subsection{A variational approach to proving Conjecture B for Weil classes.}

The key to proving the conjectures is to prove that Conjecture B holds for
split Weil classes. Not having a proof, we simply make some remarks.

Let $(A_{1},\nu_{1},\lambda_{1})$ be a split Weil triple over $\mathbb{Q}%
{}^{\mathrm{al}}$ (relative to a CM-algebra $E$) with good reduction at $w$.

\begin{plain}
\label{w14}If $(A_{1},\nu_{1},\lambda_{1})$ is of the form $(B\otimes
\mathcal{O}{}_{E},\ldots)$, then the Weil classes on $A_{1}$ are $w$-Lefschetz
if they are weakly $w$-Lefschetz (because they are algebraic).
\end{plain}

\begin{plain}
\label{w15}Consider the universal Weil family $f\colon A\rightarrow S$ over
$\mathbb{Q}{}^{\mathrm{al}}$ containing $(A_{1},\nu_{1},\lambda_{1})$. If the
Weil classes on $A_{1}$ are weakly $w$-Lefschetz, and $(A_{1},\nu_{1}%
,\lambda_{1})_{0}$ lifts (up to isogeny) to a triple in the family whose Weil
classes are $w$-Lefschetz, then the Weil classes on $A_{1}$ are $w$-Lefschetz (obviously).
\end{plain}

\begin{plain}
\label{c15} Assume that the Weil classes on $A_{1}$ are weakly $w$-Lefschetz.
If $B$ and the family $\bar{f}\colon\bar{A}\rightarrow C$ in \ref{w7} can be
chosen so that the Weil classes $\gamma$ on $\bar{A}$ (i.e, the global
sections of $W_{E}(\bar{A}/C)$) are fixed by $L(\bar{A}_{\bar{\eta}})$, then
$\gamma_{s}$ is Lefschetz for all closed $s\in C$. In particular, the Weil
classes on $A_{1}$ are $w$-Lefschetz.
\end{plain}

\subsection{CM abelian varieties}

Let $\mathbb{Q}{}^{\mathrm{al}}$ be the algebraic closure of $\mathbb{Q}{}$ in
$\mathbb{C}{}$, and let $w$ be a prime of $\mathbb{Q}{}^{\mathrm{al}}$ lying
over $p$.

\begin{theorem}
\label{r0} Let $A$ be an abelian variety over $\mathbb{Q}{}^{\mathrm{al}}$ of
CM-type. There exist abelian varieties $A_{\Delta}$ of split Weil type and
homomorphisms $f_{\Delta}\colon A\rightarrow A_{\Delta}$ such that every Hodge
class $\gamma$ on $A$ can be written as a sum $\gamma=\sum f_{\Delta}^{\ast
}(\gamma_{\Delta})$ with $\gamma_{\Delta}$ a Weil class on $A_{\Delta}$. If
$\gamma$ is weakly $w$-Lefschetz on $A$, then the $\gamma_{\Delta}$ can be
chosen to be weakly $w$-Lefschetz on $A_{\Delta}$.
\end{theorem}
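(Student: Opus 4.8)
The plan is to upgrade the proof of Theorem~\ref{a6} (André's theorem, as proved above) so that it keeps track of the property of being weakly $w$-Lefschetz. Recall that in the proof of Theorem~\ref{a6}, starting from a CM abelian variety $A$ of CM-type $(E,\phi)$, one fixes a CM field $K$ splitting the centre of $\End^{0}(A)$ and, for each subset $\Delta\subset S=\Hom(E,K)$ of order $2p$ satisfying the balance condition~(\ref{eq2}), one forms $A_{\Delta}=A\otimes_{\mathbb{Q}}K_{\Delta}$ (of split Weil type relative to $K$) together with the canonical homomorphism $f_{\Delta}\colon A\to A_{\Delta}$. The key identity obtained there is
\[
\sum_{\Delta\text{ satisfies }(\ref{eq2})}f_{\Delta}^{\ast}\bigl(W_{K}(A_{\Delta})\bigr)=\mathcal{B}^{p}(A)
\]
as $\mathbb{Q}$-subspaces of $H^{2p}(A,\mathbb{Q})$, with the stronger fact that after tensoring with $K$ each graded piece $H^{2p}(A)_{t\Delta}$ already lies in $f_{\Delta}^{\ast}(W_{K}(A_{\Delta}))\otimes_{\mathbb{Q}}K$. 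So the first step is simply to quote this construction: the $A_{\Delta}$ and $f_{\Delta}$ of the present Theorem~\ref{r0} are exactly those of Theorem~\ref{a6}, and the decomposition $\gamma=\sum f_{\Delta}^{\ast}(\gamma_{\Delta})$ already exists.

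The second, and substantive, step is to arrange that when $\gamma$ is weakly $w$-Lefschetz the pieces $\gamma_{\Delta}$ can be taken weakly $w$-Lefschetz as well. Here I would work entirely in the adelic cohomology $H_{\mathbb{A}}^{2\ast}$ and in its specialization at $w$. By definition $\gamma$ weakly $w$-Lefschetz means $\gamma_{0}\in H_{\mathbb{A}}^{2p}(A_{0})(p)$ is fixed by $L(A_{0})$, i.e.\ lies in the image of $\mathcal{D}^{p}(A_{0})\otimes_{\mathbb{Q}}\mathbb{A}$ (see \ref{l5}). The crucial observations are: (i) $A$ and each $A_{\Delta}$ have good reduction at $w$ (the $A_{\Delta}$ are built from $A$ by tensoring with the étale $K$-algebra $K_{\Delta}$, so $A_{\Delta,0}\simeq A_{0}\otimes_{\mathbb{Q}}K_{\Delta}$ still makes sense and good reduction is preserved), and the specialization maps are compatible with the $f_{\Delta}$, so $(f_{\Delta}^{\ast}\gamma_{\Delta})_{0}=f_{\Delta,0}^{\ast}(\gamma_{\Delta,0})$; (ii) the decomposition $\gamma=\sum f_{\Delta}^{\ast}\gamma_{\Delta}$ is governed by the group action of $E^{I}$ (equivalently, by the eigenspace decomposition of $H_{\mathbb{A}}^{2p}$ under the CM-algebra), and this decomposition is \emph{orthogonal} for the cup-product pairing and is defined over $\mathbb{Q}$ after the finite extension to $K$. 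Because $L(A_{0})$ is the Lefschetz group, it commutes with the $E$-action (the Rosati involution of a CM polarization acts on $E$ as complex conjugation, so the semisimple part of the algebra generated by $E$ is $L(A_{0})$-stable), hence the projectors onto the $\gamma_{\Delta}$-components are $L(A_{0})$-equivariant. Applying these $L(A_{0})$-equivariant projectors to the $L(A_{0})$-fixed class $\gamma_{0}$ shows each component is $L(A_{0,\Delta})$-fixed, and the same descent-of-$\mathbb{Q}$-structure argument used in Theorem~\ref{a6} (the footnote: if $W\otimes_{k}K\subset W'\otimes_{k}K$ then $W\subset W'$, here applied to the $\mathbb{A}$-module $\mathcal{D}^{\ast}\otimes\mathbb{A}$) upgrades this back to a statement over $\mathbb{Q}$, giving $\gamma_{\Delta}$ weakly $w$-Lefschetz.

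Concretely, the steps I would carry out are: first, invoke Theorem~\ref{a6} to fix the $A_{\Delta}$, $f_{\Delta}$ and the decomposition; second, check that everything (the family $A_{\Delta}$, the maps $f_{\Delta}$, the $E^{I}$-action, the orthogonal eigenspace decomposition) specializes compatibly to $\mathbb{F}$ and that $L(A_{0})$ is compatible with the $E$-action, so that the eigen-projectors are $L(A_{0})$-equivariant; third, apply these projectors to $\gamma_{0}$ to see $(\gamma_{\Delta})_{0}$ is $L(A_{\Delta,0})$-fixed; fourth, descend from $K$-coefficients back to $\mathbb{Q}$ to conclude $\gamma_{\Delta}$ is weakly $w$-Lefschetz on $A_{\Delta}$. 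I expect the main obstacle to be the bookkeeping in step two --- precisely, verifying that the passage to $A_{\Delta}=A\otimes_{\mathbb{Q}}K_{\Delta}$ and its reduction is compatible with the Lefschetz group and that the André-style decomposition of $H_{\mathbb{A}}^{2p}(A)(p)$ into the pieces cut out by (\ref{eq2}) is realized by genuine (idempotent) correspondences commuting with $L(A_{0})$, rather than merely an abstract direct sum; once that is in place the descent of $\mathbb{Q}$-structures is the same routine argument already used twice above (in Theorem~\ref{a6} and in Lemma~\ref{c6}).
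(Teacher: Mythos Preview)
Your proposal is correct and is essentially the paper's argument, only spelled out in more detail. The paper's proof is a three-line remark: it identifies the weakly $w$-Lefschetz Hodge classes with $\mathcal{B}^{p}(A)^{L_{0}}$ (where $L_{0}=L(A_{0})$ acts via the specialization isomorphism), and then observes that this space is exactly the sub-sum $\sum_{\Delta}f_{\Delta}^{\ast}(W_{K}(A_{\Delta}))$ over those $\Delta$ satisfying~(\ref{eq2}) for which $\bigoplus_{t}H^{2p}(A_{\Delta})_{\Delta\times\{t\}}$ is $L_{0}$-fixed, referring back to the proof of Theorem~\ref{a6}. Your projector language and the paper's ``restrict the index set'' language amount to the same thing, since each eigenspace $H^{2p}(A)_{\Delta}$ is one-dimensional and $L_{0}$ commutes with $E$ (because $E\subset\End^{0}(A)\subset\End^{0}(A_{0})$ and $L_{0}$ sits in the centralizer of the latter).

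One point you flag as a potential obstacle --- the passage from $L(A_{0})$-fixed to $L(A_{\Delta,0})$-fixed --- dissolves once you note that $A_{\Delta}=A\otimes_{\mathbb{Q}}K_{\Delta}$ is isogenous to a power of $A$, so $A_{\Delta,0}$ is isogenous to a power of $A_{0}$ and hence $L(A_{\Delta,0})=L(A_{0})$ (Lefschetz groups are invariants of $\langle A\rangle^{\otimes}$, not of $A$ itself; see~\ref{l3}). With that in hand, your steps two through four are routine, and your descent step is exactly the one already carried out at the end of the proof of Theorem~\ref{a6}.
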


\begin{proof}
Let $E_{0}$ be the centre of $\End(A_{0})$ and $L_{0}$ its Lefschetz group.
Then the $\mathbb{Q}$-vector space of weakly $w$-Lefschetz classe is
$B^{p}(A)^{L_{0}}$. This is equal to the sum $\sum_{\Delta}f_{\Delta}^{\ast
}(W_{K}(A_{\Delta}))$, where $\Delta$ runs over the classes $\Delta$
satisfying (\ref{eq2}) and such that the elements of
\[
\bigoplus_{t\in T}H^{2p}(A_{\Delta})_{\Delta\times\{t\}}%
\]
are fixed by $L_{0}$. See \ref{a6}.
\end{proof}

\begin{corollary}
\label{r0c}If Conjecture B holds for split Weil classes then it holds for all
Hodge classes on CM abelian varieties.
\end{corollary}

\begin{proof}
The pullback of a $w$-Lefschetz class by a morphism of abelian varieties is
$w$-Lefschetz (because the pullback of a Lefschetz class is Lefschetz).
\end{proof}

\begin{remark}
\label{r0r}It is not true that all Weil classes on abelian varieties of Weil
type over $\mathbb{Q}{}^{\mathrm{al}}$ are weakly $w$-Lefschetz because that
would imply that all Hodge classes on CM abelian varieties specialize to
Lefschetz classes, which is false.
\end{remark}

\section{Conjecture A implies C and D\label{CAR}}

For CM abelian varieties, there is a proof (\cite{milne2009}, \S 4), using
tannakian categories, that Conjecture B implies Conjecture D, hence also
Conjecture C. In the second subsection below, we discuss an elementary
approach to proving that Conjecture B implies Conjecture C, and in the third
subsection we explain the tannakian proof. Recall that Conjecture A implies
Conjecture B.

\subsection{Preliminaries}

\begin{plain}
\label{m1}Recall that a representation of an affine group scheme over $R$ on
an $R$-module $V$ corresponds to co-action $\rho\colon V\rightarrow
V\otimes\mathcal{O}{}(G)$ of the Hopf algebra $\mathcal{O}(G)$ of $G$ on $V$.
We define%
\[
V^{G}=\{v\in V\mid\rho(v)=v\otimes1\}.
\]
Its definition commutes with base change. See \cite{milne2017c}, 4.i.
\end{plain}

\begin{plain}
\label{m2}Let $Q$ be a field and $R$ a $Q$-algebra. Let $M$ be an $R$-module.
Recall that a $Q$-structure on $M$ is a $Q$-submodule $V$ of $M$ such that the
map $v\otimes r\mapsto rm\colon V\otimes_{Q}R\rightarrow M$ is an isomorphism.
We write $V\overset{\otimes}{\longrightarrow}M$ to signify that $V$ is a
$Q$-structure on $M$.
\end{plain}

\begin{plain}
\label{m2a}Let $A$ and $B$ be abelian varieties such that $\Hom(A,B)=0$. Then%
\[
\mathcal{D}{}^{\ast}(A\times B)\simeq\mathcal{D}{}^{\ast}(A)\otimes
\mathcal{D}{}^{\ast}(B).
\]
The corresponding statement for Hodge classes, Tate classes, rational Tate
classes\ldots\ is false. However, for example, $\mathcal{B}{}^{\ast
}(A)=f^{\ast}\left(  \mathcal{B}{}^{\ast}(A\times B)\right)  $, where $f\colon
A\rightarrow A\times B$ is the map $a\mapsto(a,0).$
\end{plain}

\subsection{Conjecture B implies Conjecture C}

\begin{plain}
\label{m3}Let $A$ be a CM abelian variety over $\mathbb{Q}{}^{\mathrm{al}}$
and $A_{0}$ its reduction over $\mathbb{F}{}$. The inclusion $\End^{0}%
(A)\hookrightarrow\End^{0}(A_{0})$ maps the centre $C(A)$ of $\End^{0}(A)$
onto a $\mathbb{Q}{}$-subalgebra of $\End^{0}(A_{0})$ containing its centre
$C(A_{0})$, and hence it defines an inclusion $L(A_{0})\hookrightarrow L(A)$.
We have a commutative diagram%
\[
\begin{tikzcd}
\MT(A)\arrow[hook]{r}&L(A)\\
P(A_{0})\arrow[hook]{r}\arrow[hook]{u}
&L(A_{0}),\arrow[hook]{u}
\end{tikzcd}
\]
where $P(A_{0})$ is the smallest algebraic subgroup of $L(A_{0})$ containing a
Frobenius element for $A_{0}$ (cf. \cite{milne2009}, 3.2).

Let $\mathcal{H}^{r}(A)=H_{\mathbb{A}}^{2r}(A)(r)$. Recall (\cite{deligne1982}%
) that%
\[
\mathcal{B}{}^{r}(A)\overset{\otimes}{\longrightarrow}\mathcal{H}{}%
^{r}(A)^{\MT(A)},
\]
and (\ref{l5})%
\[
\mathcal{D}{}^{r}(A_{0})\overset{\otimes}{\longrightarrow}\mathcal{H}{}%
^{r}(A_{0})^{L(A_{0})}.
\]
Using the projection $\mathcal{H}{}^{r}(A)\rightarrow\mathcal{H}{}^{r}(A_{0}%
)$, we identify $\mathcal{B}^{r}(A)$ with its image in $\mathcal{H}{}%
^{r}(A_{0})$, and then%
\begin{equation}
\mathcal{B}{}^{r}(A)\overset{\otimes}{\longrightarrow}\mathcal{H}{}^{r}%
(A_{0})^{\MT(A)} \label{ep8}%
\end{equation}

\end{plain}

\begin{proposition}
\label{m4}Conjecture $B$ for $A$ implies that%
\[
\mathcal{B}^{r}(A)\cap\mathcal{D}^{r}(A_{0})\overset{\otimes}{\longrightarrow
}\mathcal{H}{}^{r}(A_{0})^{\MT(A)\cdot L(A_{0})}%
\]
(intersection in $\mathcal{H}{}^{r}(A_{0})$).
\end{proposition}

\begin{proof}
On taking fixed points with respect to $L(A_{0})$ in (\ref{ep8}), we find that%
\[
\mathcal{B}{}^{r}(A)^{L(A_{0})}\overset{\otimes}{\longrightarrow}\mathcal{H}%
{}^{r}(A_{0})^{\MT(A)\cdot L(A_{0})}\text{.}%
\]
If $a\in\mathcal{B}{}^{r}(A)$ is fixed by $L(A_{0})$, then it is weakly
$w$-Lefschetz, hence $w$-Lefschetz by Conjecture $B$. Therefore $\mathcal{B}%
{}^{r}(A)^{L(A_{0})}=\mathcal{B}{}^{r}(A)\cap\mathcal{D}{}^{r}(A_{0})$.
\end{proof}

Assume that Conjecture B holds for $A$. From the split-exact sequence%
\[
\begin{tikzcd}[column sep=7ex]
0\arrow{r}
&\mathcal{H}^{r}(A_{0})\arrow{r}{x\mapsto(x,x)}
&\mathcal{H}^{r}(A_{0})\oplus\mathcal{H}^{r}(A_{0})
\arrow{r}{(x,y)\mapsto x-y}
&\mathcal{H}^{r}(A_{0})\arrow{r}
&0,\end{tikzcd}
\]
we get the bottom row of the following exact commutative diagram,%
\[
\begin{tikzcd}[column sep=small]
0\arrow{r}
&\mathcal{B}^{r}(A)\cap\mathcal{D}^{r}(A_{0})\arrow{r}\arrow{d}{\otimes}
&\mathcal{B}^{r}(A)\oplus\mathcal{D}^{r}(A_{0})\arrow{r}\arrow{d}{\otimes}
&\mathcal{B}^{r}(A)+\mathcal{D}^{r}(A_{0})\arrow{r}\arrow[dashed]{d}&0\\
0\arrow{r}&\mathcal{H}^{r}(A_{0})^{\MT(A)\cdot L(A_{0})}\arrow{r}
&\mathcal{H}^{r}(A_{0})^{\MT(A))}
\oplus\mathcal{H}^{r}(A_{0})^{L(A_{0})}\arrow{r}
&\mathcal{H}^{r}(A_{0})^{\MT(A)\cap L(A_{0})}
\end{tikzcd}
\]
From the diagram, we see that the dashed arrow exists and that%
\[
\left(  \mathcal{B}{}^{r}(A)+\mathcal{D}^{r}(A_{0})\right)  \otimes
\mathbb{A}\hookrightarrow\mathcal{H}{}^{r}(A_{0})^{\MT(A)\cdot\cap
L(A_{0})}.
\]
As $P(A_{0})\subset\MT(A)\cap L(A_{0})$, we obtain an injection%
\[
\left(  \mathcal{B}{}^{r}(A)+\mathcal{D}^{r}(A_{0})\right)  \otimes
\mathbb{A}{}_{f}\hookrightarrow\mathcal{T}{}^{r}(A).
\]
This will not usually be an isomorphism.

Let $K$ be a CM subfield of $\mathbb{C}{}$ that is finite and Galois over
$\mathbb{Q}{}$. We say that $K$ splits a CM abelian variety $A$ if
$\End^{0}(A)\otimes_{\mathbb{Q}}K$ is a product of matrix algebras over $K$.
Let $A^{K}$ be a CM abelian variety over $\mathbb{Q}^{\mathrm{al}}$ split by
$K$ and so large that every simple abelian variety over $\mathbb{Q}%
{}^{\mathrm{al}}$ split by $K$ is isogenous to an abelian subvariety of $A$.
Then $A_{0}^{K}$ is an abelian variety over $\mathbb{F}{}$ such that every
simple abelian variety over $\mathbb{F}{}$ split by $K$ is isogenous to an
abelian subvariety of $A_{0}^{K}$. For $A^{K}$, we have%
\[
P(A_{0}^{K})=\MT(A^{K})\cap L(A_{0}^{K}),
\]
(\cite{milne1999b}, Theorem 6.1). Is the $\mathbb{Q}{}$-subalgebra of
$\mathcal{T}{}^{\ast}(A_{0}^{K})$ generated by $\mathcal{B}{}^{\ast}(A^{K})$
and $\mathcal{D}{}^{\ast}(A_{0}^{K})$ a $\mathbb{Q}{}$-structure on
$\mathcal{T}^{\ast}(A_{0}^{K})$? If so, we could define $\mathcal{R}{}^{\ast
}(A_{0})$ to be this $\mathbb{Q}{}$-algebra. For any other abelian variety $B$
over $\mathbb{F}{}$, there exists a  homomorphism $f\colon B\rightarrow
A_{0}^{K}$ with finite kernel for some $K$, and we could define
\[
\mathcal{R}^{\ast}(B)=f^{\ast}\mathcal{R}^{\ast}(A_{0})
\]
This would give an elementary construction of the family of rational Tate classes.

\begin{summary}
\label{m5}Assume that conjecture B holds for CM abelian varieties over
$\mathbb{Q}{}^{\mathrm{al}}$. Then there exists a unique family $\mathcal{R}%
{}^{\ast}(A)$ of $\mathbb{Q}{}$-structures on the $\mathbb{A}_{f}{}$-algebras
$\mathcal{T}{}^{\ast}(A)$, indexed by the abelian varieties over $\mathbb{F}$,
satisfying (R1) and (R2) and such that Hodge classes on CM abelian varieties
over $\mathbb{Q}{}^{\mathrm{al}}$ specialize to rational Tate classes.
\end{summary}

Without Conjecture B, the map $(\mathcal{B}{}^{r}(A)+\mathcal{D}{}^{r}%
(A_{0}))\otimes\mathbb{A}_{f}{}\rightarrow\mathcal{T}^{r}(A_{0}$) need not be injective.

\subsection{Conjecture B implies Conjecture D}

In this section, we assume that Conjecture B holds for all CM abelian
varieties over $\mathbb{Q}^{\mathrm{al}}$, and we construct the category of
motives $\Mot(\mathbb{F})$ over $\mathbb{F}$. This section is largely a review
of earlier work of the author.

\subsubsection{Statements}

\begin{plain}
\label{d1}Assuming Conjecture B for CM abelian varieties, we construct
commutative diagrams%

\begin{equation}
\begin{tikzcd} S&S_{\mathbb{Q}_l}&&\CM(\mathbb{Q}^{\mathrm{al}}) \arrow{d}{R} \arrow{rd}{\xi_l}\\ P\arrow{u}&P_{\mathbb{Q}_l} \arrow{u}&P_l\arrow{l}\arrow{lu} &\Mot(\mathbb{F})\arrow{r}{\eta_l}&\mathsf{R}_{l} (\mathbb{F}) \end{tikzcd}\quad
l=2,\ldots,p,\ldots, \label{ep3}%
\end{equation}
where

\begin{itemize}
\item $\CM(\mathbb{Q}^{\mathrm{al}})$ is the subcategory of $\Mot^{w}%
(\mathbb{Q}^{\mathrm{al}})$ of motives of CM-type;

\item $\Mot(\mathbb{F}{})$ is a tannakian category over $\mathbb{Q}{}$ with
fundamental group $P$;

\item $P\rightarrow S$ is the Shimura--Taniyama homomorphism (\ref{b3})

\item $R\colon\CM(\mathbb{Q}{}^{\mathrm{al}})\rightarrow\Mot(\mathbb{F}{})$ is
a quotient functor bound by $P\rightarrow S$;

\item $\xi_{l}\colon\CM(\mathbb{Q}^{\mathrm{al}})\rightarrow\mathsf{R}%
_{l}(\mathbb{F}{})$ is the realization functor (\ref{b16}, \ref{b17}).
\end{itemize}
\end{plain}

\subsubsection{A construction}

Let $\LCM(\mathbb{Q}{}^{\mathrm{al}})$ denote the tannakian subcategory of
$\LMot(\mathbb{Q}{}^{\mathrm{al}})$ generated by the abelian varieties of
CM-type. There are canonical exact tensor functors $J\colon\LCM(\mathbb{Q}%
{}^{\mathrm{al}})\rightarrow\CM(\mathbb{Q}{}^{\mathrm{al}})$ and
$R\colon\LCM(\mathbb{Q}{}^{\mathrm{al}})\rightarrow\LMot(\mathbb{F}{})$ giving
rise to homomorphisms $S\hookrightarrow T$ and $L\hookrightarrow T$ of
(commutative) fundamental groups. We shall shall construct quotient functors
$q\colon\CM(\mathbb{Q}^{\mathrm{al}})\rightarrow\Mot^{\prime}(\mathbb{F})$ and
$q^{\prime}\colon\LMot(\mathbb{F}{})\rightarrow\Mot^{\prime}(\mathbb{F}{})$
with the following properties:

\begin{enumerate}
\item the diagram at left commutes and corresponds to the diagram of
fundamental groups at right%
\[
\begin{tikzcd}
\CM(\mathbb{Q}^{\mathrm{al}})\arrow{d}{q}
&\LCM(\mathbb{Q}^{\mathrm{al}})\arrow{l}[swap]{J}\arrow{d}{R}
&S\arrow[hook]{r}&T\\
\Mot^{\prime}(\mathbb{F})
&\LMot(\mathbb{F})\arrow{l}[swap]{q^{\prime}}
&P\arrow[hook]{r}\arrow[hook]{u}&L\arrow[hook]{u}
\end{tikzcd}
\]

\item the functors $\xi_{l}\colon\CM(\mathbb{Q}{}^{\mathrm{al}})
\rightarrow\mathsf{R}_{l}(\mathbb{F}{})$ factor through $q$.
\end{enumerate}

The functors $R$ and $J$ are both quotient functors, and so correspond to
$\mathbb{Q}{}$-valued functor $\omega^{R}$ and $\omega^{J}$ on
$\LCM(\mathbb{Q}{}^{\mathrm{al}})^{L}$ and $\LCM(\mathbb{Q}{}^{\mathrm{al}%
})^{S}$ respectively (see \ref{t2}). Conjecture B for CM abelian varieties
says exactly that these two functors restrict to the same fibre functor
$\omega_{1}$ on $\LCM(\mathbb{Q}^{\mathrm{al}})^{L\cdot S}$ and that
$\omega_{1}$ is a $\mathbb{Q}{}$-structure on the ad\'{e}lic fibre functor on
$\LCM(\mathbb{Q}{}^{\mathrm{al}})^{L\cdot S}$ defined by the standard Weil
cohomology theories. As $P=S\cap L$ (\cite{milne1999b}, 6.1), the sequence%
\[
0\rightarrow S/P\rightarrow T/L\rightarrow T/(L\cdot S)\rightarrow0
\]
is exact. Therefore $J|\colon\LCM(\mathbb{Q}^{\mathrm{al}})^{L}\rightarrow
\CM(\mathbb{Q}{}^{\mathrm{al}})^{P}$ is a quotient functor and\newline$\left(
\LCM(\mathbb{Q}^{\mathrm{al}})^{L}\right)  ^{S/P}=\LCM(\mathbb{Q}%
^{\mathrm{al}})^{L\cdot S}$:%
\[
\begin{tikzcd}[column sep=scriptsize]
\CM(\mathbb{Q}^{\mathrm{al}})^P\arrow{d}
&\LCM(\mathbb{Q}^{\mathrm{al}})^L\arrow{l}[swap]{J|}\arrow{d}
&\LCM(\mathbb{Q}^{\mathrm{al}})^{L\cdot S}\arrow{l}\arrow{d}
&S/P\arrow{r}&T/L\arrow{r}&T/L\cdot S\\
\CM(\mathbb{Q}^{\mathrm{al}})\arrow{d}{q}
&\LCM(\mathbb{Q}^{\mathrm{al}})\arrow{l}[swap]{J}\arrow{d}{R}
&\LCM(\mathbb{Q}^{\mathrm{al}})^S\arrow{l}&S\arrow{r}\arrow{u}
&T\arrow{u}\arrow{r}&T/S\arrow{u}\\
\Mot^{\prime}(\mathbb{F})
&\LMot(\mathbb{F})\arrow{l}[swap]{q^{\prime}}
&&P\arrow{r}\arrow{u}&L\arrow{u}
\end{tikzcd}
\]
From $\omega_{1}$ and the equality $\omega_{1}=\omega^{R}|$, we get a fibre
functor $\omega_{0}$ on $\CM(\mathbb{Q}^{\mathrm{al}})^{P}$ (see \ref{t3})
such that

\begin{enumerate}
\item $\omega_{0}|\LCM(\mathbb{Q}{}^{\mathrm{al}})^{L}=\omega^{R}$,

\item $\omega_{0}$ is a $\mathbb{Q}{}$-structure on $x_{l}$ for $l=2,\ldots
,p,\ldots.$
\end{enumerate}

\noindent We define $\Mot^{\prime}(\mathbb{F})$ to be the quotient
$\CM(\mathbb{Q}^{\mathrm{al}})/\omega_{0}$. Because of (a), the functor
$q\circ J$ factors through $R$, say, $q\circ J=q^{\prime}\circ R$. The triple
($\Mot^{\prime}(\mathbb{F}{}),q,q^{\prime})$ has the properties (a) and (b).
See \cite{milne2009}, \S 4, for more details.

\section{Extending the reduction functor}

\label{ER}

In this section, \textit{we assume that Conjecture B holds for all CM abelian
varieties over} $\mathbb{Q}^{\mathrm{al}}$, and we investigate whether the
reduction functor $R\colon\CM(\mathbb{Q}{}^{\mathrm{al}})\rightarrow
\Mot(\mathbb{F}{})$ extends to $\Mot^{w}(\mathbb{Q}{}^{\mathrm{al}})$.

\subsection{Statements}

Let $\Mot^{\prime}(\mathbb{Q}{}^{\mathrm{al}})$ be a tannakian subcategory of
$\Mot^{w}(\mathbb{Q}{}^{\mathrm{al}})$ containing $\CM(\mathbb{Q}%
^{\mathrm{al}})$, and consider the following statements.

\begin{theorem}
\label{r3}The reduction functor $R\colon\CM(\mathbb{Q}{}^{\mathrm{al}%
})\rightarrow\Mot(\mathbb{F}{})$ extends uniquely to a functor $R\colon
\Mot^{\prime}(\mathbb{Q}^{\mathrm{al}})\rightarrow\Mot(\mathbb{F}{})$ with the
following properties:

\begin{enumerate}
\item if $hA\in$ $\ob\Mot^{\prime}(\mathbb{Q}^{\mathrm{al}})$, then
$R(hA)=hA_{0}$, and

\item the diagrams
\[
\begin{tikzcd}
\Mot^{\prime}(\mathbb{Q}^{\mathrm{al}}) \arrow{d}{R}
\arrow{rd}{\xi_l}\\
\Mot(\mathbb{F})\arrow{r}{\eta_l} &R_{l}(\mathbb{F}) \end{tikzcd}
\]
commute for all prime numbers $l$.
\end{enumerate}
\end{theorem}

\noindent If $\Mot^{\prime}(\mathbb{Q}^{\mathrm{al}})$ is generated by the
abelian varieties it contains, then the uniqueness is obvious.

\begin{corollary}
\label{r1}Let $A$ be an abelian variety over $\mathbb{Q}^{\mathrm{al}}$. If
$hA$ lies in $\Mot^{\prime}(\mathbb{Q}^{\mathrm{al}})$, then Hodge classes on
$A$ specialize to rational Tate classes on $A_{0}$.
\end{corollary}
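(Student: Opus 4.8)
The plan is to obtain Corollary \ref{r1} as a formal consequence of Theorem \ref{r3}. Recall first that, by the very definition of the motive categories, a Hodge class of codimension $n$ on $A$ is nothing but a morphism $\gamma\colon\1\rightarrow h^{2n}(A)(n)$ in $\Mot(\mathbb{Q}^{\mathrm{al}})$; since $hA\in\ob\Mot^{\prime}(\mathbb{Q}^{\mathrm{al}})$ and $\Mot^{\prime}(\mathbb{Q}^{\mathrm{al}})$ is a tannakian subcategory containing $\CM(\mathbb{Q}^{\mathrm{al}})$ (hence the Tate object), the object $h^{2n}(A)(n)$ and the morphism $\gamma$ both live in $\Mot^{\prime}(\mathbb{Q}^{\mathrm{al}})$. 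Dually, a rational Tate class of codimension $n$ on $A_{0}$ is by definition a morphism $\1\rightarrow h^{2n}(A_{0})(n)$ in $\Mot(\mathbb{F}{})$, i.e. an element of $\mathcal{R}^{n}(A_{0})$. As a general Hodge class is a finite sum of homogeneous ones, it suffices to treat a single $\gamma$ of codimension $n$.

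Next I would apply the reduction functor $R\colon\Mot^{\prime}(\mathbb{Q}^{\mathrm{al}})\rightarrow\Mot(\mathbb{F}{})$ furnished by Theorem \ref{r3}. Being an exact tensor functor, $R$ sends $\1$ to $\1$, and by property (a) of the theorem together with compatibility with Tate twists it sends $h^{2n}(A)(n)$ to $h^{2n}(A_{0})(n)$. Hence $R(\gamma)$ is a morphism $\1\rightarrow h^{2n}(A_{0})(n)$ in $\Mot(\mathbb{F}{})$, that is, a rational Tate class on $A_{0}$. The one point still to be checked is that $R(\gamma)$, viewed inside $H_{\mathbb{A}{}}^{2n}(A_{0})(n)$, is the specialization $\gamma_{0}$ of $\gamma$. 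For this I would invoke property (b) of Theorem \ref{r3}: for every prime $l$ (including $p$ and $\infty$) one has $\eta_{l}\circ R=\xi_{l}$, so $\eta_{l}(R(\gamma))=\xi_{l}(\gamma)$. By the construction of $\xi_{l}$ out of the standard Weil cohomologies together with their geometric specialization maps, $\xi_{l}(\gamma)$ is exactly the $l$-component of $\gamma_{0}$; while by condition (R4) of Theorem \ref{d2}, $\eta_{l}(R(\gamma))$ is the $l$-component of $R(\gamma)$ under $\mathcal{R}^{\ast}(A_{0})\otimes_{\mathbb{Q}{}}\mathbb{Q}{}_{l}\xrightarrow{\ \sim\ }\mathcal{T}{}_{l}^{\ast}(A_{0})$. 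These coincide for every $l$, and a rational Tate class is determined by its collection of $l$-components (again R4), so $R(\gamma)=\gamma_{0}$. Therefore $\gamma_{0}$ is a rational Tate class, as claimed.

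I do not anticipate any genuine obstacle: all of the substance has been placed in Theorem \ref{r3}, and the corollary merely unwinds definitions, observing that ``Hodge class'', ``rational Tate class'', and ``specialization'' are each already encoded in the Hom-spaces of the two categories and in the realization functors. The only step deserving care is the identification $R(\gamma)=\gamma_{0}$, rather than $R(\gamma)$ being merely \emph{some} rational Tate class on $A_{0}$; this is precisely what compatibility (b) with the functors $\xi_{l}$ buys, since each $\xi_{l}$ is built from the geometric specialization map on $\ell$-adic (resp.\ crystalline, resp.\ Betti) cohomology.
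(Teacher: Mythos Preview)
Your argument is correct and is precisely a detailed unpacking of what the paper records as ``Obvious from the definitions.'' The paper gives no further proof; your identification of Hodge classes and rational Tate classes with the relevant Hom-spaces, together with the use of property (b) to pin down $R(\gamma)=\gamma_{0}$, is exactly the content implicit in that one-line justification.
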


\begin{proof}
Obvious from the definitions.
\end{proof}

\begin{corollary}
\label{r11}Conjecture A holds for all abelian varieties over $\mathbb{Q}%
^{\mathrm{al}}$ such that $hA\in\Mot^{\prime}(\mathbb{Q}{}^{\mathrm{al}})$.
\end{corollary}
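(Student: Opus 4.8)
The plan is to deduce this directly from Corollary \ref{r1} together with the basic properties of rational Tate classes in Theorem \ref{d2}, following the template of the proof of Proposition \ref{n4}. First I would fix $A$ with $hA\in\Mot^{\prime}(\mathbb{Q}^{\mathrm{al}})$; since $\Mot^{\prime}(\mathbb{Q}^{\mathrm{al}})$ is a tannakian subcategory of $\Mot^{w}(\mathbb{Q}^{\mathrm{al}})$, the variety $A$ has good reduction at $w$, so $A_{0}$ and the specialization map on adelic cohomology are defined, and Conjecture A is a meaningful assertion about $A$. Let $\gamma$ be an absolute Hodge class of codimension $r$ on $A$, say $\gamma\in H_{\mathbb{A}}^{2r}(A)(r)$, and put $d=\dim A$.

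Next I would apply Corollary \ref{r1}: $\gamma_{0}\in\mathcal{R}^{r}(A_{0})$, i.e.\ $\gamma$ specializes to a rational Tate class. By property (R2) of Theorem \ref{d2}, divisor classes are rational Tate, and since $\mathcal{R}^{\ast}(A_{0})$ is a $\mathbb{Q}$-subalgebra of $H_{\mathbb{A}}^{2\ast}(A_{0})(\ast)$, every Lefschetz class $\delta$ on $A_{0}$ --- being a polynomial in divisor classes --- also lies in $\mathcal{R}^{\ast}(A_{0})$. Hence for $\delta$ of complementary dimension $d-r$ the product $\gamma_{0}\cdot\delta$ lies in $\mathcal{R}^{d}(A_{0})=\Hom(\1,h^{2d}(A_{0})(d))$. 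In $\Mot(\mathbb{F})$ one has $h^{2d}(A_{0})(d)\simeq\1$ (Poincar\'e duality, available because $\Mot(\mathbb{F})$ is a Tate triple with K\"unneth-compatible rational Tate classes, Theorem \ref{d7}), so $\mathcal{R}^{d}(A_{0})\simeq\mathbb{Q}$, and under this identification $\gamma_{0}\cdot\delta\mapsto\langle\gamma_{0}\cdot\delta\rangle$. Therefore $\langle\gamma_{0}\cdot\delta\rangle\in\mathbb{Q}$ for every Lefschetz $\delta$ of complementary dimension, which is exactly the statement that $\gamma$ is $w$-rational; as $\gamma$ was arbitrary, Conjecture A holds for $A$.

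The one point to verify with care --- and I expect this to be the only real content beyond bookkeeping --- is the compatibility of the two product structures: that the cup product $\gamma_{0}\cdot\delta$ and the degree pairing $\langle-\rangle$ computed in the adelic cohomology $H_{\mathbb{A}}^{\ast}(A_{0})$ agree, via the realization functors $\eta_{l}$, with the ring structure on $\Hom(\1,h(A_{0}))$ and the canonical trace $h^{2d}(A_{0})(d)\to\1$ in $\Mot(\mathbb{F})$. This uses that each $\eta_{l}$ is an exact tensor functor compatible with cycle classes and that, by (R4), $\mathcal{R}^{\ast}(A_{0})\otimes_{\mathbb{Q}}\mathbb{A}\to H_{\mathbb{A}}^{2\ast}(A_{0})(\ast)$ carries the motivic product to the cohomological one; granting the construction of $\Mot(\mathbb{F})$ this is formal, essentially the final computation in the proof of Proposition \ref{n4}. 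No genuinely new obstacle appears: once Theorem \ref{r3} is available, Corollary \ref{r11} is a purely formal consequence via Corollary \ref{r1} and the properties (R1)--(R4).
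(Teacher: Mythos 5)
Your proof is correct and takes the same route the paper intends: the paper's proof is literally ``Obvious from Corollary \ref{r1},'' and you have simply spelled out the intended inference --- namely that by (R2) and the $\mathbb{Q}$-algebra structure on $\mathcal{R}^{\ast}(A_{0})$, a Lefschetz class $\delta$ is rational Tate, so $\gamma_{0}\cdot\delta$ lies in $\mathcal{R}^{d}(A_{0})\simeq\mathbb{Q}$. The only cosmetic quibble is that invoking (R4) for product compatibility is more than is needed; the degree pairing on rational Tate classes landing in $\mathbb{Q}$ is already built into the construction of $\mathcal{R}^{\ast}$ as $\Hom(\1,q'h(A_{0}))$ with its algebra structure, as you also note via the Proposition \ref{n4} template.
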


\begin{proof}
Obvious from Corollary \ref{r1}.
\end{proof}

For an abelian motive $M$ over $\mathbb{Q}^{\mathrm{al}}\subset\mathbb{C}{}$,
we let $G_{M}$ denote the Mumford--Tate group of the rational Hodge structure
$\omega_{B}(M)$.

\begin{corollary}
\label{r11c}Let $M$ be a motive in $\Mot^{\prime}(\mathbb{Q}^{\mathrm{al}})$.
The Galois representation attached to any model of $M$ over a sufficiently
large algebraic number field in $\mathbb{Q}^{\mathrm{al}}$ takes values in
$G_{M}$ and is strictly compatible.
\end{corollary}

\begin{proof}
The first part of the statement is obvious, and the second follows from the
properties of the motive $R(M)$. See \ref{d9c}.
\end{proof}

When $\Mot^{\prime}(\mathbb{Q}^{\mathrm{al}})=\CM(\mathbb{Q}{}^{\mathrm{al}}%
)$, Theorem \ref{r3} says nothing new. When $\Mot^{\prime}(\mathbb{Q}%
{}^{\mathrm{al}})$ is the category generated by the abelian varieties with
very good reduction, we prove this in \ref{r5} below. When $\Mot^{\prime
}(\mathbb{Q}{}^{\mathrm{al}})$ consists of the abelian motives with visibly
good reduction, we suggest two approaches to proving it. When $\Mot^{\prime
}(\mathbb{Q}{}^{\mathrm{al}})=\Mot^{w}(\mathbb{Q}{}^{\mathrm{al}})$, it is
left as an exercise for the reader.

\subsubsection{Notes}

\begin{plain}
\label{r11a}For a collection $\mathscr{s}{}$ of abelian varieties over
$\mathbb{Q}{}^{\mathrm{al}}$ with good reduction at $w$, we define
$\Mot^{\mathscr{s}}(\mathbb{Q}^{\mathrm{al}})$ to be the tannakian subcategory
of $\Mot^{w}(\mathbb{Q}{}^{\mathrm{al}})$ generated by $\mathscr{s}$. Clearly
Theorem \ref{r3} holds for $\Mot^{\mathscr{s}}(\mathbb{Q}^{\mathrm{al}})$ if
and only if the Hodge classes on the abelian varieties in $\mathscr{s}{}$
specialize to rational Tate classes.
\end{plain}

\begin{plain}
\label{r11b}We would like to prove Theorem \ref{r3} in the general case using
as little of the theory of Shimura varieties as possible. One of the goals of
this article is to \textit{recover} the theory of Shimura varieties from the
theory of motives, not merely enhance it.
\end{plain}

\begin{plain}
\label{r11d}Let $A$ be an abelian variety over a number field $K$.
\cite{kisinZ2025} show that, after replacing $K$ with a finite extension, the
Weil--Deligne representation attached to $A$ takes values in the Mumford-Tate
group of $A$ and is strictly compatible. In other words, a version of
Corollary \ref{r11c} holds for \textit{all} abelian varieties over
$\mathbb{Q}{}^{\mathrm{al}}$, not just those with good reduction at $w$. This
suggests, as noted elsewhere, that many statements concerning abelian
varieties with good reduction at $w$ should extend mutatis mutandis to all
abelian varieties over $\mathbb{Q}{}^{\mathrm{al}}$.
\end{plain}

\subsection{CM lifts}

Up to isogeny, every abelian variety over $\mathbb{F}{}$ lifts to a CM abelian
variety in characteristic zero (\ref{x24}). There is the following more
precise conjecture.

\begin{conjecture}
\label{r12}Let $A$ be an abelian variety over $\mathbb{Q}^{\mathrm{al}}$ with
good reduction at $w$ and let $\gamma$ be a Hodge class on $A$. There exist a
CM abelian variety $A^{\prime}$ over $\mathbb{Q}{}^{\mathrm{al}}$ and a Hodge
class $\gamma^{\prime}$ on $A^{\prime}$ such that $(A,\gamma)_{0}%
\sim(A^{\prime},\gamma^{\prime})_{0}$, i.e., such that there exists an isogeny
$A_{0}\rightarrow A_{0}^{\prime}$ sending $\gamma_{0}$ to $\gamma_{0}^{\prime
}$.
\end{conjecture}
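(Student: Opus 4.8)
The plan is to reduce the pair $(A,\gamma)$ to split Weil data over $\mathbb{F}{}$ and then invoke the lifting theorems of \S\ref{x24}--\ref{x25}. Assuming $hA$ lies in a subcategory for which Theorem~\ref{r3} holds, Corollary~\ref{r1} gives that $\gamma_{0}$ is a rational Tate class on $A_{0}$; and by Tate's theorem (\ref{x24}) $A_{0}$ is isogenous to the reduction $B_{0}$ of a CM abelian variety $B$ over $\mathbb{Q}{}^{\mathrm{al}}$, the isogeny chosen so that a CM subfield $L\supset\mathbb{Q}{}\{\pi\}$ of degree $2\dim A$ splitting $\End^{0}(A_{0})$ lifts. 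Transporting $\gamma_{0}$ along this isogeny reduces the conjecture to the following assertion: \emph{given a CM abelian variety $B$ over $\mathbb{Q}{}^{\mathrm{al}}$ and a rational Tate class $\delta$ on $B_{0}$, there exist a CM abelian variety $A'$ over $\mathbb{Q}{}^{\mathrm{al}}$ with $A'_{0}\sim B_{0}$ and a Hodge class $\gamma'$ on $A'$ with $(A',\gamma')_{0}\sim(B,\delta)_{0}$.}

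Next I would run the argument of André's Theorem~\ref{a6} (in the form of Theorem~\ref{r0}) \emph{inside} $\Mot(\mathbb{F}{})$: the subcategory of $\Mot(\mathbb{F}{})$ generated by the CM abelian varieties over $\mathbb{F}{}$ is governed by the commutative fundamental group $P$, and the proof of \ref{a6} is torus-theoretic, so it should express $\delta$ as a finite sum $\delta=\sum_{\Delta}g_{\Delta}^{\ast}(\delta_{\Delta})$, with $g_{\Delta}\colon B_{0}\rightarrow C_{\Delta}$ homomorphisms over $\mathbb{F}{}$, each $C_{\Delta}$ CM of split Weil type relative to a CM field $E_{\Delta}$, and each $\delta_{\Delta}$ a split Weil rational Tate class on $C_{\Delta}$. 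Since a CM abelian variety of split Weil type over $\mathbb{F}{}$ is, up to isogeny, a product of simple CM abelian varieties (cf.~\ref{a5}), each $C_{\Delta}$ is itself the reduction of a CM abelian variety over $\mathbb{Q}{}^{\mathrm{al}}$.

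It then remains to lift each split Weil datum. Choosing the CM-types of the lifting factors to satisfy the split-Weil numerical condition of \ref{a5}, one obtains a CM abelian variety of split Weil type over $\mathbb{Q}{}^{\mathrm{al}}$ lying --- by Theorem~\ref{x25}, or by the liftability argument of \S\ref{x19} --- in a Weil family for which, by \ref{t11}, the $E_{\Delta}$-span of Weil classes specializes onto that of $C_{\Delta}$; in particular $\delta_{\Delta}$ is the specialization of a Weil class on that lift. The homomorphisms $g_{\Delta}$ need not lift naively, so one follows the proof of Theorem~\ref{x19}: the lift and an accompanying homomorphism are chosen together so that the pulled-back Weil classes still reduce to the prescribed ones. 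Assembling these pieces (and using \ref{x5}, \ref{x6}) produces a CM abelian variety $A'$ --- a suitable modification of $B$ and the lifts --- carrying a Hodge class $\gamma'$ with $(A',\gamma')_{0}\sim(B,\delta)_{0}$, and hence $\sim(A,\gamma)_{0}$.

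The main obstacle is this last step, and precisely the requirement that a split Weil abelian variety over $\mathbb{F}{}$ be lifted to a CM split Weil triple over $\mathbb{Q}{}^{\mathrm{al}}$ \emph{within a Weil family whose generic split Weil class reduces to the prescribed $\delta_{\Delta}$}, compatibly with the maps $g_{\Delta}$. For a single abelian variety this is Tate's lifting theorem, but tracking the $E_{\Delta}$-action, a polarization of split type, and the identification of the Weil-class subspaces under specialization is exactly the content of the (still conditional) Theorem~\ref{x25}, which rests on deformation-theoretic results of Zink and André. Everything else --- the torus-theoretic version of \ref{a6} in $\Mot(\mathbb{F}{})$, the choice of compatible CM-types, and the bookkeeping via \ref{t11}, \ref{x5}, \ref{x6} --- I expect to be routine. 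In short, Conjecture~\ref{r12} should follow from Theorem~\ref{r3} together with a proof of Theorem~\ref{x25} (equivalently, a suitable positive answer to Question~\ref{Q2}).
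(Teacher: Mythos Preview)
The statement you are attempting to prove is labelled a \emph{conjecture} in the paper and is not proved there. The paper treats Conjecture~\ref{r12} as open: it records the implication $\ref{r12}\Rightarrow$ Conjecture~B (hence $\Rightarrow$ Theorem~\ref{r3} for $\Mot^{\mathrm{vis}}$) in \ref{r12b}, sketches the converse $\ref{r3}\Rightarrow\ref{r12}$ in \ref{r10} via integral models of Shimura varieties and the Langlands--Rapoport cohomological argument, and notes partial results (Kisin--Zhou for the $\mu$-ordinary locus) in \ref{r10a}. Where the paper does establish cases of \ref{r12} --- inside the proof of Theorem~\ref{r5} for very good reduction --- it does so by invoking Kisin--Vasiu to lift the mod-$p$ point to a special point on an integral canonical model. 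There is no André-style decomposition argument in the paper for this statement.

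Your proposal is thus a genuinely different strategy, but it has two real problems. First, your opening move assumes Theorem~\ref{r3} holds for a subcategory containing $hA$ with $A$ an \emph{arbitrary} abelian variety with good reduction; by \ref{r12b} this is essentially equivalent to Conjecture~\ref{r12} itself, so you are assuming what you want to prove. If instead you only assume \ref{r3} for CM motives (which is what \S5 gives), you no longer know that $\gamma_{0}$ is a rational Tate class, and your reduction to the CM case collapses. Second, the step ``run the proof of \ref{a6} inside $\Mot(\mathbb{F})$'' is not routine: André's argument is tailored to CM-types and the character lattice of the Serre group $S$, not merely to ``any torus''. Over $\mathbb{F}$ the relevant torus is $P$, whose character group $W(p^{\infty})$ has a very different combinatorial structure, and the paper nowhere asserts such a decomposition for rational Tate classes. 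Even granting that, the lifting step you isolate is precisely Theorem~\ref{x25}, which carries a question mark in the paper. So at best you have rederived the implication of \ref{r10} by a route that trades Shimura-variety input for two unproved ingredients (an André decomposition over $\mathbb{F}$ and \ref{x25}); you have not advanced toward an unconditional proof.
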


\begin{plain}
\label{r12a}If Conjecture \ref{r12} holds for all $\gamma$ on the abelian
variety $A$, then $A$ satisfies Conjecture A. Indeed, the condition implies
that $\gamma_{0}$ is a rational Tate class on $A_{0}$, and intersections of
rational Tate classes of complementary dimension are rational numbers.
\end{plain}

\begin{plain}
\label{r12c}An abelian motive $M$ over $\mathbb{Q}{}^{\mathrm{al}}$ is said to
have \emph{visibly good reduction} if it can be expressed in the form
$h(A,e,m)$ with $A$ an abelian variety with good reduction at $w$. We write
$\Mot^{\text{vis}}(\mathbb{Q}^{\mathrm{al}})$ for the category of abelian
motives over $\mathbb{Q}{}^{\mathrm{al}}$ (tannakian subcategory of
$\Mot^{w}(\mathbb{Q}{}^{\mathrm{al}})$).
\end{plain}

\begin{plain}
\label{r12b}Conjecture \ref{r12} implies that Conjecture B holds for all
abelian varieties over $\mathbb{Q}^{\mathrm{al}}$ with good reduction at $w$.
The same argument as in \S 5 then allows us to define $\Mot(\mathbb{F}{})$ to
be $\Mot^{\text{vis}}(\mathbb{Q}^{\mathrm{al}})/\omega$ for a suitable fibre
functor $\omega$ on $\Mot^{\mathrm{vis}}(\mathbb{Q}{}^{\mathrm{al}})^{P}$, and
the functor $\CM(\mathbb{Q}^{\mathrm{al}})\rightarrow\Mot^{\text{vis}%
}(\mathbb{Q}^{\mathrm{al}})$ induces an equivalence $\CM(\mathbb{Q}%
^{\mathrm{al}})/\omega\rightarrow\Mot^{\text{vis}}(\mathbb{Q}^{\mathrm{al}%
})/\omega$. Therefore Conjecture \ref{r12} implies Theorem \ref{r3} for
abelian motives with visibly good reduction.
\end{plain}

\begin{plain}
\label{r10}There is a converse to the last statement. Let $\Sh_{p}(G,X)$ be a
Shimura variety abelian type with rational weight satisfying the condition to
have good reduction at $p$. From Theorem $\ref{r3}$ for $\Mot^{w}(\mathbb{F}%
{})$, we obtain an integral canonical model of the Shimura variety and a
description of it as a moduli variety for abelian motives (see the next
section). Proceeding as in \cite{langlandsR1987}, we then attach to each point
of $\Sh_{p}(\mathbb{F}{})$ an admissible homomorphism $\mathfrak{P}%
\rightarrow\mathfrak{G}_{G}$. Now a cohomological argument (assuming
$G^{\mathrm{der}}$ is simply connected) shows that $\varphi$ is the
homomorphism attached to a special point (op.\ cit.\ 5.3). In this way, we see
that every point of $\Sh_{p}(\mathbb{F}{})$ lifts to a special point. Cf.~\S 4
of \cite{milne1992}, especially Theorem 4.6.
\end{plain}

\begin{plain}
\label{r10a}There are many results in the literature concerning Conjecture
\ref{r12}. For example, \cite{kisinZ2021} prove that every point in the $\mu
$-ordinary locus of the special fibre of a Shimura variety lifts to a special point.
\end{plain}

\subsection{Nifty abelian varieties}

Recall that an abelian variety $A$ over $\mathbb{Q}{}^{\mathrm{al}}$ with good
reduction at $w$ is nifty if $\MT(A)\cdot L(A_{0})=L(A)$.

\begin{proposition}
Hodge classes on nifty abelian varieties specialize to rational Tate classes.
\end{proposition}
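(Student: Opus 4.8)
The plan is to upgrade the argument for Proposition~\ref{n4} from the rationality of intersection numbers to the full statement by extending the reduction functor $R\colon\CM(\mathbb{Q}^{\mathrm{al}})\rightarrow\Mot(\mathbb{F})$ to the tannakian subcategory $\langle A\rangle_{H}$ of $\Mot^{w}(\mathbb{Q}^{\mathrm{al}})$ generated by a nifty abelian variety $A$, compatibly with the realization functors $\xi_{l}$ and $\eta_{l}$. Granting this, the statement follows as in Corollary~\ref{r1}: the image $R(\gamma)$ of a Hodge class $\gamma\colon\1\rightarrow h^{2r}(A)(r)$ of $\langle A\rangle_{H}$ is a morphism $\1\rightarrow h^{2r}(A_{0})(r)$ of $\Mot(\mathbb{F})$, hence lies in $\mathcal{R}^{r}(A_{0})$, and its $\eta_{l}$-realizations are the components of $\gamma_{0}$, so $\gamma_{0}\in\mathcal{R}^{r}(A_{0})$. (When $A$ is itself of CM-type this is already property (R3) of Theorem~\ref{d2}, so the real content is the non-CM case.)

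By the theory of quotients of tannakian categories (\ref{t2}, \ref{t3}), producing such an $R$ amounts to producing a $\mathbb{Q}$-valued fibre functor $\omega_{0}$ on $\langle A\rangle_{H}^{P}$, the full subcategory of objects on which the Weil protorus $P$ acts trivially (it acts through $P\rightarrow G\rightarrow\MT(A)$, cf.\ \ref{p1}, \ref{b13}), subject to two requirements: $\omega_{0}$ must be a $\mathbb{Q}$-structure on the restricted product of the fibre functors $\omega_{l}$ of \ref{b15x}, and it must restrict to the fibre functor of \S5 on $\langle A\rangle_{H}^{P}\cap\CM(\mathbb{Q}^{\mathrm{al}})$. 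Here niftiness enters exactly as in Proposition~\ref{n4}. One has inclusions $\MT(A)\subseteq L(A)$ and $L(A_{0})\subseteq L(A)$ (the latter because Lefschetz classes on $A$ specialize to Lefschetz classes on $A_{0}$, so $L(A_{0})$ fixes the image of $\mathcal{D}^{\ast}(A)$). The reduction functor on Lefschetz motives $\langle A\rangle_{L}\rightarrow\langle A_{0}\rangle_{L}$ is a quotient functor, hence by \ref{t2} defines a $\mathbb{Q}$-valued fibre functor $\omega^{R}$ on $\langle A\rangle_{L}^{L(A_{0})}$, and by \ref{l5} this $\omega^{R}$ is a $\mathbb{Q}$-structure on the ad\'{e}lic fibre functor. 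Niftiness, $\MT(A)\cdot L(A_{0})=L(A)$, forces the canonical functor $\langle A\rangle_{L}^{L(A_{0})}\rightarrow\langle A\rangle_{H}^{\MT(A)\cap L(A_{0})}$ (induced by $\MT(A)\hookrightarrow L(A)$, corresponding on fundamental groups to $\MT(A)/(\MT(A)\cap L(A_{0}))\xrightarrow{\sim}L(A)/L(A_{0})$) to be an equivalence of tensor categories; transporting $\omega^{R}$ along it gives the candidate $\omega_{0}$, and the $\mathbb{Q}$-structure property is inherited because on the relevant subcategory the functors $\xi_{l}$ on $\langle A\rangle_{H}$ agree with the realizations of Lefschetz motives.

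Two points remain, and the second is the main obstacle. First, one must check that the $\omega_{0}$ so obtained coincides on the CM part of $\langle A\rangle_{H}$ with the fibre functor of \S5; this is forced by the uniqueness in Theorem~\ref{d2} (apply the argument following Lemma~\ref{d3} to the family obtained by intersecting $\mathcal{R}^{\ast}$ with the $\mathbb{Q}$-algebras $\Hom_{\langle A\rangle_{H}/\omega_{0}}(\1,h^{2\ast}(A_{0}^{n})(\ast))$, which satisfies R1--R3 and R4*). Second, and crucially, one must show that $\langle A\rangle_{H}^{P}=\langle A\rangle_{H}^{\MT(A)\cap L(A_{0})}$, i.e.\ that the image of $P\rightarrow\MT(A)$ generates $\MT(A)\cap L(A_{0})$ as a (normal) subgroup. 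For $A$ of CM-type this is the theory of complex multiplication: the lifted Frobenius $F$ of \ref{b13} lies in $\MT(A)\cap L(A_{0})$ (its reduction $\pi$ satisfies $\pi^{\dagger}\pi=p^{n}\in\mathbb{Q}^{\times}$, so $\pi\in L(A_{0})$ by \ref{l4}), and the Shimura--Taniyama description identifies the subtorus it generates with $\MT(A)\cap L(A_{0})$. In general this is a statement about conjugacy classes of Frobenius elements of the type in Theorem~\ref{p1}, to be obtained from the work of Noot, Laskar and Kisin--Zhou cited there; handling at the same time the non-commutativity of $\MT(A)$ for non-CM nifty $A$ is the extra bookkeeping, carried out as in the general case of Proposition~\ref{n4}.
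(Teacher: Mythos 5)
The paper itself marks this proposition as ``Omitted for the moment,'' so there is no paper proof to compare against. Your proposal is therefore a construction from scratch, and its overall strategy --- factor the reduction through the theory of quotient tannakian categories (\ref{t2}, \ref{t3}), use niftiness $\MT(A)\cdot L(A_0)=L(A)$ to transport the $\mathbb{Q}$-valued fibre functor $\omega^R$ from $\langle A\rangle_L^{L(A_0)}$ to the Hodge side, and then appeal to uniqueness (Theorem~\ref{d2}) to reconcile with the CM story --- is exactly the mechanism the paper uses for the nearby Proposition~\ref{n4} and for the construction of $\Mot(\mathbb{F})$ in~\S5. Your identification of the residual obstacle (that the image of the Weil protorus $P$ in $\MT(A)$ should generate, normally, $\MT(A)\cap L(A_0)$, a Frobenius‐rationality statement to be drawn from Theorem~\ref{p1} and the cited work of Noot, Laskar, Kisin--Zhou) is also the right place to locate the difficulty.

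However, the proposal is not a complete proof, and you should be more explicit about two points where you are leaning on something the paper does not actually supply. First, the closing appeal to ``the general case of Proposition~\ref{n4}'' has no real content: the paper's proof of \ref{n4} explicitly assumes $A$ is CM ``to simplify,'' so the non-commutative bookkeeping --- in particular that $\MT(A)\cap L(A_0)$ is normal in $\MT(A)$, which one needs for $\langle A\rangle_H^{\MT(A)\cap L(A_0)}$ to be the fixed subcategory attached to a quotient in the sense of \ref{t2} --- is nowhere carried out and is genuinely non-trivial for non-CM nifty $A$. Second, the crucial equality $\langle A\rangle_H^{P}=\langle A\rangle_H^{\MT(A)\cap L(A_0)}$ is flagged as ``to be obtained,'' but it is not a corollary of Theorem~\ref{p1} as stated (which only gives a morphism of bands $P\to G$); one needs the much stronger assertion that the Zariski closure of the image of $P$ in $\MT(A)$ is exactly $\MT(A)\cap L(A_0)$. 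As things stand, your argument is an informed and honest sketch that locates the gaps correctly, but both of these points remain open, consistent with the paper's own decision to leave the proof of this proposition omitted.
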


\begin{proof}
Omitted for the moment.
\end{proof}

\subsection{Abelian varieties with very good reduction}

We say that an abelian variety $A$ has \emph{very good reduction} at $w$ if it
has good reduction at $w$ and the adjoint group of $\MT(A)$ is unramified at
$p$. Note that products of abelian varieties with very good reduction have
very good reduction, and that all CM abelian varieties over $\mathbb{Q}%
{}^{\mathrm{al}}$ have very good reduction.

Let $\Mot^{\mathrm{vg}}(\mathbb{Q}^{\mathrm{al}})$ denote the category of
abelian motives over $\mathbb{Q}{}^{\mathrm{al}}$ generated by the abelian
varieties with very good reduction. We explain in this subsection how to
extend $R$ to $\Mot^{\mathrm{vg}}(\mathbb{Q}^{\mathrm{al}})$.

Let $A$ have very good reduction at $w$, and let $(G,h)$ be the Mumford-Tate
group of $A$. Let $X$ be the conjugacy class of $h$. Then $(G,X)$ is a Shimura
datum, and so, for every compact open $K\subset G(\mathbb{A}{}_{f})$, we have
a variety $\Sh_{K}(G,X)$ over $\mathbb{C}$. We assume that $K=K_{p}\times
K^{p}$ with $K_{p}$ a hyperspecial subgroup of $G(\mathbb{Q}{}_{p}$) and
$K^{p}$ a sufficiently small subgroup of $G(\mathbb{A}_{f}^{p})$. The action
of $G$ on the $\mathbb{Q}{}$-vector space $V\overset{\df}{=}H_{1}%
(A,\mathbb{Q}{})$ allows us to realize $\Sh_{K}(G,X)$ as the solution to a
moduli problem over $\mathbb{C}{}$. The moduli problem is defined over
$\mathbb{Q}{}^{\mathrm{al}}$, and descent theory shows that $\Sh_{K}(G,X)$ has
a canonical model over $\mathbb{Q}{}^{\mathrm{al}}$, which we denote
$S_{\mathbb{Q}{}^{\mathrm{al}}}$, and which is the solution to a moduli
problem over $\mathbb{Q}{}^{\mathrm{al}}$. Specifically, $S_{\mathbb{Q}%
{}^{\mathrm{al}}}$ parametrizes triples $(B,\mathfrak{k}{},\lambda)$ where $B$
is an abelian variety over $\mathbb{Q}{}^{\mathrm{al}}$, $\mathfrak{k}{}$ is a
family of Hodge tensors on $B$ and its powers, and $\lambda$ is a level
structure on $B$. We choose $\mathfrak{k}{}$ to be the family of all Hodge
classes on $A$ and its powers. We assume that $K$ has been chosen small enough
to force $B$ to have good reduction at $w$. Results of Kisin and Vasiu, show
that $S_{\mathbb{Q}{}^{\mathrm{al}}}$ extends to a smooth canonical model $S$
over $\mathcal{O}{}_{w}$, and that the point of $S(\mathbb{F)}$ defined by
$(A,\mathfrak{k}{},\lambda)$ is isogenous to the reduction of a special point.
Specifically, this means that there exists an abelian variety $B$ over
$\mathbb{Q}^{\mathrm{al}}$ and an isogeny $A_{\mathbb{F}{}}\rightarrow
B_{\mathbb{F}{}}$ such that

\begin{enumerate}
\item $B$ is of CM-type;

\item let $\gamma$ be a Hodge class on $A_{\mathbb{Q}{}^{\mathrm{al}}}$ and
$\gamma^{\prime}$ the corresponding Hodge class on $B$; then, under the
isogeny under the isogeny, $\gamma_{l}$ maps to $\gamma_{l}^{\prime}$ for all
$\gamma$.
\end{enumerate}

\noindent Therefore, there exists an exact $\mathbb{Q}{}$-linear tensor
functor $\langle A\rangle^{\otimes}\rightarrow\Mot(\mathbb{F}{})$ such that
the diagram%
\[
\begin{tikzcd}
\langle A\rangle^{\otimes}\arrow{d}{R}\arrow{rd}{\xi_l}\\
\Mot(\mathbb{F})\arrow{r}{\eta_l}&R_l(\mathbb{F})
\end{tikzcd}
\]
commutes. Let $A^{\prime}$ be a second abelian variety over $\mathbb{Q}%
{}^{\mathrm{al}}$ with very good reduction at $w$. On repeating the argument
for $A\times A^{\prime}$, we can extend the above diagram to a diagram
\[
\begin{tikzcd}
\langle A\times A^{\prime}\rangle^{\otimes}\arrow{d}{R}\arrow{rd}{\xi_l}\\
\Mot(\mathbb{F})\arrow{r}{\eta_l}&\Mot(\mathbb{F}).
\end{tikzcd}
\]
As the set of isogeny classes of abelian varieties over $\mathbb{Q}%
{}^{\mathrm{al}}$ with very good reduction is countable, this will eventually
lead to a functor $R\colon\Mot^{\mathrm{vg}}(\mathbb{Q}^{\mathrm{al}%
})\rightarrow\Mot(\mathbb{F}{})$ such that the diagrams (\ref{ep1}) commute
(by the axiom of dependent choice). We have proved the following statement.

\begin{theorem}
\label{r5}The reduction functor $R\colon\CM(\mathbb{Q}{}^{\mathrm{al}%
})\rightarrow\Mot(\mathbb{F}{})$ extends uniquely to $\Mot^{\mathrm{vg}%
}(\mathbb{Q}{}^{\mathrm{al}})$, and makes the diagrams (\ref{ep1}) commute.
\end{theorem}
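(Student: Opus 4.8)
The plan is to extend $R$ one abelian variety at a time, using the integral canonical models of Kisin and Vasiu to control reductions, and then to glue the resulting functors. First I would fix an abelian variety $A$ over $\mathbb{Q}^{\mathrm{al}}$ with very good reduction at $w$ and set $(G,h)=\MT(A)$; since $A$ has good reduction and $G^{\mathrm{ad}}$ is unramified at $p$, the pair $(G,X)$ with $X$ the conjugacy class of $h$ is a Shimura datum admitting a hyperspecial level $K_{p}\subset G(\mathbb{Q}_{p})$. The faithful representation of $G$ on $V=H_{1}(A,\mathbb{Q})$ realises $\Sh_{K}(G,X)$ as a moduli variety over $\mathbb{C}$ for triples $(B,\mathfrak{k},\lambda)$; descent gives a canonical model $S_{\mathbb{Q}^{\mathrm{al}}}$ over $\mathbb{Q}^{\mathrm{al}}$ solving the same moduli problem, and $A$ together with its family $\mathfrak{k}$ of all Hodge classes on $A$ and its powers defines a point of $S(\mathbb{Q}^{\mathrm{al}})$.

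Next I would invoke the results of Kisin and Vasiu: after shrinking $K^{p}$, the model $S_{\mathbb{Q}^{\mathrm{al}}}$ extends to a smooth integral canonical model $S$ over $\mathcal{O}_{w}$, and the point of $S(\mathbb{F})$ attached to $A$ is isogenous to the reduction of a CM point, given by some CM abelian variety $B$ over $\mathbb{Q}^{\mathrm{al}}$ with a matching family of Hodge classes. Unwinding the moduli interpretation, the isogeny $A_{\mathbb{F}}\to B_{\mathbb{F}}$ carries $\gamma_{l}$ to $\gamma'_{l}$ for every Hodge class $\gamma$ on $A$ and the corresponding class $\gamma'$ on $B$, for all $l$. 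Since $B$ is CM, $\gamma'_{0}$ is a rational Tate class on $B_{0}$ by the construction of $\Mot(\mathbb{F})$ in \S 5 (which assumes Conjecture B for CM abelian varieties throughout); transporting along the isogeny shows $\gamma_{0}$ is rational Tate on $A_{0}$. This yields an exact $\mathbb{Q}$-linear tensor functor $\langle A\rangle^{\otimes}\to\Mot(\mathbb{F})$ with $hA\mapsto hA_{0}$, compatible with every $\xi_{l}$ and $\eta_{l}$.

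The gluing step is where the real work lies. Running the same construction for a product $A\times A'$ produces a functor on $\langle A\times A'\rangle^{\otimes}$, which contains $\langle A\rangle^{\otimes}$ and $\langle A'\rangle^{\otimes}$, and whose restrictions must agree with the two functors built before (both send each generating $hA$ to $hA_{0}$ and commute with the realisation functors, and the latter are faithful enough to pin down the functor on morphisms). Because the set of isogeny classes of abelian varieties over $\mathbb{Q}^{\mathrm{al}}$ with very good reduction is countable, I would enumerate them and, using the axiom of dependent choice, assemble a compatible system of functors on an exhausting chain $\langle A_{1}\rangle^{\otimes}\subset\langle A_{1}\times A_{2}\rangle^{\otimes}\subset\cdots$, whose colimit is the desired $R\colon\Mot^{\mathrm{vg}}(\mathbb{Q}^{\mathrm{al}})\to\Mot(\mathbb{F})$. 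Uniqueness is immediate: $\Mot^{\mathrm{vg}}(\mathbb{Q}^{\mathrm{al}})$ is tensor-generated by the motives $hA$, and conditions (a) and (b) of Theorem \ref{r3} force the value of any such extension on each $hA$.

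I expect the main obstacle to be the coherence of the individually constructed functors: checking that the Kisin--Vasiu CM-lifting data can be chosen so that the functor on $\langle A\times A'\rangle^{\otimes}$ genuinely restricts to the ones already built on $\langle A\rangle^{\otimes}$ and $\langle A'\rangle^{\otimes}$, and more generally along the whole enlarging chain. Since $hA\mapsto hA_{0}$ together with compatibility with the faithful realisations ($\xi_{l}=\eta_{l}\circ R$) determines the functor on morphisms uniquely, the content is really that such a compatible functor exists at each finite stage; once that is established, the passage to the limit by dependent choice is formal.
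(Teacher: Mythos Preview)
Your proposal is correct and follows essentially the same route as the paper: realize $\langle A\rangle^{\otimes}$ inside a Shimura variety of Hodge type with hyperspecial level, use the Kisin--Vasiu integral canonical model to show that the mod-$p$ point is isogenous to the reduction of a CM point, transport Hodge classes along this isogeny to obtain rational Tate classes, and then glue over products $A\times A'$ using countability and dependent choice. Even your caveat about coherence of the individually constructed functors mirrors the paper's implicit reliance on the faithfulness of the realization functors and the uniqueness clause of Theorem~\ref{r3}.
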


\begin{corollary}
\label{r6}Let $A$ be an abelian variety over $\mathbb{Q}^{\mathrm{al}}$. If
$A$ has very good reduction at $w$, then Hodge classes on $A$ specialize to
rational Tate classes on $A_{0}$.
\end{corollary}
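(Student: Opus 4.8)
The plan is to read this off directly from Theorem \ref{r5} together with the definition of rational Tate classes and the construction of the local realizations. Since $A$ has very good reduction at $w$, the motive $hA$ — and hence the whole tannakian subcategory $\langle A\rangle^{\otimes}$ of $\Mot^{w}(\mathbb{Q}^{\mathrm{al}})$ it generates — lies in $\Mot^{\mathrm{vg}}(\mathbb{Q}^{\mathrm{al}})$. Theorem \ref{r5} therefore supplies an exact $\mathbb{Q}$-linear tensor functor $R\colon\langle A\rangle^{\otimes}\to\Mot(\mathbb{F})$ with $R(hA)=hA_{0}$ and $\eta_{l}\circ R=\xi_{l}$ for every $l=2,3,\ldots,p,\ldots$, and with $R$ banded by $P\to G$.

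First I would fix a Hodge class $\gamma$ on $A$ of codimension $n$, i.e.\ an element of $\Hom(\1,h^{2n}(A)(n))$ in $\Mot(\mathbb{Q}^{\mathrm{al}})$ (equivalently, the element of $H_{\mathbb{A}}^{2n}(A)(n)$ it determines under the fibre functors; absolute Hodge classes are precisely such morphisms). Applying $R$ gives
$R(\gamma)\in\Hom\bigl(\1,R(h^{2n}(A)(n))\bigr)=\Hom\bigl(\1,h^{2n}(A_{0})(n)\bigr)=\mathcal{R}^{n}(A_{0})$,
which is by definition a rational Tate class on $A_{0}$. It then remains to check that $R(\gamma)$ is actually the specialization $\gamma_{0}$ of Definition \ref{x1}, not merely some rational Tate class. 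This is where $\eta_{l}\circ R=\xi_{l}$ is used: by Theorem \ref{d2}(R4) the functor $\eta_{l}$ identifies $\mathcal{R}^{n}(A_{0})\otimes_{\mathbb{Q}}\mathbb{Q}_{l}$ with the space of $\ell$-adic (resp.\ crystalline) Tate classes on $A_{0}$, while by the construction of $\xi_{\ell}$ (\ref{b16}), $\xi_{p}$ (\ref{b17}), and $\xi_{\infty}$ (\ref{b15}) the class $\xi_{l}(\gamma)$ is exactly the $\ell$-adic (resp.\ crystalline, resp.\ real Hodge) specialization of the $l$-component of $\gamma$. Hence $\eta_{l}(R(\gamma))=\xi_{l}(\gamma)=\gamma_{0,l}$ for all $l$, which says precisely that $R(\gamma)\otimes_{\mathbb{Q}}\mathbb{A}$ coincides with $\gamma_{0}\in H_{\mathbb{A}}^{2n}(A_{0})(n)$; so $\gamma_{0}$ lies in $\mathcal{R}^{n}(A_{0})$, as claimed.

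The one step that needs a little care — though it is routine given \ref{b16}--\ref{b15} and the definition of the specialization map $H_{\mathbb{A}}^{2n}(A)(n)\to H_{\mathbb{A}}^{2n}(A_{0})(n)$ in the Notation section — is matching the functorial output $R(\gamma)$, an abstract element of a $\Hom$-group in $\Mot(\mathbb{F})$, with the honest adelic specialization map, component by component over all $l$. Once that compatibility is recorded, the corollary is immediate: no input beyond Theorem \ref{r5} is required, which is why the statement is essentially a bookkeeping consequence of the commutativity of the fundamental diagrams.
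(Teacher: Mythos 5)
Your argument is correct and is exactly what the paper intends: the paper leaves \ref{r6} without an explicit proof (compare Corollary \ref{r1}, whose proof reads ``Obvious from the definitions''), and your write-up simply unpacks that bookkeeping — apply $R$ from Theorem \ref{r5} to the morphism $\gamma\colon\1\to h^{2n}(A)(n)$ to land in $\mathcal{R}^{n}(A_{0})$, then use $\eta_{l}\circ R=\xi_{l}$ for every $l$ to identify $R(\gamma)$ with the adelic specialization $\gamma_{0}$. No gap; this is the paper's own route, just made explicit.
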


\begin{corollary}
\label{r7} Conjecture A holds for abelian varieties with very good reduction.
\end{corollary}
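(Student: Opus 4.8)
The plan is to read off Corollary~\ref{r7} from Corollary~\ref{r6} together with the elementary ring-theoretic properties of rational Tate classes. First I would fix an abelian variety $A$ over $\mathbb{Q}^{\mathrm{al}}$ with very good reduction at $w$, write $A_0$ for its reduction (an abelian variety over $\mathbb{F}$ of some dimension $d$), and let $\gamma$ be an arbitrary absolute Hodge class on $A$, say of codimension $r$. By Corollary~\ref{r6}, the specialization $\gamma_0\in H_{\mathbb{A}}^{2r}(A_0)(r)$ is a rational Tate class, i.e.\ $\gamma_0\in\mathcal{R}^r(A_0)$.

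Next I would observe that any Lefschetz class $\delta$ on $A_0$ of complementary dimension $d-r$ is itself a rational Tate class: by definition (\ref{l1}) it lies in the $\mathbb{Q}$-subalgebra of $H_{\mathbb{A}}^{2\ast}(A_0)(\ast)$ generated by divisor classes, divisor classes are rational Tate by property~(R2) of Theorem~\ref{d2}, and $\mathcal{R}^\ast(A_0)$ is a graded $\mathbb{Q}$-subalgebra of $H_{\mathbb{A}}^{2\ast}(A_0)(\ast)$; hence $\delta\in\mathcal{R}^{d-r}(A_0)$. Therefore the product $\gamma_0\cdot\delta$ lies in $\mathcal{R}^d(A_0)=\Hom(\1,h^{2d}(A_0)(d))$. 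Since $A_0$ is geometrically connected, $h^{2d}(A_0)(d)\simeq\1$ in $\Mot(\mathbb{F})$ and $\End(\1)=\mathbb{Q}$, so $\langle\gamma_0\cdot\delta\rangle\in\mathbb{Q}$ — this is the statement that intersections of rational Tate classes of complementary dimension are rational numbers (cf.\ \ref{r12a}).

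As $\delta$ ranges over all Lefschetz classes on $A_0$ of complementary dimension, this says precisely that $\gamma$ is $w$-rational in the sense of Definition~\ref{x1}, and as $\gamma$ ranges over all Hodge classes on $A$, it says that Conjecture~A holds for $A$. I do not expect any obstacle at this stage: the corollary is a purely formal consequence of Corollary~\ref{r6} and the fact that $\mathcal{R}^\ast(A_0)$ is a ring. All the genuine content sits upstream — in Theorem~\ref{r5}, whose proof invokes the smooth integral canonical models of Shimura varieties of Hodge type (Kisin, Vasiu) and the fact that the $\mathbb{F}$-point attached to $(A,\mathfrak{k},\lambda)$ is isogenous to the reduction of a special point — and, before that, in Conjecture~B for CM abelian varieties, which underlies the construction of $\Mot(\mathbb{F})$ and of the rational Tate classes used here.
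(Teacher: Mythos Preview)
Your argument is correct and is precisely the unpacking of the paper's one-line proof ``Obvious from Corollary~\ref{r6}'': Hodge classes specialize to rational Tate classes by \ref{r6}, Lefschetz classes are rational Tate by (R2), and their intersection in top degree lands in $\mathbb{Q}$. There is nothing to add.
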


\begin{proof}
Obvious from Corollary \ref{r6}.
\end{proof}

\begin{corollary}
\label{r7a}Let $M$ be a motive in $\Mot^{\mathrm{vg}}(\mathbb{Q}^{\mathrm{al}%
})$. The Galois representation attached to any model of $M$ over a
sufficiently large algebraic number field in $\mathbb{Q}{}^{\mathrm{al}}$
takes values in $G_{M}$ and is strictly compatible.
\end{corollary}

\begin{remark}
\label{r8}The hypothesis that we have made in this section that
$\MT(A)^{\mathrm{ad}}$ be unramified at $p$, i.e., quasi-split over
$\mathbb{Q}{}_{p}$ and splits over an unramified extension, is unnecessarily
strong. For example, if Conjecture 1 of \cite{kisinM2022} holds, then we can
replace it with the requirement that $\MT(A)$ be quasi-split over
$\mathbb{Q}{}_{p}$. See also Reimann 1997, B3.12.
\end{remark}

\subsection{Abelian motives with visibly good reduction}

In this subsection, we investigate the following statement, which implies that
Theorem \ref{r3} holds for abelian motives over $\mathbb{Q}{}^{\mathrm{al}}$
with visibly good reduction at $w$ (cf.~\ref{r12c}, \ref{r12b}).

\begin{plain}
\label{r9}For every abelian variety $A$ over $\mathbb{Q}{}^{\mathrm{al}}$ with
good reduction at $w$, Hodge classes on $A$ specialize to rational Tate classes.
\end{plain}

\subsubsection{First approach}

\begin{plain}
\label{r20}As in \S 6 of \cite{deligne1982}, embed $(A,\gamma)$ in a family of
abelian varieties with additional structure over $\mathbb{C}{}$. In
particular, $\gamma$ extends to a global section of the family. Now the family
is defined over a number field, and specializes to a family over $\mathbb{F}%
{}$. Complete the proof by showing that $A_{0}$ lifts to a CM abelian variety
in the family (this seems to be more general than known, or even conjectured, results).
\end{plain}

\subsubsection{Second approach}

We saw earlier (Theorem \ref{r22}) that \ref{r9} holds under some hypotheses.

\subsection{Abelian motives with good reduction}

As mentioned earlier, all statements in this article for abelian varieties
over $\mathbb{Q}^{\mathrm{al}}$ with good reduction at $w$ should hold mutatis
mutandis also for those with bad reduction. In particular, the second approach
in the last subsection should yield a proof of Theorem \ref{r3} for
$\Mot^{w}(\mathbb{Q}^{\mathrm{al}})$.

\section{Consequences of Conjecture A for motives}

\label{CM} In this section, we assume the conjectures A, B, C, D with
$\mathscr{s}$ the collection of CM abelian varieties, and we investigate some
of the applications of the conjectures.

\subsection{The category of motives over $\mathbb{F}$}

\begin{quote}
We define the category of motives\footnote{The reader may ask why we call this
the category of motives over $\mathbb{F}{}$ rather than the category of
abelian motives. Conjecturally, the two are the same. Specifically, when we
assume the Tate and standard conjectures over $\mathbb{F}$, the simple objects
of $\Mot(\mathbb{F})$ are classified by the conjugacy classes of Weil numbers,
and the abelian motives exhaust the possible Weil numbers (\cite{milne1994a},
2.7).} over $\mathbb{F}{}$ and show that it has most of the properties that
Grothendieck's category of numerical motives would have if the Tate and
standard conjectures were known over $\mathbb{F}{}$.
\end{quote}

\begin{plain}
\label{d4}Let $\Mot(\mathbb{F})$ denote the category of motives based on the
abelian varieties over $\mathbb{F}{}$ using the rational Tate classes as
correspondences. Specifically, its objects are triples $(A,e,m)$, where

\begin{itemize}
\item $A$ is a variety over $\mathbb{F}{}$ each of whose components admits the
structure of an abelian variety,

\item $e\in\mathcal{R}{}^{\dim A}(A\times A)$ is such that $e^{2}=e$, and

\item $m\in\mathbb{Z}{}$
\end{itemize}

\noindent and the Homs are given by%
\[
\Hom((A,e,m),(B,f,n))\mathcal{=}f\cdot\mathcal{\mathcal{R}{}}^{\dim
A+n-m}(A\times B)\cdot e.
\]
We use that the K\"{u}nneth components of the diagonal are rational Tate
classes to modify the commutativity constraint.
\end{plain}

\begin{plain}
\label{d5}The category $\Mot(\mathbb{F})$ is a tannakian category over
$\mathbb{Q}$ with canonical functors
\[
\eta_{l}\colon\Mot(\mathbb{F}{})\rightarrow R_{l}(\mathbb{F}),\qquad l\text{ a
prime number}.
\]
Its band is the Weil-number protorus $P$. The category $\Mot(\mathbb{F}{})$
has a canonical structure of a Tate triple.
\end{plain}

\begin{plain}
\label{d6}There is a canonical functor $R$ making the diagrams in \ref{d1}
commute. There is a unique polarization on $\Mot(\mathbb{F}{})$ compatible
with the canonical polarization on $\CM(\mathbb{Q}{}^{\mathrm{al}})$. This can
be proved as in \cite{milne2002}.
\end{plain}

\begin{plain}
\label{d8}Grothendieck's standard conjectures hold for abelian varieties over
$\mathbb{F}$ and rational Tate classes. For the Lefschetz standard conjecture,
this follows from the fact that the inverse Lefschetz operator is even
Lefschetz. For the Hodge standard conjecture, it is a restatement of \ref{d6}
\end{plain}

\begin{plain}
\label{d8a}The functor $\Mot(\mathbb{F}{})\rightarrow\Mot(\mathbb{F}%
{};\mathbb{Q}{}_{l})$ defines an equivalence%
\[
\Mot(\mathbb{F}{})_{(\mathbb{Q}{}_{l})}\rightarrow\Mot(\mathbb{F}{}%
;\mathbb{Q}{}_{l})\text{.}%
\]
Thus, we can regard the tannakian category $\Mot(\mathbb{F}{})$ as being
(simultaneously) a $\mathbb{Q}{}$-structure on the tannakian categories being
a $\Mot(\mathbb{F})_{(\mathbb{Q}_{l})}\rightarrow V_{l}(\mathbb{F})$ are
equivalences of categories.
\end{plain}

\subsection{The category of motives over a finite field.}

\begin{plain}
\label{d9}In Grothendieck's motivic paradise, $\Mot(\mathbb{F}{}_{q})$ is a
tannakian category over $\mathbb{Q}$ with fundamental group $P(q)$ (see
\ref{b2a}), and $\Mot(\mathbb{F})$ is a tannakian category over $\mathbb{Q}$
with fundamental group the Weil-number protorus $P$ (see \cite{milne1994a}).
The functor $\Mot(\mathbb{F}{}_{q})\rightarrow\Mot(\mathbb{F}{})$ identifies
$\Mot(\mathbb{F}{}_{q})$ with the category whose objects are pairs consisting
of an object of $\Mot(\mathbb{F})$ and an action of $P(q)$ on the object
consistent with the action of $P$ (see the author's book on Tannakian
Categories). To give such an action, it suffices to endow the object with a
suitable Frobenius endomorphism.
\end{plain}

\begin{plain}
\label{d9a}In particular, the preceding remark suggests the following
\textit{definition.} Every object $\Mot(\mathbb{F}{})$ is equipped with an
action of $P$. In particular, it has a germ of Frobenius endomorphisms. We
define $\Mot(\mathbb{F}{}_{q}$) to be the category whose objects are the pairs
$(M,\pi_{M})$, where $M$ is an object of $\Mot(\mathbb{F})$ and $\pi_{M}$ is a
Frobenius endomorphism representing the germ and such that $\rho(\pi_{M}%
)\cdot\overline{\rho(\pi_{M})}=q^{m}$. The resulting category $\Mot(\mathbb{F}%
_{q})$ has essentially all the properties that the category of numerical
motives has in Grothendieck's motivic paradise. (See \cite{milne1994a}.)
\end{plain}

\begin{plain}
\label{d9b}The category $\Mot(\mathbb{F}{}_{q}$) is semisimple. The Frobenius
element $\pi_{M}$ of a simple motive $M$ is a Weil $q$-number of weight $m$.
The isomorphism classes of simple motives are classified by the orbits of
$\Gamma\overset{\df}{=}\Gal(\mathbb{Q}{}^{\mathrm{al}}/\mathbb{Q}{})$ acting
on $W(q)$,%
\[
\Sigma(\Mot(\mathbb{F}{}_{q}))\simeq\Gamma\backslash W(q).
\]
Let $M$ be a simple motive over $\mathbb{F}{}_{q}$. Then $E\overset{\df}{=}%
\End(M)$ is a simple $\mathbb{Q}{}$-algbra with centre $\pi_{M}$, and its
invariant at a prime $v$ of $\mathbb{Q}{}[\pi_{M}]$ is given by%
\[
\renewcommand{\arraystretch}{1.5}\inv_{v}(E)=\left\{
\begin{array}
[c]{l}%
1/2\text{ if }v\text{ is real and }M\text{ has odd weight,}\\
\dfrac{\ord_{v}(\pi_{X})}{\ord_{v}(q)}\cdot\lbrack\mathbb{Q}{}[\pi_{X}%
]_{v}\colon\mathbb{Q}{}_{p}]\text{ if }v|p,\\
0\text{ otherwise.}%
\end{array}
\right.
\]

\end{plain}

\subsection{Abelian motives in characteristic zero}

Assume now that the reduction functor $R$ extends to $\Mot^{w}(\mathbb{Q}%
{}^{\mathrm{al}})$. This allows us to prove results, otherwise unknown, about
abelian motives in characteristic zero (in the sense of \cite{deligne1982}).

Let $P$ be the Weil-number protorus and let $G=\mathcal{A}ut^{\otimes}%
(\omega_{B}|\Mot^{w}(\mathbb{Q}{}^{\mathrm{al}}))$ --- they are both affine
group schemes over $\mathbb{Q}{}$. The reduction functor%
\[
R\colon\Mot^{w}(\mathbb{Q}{}^{\mathrm{al}})\rightarrow\Mot(\mathbb{F}{})
\]
defines a morphism of the associated bands%
\[
bP\rightarrow bG.
\]
We make this explicit. The choice of a $\mathbb{Q}^{\mathrm{al}}$-valued fibre
functor on $\Mot(\mathbb{F})$ and an isomorphism $\omega_{B}\otimes
\mathbb{Q}{}^{\mathrm{al}}\rightarrow\omega\circ R$ defines a homomorphism%
\[
P_{\mathbb{Q}{}^{\mathrm{al}}}\rightarrow G_{\mathbb{Q}{}^{\mathrm{al}}}%
\]
that is independent of the choices up to an inner automorphism of
$G_{\mathbb{Q}^{\mathrm{al}}}$. In this way we get a conjugacy class of
homomorphisms $P_{\mathbb{Q}{}^{\mathrm{al}}}\rightarrow G_{\mathbb{Q}%
{}^{\mathrm{al}}}$ stable under $\Gal(\mathbb{Q}^{\mathrm{al}}/K)$. Let $clG$
denote the scheme of conjugacy classes in $G$ (quotient of $G$ by its action
on itself by inner automorphisms). The composite $P_{\mathbb{Q}{}%
^{\mathrm{al}}}\rightarrow G_{\mathbb{Q}{}^{\mathrm{al}}}\rightarrow
(clG)_{\mathbb{Q}{}^{\mathrm{al}}}$ is independent of all choices and is
defined over $\mathbb{Q}$,%
\[
\begin{tikzcd}
P_{\mathbb{Q}^{\mathrm{al}}}\arrow{r}
&G_{\mathbb{Q}^{\mathrm{al}}}\arrow{r}
&(clG)_{\mathbb{Q}^{\mathrm{al}}}\\
P\arrow{u}\arrow{rr}&& clG\arrow{u}.
\end{tikzcd}
\]

We need a variant of this. Let $L\subset\mathbb{Q}{}^{\mathrm{al}}$ be a
number field, and let $\Mot^{w}(L)$ be the category of abelian motives over
$L$ with good reduction at $w|L$, i.e., which satisfy the N\'{e}ron condition.
Let $\mathbb{F}{}_{q}\subset\mathbb{F}{}$ be the residue field at $w|L$. The
functor $R\colon\Mot^{w}(\mathbb{Q}{}^{\mathrm{al}})\rightarrow\Mot(\mathbb{F}%
{})$ defines an exact tensor functor%
\[
R\colon\Mot^{w}(L)\rightarrow\Mot(\mathbb{F}_{q})\text{,}%
\]
and hence a morphism of bands%
\[
P(q)\rightarrow G,\quad G\overset{\df}{=}\mathcal{A}{}ut(\omega_{B}%
\mid\Mot^{w}(L)).
\]
As before, this defines a morphism of schemes over $\mathbb{Q}$,
\[
P(q)\rightarrow clG.
\]

Let $M\in\ob\Mot^{w}(L)$, and let $G_{M}=\mathcal{A}{}ut^{\otimes}(\omega
_{B}|\langle M_{\mathbb{Q}{}^{\mathrm{al}}}\rangle^{\otimes})$. Let $M_{0}$
denote the specialization of $M$ in $\Mot(\mathbb{F}{}_{q})$. Let $\ell\neq p$
be a prime number. After possibly replacing $L$ with a finite extension, we
obtain a representation%
\[
\rho_{\ell}\colon\Gal(\mathbb{Q}{}^{\mathrm{al}}/L)\rightarrow G_{M}%
(\mathbb{Q}{}_{\ell}).
\]
Let $\gamma_{\ell}$ denote the image of $\mathrm{Frob}_{w}$ in $(clG_{M}%
)(\mathbb{Q}_{\ell})$.

\begin{theorem}
\label{d9c}There exists a $\gamma\in(clG_{M})(\mathbb{Q})$ that maps to
$\gamma_{\ell}\in(clG_{M})(\mathbb{Q}_{\ell})$, all $\ell\neq p$.
\end{theorem}

\begin{proof}
We have a morphism (over $\mathbb{Q}{}$),%
\[
P(q)\rightarrow clG\rightarrow clG_{M}.
\]
We can take $\gamma$ to be the image of the universal element $\pi
_{\text{univ}}\in$ $P(q)(\mathbb{Q})$.
\end{proof}

\begin{note}
Theorem \ref{d9c} is proved for abelian varieties in \cite{kisinZ2021} after
earlier work of Laskar and Noot. See also \cite{commelin2019}.
\end{note}

\subsection{Integral motives}

Let $k=\mathbb{F}{}_{q}$ or $\mathbb{F}{}$. For the definition of the
categories $\mathsf{R}^{+}(k;\mathbb{\hat{Z})}$ and $\mathsf{R}^{+}%
(k;\mathbb{A}{}_{f})$, we refer the reader to \cite{milneR2004}. We let
$\Mot^{+}(k)$ denote the subcategory of $\Mot(k)$ of effective motives
(triples $(A,e,m)$ with $m\geq0$).

\begin{definition}
\label{d10}The category of effective integral motives $\Mot{}^{+}%
(k,\mathbb{Z})$ over $k$ is the full subcategory of the fibre product category%
\[
\mathsf{R}{}^{+}(k;\mathbb{\hat{Z}}) \underset{\mathsf{R}^{+}(k;\mathbb{A}%
_{f})}{\times}\Mot^{+}(k)
\]
whose objects $(X_{f},X_{0},x_{f})$ are those for which the prime-to-$p$
torsion subgroup of $X_{f}$ is finite.
\end{definition}

Thus, an effective integral motive is a triple $(X_{f},X_{0},x_{f})$
consisting of

\begin{enumerate}
\item an object $X_{f}=(X_{l})_{l}$ of $\mathsf{R}{}^{+}(k;\mathbb{Z}{})$ such
that $X_{l}$ is torsion-free for almost all $l$,

\item as effective motive $X_{0}$, and

\item an isomorphism $x_{f}\colon(X_{f})_{\mathbb{Q}{}}\rightarrow\omega
_{f}(X_{0})$ in $\mathsf{R}{}^{+}(k;\mathbb{A}{}_{f}$).
\end{enumerate}

For $M$ in $\mathsf{R}{}_{p}^{+}(\mathbb{F}{}_{q})$, let $r(M)$ denote the
rank of $M$ and $s(M)$ the sum of the slopes of $M$. Thus, if%
\[
P_{M}(T)=T^{h}+\cdots+c,
\]
then $r(M)=h$ and $s(M)=\ord_{p}(c)/\ord_{p}(q)$.

\begin{theorem}
\label{d11}Let $X$ and $Y$ be effective motives over $\mathbb{F}{}_{q}$ (i.e.,
objects of $\Mot^{+}(\mathbb{F}{}_{q})$). The group $\Ext^{1}(X,Y)$ is finite,
and%
\[
\lim_{s\rightarrow0}\frac{\zeta(X^{\vee}\otimes Y)}{(1-q^{-s})^{\rho(X,Y)}%
}=q^{-\chi(X,Y)}\frac{[\Ext^{1}(X,Y)]\cdot D(X,Y)}{\left[
\Hom(X,Y)_{\mathrm{tors}}\right]  \cdot\left[  \Hom(Y,X)_{\mathrm{tors}%
}\right]  },
\]
where

\begin{itemize}
\item $\chi(X,Y)=s(X_{p})r(Y_{p}),$

\item $D(X,Y)$ is the discriminant of the pairing%
\[
\Hom(Y,X)\times\Hom(X,Y)\rightarrow
\End(Y)\xrightarrow{\mathrm{trace}}\mathbb{Z}{}.
\]

\end{itemize}
\end{theorem}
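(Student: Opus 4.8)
The plan is to recognise this as the special-value formula for the zeta function of the integral motive $\mathcal{H}om(X,Y)$ in the sense of Milne--Ramachandran \cite{milneR2004}, and to run their argument in the present setting. At the level of rational motives put $Z=X^{\vee}\otimes Y\in\Mot(\mathbb{F}_{q})$, so that for $\ell\neq p$ one has $\zeta(X^{\vee}\otimes Y,s)=\det(1-\pi_{Z}q^{-s}\mid\omega_{\ell}Z)^{-1}$, independently of $\ell$. Since the Tate conjecture for abelian varieties over $\mathbb{F}_{q}$ is known and Frobenius acts semisimply on the $\ell$-adic realization of an abelian variety, $1$ is a semisimple eigenvalue of $\pi_{Z}$, its multiplicity equals $\dim_{\mathbb{Q}_{\ell}}(\omega_{\ell}Z)^{\pi_{Z}=1}=\rank_{\mathbb{Z}}\Hom(X,Y)=\rank_{\mathbb{Z}}\Hom(Y,X)=:\rho(X,Y)$, and the trace pairing $\Hom(Y,X)_{\mathbb{Q}}\times\Hom(X,Y)_{\mathbb{Q}}\to\mathbb{Q}$ is non-degenerate, so $D(X,Y)\neq 0$. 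Writing $\det(1-\pi_{Z}T\mid\omega_{\ell}Z)=(1-T)^{\rho(X,Y)}Q(T)$ with $Q(1)\neq 0$, the left-hand side of the theorem equals $Q(1)^{-1}=\prod_{\gamma\neq 1}(1-\gamma)^{-1}$, the product over the Frobenius eigenvalues $\gamma\neq 1$ on $\omega_{\ell}Z$ (equivalently $Q(1)$ is the determinant of $1-\pi_{Z}$ restricted to $(\pi_{Z}-1)\omega_{\ell}Z$, an algebraic number which one must show equals the displayed rational expression).

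Next I would give the cohomological interpretation of the arithmetic side and deduce finiteness of $\Ext^{1}$. Because $\Mot^{+}(\mathbb{F}_{q},\mathbb{Z})$ is the glued (fibre-product) category of Definition~\ref{d10}, the short exact sequence $0\to\mathbb{Z}\to\hat{\mathbb{Z}}\oplus\mathbb{Q}\to\mathbb{A}_{f}\to 0$ yields a long exact sequence expressing $\Hom$ and $\Ext^{1}$ in $\Mot^{+}(\mathbb{F}_{q},\mathbb{Z})$ through the corresponding groups in $\mathsf{R}^{+}(\mathbb{F}_{q};\hat{\mathbb{Z}})$, in $\Mot^{+}(\mathbb{F}_{q})$, and in $\mathsf{R}^{+}(\mathbb{F}_{q};\mathbb{A}_{f})$. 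Using that $\Mot^{+}(\mathbb{F}_{q})$ is semisimple (so its $\Ext^{1}$ vanishes) and that the Weil-\'etale $\Ext^{1}$ in $\mathsf{R}^{+}(\mathbb{F}_{q};\hat{\mathbb{Z}})$ is $\Coker(F-1)$, the sequence identifies $\Ext^{1}(X,Y)$ with the cokernel of $\Hom_{\hat{\mathbb{Z}}}(X_{f},Y_{f})\oplus\Hom(X_{0},Y_{0})\to\Hom(X,Y)\otimes\mathbb{A}_{f}$. That this cokernel is finite --- the first assertion of the theorem --- is exactly the statement that $\Hom(X_{0},Y_{0})$ is a $\mathbb{Q}$-structure of the correct dimension $\rho(X,Y)$ on the $\mathbb{A}_{f}$-module of Tate classes (property R4 of Theorem~\ref{d2} together with the Tate conjecture). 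The same sequence, decorated with the trace pairing, exhibits $[\Ext^{1}(X,Y)]$, the torsion orders $[\Hom(X,Y)_{\mathrm{tors}}]$, $[\Hom(Y,X)_{\mathrm{tors}}]$, and the regulator $D(X,Y)$ as the constituents of the (reduced) Euler characteristic of $R\Hom_{\Mot^{+}(\mathbb{F}_{q},\mathbb{Z})}(X,Y)$, and the determinant-of-a-complex formalism rewrites the right-hand side of the theorem as a product of local terms over all primes $\ell$, including $\ell=p$.

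It then remains to match the two sides prime by prime. For $\ell\neq p$ the local term is the classical Artin--Tate computation: one compares the lattice $\Hom_{\mathbb{Z}_{\ell}}(X_{\ell},Y_{\ell})\subset\omega_{\ell}Z$ with its image under $\pi_{Z}-1$, and the elementary identity "index $=$ determinant of $1-\pi_{Z}$ on the non-invariants'' reproduces the $\ell$-part of $\prod_{\gamma\neq 1}(1-\gamma)$ together with the $\ell$-parts of the torsion orders and of $D(X,Y)$, contributing no power of $q$. For $\ell=p$ one works instead with the $F$-isocrystal (Dieudonn\'e module) $\mathcal{H}om(X_{p},Y_{p})$: the integral $W$-lattice structure it inherits from $X_{p}^{\vee}\otimes Y_{p}$ differs from the slope-normalized one built into the motive, and carrying out the $p$-adic analogue of the lattice computation --- via the flat/syntomic $p$-adic cohomology used in \cite{milneR2004} --- produces precisely the correction factor $q^{-s(X_{p})r(Y_{p})}=q^{-\chi(X,Y)}$. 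Assembling the local contributions gives the displayed identity.

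\textbf{Main obstacle.} Everything away from $p$ is the standard Artin--Tate bookkeeping; the real difficulty is the $p$-adic term --- reconciling the Dieudonn\'e-module integral structure of $\mathcal{H}om(X_{p},Y_{p})$ with the motivic one and showing that the discrepancy is exactly $q^{s(X_{p})r(Y_{p})}$. This requires the fine structure theory of $F$-isocrystals over $\mathbb{F}_{q}$ (slopes, and the identification of $\chi$ as the "$p$-adic period'') together with the comparison between syntomic/flat cohomology and the integral $p$-adic realization, and is the technical heart of the argument, as in \cite{milneR2004}.
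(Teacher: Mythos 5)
The paper itself gives no proof: the displayed proof consists of the single line ``See \cite{milneR2004}, 10.1.'', deferring entirely to Milne--Ramachandran 2004. So the relevant comparison is between your sketch and the argument in the cited paper, and there your outline is essentially faithful: you correctly identify the three structural pieces of the Milne--Ramachandran proof, namely (i) the $\ell$-adic/Artin--Tate description of the leading term of $\zeta(X^{\vee}\otimes Y)$ at $s=0$ together with the use of the Tate conjecture and semisimplicity of Frobenius to identify the order of vanishing with $\rho(X,Y)=\rank\Hom(X,Y)$ and to guarantee $D(X,Y)\neq 0$; (ii) the Mayer--Vietoris computation of $\Hom$ and $\Ext^{1}$ in the glued category of Definition~\ref{d10}, from which the finiteness of $\Ext^{1}$ falls out of the rank equality (R4); and (iii) the $p$-adic analysis, where the slope/Dieudonn\'e bookkeeping produces the correction $q^{-s(X_p)r(Y_p)}=q^{-\chi(X,Y)}$. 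You also correctly single out (iii) as the technical heart.

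Two caveats you should be aware of, though neither is fatal to the outline. First, your Mayer--Vietoris sentence identifying $\Ext^{1}(X,Y)$ with a single cokernel is too coarse as stated: the long exact sequence also receives a contribution from $\Ext^{1}$ in the $\hat{\mathbb{Z}}$-realization category (the $\Coker(F-1)$ term you mention a line earlier), and one has to assemble the two contributions before reading off finiteness. Second, in Milne--Ramachandran the ``Weil-\'etale'' behaviour of $\Ext^{1}$ (i.e.\ that $H^{1}=\Coker(F-1)$ with no higher cohomology) depends on the realization categories being built from the Weil group $\mathbb{Z}$ rather than the profinite $\hat{\mathbb{Z}}$; your phrasing conflates the two, and one has to be careful which category is meant when invoking that description. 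Neither point changes the strategy, but both are places where a full write-up would need more care; the substantive work you correctly flag as remaining is the $p$-adic comparison producing the factor $q^{-\chi(X,Y)}$, which occupies a large part of the Milne--Ramachandran paper.
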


\begin{proof}
See \cite{milneR2004}, 10.1.
\end{proof}

\begin{aside}
\label{d12}Compare \ref{d11} with the following result (\cite{milne1968a}). If
$A$ and $B$ are abelian varieties over $\mathbb{F}_{q}$, then%
\[
q^{\dim(A)\dim(B)}\prod_{a_{i}\neq b_{j}}\bigg(  1-\frac{a_{i}}{b_{j}}\bigg)
=[\Ext^{1}(A,B)]\cdot D(A,B)
\]
where

\begin{itemize}
\item $(a_{i})_{1\leq i\leq2\dim A}$ and $(b_{i})_{1\leq i\leq2\dim B}$ are
the roots of the characteristic polynomials of the Frobenius endomorphisms of
$A$ and $B$,

\item $D(A,B)$ is the discriminant of the pairing%
\[
\Hom(B,A)\times\Hom(A,B)\rightarrow
\End(B)\xrightarrow{\textrm{trace}}\mathbb{Z}.
\]

\end{itemize}
\end{aside}

\subsection{Almost rational Tate classes}

\begin{plain}
\label{d31}Assuming Conjecture B for CM abelian varietes over $\mathbb{Q}%
{}^{\mathrm{al}}$, we have shown how to construct tannakian categories of
abelian motives $\Mot(\mathbb{F}{}_{p^{n}})$ for all $p$ and $n$. We now
explain how to obtain tannakian categories of abelian motives $\Mot(k)$ for
all fields $k$. For simplicity, we take $k$ to be algebraically closed.
\end{plain}

\begin{plain}
\label{d32}Let $A$ be an abelian variety over $k$. An \emph{almost-RT class}
of codimension $n$ on $A$ is an element $\gamma\in H_{\mathbb{A}{}}%
^{2n}(A)(n)$ such that there exists a cartesian square%
\[
\begin{tikzcd}
X\arrow{d}{f}&A\arrow{l}\arrow{d}\\
S&\Spec(k)\arrow{l}
\end{tikzcd}
\]
and a global section $\tilde{\gamma}$ of $R^{2n}f_{\ast}\mathbb{A}{}(n)$
satisfying the following conditions

\begin{itemize}
\item $S$ is a connected normal scheme of finite type over $\Spec\mathbb{Z}{}$;

\item $f\colon X\rightarrow S$ is an abelian scheme over $S$;

\item the fibre of $\tilde{\gamma}$ over $\Spec(k)$ is $\gamma$, and the
specialization of $\tilde{\gamma}$ at $s$ is rational Tate for all closed
points $s$ in a dense open subset $U$ of $S$.
\end{itemize}

\noindent Note that the residue field $\kappa(s)$ at a closed point of $S$ is
finite, so it makes sense to require $\tilde{\gamma}_{s}$ to be rational Tate.
\end{plain}

\begin{plain}
\label{d33}Let $\Mot(k)$ denote the category of motives based on the abelian
varieties over $k$ using the almost-RT classes as correspondences. Then
$\Mot(k)$ is a tannakian category over $\mathbb{Q}{}$ with many of the
properties anticipated for Grothendieck's category of abelian motives.
\end{plain}

\begin{question}
\label{d34}Does the Tate conjecture for almost-RT classes on abelian varieties
hold over finitely generated fields?
\end{question}

\begin{question}
\label{d35}Let $f\colon A\rightarrow S$ be an abelian scheme over a connected
normal scheme $S$ of finite type over $\mathbb{Z}{}$. Is the set of closed
$s\in S$ such that $\gamma_{s}$ is rational Tate closed?
\end{question}

\begin{proposition}
\label{r21}Assume that \ref{d34} and \ref{d35} have positive answers. Let
$f\colon A\rightarrow S$ be an abelian scheme over a connected normal scheme
of finite type over $\mathbb{F}$, and let $\gamma$ be a global section of
$R^{2n}f_{\ast}\mathbb{A}(n)$. If $\gamma_{s}\in H_{\mathbb{A}{}}^{2n}%
(A_{s})(n)$ is a rational Tate class for one $s\in S(\mathbb{F}{})$, then it
is a rational Tate class for all $s\in S(\mathbb{F})$.
\end{proposition}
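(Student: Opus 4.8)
The plan is to run the argument behind Theorems~\ref{c9}, \ref{c13}, and~\ref{d14}, with ``algebraic'' (or ``Lefschetz'') everywhere replaced by ``rational Tate'', using hypothesis~\ref{d34} in place of the Tate conjecture and hypothesis~\ref{d35} to spread the conclusion over the whole base. First I would spread out: since $S$ is of finite type over $\mathbb{F}$, there are a finite subfield $\mathbb{F}_{q}\subset\mathbb{F}$, a connected normal scheme $S_{0}$ of finite type over $\mathbb{F}_{q}$ (hence of finite type over $\mathbb{Z}$), an abelian scheme $f_{0}\colon A_{0}\rightarrow S_{0}$, a global section $\gamma_{0}$ of $R^{2n}f_{0\ast}\mathbb{A}(n)$ (spreading out each $\ell$-adic lisse sheaf, $\ell\neq p$, and the crystalline $F$-isocrystal separately), and a closed point $s_{1}$ of $S_{0}$ lying under the given point of $S(\mathbb{F})$, all recovering $(f,\gamma)$ and that point upon base change to $\mathbb{F}$. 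As $\gamma_{0}$ is $\pi_{1}(S_{0})$-invariant, $(\gamma_{0})_{s_{1}}$ is $\kappa(s_{1})$-rational, and together with the hypothesis and the description of $\Mot(\kappa(s_{1}))$ as pairs in $\Mot(\mathbb{F})$ this makes $(\gamma_{0})_{s_{1}}$ a rational Tate class on $A_{0,s_{1}}$. Write $\eta_{0}$ for the generic point of $S_{0}$, a finitely generated field, and $\bar{\eta}_{0}$ for a geometric generic point.

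Let $\mathcal{R}^{n}(A_{0}/S_{0})$ be the $\mathbb{Q}$-subspace of $H^{0}(S_{0},R^{2n}f_{0\ast}\mathbb{A}(n))$ of those sections whose restriction to $A_{0,\bar{\eta}_{0}}$ is an almost-RT class (\ref{d32}). Because $S_{0}$ is normal, the surjection $\pi_{1}(\eta_{0})\twoheadrightarrow\pi_{1}(S_{0})$ makes the comparison map $e\colon H^{0}(S_{0},R^{2n}f_{0\ast}\mathbb{A}(n))\rightarrow H_{\mathbb{A}}^{2n}(A_{0,\bar{\eta}_{0}})(n)^{\pi_{1}(\eta_{0})}$ an isomorphism, and hypothesis~\ref{d34} --- the Tate conjecture for almost-RT classes over the finitely generated field $\kappa(\eta_{0})$ --- says precisely that the almost-RT classes on $A_{0,\bar{\eta}_{0}}$ are a $\mathbb{Q}$-structure on the target. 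Together with Theorem~\ref{d2}(R4) for the fibre $A_{0,s_{1}}$ and the evident injectivity of the two vertical maps, one obtains a diagram
\[
\begin{tikzcd}
\mathcal{R}^{n}(A_{0,\bar{\eta}_{0}})\arrow{r}{\otimes}
&H_{\mathbb{A}}^{2n}(A_{0,\bar{\eta}_{0}})(n)^{\pi_{1}(\eta_{0})}\\
\mathcal{R}^{n}(A_{0}/S_{0})\arrow{u}{\simeq}\arrow{r}{d}\arrow[hook]{d}
&H^{0}(S_{0},R^{2n}f_{0\ast}\mathbb{A}(n))\arrow{u}{e}[swap]{\simeq}\arrow[hook]{d}\\
\mathcal{R}^{n}(A_{0,s_{1}})\arrow{r}{\otimes}
&H_{\mathbb{A}}^{2n}(A_{0,s_{1}})(n)^{\pi_{1}(s_{1})}
\end{tikzcd}
\]
in which $d$ becomes an isomorphism after $\otimes_{\mathbb{Q}}\mathbb{A}$. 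Applying Lemma~\ref{c6} to the lower square gives
\[
\mathcal{R}^{n}(A_{0}/S_{0})=\mathcal{R}^{n}(A_{0,s_{1}})\cap H^{0}(S_{0},R^{2n}f_{0\ast}\mathbb{A}(n))
\]
(intersection inside $H_{\mathbb{A}}^{2n}(A_{0,s_{1}})(n)$), and since $\gamma_{0}$ lies on the right-hand side we get $\gamma_{0}\in\mathcal{R}^{n}(A_{0}/S_{0})$; that is, $\gamma_{\bar{\eta}_{0}}$ is an almost-RT class.

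It remains to unwind the definition of almost-RT class. By uniqueness of spreadings-out up to shrinking, there is a dense open $V\subset S_{0}$ over which $(\gamma_{0})_{s}$ is a rational Tate class on $A_{0,s}$ for every closed $s$. By hypothesis~\ref{d35}, the set $Z$ of closed points of $S_{0}$ at which $\gamma_{0}$ is rational Tate is closed; as $Z\supset V$, $V$ is dense, and $S_{0}$ is irreducible, $Z=S_{0}$. Hence $(\gamma_{0})_{s}$ is a rational Tate class for every closed point $s$ of $S_{0}$, and applying the base-change functor $\Mot(\kappa(s))\rightarrow\Mot(\mathbb{F})$ shows that $\gamma_{s}$ is a rational Tate class for every $s\in S(\mathbb{F})$.

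The hard part will be the middle step: the $\mathbb{Q}$-structure identity $\mathcal{R}^{n}(A_{0}/S_{0})\otimes_{\mathbb{Q}}\mathbb{A}\simeq H^{0}(S_{0},R^{2n}f_{0\ast}\mathbb{A}(n))$ rests entirely on~\ref{d34}, and --- more delicate than in the purely $\ell$-adic situation --- one must check that the crystalline component of a global section of $R^{2n}f_{0\ast}\mathbb{A}(n)$ is governed by the $F$-isocrystal of the geometric generic fibre, as is used implicitly for the isomorphism $e$ and for Theorem~\ref{d2}(R4) at $s_{1}$. Everything else is formal, given Lemma~\ref{c6} and the closedness input~\ref{d35}.
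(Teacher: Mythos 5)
Your proof is correct and follows essentially the same route as the paper: after reducing to a finite subfield of $\mathbb{F}$, form the three-row diagram with $\mathcal{R}^{n}(A_{\eta})$, $\mathcal{R}^{n}(A/S)$, $\mathcal{R}^{n}(A_{s})$ on the left, use hypothesis \ref{d34} for the $\mathbb{Q}$-structure isomorphism in the middle row, apply Lemma \ref{c6} to the lower square, and invoke hypothesis \ref{d35} to pass from the dense open supplied by the definition of almost-RT to all closed points. The only cosmetic difference is the placement of \ref{d35}: the paper defines $\mathcal{R}^{n}(A/S)$ as the sections that are rational Tate at \emph{every} closed point (so the left vertical map $\mathcal{R}^{n}(A/S)\to\mathcal{R}^{n}(A_{s_{1}})$ is automatic) and uses \ref{d35} to prove the top arrow is surjective, whereas you define it via the generic fibre and defer \ref{d35} to the final unwinding. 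Note that with your definition the left vertical map of the lower square is not well-defined \emph{a priori} --- that a global section with almost-RT generic fibre specializes to a rational Tate class at $s_{1}$ is itself the dense-open-plus-\ref{d35} argument --- so strictly speaking you need that observation \emph{before} applying Lemma \ref{c6}, not after; but you have already supplied the argument, so only a reordering is required.
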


\begin{proof}
As in the proof of \ref{c8}, we may replace $\mathbb{F}$ with a finite
subfield. Consider the diagram
\[
\begin{tikzcd}
\mathcal{R}^n(A_{\eta})\arrow{r}{\otimes}
&H^{2n}_{\mathbb{A}}(A_{\eta})(n))^{\pi_{1}(\eta)}\\
\mathcal{R}^n(A/S)\arrow{r}{b}\arrow{u}{\simeq}\arrow[hook]{d}
&H^{0}(S,R^{2n}f_{\ast}\mathbb{A}(n))\arrow{u}{c}\arrow[hook]{d}\\
\mathcal{R}^n(A_{s})\arrow{r}{\otimes} &H^{2n}_{\mathbb{A}}(A_{s})(n)^{\pi_{1}(s)}.
\end{tikzcd}
\]
Because the Tate conjecture holds for rational Tate classes,%
\[
\mathcal{R}^{n}(A_{s})\otimes_{\mathbb{Q}{}}\mathbb{A}{}%
\overset{\lsimeq}{\longrightarrow}H_{\mathbb{A}{}}^{2n}(A_{s})(n)^{\pi_{1}%
(s)}.
\]
Let $\mathcal{R}^{n}(A_{\eta})$ denote the space of almost-RT classes on
$A_{\eta}$. Because we are assuming the Tate conjecture for almost-RT classes
(\ref{d34}),
\[
\mathcal{R}^{n}(A_{\eta})\otimes_{\mathbb{Q}{}}\mathbb{A}{}%
\overset{\lsimeq}{\longrightarrow}H_{\mathbb{A}{}}^{2n}(A_{\eta})(n)^{\pi
_{1}(\eta)}.
\]
Define $R^{n}(A/S)$ to be the space of global sections $\gamma$ of
$R^{2n}f_{\ast}\mathbb{A}(n)$ such that $\gamma_{s}\in H_{\mathbb{A}{}}%
^{2n}(A_{s})(n)$ is a rational Tate class for all $s\in S(\mathbb{F}{}_{q})$.
The map%
\[
\mathcal{R}^{n}(A/S)\rightarrow\mathcal{R}^{n}(A_{\eta})
\]
is injective, and, because of (\ref{d35}), it is surjective. Now we can apply
Lemma \ref{c6} to obtain the equality,%
\[
\mathcal{R}^{n}(A/S)=\mathcal{R}^{n}(A_{s})\cap H^{0}(S,R^{2n}f_{\ast
}\mathbb{A}(n))
\]
(intersection inside $H_{\mathbb{A}}^{2n}(A_{s})(n)$). Thus, if $\gamma_{s}$
is rational Tate for one $s$, it lies in $\mathcal{R}{}^{n}(A/S)$, which means
that $\gamma_{s}$ is rational Tate for all $s$.
\end{proof}

\begin{theorem}
\label{r22}Assume that \ref{d34} and \ref{d35} have positive answers. All
Hodge classes on abelian varieties over $\mathbb{Q}{}^{\mathrm{al}}$ with good
reduction at $w$ specialize to rational Tate classes.
\end{theorem}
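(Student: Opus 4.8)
The plan is to run Deligne's ``Principle~B'' argument of \cite{deligne1982}, with Proposition~\ref{r21} (rational Tate classes in families over $\mathbb{F}$) playing the part of the variational Hodge conjecture and property R3 of Theorem~\ref{d2} anchoring the argument at CM points. A Hodge class in $H_{\mathbb{A}}^{2\ast}(A)(\ast)$ is a sum of homogeneous pieces, so it suffices to treat one Hodge class $\gamma$ of codimension $n$ on $A$. Fix a polarization $\psi$ of $A$ and set $G=\MT(A)\hookrightarrow\GSp(H^{1}(A),\psi)$; then $\gamma$ is a Hodge tensor fixed by $G$, hence extends to a flat global section $\tilde{\gamma}$ of $R^{2n}f_{\ast}\mathbb{Q}(n)$ over a connected component $S$ of the Shimura variety of Hodge type attached to $(G,\psi)$, where $f\colon X\rightarrow S$ is the universal abelian scheme; since Hodge classes in a polarizable variation form a local subsystem, $\tilde{\gamma}_{s}$ is a Hodge class on $X_{s}$ for every $s$. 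Everything descends to $\mathbb{Q}^{\mathrm{al}}$; there is $s_{0}\in S(\mathbb{Q}^{\mathrm{al}})$ with $(X_{s_{0}},\tilde{\gamma}_{s_{0}})\approx(A,\gamma)$, and, as CM points are dense, an $s_{1}\in S(\mathbb{Q}^{\mathrm{al}})$ with $X_{s_{1}}$ of CM-type.

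Next I would spread out over $w$. Choosing the level prime to $p$, the Shimura variety and its universal abelian scheme admit a smooth model $\mathcal{S}\rightarrow\Spec\mathcal{O}_{w}$, $\mathcal{X}\rightarrow\mathcal{S}$ (Kisin, Vasiu), and $\tilde{\gamma}$ extends to a global section $\tilde{\tilde{\gamma}}$ over $\mathcal{S}$ of the associated system of cohomology sheaves (lisse in the $\ell$-adic realizations, and through the de Rham realization of $\mathcal{X}/\mathcal{S}$ in the crystalline one), because the monodromy is contained in $G$ and $G$ fixes $\tilde{\gamma}$. As $A=X_{s_{0}}$ has good reduction at $w$ by hypothesis, and $X_{s_{1}}$ has good reduction at $w$ because it is CM, the points $s_{0},s_{1}$ extend to $\sigma_{0},\sigma_{1}\in\mathcal{S}(\mathcal{O}_{w})$; write $\bar{s}_{0},\bar{s}_{1}\in\mathcal{S}_{\mathbb{F}}(\mathbb{F})$ for their reductions. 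Then $\bar{\mathcal{X}}:=\mathcal{X}_{\mathbb{F}}\rightarrow\bar{\mathcal{S}}:=\mathcal{S}_{\mathbb{F}}$ is an abelian scheme carrying the specialized section $\bar{\gamma}:=\tilde{\tilde{\gamma}}|_{\bar{\mathcal{S}}}$; here $\gamma_{0}=\bar{\gamma}_{\bar{s}_{0}}$ is the class in question, while $\bar{\gamma}_{\bar{s}_{1}}=(\tilde{\gamma}_{s_{1}})_{0}$ is a rational Tate class on $\bar{\mathcal{X}}_{\bar{s}_{1}}=(X_{s_{1}})_{0}$ by R3 of Theorem~\ref{d2} (it is the specialization of a Hodge class on a CM abelian variety).

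It remains to join $\bar{s}_{0}$ to $\bar{s}_{1}$ inside the good-reduction locus and invoke Proposition~\ref{r21}. The base $\mathcal{S}$ is not proper, but it has a normal compactification $\mathcal{S}^{\ast}$ proper over $\mathcal{O}_{w}$, and by Andr\'{e}'s codimension estimate (\cite{andre2006b}; cf.\ Lemma~\ref{g12}) the boundary $\mathcal{S}^{\ast}_{\mathbb{F}}\smallsetminus\mathcal{S}_{\mathbb{F}}$ has codimension $\geq2$ in $\mathcal{S}^{\ast}_{\mathbb{F}}$, which is connected by properness and normality; hence $\bar{\mathcal{S}}$ is connected and normal. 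Applying Proposition~\ref{r21} to $\bar{\mathcal{X}}\rightarrow\bar{\mathcal{S}}$ and the section $\bar{\gamma}$, which is rational Tate at $\bar{s}_{1}$, shows that it is rational Tate at every $\mathbb{F}$-point, in particular at $\bar{s}_{0}$; that is, $\gamma_{0}$ is a rational Tate class. (If one prefers to avoid a connectedness statement for the whole special fibre, one may instead take, as in the proof of Theorem~\ref{c15}, a sufficiently general linear section of $\mathcal{S}^{\ast}$ through $\sigma_{0}$ and $\sigma_{1}$, which is a flat projective $\mathcal{O}_{w}$-curve lying in $\mathcal{S}$ with smooth geometrically connected generic fibre, and apply Proposition~\ref{r21} to the induced abelian scheme over the normalization of its special fibre.)

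The main obstacle is the integral-model input of the last two paragraphs: producing a model $\mathcal{S}\rightarrow\Spec\mathcal{O}_{w}$ of the Hodge-type Shimura variety together with the boundary-codimension property, and, when $\MT(A)$ falls outside the range where these are available --- exceptional $D$-type factors, or ramification at $p$ --- either replacing the family by one of abelian type or disposing of the offending almost-simple factors separately, which are exactly the technical difficulties already met in \S4. Once these are handled the argument is purely formal.
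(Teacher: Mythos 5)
Your route is genuinely different from the paper's. The paper reduces in three steps: first prove the statement for split Weil classes, adapting the proof of Theorem~\ref{c15} (where one works with Deligne's explicit split-Weil families inside a Siegel moduli space and uses the Chai--Faltings compactification together with Andr\'e's boundary-codimension estimate~\ref{g12}); then deduce the CM case via Andr\'e's decomposition~\ref{a6}; finally deduce the general case by the reduction argument of \S6 of \cite{deligne1982}. You instead go directly at the general case: put $(A,\gamma)$ in the universal family over a Hodge-type Shimura variety for $G=\MT(A)$, spread out over $\mathcal{O}_w$, and apply Proposition~\ref{r21} to a connected piece of the special fibre joining a CM point to the reduction of $A$. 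The one-shot argument is conceptually cleaner and makes the role of Proposition~\ref{r21} and property R3 transparent.

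The cost is that your integral-model input is heavier than what the paper's route calls for, and you cite it a little loosely. Kisin--Vasiu models exist for $G$ unramified at $p$ with $K_p$ hyperspecial; here $G=\MT(A)$ is arbitrary and may be ramified. More importantly, the boundary-codimension estimate you invoke as ``Andr\'e's (\cite{andre2006b}; cf.\ Lemma~\ref{g12})'' is, in the form used in the paper, a statement about the specific split-Weil families; for an arbitrary Hodge-type Shimura variety one would need the corresponding fact about Kisin's (or Madapusi Pera's) integral models, which is not what Lemma~\ref{g12} gives and is again tied to the unramified case. You flag this at the end, but the description ``exactly the technical difficulties already met in \S4'' understates the gap: the whole point of the paper's architecture (split Weil $\Rightarrow$ CM $\Rightarrow$ general) is to ensure that the only integral models and boundary estimates ever needed are those for Siegel moduli and split-Weil families, which are unconditionally available. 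To make your direct argument rigorous you would have to either restrict to $A$ with $\MT(A)$ in the range covered by the known integral-model theory and dispose of the rest separately, or fall back on the Siegel moduli space and a chain of families connecting $A$ to a CM point -- which is essentially the paper's \S6-of-Deligne step.
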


\begin{proof}
First prove this for split Weil classes (see the proof of Theorem \ref{c15}).
Then deduce it for Hodge classes on CM abelian varieties (apply \ref{a6}).
Finally, deduce the general case by the argument in \S 6 of \cite{deligne1982}.
\end{proof}

\begin{corollary}
\label{r23}Under the assumptions of the theorem, all Hodge classes on abelian
varieties over fields of characteristic zero are almost-RT.
\end{corollary}

\begin{proof}
For $\mathbb{Q}^{\mathrm{al}}$, this follows from Theorem \ref{r22}. For the
general case, specialize first to $\mathbb{Q}{}^{\mathrm{al}}$.
\end{proof}

\subsection{Comparison with the constructions in \cite{langlandsR1987}}

Recall that we have canonically-defined tannakian categories and quotient
functors,%
\[
\CM(\mathbb{Q}^{\mathrm{al}})\xleftarrow{J}\LCM(\mathbb{Q}^{\mathrm{al}%
})\xrightarrow{R}\LMot(\mathbb{F}).
\]
Let $\omega^{R}$ be the functor on $\LCM(\mathbb{Q}{}^{\mathrm{al}})^{L}$
defined by $R$, so%
\[
\omega^{R}(X)=\Hom(\1,R(X)).
\]
We have the following statement.

\begin{theorem}
\label{d22}There exists a unique $\mathbb{Q}{}$-valued fibre functor
$\omega_{0}$ on $\CM(\mathbb{Q}{}^{\mathrm{al}})^{P}$ such that%
\[
\omega_{0}(J(X))=\omega^{R}(X)
\]
for all $X$ in $\LCM(\mathbb{Q}{}^{\mathrm{al}})$. Moreover, $\omega_{0}$
provides a $\mathbb{Q}{}$-structure for $\omega_{\mathbb{A}}$.
\end{theorem}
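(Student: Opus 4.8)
The plan is to derive the statement purely formally from the theory of quotients of tannakian categories (\ref{t2}, \ref{t3}), feeding in Conjecture B for CM abelian varieties as the sole nontrivial input; it is essentially the construction of $\Mot^{\prime}(\mathbb{F})$ given above, repackaged in the form convenient for comparison with \cite{langlandsR1987}. First I would fix the quotient functor to which \ref{t3} is applied. The equality $P=S\cap L$ (\cite{milne1999b}, 6.1) makes the sequence of fundamental groups $0\to S/P\to T/L\to T/(L\cdot S)\to0$ exact, and hence, as already noted in the construction, $J$ restricts to a quotient functor
\[
J|\colon\LCM(\mathbb{Q}^{\mathrm{al}})^{L}\longrightarrow\CM(\mathbb{Q}^{\mathrm{al}})^{P},
\]
whose associated tannakian subcategory — those objects of $\LCM(\mathbb{Q}^{\mathrm{al}})^{L}$ whose image under $J|$ is trivial — is exactly $\LCM(\mathbb{Q}^{\mathrm{al}})^{L\cdot S}$. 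On that subcategory the fibre functor $\omega^{J|}=\Hom(\1,J|(-))$ attached to $J|$ by \ref{t2} is, by inspection, the restriction of $\omega^{J}$.

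Next I would invoke Conjecture B. In the form recalled in the construction, it says precisely that $\omega^{J}$ and $\omega^{R}$ restrict to one and the same fibre functor $\omega_{1}$ on $\LCM(\mathbb{Q}^{\mathrm{al}})^{L\cdot S}$, and that $\omega_{1}$ is a $\mathbb{Q}$-structure on the ad\'{e}lic fibre functor of $\LCM(\mathbb{Q}^{\mathrm{al}})^{L\cdot S}$ defined by the standard Weil cohomologies. Thus the pair consisting of the fibre functor $\omega^{R}$ on $\LCM(\mathbb{Q}^{\mathrm{al}})^{L}$ and the identification $\omega^{J|}=\omega_{1}=\omega^{R}|\LCM(\mathbb{Q}^{\mathrm{al}})^{L\cdot S}$ is exactly the datum required by the converse half of \ref{t3}, which therefore produces a unique $\mathbb{Q}$-valued fibre functor $\omega_{0}$ on $\CM(\mathbb{Q}^{\mathrm{al}})^{P}$ with $\omega_{0}\circ(J|)\cong\omega^{R}$ compatibly, i.e.\ $\omega_{0}(J(X))=\omega^{R}(X)$ for every $X$ in $\LCM(\mathbb{Q}^{\mathrm{al}})^{L}$ — the range on which $\omega^{R}$ is a fibre functor. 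Uniqueness is part of \ref{t3}; concretely, every object of $\CM(\mathbb{Q}^{\mathrm{al}})^{P}$ is a subquotient of some $J(X)$ because $J|$ is a quotient functor, and an exact faithful tensor functor is pinned down by its values on such objects.

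For the assertion that $\omega_{0}$ is a $\mathbb{Q}$-structure on $\omega_{\mathbb{A}}$, I would note that \ref{t3} is functorial in the coefficient ring, so the same recipe carried out with coefficients in $\mathbb{Q}_{l}$ (for each $l=2,\ldots,p,\ldots,\infty$) — starting from the $\mathbb{Q}_{l}$-valued fibre functor on $\LCM(\mathbb{Q}^{\mathrm{al}})^{L}$ coming from the $l$-adic realization, together with the identification of $\omega_{1}\otimes\mathbb{Q}_{l}$ with the $l$-component of the ad\'{e}lic functor — yields the fibre functor $\omega_{l}$ of \ref{b15x} and identifies it with $\omega_{0}\otimes_{\mathbb{Q}}\mathbb{Q}_{l}$; this is property (b) of the construction, a consequence of the part of Conjecture B asserting that $\omega_{1}$ is a $\mathbb{Q}$-structure on the ad\'{e}lic functor. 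Taking the restricted product over $l$ gives $\omega_{0}\otimes_{\mathbb{Q}}\mathbb{A}\cong\omega_{\mathbb{A}}=\prod'_{l}\omega_{l}$.

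The real content is all imported: the identity $P=S\cap L$, and above all Conjecture B for CM abelian varieties, which is the standing hypothesis of this section. Within the argument the one step deserving care is the fundamental-group bookkeeping of the first paragraph — checking that $J|$ genuinely is a quotient functor with kernel exactly $S/P$ and that $\omega^{J|}$ coincides with $\omega_{1}$, so that the hypotheses of the converse direction of \ref{t3} are met on the nose. Once that is in place, everything else is formal; I expect no serious obstacle.
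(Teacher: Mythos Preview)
Your argument is correct and follows essentially the same route as the paper, which after stating the theorem simply notes that uniqueness is obvious because $J|\colon\LCM(\mathbb{Q}^{\mathrm{al}})^{L}\to\CM(\mathbb{Q}^{\mathrm{al}})^{P}$ is a quotient functor, that the $\mathbb{Q}$-structure claim follows from the corresponding property of $\omega^{R}$, and that existence requires (indeed is equivalent to) Conjecture~B --- all of which you have unpacked correctly via \ref{t3} and the construction preceding the theorem. Your care in restricting the equality $\omega_{0}(J(X))=\omega^{R}(X)$ to $X\in\LCM(\mathbb{Q}^{\mathrm{al}})^{L}$ is appropriate, since that is where $\omega^{R}$ is defined as a fibre functor; the theorem's phrasing ``for all $X$ in $\LCM(\mathbb{Q}^{\mathrm{al}})$'' is a slight imprecision.
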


Because $\LCM(\mathbb{Q}{}^{\mathrm{al}})^{L}\rightarrow\CM(\mathbb{Q}%
{}^{\mathrm{al}})^{P}$ is a quotient functor, the uniqueness is obvious. That
$\omega_{0}$ is a $\mathbb{Q}$-structure on $\omega_{\mathbb{A}}$ follows from
the fact that $\omega^{R}$ is a $\mathbb{Q}$-structure on $\omega_{\mathbb{A}%
}$. The proof of the existence requires Conjecture B (in fact, is equivalent
to it).

Using cohomology, it is possible to prove only the following weaker result.

\begin{theorem}
\label{d23}There exists a $\mathbb{Q}$-valued fibre functor $\omega$ on
$\CM(\mathbb{Q}^{\mathrm{al}})^{P}$ such that $\omega\otimes\mathbb{Q}{}%
_{l}\approx\omega_{l}$ for all $l$. Any two become isomorphic on any algebraic
subcategory of $\CM(\mathbb{Q}^{\mathrm{al}})$. The set of isomorphism classes
of such $\omega$ is a principal homogeneous space for $\varprojlim
_{\mathcal{F}{}}C(K)$, where $\mathcal{F}{}$ is the set of CM-subfields of
$\mathbb{Q}{}^{\mathrm{al}}$ finite over $\mathbb{Q}{}$ and $C(K)$ is the
ideal class group of $K$.
\end{theorem}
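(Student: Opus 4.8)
The plan is to reduce the whole statement to Galois cohomology of the protorus $N:=S/P$, where $S=\pi(\CM(\mathbb{Q}^{\mathrm{al}}))$ is the Serre group and $P\hookrightarrow S$ is the Shimura--Taniyama homomorphism of \ref{b3} (injective). Since $\CM(\mathbb{Q}^{\mathrm{al}})^{P}=\CM(\mathbb{Q}^{\mathrm{al}})^{H}$ with $H$ the image of $P$, the fundamental group of $\CM(\mathbb{Q}^{\mathrm{al}})^{P}$ is $N$, which is a protorus, $N=\varprojlim_{K}N^{K}$ with $N^{K}=S^{K}/P^{K}$ a $\mathbb{Q}$-torus split by the CM field $K$. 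The Betti fibre functor restricts to a $\mathbb{Q}$-valued fibre functor $\omega_{B}$ on $\CM(\mathbb{Q}^{\mathrm{al}})^{P}$; taking $\omega_{B}$ as base point identifies $\CM(\mathbb{Q}^{\mathrm{al}})^{P}$ with $\Rep_{\mathbb{Q}}(N)$ and the set of isomorphism classes of $\mathbb{Q}$-valued fibre functors with $H^{1}(\mathbb{Q},N)$.

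For existence I would take $\omega=\omega_{B}$ and verify $\omega_{B}\otimes\mathbb{Q}_{l}\approx\omega_{l}$ at every place $l$. Every CM motive has good reduction at $w$ (\ref{b12}), so for $\ell\neq p,\infty$ smooth--proper base change identifies $\xi_{\ell}$ with the $\ell$-adic \'etale cohomology of the $\mathbb{Q}^{\mathrm{al}}$-motive and Artin comparison gives $\omega_{\ell}\cong\omega_{B}\otimes\mathbb{Q}_{\ell}$; for $l=p$ one uses that Frobenius lifts on CM abelian varieties (\ref{b13}), so that the crystalline realization of the reduction is $D_{\mathrm{crys}}$ of the crystalline $p$-adic \'etale realization and, on the weight-zero category $\CM(\mathbb{Q}^{\mathrm{al}})^{P}$, the functor $V\rightsquigarrow V^{F}$ cuts out a $\mathbb{Q}_{p}$-form isomorphic to $\omega_{B}\otimes\mathbb{Q}_{p}$ (cf.\ \cite{milne2009}); for $l=\infty$ the functor $\xi_{\infty}$ is built from the real Hodge structure $\omega_{B}(-)_{\mathbb{R}}$, and $V\rightsquigarrow V^{F}$ cuts out an $\mathbb{R}$-form isomorphic to $\omega_{B}\otimes\mathbb{R}$ (\ref{b15}). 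This gives the first assertion.

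Granting existence, an $\omega$ satisfies $\omega\otimes\mathbb{Q}_{l}\approx\omega_{l}$ for all $l$ if and only if its class $c\in H^{1}(\mathbb{Q},N)$ relative to $\omega_{B}$ dies in $H^{1}(\mathbb{Q}_{l},N)$ for every place $l$; hence the set of such (admissible) $\omega$ up to isomorphism is the Tate--Shafarevich group $\mathrm{III}^{1}(\mathbb{Q},N)$, a principal homogeneous space under itself. The key finite-level input is the Hasse principle $\mathrm{III}^{1}(\mathbb{Q},N^{K})=0$, which I would deduce from the explicit description of $P^{K}\hookrightarrow S^{K}$ in \ref{b3} (equivalently, via Poitou--Tate duality, as a vanishing of $\mathrm{III}^{2}$ for the dual torus) together with a Chebotarev argument. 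This already yields the second assertion: an algebraic subcategory $\mathsf{T}'\subseteq\CM(\mathbb{Q}^{\mathrm{al}})^{P}$ has fundamental group a quotient $N'$ of some $N^{K}$, and on $\mathsf{T}'$ any two admissible $\omega$ differ by the image of $c$ in $H^{1}(\mathbb{Q},N')$, which vanishes because $\mathrm{III}^{1}(\mathbb{Q},N)\to\mathrm{III}^{1}(\mathbb{Q},N^{K})$ factors through $\varprojlim_{K'}\mathrm{III}^{1}(\mathbb{Q},N^{K'})=0$. Passing to the limit, the Milnor exact sequence gives
\[
0\to\varprojlim_{K}{}^{1}\,N^{K}(\mathbb{Q})\to\mathrm{III}^{1}(\mathbb{Q},N)\to\varprojlim_{K}\mathrm{III}^{1}(\mathbb{Q},N^{K})=0,
\]
so $\mathrm{III}^{1}(\mathbb{Q},N)\cong\varprojlim_{K}{}^{1}\,N^{K}(\mathbb{Q})$.

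The main obstacle is the identification of this derived limit with $\varprojlim_{\mathcal{F}}C(K)$. The plan is to unwind $N^{K}(\mathbb{Q})=(S^{K}/P^{K})(\mathbb{Q})$ using the construction of the Shimura--Taniyama map in \ref{b3} --- the principal-ideal choices $\mathfrak{p}^{h}=(a)$, $\alpha=a^{2n}$ --- so that $(S^{K}/P^{K})(\mathbb{Q})$ is expressed through units and principal ideals of $\mathcal{O}_{K}$, whereupon the ideal class groups $C(K)$ emerge as the cokernels of the principal-ideal maps and the derived limit over $\mathcal{F}$ reproduces $\varprojlim_{\mathcal{F}}C(K)$ (with transition maps the norm maps, matching the tannakian ones). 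Keeping track of the local conditions in this computation, above all at $p$ --- where $P$ is concentrated --- and at $\infty$, and checking that the class-group transition maps are the right ones, is the delicate part; by contrast the existence step rests only on standard comparison isomorphisms and the finite-level Hasse principle is a routine, if explicit, cohomological computation.
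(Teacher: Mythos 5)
Your reduction to Galois cohomology of the protorus $N=S/P$ is exactly the framework of the paper's proof: the paper likewise observes that $\mathbb{Q}$-valued fibre functors on $\CM(\mathbb{Q}^{\mathrm{al}})^{P}$ are classified by $H^{1}(\mathbb{Q},S/P)$, and that the set of admissible ones is a torsor to be computed cohomologically (deferring to Theorem 4.1 of \cite{milne2003}). Your passage through the Milnor sequence, the finite-level Hasse principle $\mathrm{III}^{1}(\mathbb{Q},N^{K})=0$, and the identification of the derived limit with $\varprojlim_{\mathcal{F}}C(K)$ is in the spirit of that citation, though everything after ``Granting existence'' is a programme rather than a proof --- in particular, you would still need to control $\varprojlim^{1}N^{K}(\mathbb{Q}_{l})$ in order for the image of $\varprojlim^{1}N^{K}(\mathbb{Q})$ to land in $\mathrm{III}^{1}$, and the identification with the class groups via the principal-ideal choices of \ref{b3} is nontrivial bookkeeping.

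The genuine gap is the existence step. You propose to take $\omega=\omega_{B}$ and to verify $\omega_{B}\otimes\mathbb{Q}_{l}\approx\omega_{l}$ by ``standard comparison isomorphisms.'' For $\ell\neq p,\infty$ this is Artin comparison plus proper--smooth base change, but at $l=p$ and $l=\infty$ the claim is not standard, and the paper explicitly marks this as the hard part: ``The proof of the existence of $\omega$ occupies a large part of the article \cite{langlandsR1987}.'' At $p$, $\omega_{p}$ is the Frobenius-fixed points of the crystalline realization of the reduction; the $p$-adic comparison theorems only relate this to $\omega_{B}$ after tensoring with $B_{\mathrm{crys}}$, and descending to an isomorphism of $\mathbb{Q}_{p}$-valued tensor functors is precisely the point at which the Shimura--Taniyama theory enters and a class in $H^{1}(\mathbb{Q}_{p},S/P)$ has to be analysed. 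Similarly at $\infty$, the twisted real form $V\rightsquigarrow V^{F}$ differs from $\omega_{B}\otimes\mathbb{R}$ a priori by a class in $H^{1}(\mathbb{R},S/P)$, which is not automatically zero for a nonsplit (pro)torus. Your first paragraph therefore shows only that existence is equivalent to the vanishing of a cohomological obstruction built from $[\omega_{p}]$ and $[\omega_{\infty}]$ --- it does not show that the obstruction vanishes, which is the actual content the paper delegates to Langlands--Rapoport.
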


\begin{proof}
As $\CM(\mathbb{Q}^{\mathrm{al}})$ has a canonical fibre functor $\omega_{B}$,
the isomorphism classes of $\mathbb{Q}$-valued fibre functors on
$\CM(\mathbb{Q}^{\mathrm{al}})^{P}$ are classified by the cohomology group
$H^{1}(\mathbb{Q}{},S/P)$. The proof of the existence of $\omega$ occupies a
large part of the article \cite{langlandsR1987}. For the rest, see Theorem 4.1
of \cite{milne2003}.
\end{proof}

We can now choose a $\mathbb{Q}{}$-valued subfunctor $\omega_{0}$ of
$\omega_{\mathbb{A}{}}$ such that $\omega_{0}\otimes_{\mathbb{Q}{}}%
\mathbb{A}{}=\omega_{\mathbb{A}{}}$. We define $\Mot(\mathbb{F}{})$ to be the
quotient $q\colon\CM(\mathbb{Q}^{\mathrm{al}})\rightarrow\Mot(\mathbb{F}{})$
of $\CM(\mathbb{Q}^{\mathrm{al}})$ corresponding to the functor $\omega_{0}$
on $\CM(\mathbb{Q}^{\mathrm{al}})^{P}$ (see \cite{milne2007d}). As
$\CM(\mathbb{Q}^{\mathrm{al}})$ is semisimple, this has an explicit
description (ibid.~2.12 et seq.). Apart from involving a choice, this
definition does not give an object with the wished for properties.

\subsection{Comparison with Grothendieck's categories of motives}

Let $\CM_{\mathrm{num}}(\mathbb{Q}{}^{\mathrm{al}})$ and $\Mot_{\mathrm{num}%
}(\mathbb{F}{})$ be the categories of CM and abelian motives defined using
algebraic cycles modulo numerical equivalence.

\begin{proposition}
\label{d24}The following statements are equivalent:

\begin{enumerate}
\item the functor $\CM_{\mathrm{num}}(\mathbb{Q}^{\mathrm{al}})\rightarrow
\Mot(\mathbb{F}{})$ factors through the functor $\CM_{\mathrm{num}}%
(\mathbb{Q}^{\mathrm{al}})\rightarrow\Mot_{\mathrm{num}}(\mathbb{F})$;

\item an object $M$ of $\Mot(\mathbb{F)}$ is trivial if and only if the
Frobenius element $\pi_{M}=1$;

\item the Tate conjecture holds for all abelian varieties over $\mathbb{F}{}$.
\end{enumerate}
\end{proposition}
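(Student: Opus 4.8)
The plan is to prove the cycle \textup{(a)}$\Rightarrow$\textup{(b)}$\Rightarrow$\textup{(c)}$\Rightarrow$\textup{(a)}, the bridge in each case being the comparison of $\Mot(\mathbb{F})$ with Grothendieck's numerical category. First I would set up two comparison functors out of $\CM_{\mathrm{num}}(\mathbb{Q}^{\mathrm{al}})$. The functor $F\colon\CM_{\mathrm{num}}(\mathbb{Q}^{\mathrm{al}})\to\Mot(\mathbb{F})$ of \textup{(a)} is the composite of the canonical faithful functor $\CM_{\mathrm{num}}(\mathbb{Q}^{\mathrm{al}})\to\CM(\mathbb{Q}^{\mathrm{al}})$ --- algebraic cycles on CM abelian varieties are absolutely Hodge, by Deligne --- with the reduction functor $R$ of \ref{d1}. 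There is also a canonical specialization functor $\Lambda\colon\CM_{\mathrm{num}}(\mathbb{Q}^{\mathrm{al}})\to\Mot_{\mathrm{num}}(\mathbb{F})$: every CM abelian variety over $\mathbb{Q}^{\mathrm{al}}$ has good reduction (\ref{b12}), and specialization of algebraic cycles respects numerical equivalence. By Honda's construction of abelian varieties with prescribed Weil numbers, every abelian variety over $\mathbb{F}$ is, up to isogeny, an isogeny factor of the reduction of a CM abelian variety over $\mathbb{Q}^{\mathrm{al}}$, so $F$ and $\Lambda$ are both quotient functors (\ref{t2}), and each intertwines the action of the Weil-number protorus $P$ with the Shimura--Taniyama action on $\CM(\mathbb{Q}^{\mathrm{al}})$ (\ref{b13}). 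Since homological equivalence is finer than numerical equivalence, any morphism killed by $F$ is killed by $\Lambda$; hence $F$ factors through $\Lambda$ iff the two have the same kernel, iff a specialized algebraic cycle on a reduction $A_0$ which is numerically trivial on $A_0$ vanishes in $H_{\mathbb{A}}(A_0)$ --- equivalently (Honda--Tate again) numerical $=$ $\mathbb{A}$-homological equivalence on all abelian varieties over $\mathbb{F}$.

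For \textup{(a)}$\Rightarrow$\textup{(b)}: a factorization $F=G\circ\Lambda$ gives an exact tensor functor $G\colon\Mot_{\mathrm{num}}(\mathbb{F})\to\Mot(\mathbb{F})$ which is essentially surjective and $P$-equivariant, hence matches germs of Frobenius. If $M$ in $\Mot(\mathbb{F})$ has $\pi_M=1$, a lift $\widetilde M$ over the semisimple category $\Mot_{\mathrm{num}}(\mathbb{F})$ (Jannsen) has $\pi_{\widetilde M}=1$; then every class in the realization of $\widetilde M$ is a Tate class, and in $\Mot_{\mathrm{num}}(\mathbb{F})$ Tate classes are algebraic modulo numerical equivalence, so $\Hom(\1,\widetilde M)$ has rank $\rank\widetilde M$ and $\widetilde M$, hence $M=G(\widetilde M)$, is a sum of copies of $\1$; the converse in \textup{(b)} is clear. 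For \textup{(b)}$\Rightarrow$\textup{(c)}: given a Tate class $t\in H_\ell^{2n}(A)(n)$ on $A/\mathbb{F}$, a finite extension of the field of definition makes Frobenius act trivially on $\langle t\rangle$; cutting out the corresponding summand $M$ of $h^{2n}(A)(n)$ inside $\Mot_{\mathrm{num}}(\mathbb{F})$ (Künneth projectors of abelian varieties are algebraic), the object $G(M)$ has trivial Frobenius, so $G(M)$ --- and, via faithfulness of $G$ together with the rank count, $M$ itself --- is a sum of copies of $\1$; therefore $t$ lies in the $\mathbb{Q}_\ell$-span of the morphisms $\1\to M$, which in $\Mot_{\mathrm{num}}(\mathbb{F})$ are algebraic cycles. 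This is the Tate conjecture.

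For \textup{(c)}$\Rightarrow$\textup{(a)}: granting the Tate conjecture for abelian varieties over $\mathbb{F}$ and the semisimplicity of Frobenius on $\ell$-adic cohomology (Weil), numerical and $\mathbb{A}$-homological equivalence coincide on abelian varieties over $\mathbb{F}$, which is the reformulation of \textup{(a)} obtained above; in fact rational Tate classes then coincide with algebraic cycles modulo numerical equivalence, $\Mot(\mathbb{F})\simeq\Mot_{\mathrm{num}}(\mathbb{F})$, and $F$ factors tautologically. The step I expect to be the real obstacle is making \textup{(b)}$\Rightarrow$\textup{(c)} and \textup{(a)}$\Leftrightarrow$\textup{(c)} precise: this hinges on controlling the three a priori distinct notions --- numerical equivalence, $\mathbb{A}$-homological equivalence, and Tate classes --- via the semisimplicity of Frobenius, the canonical polarization on $\Mot(\mathbb{F})$ (\ref{d7}, \ref{d8}), the properties \textup{R1--R4} of \ref{d2}, and a Honda--Tate reduction to the reductions of CM abelian varieties, where Deligne's theorem makes algebraic cycles accessible.
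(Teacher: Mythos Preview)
The paper does not give a self-contained argument: it simply cites \cite{milne2007d} for (a)$\Leftrightarrow$(b) and Geisser (1998) for (b)$\Leftrightarrow$(c). Your attempt to supply a direct proof is reasonable in spirit, and the implication (c)$\Rightarrow$(a) is essentially correct --- the Tate conjecture together with semisimplicity of Frobenius (Weil) forces numerical and $\mathbb{A}$-homological equivalence to coincide on abelian varieties over $\mathbb{F}$, and this is indeed what (a) unwinds to.

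However, your cycle does not close. In the step (a)$\Rightarrow$(b) you assert that ``in $\Mot_{\mathrm{num}}(\mathbb{F})$ Tate classes are algebraic modulo numerical equivalence.'' That sentence \emph{is} the Tate conjecture (c): it says that every Frobenius-invariant cohomology class is represented by an algebraic cycle. So your deduction of (b) from (a) secretly uses (c). Then in (b)$\Rightarrow$(c) you cut out the summand $M$ inside $\Mot_{\mathrm{num}}(\mathbb{F})$ and apply $G$ to it; but $G$ was produced from the factorization in (a), so what you have written is really (a)$\wedge$(b)$\Rightarrow$(c), not (b)$\Rightarrow$(c). Combining these, your argument yields (c)$\Rightarrow$(a)$\Rightarrow$(b) and (a)$\wedge$(b)$\Rightarrow$(c), hence (a)$\Leftrightarrow$(c) and (a)$\Rightarrow$(b), but nothing starting from (b) alone.

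There is a further gap in the (b)$\Rightarrow$(c) paragraph: from ``$G(M)$ is trivial'' you conclude ``$M$ is trivial'' via ``faithfulness of $G$ together with the rank count.'' Faithfulness and preservation of rank only tell you that each simple summand of $M$ has rank one and maps to $\1$ under $G$; to conclude that such a summand is $\1$ in $\Mot_{\mathrm{num}}(\mathbb{F})$ you would need to rule out one-dimensional numerical motives with trivial Frobenius that are not isomorphic to $\1$ --- and ruling these out is precisely Geisser's reformulation of the Tate conjecture. The genuine content of (b)$\Leftrightarrow$(c) lives in $\Mot_{\mathrm{num}}(\mathbb{F})$, not in $\Mot(\mathbb{F})$, and cannot be short-circuited through the functor $G$.
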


\begin{proof}
For the equivalence of (a) and (b), see Milne 2007d. For the equivalence of
(b) and (c), see Geisser 1998.
\end{proof}

\subsection{Comparison with Andr\'{e}'s categories}

\begin{plain}
\label{d27}Fix a prime number $\ell\neq p$, and let $\Mot(\mathbb{F}%
;\mathbb{Q}{}_{\ell})$ denote the $\mathbb{Q}_{\ell}$-linear category based on
abelian varieties and using Tate classes as correspondencees. For some
countable subfield $Q$ of $\mathbb{Q}_{\ell}$, Andr\'{e} defines a $Q$-linear
category $\Mot(\mathbb{F};Q)$ based on abelian varieties and using motivated
classes as correspondences. The are canonical exact tensor functors%
\[
\Mot(\mathbb{F}{})\rightarrow\Mot(\mathbb{F};Q{})\rightarrow\Mot(\mathbb{F}%
;\mathbb{Q}{}_{\ell})
\]
such that%
\begin{align*}
\Mot(\mathbb{F}{})_{(Q)}  &  \rightarrow\Mot(\mathbb{F};Q{})\\
\Mot(\mathbb{F};Q{})_{\mathbb{(\mathbb{Q}{}}_{\ell})}  &  \rightarrow
\Mot(\mathbb{F};\mathbb{Q}{}_{\ell}{})
\end{align*}
are equivalences of tensor categories.

To prove this, note that Andr\'{e} (2006a, 2006b)\nocite{andre2006a,
andre2006b} shows that motivated classes on abelian varieties satisfy the
conditions (R1), (R2), (R3), and (R4) of Conjecture C. Since the spaces
$\mathcal{R}{}^{\ast}(A)\otimes_{\mathbb{Q}}Q$ also satisfy these conditions,
the argument in the proof of Theorem \ref{f2} shows that the two families coincide.
\end{plain}

\begin{summary}
\label{d29}Every rational Tate class on an abelian variety over $\mathbb{F}$
becomes motivated over $Q$, and the space of rational Tate classes is a
$\mathbb{Q}$-structure on the $Q$-space of motivated classes.
\end{summary}

\section{Shimura varieties of abelian type}

\label{SVA}

In this section, we assume that there exists a commutative diagram
\begin{equation}
\begin{tikzcd} \LMot^{\mathscr{w}}(\mathbb{Q}^{\mathrm{al}})\arrow{d}{R}\arrow{r} &\Mot^{\mathscr{w}}(\mathbb{Q}^{\mathrm{al}})\arrow{d}{R}\arrow{rd} {\xi_f}\\ \LMot(\mathbb{F})\arrow{r}\arrow[bend right=20]{rr}[swap]{\xi_f} &\Mot(\mathbb{F})\arrow{r} &\Mot(\mathbb{F};\mathbb{A}_{f})\end{tikzcd} \label{ep1}%
\end{equation}
such that the functor%
\[
\Mot(\mathbb{F})_{(\mathbb{A}_{f})}\longrightarrow\Mot(\mathbb{F};\mathbb{A}%
{}_{f})
\]
is a tensor equivalence, and we investigate its applications to Shimura varieties.

\subsection{Introduction}

\label{g}

\begin{plain}
\label{g1}Since the 1970s, Deligne has been promoting the idea that Shimura
varieties with rational weight should be thought of as moduli schemes for
motives with additional structure. Indeed this is a powerful tool for
discovery, which has been used most prominently by Langlands in his work on
understanding the zeta functions of Shimura varieties. In his Corvallis
article (1979),\nocite{langlands1979} Langlands applied it to find a
conjectural description of the conjugate of a Shimura variety --- this is
needed to compute the factors at infinity of the zeta function. In his article
with Rapoport (1987),\nocite{langlandsR1987} Langlands applied it to find a
conjectural description of the points of the Shimura variety modulo $p$ ---
this is needed to compute the factors at the finite places of the zeta function.
\end{plain}

\begin{plain}
\label{g2}Recall that the Shimura varieties of abelian type are, by
definition, exactly those for which Deligne proved the existence of a
canonical model in his Corvallis article (1979).\nocite{deligne1979} Let
$(G,X)$ be a Shimura datum of abelian type with rational weight, and chose an
algebra $(V,t)$ over $\mathbb{Q}{}$ such that $G=\mathcal{A}{}ut(V,t)$%
.\footnote{See \cite{milne2020b}. By an algebra over $\mathbb{Q}$ we mean a
finite-dimensional $\mathbb{Q}$-vector space $V$ together with a linear map
$V\otimes V\to V$ (no conditions). Readers may prefer to take any $V$ and
family of tensors determining $G$.} Such a choice realizes $\Sh(G,X)$ as a
moduli scheme (in the category of complex analytic spaces) for polarizable
rational Hodge structures with algebra and level structure. It follows from
Theorem \ref{a1} that the pair $(G,X)$ is of abelian type if and only if the
Hodge structures in the moduli family are in the essential image of the (fully
faithful) Betti functor%
\[
\Mot(\mathbb{C})\rightarrow\Hdg_{\mathbb{Q}{}}.
\]
From the theorem of Borel (1972), it follows that, when $(G,X)$ is of abelian
type, this realization becomes a realization of $\Sh(G,X)$ as a moduli scheme
(in the category of algebraic schemes over $\mathbb{C}$) for abelian motives
with algebra and level structure. From this modular realization, it is
possible to read off a proof Langlands's conjecture on conjugates. Elementary
descent theory gives a proof of the existence of canonical models that is both
simpler and more natural than the original --- the Shimura variety is defined
over the number field because the moduli probem is defined over the number
field. Moreover, this approach provides a \textit{description} of the
canonical model as a moduli scheme, whereas Deligne's original approach (1979)
provides only a \textit{characterization} of it in terms of reciprocity laws
at the special points.
\end{plain}

\begin{plain}
\label{g3}When the existence of the diagram (\ref{ep1}, p.~\pageref{ep1}) is
assumed, the theory outlined above extends to characteristic $p$.
Specifically, suppose that $G$ is unramified at $p$, and suppose that the
$\mathbb{Q}$-algebra $(V,t)$ is chosen to satisfy the condition 3.2.3 of
\cite{kisin2020}. The moduli problem over the reflex field can be extended
over its ring of integers, and the corresponding moduli scheme is smooth. This
gives us a smooth integral model of $\Sh(G,X)$ and a modular interpretation of
its functor of points. The modular description of the points with coordinates
in $\mathbb{F}$ can be regarded as a categorification of the conjectural
description in Langlands and Rapoport (1987). To obtain their original
description is an exercise in tannakian theory. Besides the integral model of
the Shimura variety, one obtains in this way an integral model of the standard
principal bundle, and hence integral models of the automorphic vector bundles
on the Shimura variety.
\end{plain}

\subsection{Characteristic zero}

\begin{plain}
\label{g7}Recall that, for a field $k$ of characteristic zero, $\Mot(k)$
denotes the category of abelian motives over $k{}$ (defined using absolute
Hodge classes). Let $S$ be a connected smooth algebraic scheme over a field
$k$ of characteristic zero, and let $\eta$ be its generic point. We define an
abelian motive $M$ over $S$ to be an abelian motive $M_{\eta}$ over $k(\eta)$
such that the action of $\pi_{1}(\eta,\bar{\eta})$ on $\omega_{f}(M)$ factors
through $\pi_{1}(S,\bar{\eta})$. See \cite{milne1994b}, 2.37.\footnote{For
hints on how to extend the definition to nonsmooth schemes, see ibid.~2.45.}
\end{plain}

\begin{plain}
\label{g4}Let $(G,X)$ be a Shimura datum. In order for the Shimura variety
$\Sh(G,X)$ to be a moduli variety for motives, it is necessary that every
special point be CM. This is true when $(G,X)$ satisfies the conditions:

\begin{enumerate}
\item the central character $w_{X}$ is defined over $\mathbb{Q}$ and

\item the connected centre of $G$ is split by a CM field.
\end{enumerate}

\noindent See \cite{milne1988a}, A.3. \noindent From now on, we always assume
the condition (b).\footnote{It is the author's view that pairs $(G,X)$ failing
(b) are pathological and should be excluded, but Deligne disagrees.}
\end{plain}

\begin{plain}
\label{g5}Let $(G,X)$ be a Shimura datum such that $w_{X}$ is rational. Let
$K$ be a (small) compact open subgroup of $G(\mathbb{A}_{f})$, and let $(V,t)$
be an algebra such that $G=\mathcal{A}ut(V,t)$.\footnote{See \cite{milne2020b}%
. Here and elsewhere, the reader may prefer to take any vector space $V$ and
family of tensors determining $G$.} Then $\Sh_{K}(G,X)$ is the solution of a
moduli problem $\mathcal{H}_{K}$ on the category of smooth algebraic schemes
over $\mathbb{C}$. More precisely, for such a scheme $S$, $\Sh_{K}(G,X)(S)$
classifies certain triples $(\mathbb{V},t,\eta)$, where $\mathbb{V}$ is a
variation of Hodge structures on $S$, $t\colon\mathbb{V}\otimes\mathbb{V}%
\rightarrow\mathbb{V}$ is an algebra structure on $\mathbb{V}$, and $\eta$ is
a $K$-level structure. If the largest $\mathbb{R}$-split torus in $Z(G)$ is
already split over $\mathbb{Q}$, then $\Sh_{K}(G,X)$ is a fine moduli scheme.
See \cite{milne1994b}, 3.10, 3.11.
\end{plain}

\begin{plain}
\label{g6}The elements of $\mathcal{H}{}_{K}(\mathbb{C}{})$ (in \ref{g5}) are
the Betti realizations of abelian motives if and only if $(G,X)$ is of abelian
type.\footnote{This can be proved by comparing Deligne's characterization of
the Shimura varieties he considers with the characterization of the algebraic
quotients of $G$ in Theorem \ref{a1}.} When this is the case, $\Sh_{K}(G,X)$
is a moduli scheme over $\mathbb{C}{}$ for abelian motives with additional
structure, and a fine moduli scheme if $Z(G)$ satisfies the condition in
\ref{g5}. See \cite{milne1994b}, 3.13.
\end{plain}

\begin{plain}
\label{g8}Let $(G,X)$ be a Shimura datum of abelian type with rational weight.
From (\ref{g6}), it is possible to read off a proof Langlands's conjugation
conjecture, except with the Taniyama group in place of Langlands's group. See
\cite{milne1990}, 4.2. To complete the proof, one needs to use that the two
groups are equal (\ref{b13a}).
\end{plain}

\begin{plain}
\label{g9}Let $(G,X)$ be a Shimura datum of abelian type with rational weight,
and let $F$ be a number field such that the moduli problem in \ref{g6} is
defined over $F$. Then an elementary descent argument (\cite{milne1999c})
shows that $\Sh_{K}(G,X)$ has model over $F$ that is a solution to the moduli
problem. When $F\subset\mathbb{C}$ is the reflex field, we get the canonical
model of $\Sh_{K}(G,X)$ in the original sense of \cite{deligne1979};
otherwise, we get the canonical model in the sense of \cite{semplinerT2025}.
\end{plain}

\begin{plain}
\label{g10}Let $(G,X)$ be a Shimura datum, and let $G^{c}$ denote the quotient
of $G$ by the largest subtorus of $Z(G)$ that is split over $\mathbb{R}{}$ but
has no subtorus that is split over $\mathbb{Q}$. Let%
\[
P(G,X)=G(\mathbb{Q}{})\backslash X\times G^{c}(\mathbb{C}{})\times
G(\mathbb{A}{}_{f})/G(\mathbb{Q}{})^{-}\text{.}%
\]
It is principal bundle $G^{c}$-bundle with a flat connection, called the
standard principal bundle. There is a canonical equivariant morphism
$\gamma\colon P(G,X)\rightarrow X^{\vee}$. The automorphic vector bundles are
obtained as follows: start with a $G^{c}$-vector bundle on $X^{\vee}$, pull it
back to $P(G,X)$, and descend it to $\Sh(G,X)$. See \cite{milne1990}, III.

Now assume that $(G,X)$ is of abelian type and is a fine moduli scheme for
abelian motives with additional structure (\ref{g6}). The system%
\begin{equation}
\Sh_{K}(G,X)\longleftarrow P_{K}(G,X)\longrightarrow X^{\vee} \label{ep6}%
\end{equation}
can be re-constructed from the universal abelian motive over $\Sh_{K}(G,X)$
(cf.~\cite{milne1990}, 3.3). From this, we can read off

\begin{enumerate}
\item a description of the conjugate of the entire system (\ref{ep6}) by an
automorphism of $\mathbb{C}{}$, extending Langlands's description of the
conjugate of $\Sh_{K}(G,X)$ (but necessarily expressed in terms of the period torsor);

\item the existence of a canonical model of the system (\ref{ep6}), extending
the existence of the canonical model of $\Sh(G,X)$.
\end{enumerate}
\end{plain}

\begin{plain}
\label{g14}For an arbitrary Shimura variety $\Sh(G,X)$ of abelian type, not
necessarily with rational weight, there are morphisms%
\[
\Sh(G,X)\times\Sh(Z_{\ast},\epsilon)\rightarrow\Sh(G_{\ast},X_{\ast}),
\]
where $Z_{\ast}$ is a torus and $\Sh(G_{\ast},X_{\ast})$ is of abelian type
with rational weight. These can be used to deduce statements about $\Sh(G,X)$
from statements about $\Sh(G_{\ast},X_{\ast})$. See \cite{milne1994b}, 3.33--3.37.
\end{plain}

\begin{note}
For more details, see \cite{milne1990}, II,3; \cite{milne1994b}, \S 3;
\cite{milne2013b}.
\end{note}

\subsection{Mixed characteristic}

\begin{plain}
[Serre-Tate]\label{g15}Let $S=\Spec R$ be an artinian local scheme with
(closed) point $s$ such that $k\overset{\df}{=}\kappa(s)$ has characteristic
$p>0$. The functor $A\rightsquigarrow(A_{s},T_{p}A,\id)$ is an equivalence
from the category of abelian schemes over $S$ to the category of triples
$(A_{0},X,\varphi)$, where $A_{0}$ is an abelian variety over $k$, $X$ is a
$p$-divisible group over $S$, and $\varphi$ is an isomorphism $X_{s}%
\rightarrow T_{p}(A_{0}$).
\end{plain}

\begin{plain}
[Fontaine]\label{g16}Let $S$ be an artinian local scheme, as in \ref{g15}. The
functor sending a $p$-divisible group $X$ over $S$ to $(L,M)$, where $M$ is
the covariant Dieudonn\'{e} module of $X_{s}$ and $L$ is an $R$-submodule of
$M\otimes R$ such that $L\otimes_{R}k=VM/pM$, is an equivalence of categories.
\end{plain}

\begin{plain}
\label{g17}Let $S$ be an artinian local scheme, as in \ref{g15}, but with $k$
equal $\mathbb{F}_{q}$ or $\mathbb{F}$. On combining the last two statements,
we see that, to give an abelian scheme over $S$ is equivalent to giving an
abelian variety $A_{0}$ over $k$ and a lifting of the filtration on the
covariant Dieudonn\'{e} module of $A_{0}$. This suggests \textit{defining} an
abelian motive over $S$ to be an abelian motive $M$ over $k$ (object of
$\Mot(k))$ and a lifting of the filtration on the crystalline homology groups
of $M$.
\end{plain}

\begin{plain}
\label{g17a} There is a similar statement (and definition) when $S$ is the
spectrum of a complete noetherian local ring with residue field $\mathbb{F}%
_{q}$ or $\mathbb{F}$.
\end{plain}

\begin{plain}
\label{g18}More generally, we define an abelian motive over a perfectoid space
$S$ over $\mathbb{F}{}$ to be a triple $(M_{0},X,\varphi)$, where $M_{0}$ is
an abelian motive over $\mathbb{F}{}$, $X$ is a $p$-adic shtuka over $S$, and
$\varphi$ is an isomorphism from $X_{s}$ to the $p$-adic shtuka of $M_{0}$.
\end{plain}

\begin{plain}
\label{g19}Let $(G,X)$ be a Shimura datum of abelian type satisfying the
conditions (a) and (b) of \ref{g4} (to be a moduli scheme). Assume that $G$ is
unramified at $p$, and let $\Sh_{p}(G,X)=\Sh_{K^{p}K_{p}}(G,X)$ with $K_{p}$
hyperspecial. As explained above, when we write $G=\mathcal{A}{}ut(V,t)$, then
we obtain a model of $\Sh_{p}(G,X)$ over the reflex field $E$ and a
description of it as a moduli scheme for abelian motives with additional
structure. Now assume that $(V,t)$ can be chosen to satisfy the condition
3.2.3 of \cite{kisin2020}. Then the moduli problem extends to schemes over
$\mathcal{O}{}_{w}$, where $\mathcal{O}{}$ is the ring of integers in $E$, and
standard methods show that it has a solution that is a smooth scheme over
$\mathcal{O}{}_{w}$. In this way, we get

\begin{enumerate}
\item an integral canonical model of $\Sh_{p}(G,X)$ over $\mathcal{O}{}_{w}$;

\item a description of the model as a moduli variety for abelian motives with
algebra and level structure (in particular, a categorification of the
conjecture of Langlands and Rapoport);

\item an integral model of the system (\ref{ep6}), extending that of
$\Sh_{p}(G,X)$;

\item an integral theory of automorphic vector bundles on $\Sh_{p}(G,X)$.
\end{enumerate}
\end{plain}

\begin{summary}
In conclusion, our theory of motives allows us to identify the functor of
points of a Shimura variety of abelian type with rational weight (which would
delight Grothendieck), but for those without rational weight we are forced to
apply the trick of Shimura (which would delight Serre).
\end{summary}

\section{Shimura varieties not of abelian type}

\label{SVN}

What about Shimura varieties not of abelian type? It remains an open, and very
interesting question, whether the polarizable Hodge structures arising from
all Shimura varieties are motivic. Absent a proof of that, we are stuck with
the old methods for proving Langlands's conjugacy conjecture and the existence
of canonical models. Concerning these, here is my response (June 14, 2025) to
a query from Richard Taylor (slightly edited for clarity).

\bigskip Rather than revisiting the ad hoc methods I use in \S 6 of my 1983
paper to prove compatibility for different special points, I think one should
instead use the following beautiful result of Borovoi.

\begin{theoremn}
Let $G$ be a simply connected semisimple algebraic group over a totally real
algebraic number field $F$. Assume that $G$ has an anisotropic maximal torus
$T$ that splits over some totally imaginary quadratic extension $K$ of the
field $F$. Let $\Pi$ be a base of the root system $R=R(G_{K},T_{K})$. Then
$G(F^{\mathrm{rc}})$ is generated by the subgroups $G_{\alpha}(F^{\mathrm{rc}%
})$, $\alpha\in\Pi$ (here $F^{\text{rc}}$ is a totally real closure of $F$).
\end{theoremn}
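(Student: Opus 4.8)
The plan is to combine a rationality observation about the rank-one subgroups $G_\alpha$ with the fact that $F^{\mathrm{rc}}$ is \emph{real closed}, so that $G(F^{\mathrm{rc}})$ inherits from Cartan's theorem over $\mathbb{R}$ the property of being connected. For a root $\alpha$ of $(G_K,T_K)$ write $G_\alpha=\langle U_\alpha,U_{-\alpha}\rangle$ for the associated rank-one subgroup, a priori defined only over $K$, and $T_\alpha=T\cap G_\alpha=\alpha^\vee(\mathbb{G}_m)$. The first point is that each $G_\alpha$ is in fact an $F$-subgroup. Indeed, let $\sigma$ generate $\Gal(K/F)$; since $T$ is $F$-anisotropic with $T_K$ split, $X^{\ast}(T)^\sigma=0$, and as $\sigma^{2}=1$ this forces $\sigma=-1$ on $X^{\ast}(T)$, hence $\sigma\alpha=-\alpha$ for every root $\alpha$. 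Thus $\sigma$ interchanges $U_\alpha$ and $U_{-\alpha}$ and stabilises $T_\alpha$, so $G_\alpha$ is a connected $F$-subgroup of $G$, necessarily an $F$-form $\SL_1(D_\alpha)$ of $\SL_2$ split by $K$. It is harmless (and, one checks, forced by the hypotheses) to assume $G$ absolutely almost-simple, since $\Pi$ and the $G_\alpha$ decompose along the almost-simple factors.

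\noindent\textbf{A surjective product word.} The Lie algebras $\Lie(G_\alpha)=\mathfrak{g}_\alpha\oplus\mathfrak{t}_\alpha\oplus\mathfrak{g}_{-\alpha}$ for $\alpha\in\Pi$ generate $\mathfrak{g}$ under the bracket, because the simple root spaces generate; hence the smallest algebraic subgroup of $G$ containing all $G_\alpha$, $\alpha\in\Pi$, is $G$. By the standard structure theorem for subgroups generated by a family of connected subvarieties (Borel, \emph{Linear algebraic groups}, \S{}2), there is a finite word $w=(\alpha_{i(1)},\dots,\alpha_{i(m)})$ in $\Pi$ for which the multiplication morphism
\[
\phi_w\colon X_w:=G_{\alpha_{i(1)}}\times\cdots\times G_{\alpha_{i(m)}}\longrightarrow G
\]
is surjective; and since we are in characteristic $0$, generic smoothness gives a dense open $V\subseteq G$ over which $\phi_w$ is smooth.

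\noindent\textbf{Descending to $F^{\mathrm{rc}}$-points.} Put $R:=F^{\mathrm{rc}}$ and $H:=\langle G_\alpha(R):\alpha\in\Pi\rangle\le G(R)$; we must show $H=G(R)$. Each $G_\alpha(R)$ is Zariski-dense in $G_\alpha$ (an infinite subgroup of a form of $\SL_2$ is Zariski-dense: when $D_\alpha\otimes_F R$ is a division algebra, $\SL_1(D_\alpha\otimes_F R)(R)$ is the ``unit sphere'', which lies in no proper algebraic subgroup), so $X_w(R)$ is Zariski-dense in $X_w$ and meets the nonempty open $\phi_w^{-1}(V)$ in an $R$-point $x_0$. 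Since $\phi_w$ is smooth at $x_0$ and $R$ is real closed, the semialgebraic submersion theorem shows that $\phi_w(X_w(R))$ contains a semialgebraic neighbourhood of $\phi_w(x_0)$ in $G(R)$; translating by $\phi_w(x_0)^{-1}\in H$ yields a semialgebraic neighbourhood $\Omega$ of $1$ with $\Omega\subseteq H$. Finally, $G$ is simply connected and $R$ is real closed, so $G(R)$ is semialgebraically connected (Cartan's theorem over $\mathbb{R}$, transported to $R$ by Tarski's transfer principle), hence generated by any neighbourhood of $1$; therefore $H=G(R)$, which is the assertion.

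\noindent\textbf{The hard part.} Everything outside the last paragraph is formal. The genuine content lies in the two facts used there about the real closed field $F^{\mathrm{rc}}$: that $G(F^{\mathrm{rc}})$ is connected for simply connected $G$ (Cartan's theorem together with the model-completeness of the theory of real closed fields), and that smooth morphisms admit semialgebraic-local sections over a real closed field. Both are standard in semialgebraic geometry, but carrying out the transfer from $\mathbb{R}$ to an arbitrary real closure with full rigour — and checking the (routine) point that the hypotheses genuinely confine $G$ to the absolutely almost-simple case — is where the care is required.
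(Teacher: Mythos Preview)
The paper does not prove this theorem; it merely quotes it and points to Borovoi's 1990 \emph{Selecta} paper for the argument, so there is no in-paper proof to compare yours against.

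That said, your approach is sound and is a clean way to see the result. The observation that $\sigma$ acts as $-1$ on $X^{\ast}(T)$, so that each $G_{\alpha}$ descends to $F$, is exactly the right starting point; the product-word and generic-smoothness steps are routine; and reducing to ``$G(R)$ semialgebraically connected $\Rightarrow$ generated by any open semialgebraic neighbourhood of $1$'' is the correct endgame. Two minor points. First, your parenthetical that the hypotheses force $G$ to be absolutely almost-simple is false (a product of rank-one groups split by the same $K$ already satisfies them), but you never use it. Second, the final implication is true but not quite as immediate as you make it sound: the subgroup $\langle\Omega\rangle$ is a countable union and not a priori semialgebraic, so you cannot simply invoke the definition of semialgebraic connectedness. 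The standard route is through semialgebraic path-connectedness together with definable compactness of $[0,1]_{R}$ (both in Bochnak--Coste--Roy), which lets you cover a semialgebraic path from $1$ to $g$ by finitely many translates of $\Omega$. With that step made explicit, your argument is complete.
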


\begin{theoremn}
Under the conditions of Theorem 1, assume that $G$ is a geometrically simple
group of totally hermitian type that is not totally compact. Then
$G(F^{\mathrm{rc}})$ is generated by the subgroups $G_{\alpha}(F^{\mathrm{rc}%
})$, $\alpha\in R^{\mathrm{rtc}}$.
\end{theoremn}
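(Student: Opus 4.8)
The statement is a sharpening of Theorem~1: there one learns that $G(F^{\mathrm{rc}})$ is generated by the rank-one subgroups $G_{\alpha}(F^{\mathrm{rc}})$ as $\alpha$ runs over a single base $\Pi$, and now one wants to discard every ``compact'' root subgroup and keep only those indexed by $R^{\mathrm{rtc}}$. The plan is therefore to \emph{deduce} Theorem~2 from Theorem~1 by showing that for each $\alpha\in\Pi$ the subgroup $G_{\alpha}(F^{\mathrm{rc}})$ already lies in the subgroup $H\subseteq G(F^{\mathrm{rc}})$ generated by $\{\,G_{\beta}(F^{\mathrm{rc}}):\beta\in R^{\mathrm{rtc}}\,\}$. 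If $\alpha\in R^{\mathrm{rtc}}$ this is trivial, so the work concentrates on the compact simple roots of $\Pi$; for a Hermitian simple group a base contains exactly one noncompact simple root, so concretely one must recover the compact $G_{\alpha}$'s from the full (global) supply of noncompact ones.

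First I would set up the root-theoretic bookkeeping. Since $T$ is $F$-anisotropic and splits over the quadratic extension $K$, complex conjugation $c\in\Gal(K/F)$ acts as $-1$ on $X^{*}(T)$, so $c\alpha=-\alpha$ for every root $\alpha$; hence each $G_{\alpha}$, generated by $U_{\alpha}$ and $U_{-\alpha}=U_{\alpha}^{c}$, is stable under $\Gal(\bar{F}/F)$ and thus defined over $F$. Whether $G_{\alpha}$ is the split form $\SL_{2}$ or the anisotropic form $\SU(2)$ over $F_{v}$, for $v$ an archimedean place, is governed by the usual compact/noncompact dichotomy for the real group $G(F_{v})$, and $R^{\mathrm{rtc}}$ consists of the roots that remain noncompact at the archimedean places where $G$ is not compact (this is the set singled out in the statement). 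Because $G$ is geometrically simple its root system $R$ is irreducible, and because $G$ is of totally Hermitian type each such $G(F_{v})$ carries the $\mathbb{Z}/2$-grading $R=R_{c,v}\sqcup R_{nc,v}$ together with the Harish--Chandra decomposition $\mathfrak g=\mathfrak p^{+}\oplus\mathfrak k\oplus\mathfrak p^{-}$; the key structural input is the bracket identity $[\mathfrak p^{+},\mathfrak p^{-}]=\mathfrak k$, valid for every simple Hermitian $\mathfrak g$. The hypothesis ``not totally compact'' guarantees $R^{\mathrm{rtc}}\neq\emptyset$, and irreducibility forces one such grading to govern all of $R$, so that via Chevalley's commutator relations every compact root can be written as a difference $\beta-\gamma$ of two elements of $R^{\mathrm{rtc}}$.

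The substance is then a descent of the generation statement to $F^{\mathrm{rc}}$. Over $\mathbb{R}$ one has the classical fact that the identity component $G(\mathbb{R})^{+}$ is generated by the noncompact root subgroups: the real noncompact root spaces span $\mathfrak p_{0}$, and $\mathfrak p_{0}+[\mathfrak p_{0},\mathfrak p_{0}]=\mathfrak g_{0}$, so $\langle G_{\beta}(\mathbb{R}):\beta\in R^{\mathrm{rtc}}\rangle$ has full Lie algebra and is connected, hence equals $G(\mathbb{R})^{+}$, which contains every compact subgroup $G_{\alpha}(\mathbb{R})$. I would upgrade this to a \emph{bounded} statement --- each element of $G_{\alpha}(\mathbb{R})$ is a product of at most $N$ elements drawn from $\bigcup_{\beta\in R^{\mathrm{rtc}}}G_{\beta}(\mathbb{R})$, with $N$ depending only on $R$ --- using the Cartan decomposition, the fact that $\exp\mathfrak p_{0}$ is covered by the noncompact $G_{\beta}(\mathbb{R})$, and that $\mathfrak k_{0}$ is swept out by boundedly many such exponentials (again $[\mathfrak p^{+},\mathfrak p^{-}]=\mathfrak k$); a bounded word-length generation statement is first-order, so Tarski's transfer principle for real-closed fields carries it from $\mathbb{R}$ to $F^{\mathrm{rc}}$. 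Applying the transferred statement at a real place of $F$ at which $G$ is not compact gives $G_{\alpha}(F^{\mathrm{rc}})\subseteq H$ for all $\alpha\in\Pi$, and Theorem~1 then yields $G(F^{\mathrm{rc}})=H$.

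The hard part will be the transfer step and, inside it, the anisotropic rank-one groups: $G_{\alpha}(F^{\mathrm{rc}})$ for a compact root is essentially $\SU(2)(F^{\mathrm{rc}})$, which contains no nontrivial unipotents and so cannot be reached by naively multiplying root-group elements --- one must instead sweep it out by $G(F^{\mathrm{rc}})$-conjugates of a single one-parameter (torus-like) subgroup and check that each such torus, together with the conjugating elements, lies in $H$ with controlled word length. Making $N$ genuinely uniform, and reconciling the (a priori inequivalent) gradings $R_{nc,v}$ at the various noncompact archimedean places so that the one set $R^{\mathrm{rtc}}$ does the job, is where I expect the most care to be needed; this is presumably exactly the point at which Borovoi's ``geometrically simple'' and ``totally Hermitian'' hypotheses are doing their real work.
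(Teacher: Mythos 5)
This paper does not prove Theorem~2 at all: it merely records it, together with Theorem~1, as a result of Borovoi and refers the reader to ``Borovoi, M.\ V., Selecta Math.\ Soviet.\ 9 (1990), no.\ 4, 331--338.'' So there is no proof in the paper for me to compare your proposal against; I can only assess it on its own terms.

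Your reduction of Theorem~2 to Theorem~1 hinges entirely on the step ``a bounded word-length generation statement is first-order, so Tarski's transfer principle for real-closed fields carries it from $\mathbb{R}$ to $F^{\mathrm{rc}}$.'' This is where the argument breaks: as the paper itself says, $F^{\mathrm{rc}}$ is a \emph{totally real closure} of $F$, i.e.\ the maximal totally real algebraic extension, and such a field is \emph{not} real-closed. (For instance $\mathbb{Q}^{\mathrm{rc}}$ does not contain $\sqrt[3]{2}$ even though $\sqrt[3]{2}\in\mathbb{R}$, and $\overline{\mathbb{Q}}$ is far from a degree-$2$ extension of $\mathbb{Q}^{\mathrm{rc}}$.) Consequently $F^{\mathrm{rc}}$ is not an elementary submodel of $\mathbb{R}$, Tarski's transfer does not apply, and the truth of a first-order sentence over $\mathbb{R}$ does not descend to $F^{\mathrm{rc}}$. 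What is true of $F^{\mathrm{rc}}$ is something weaker and more delicate: it is a pseudo-real-closed field, and one only gets rational points on geometrically irreducible varieties that have smooth real points at every ordering. Any correct argument in the spirit of yours would have to replace the blanket transfer by this PRC-type density statement, applied with care to the specific varieties that witness bounded generation, and would have to check that the relevant solution sets are geometrically irreducible and have simple real points at every archimedean place (including the places where $G$ is compact, where your ``noncompact root groups'' argument says nothing). This is not a cosmetic repair; it is essentially the content of Borovoi's paper. You correctly flag the transfer step as the hard part, but the difficulty you anticipate (uniformity of $N$, sweeping out $\mathrm{SU}(2)$) is downstream of a more basic obstruction you have not addressed, namely that the transfer principle you invoke is simply not available for the field in question.
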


In his 1983/84 paper, Borovoi made use of a stronger statement, which still
hasn't been proved, but later he did prove Theorems 1 and 2 with the help of
his Russian colleagues. See,

\begin{quote}
The group of points of a semisimple group over a totally real closed field
Borovoi, M. V., Selecta Math. Soviet. 9 (1990), no. 4, 331--338.
\end{quote}

In the last two sections of my 1988 Inventiones paper, I gave two proofs of
the compatibility, one with and one without Borovoi's statement.

\bigskip But, as I mentioned briefly at the [Tate 100] conference, I think the
whole business of canonical models (in the general case) needs to be rethought.

At present we

\begin{enumerate}
\item use Kazhdan's theorem that conjugates of arithmetic varieties are
arithmetic varieties (arithmetic variety = bsd/arithmetic group);

\item deduce the conjugation theorem for Shimura varieties (Borovoi--Milne);

\item prove the conjugation theorem for the standard principal bundle (Milne,
1988, Inventiones).
\end{enumerate}

What Nori and Ragunathan (1993) show is that Kazhdan asked the wrong question.
Let $D$ be a bounded symmetric domain and $G$ a real algebraic group such that
$G(\mathbb{R}{})$ acts transitively on $D$ with compact isotropy groups. From
an arithmetic $\Gamma$ we get a system
\[
(Y,P,\nabla,D^{\vee},\gamma),\qquad Y\leftarrow(P,\nabla)\xrightarrow{\gamma}
D^{\vee},
\]
where $Y=\Gamma\backslash D$, $P$ is the principal bundle $\Gamma\backslash
D\times G(\mathbb{C})$, $\nabla$ is a flat connection, $D^{\vee}$ is the
compact dual, and $\gamma\colon P(\Gamma)\rightarrow D^{\vee}$ is defined by
the Borel map. This system is algebraic, and the correct question to ask is
that the conjugate of such a system be again such a system. Nori and
Raghunathan characterize such systems and show that the characterizing
properties are preserved under conjugation. This is much much simpler than Kazhdan.

From a Shimura datum $(G,X)$, we get a similar system
\[
(S,P,\nabla,X^{\vee},\gamma),\qquad S\leftarrow(P,\nabla
)\xrightarrow{\gamma}X^{\vee}
\]
and I think, similarly, that one should work directly with such systems
instead of just the Shimura variety. This should make everything simpler ---
the conjugation conjecture, canonical models, and even integral canonical
models. I talked about this at the Borel conference at Hangzhou in 2004, but
haven't worked out the details.

\bigskip A little history. Langlands made little progress in understanding the
zeta functions of Shimura varieties until Deligne explained to him his axioms
(especially $h$!) and that he should think of them as moduli varieties of
motives. Langlands stated his Corvallis conjecture in order to understand the
factors of the zeta function at infinity, and his conjecture with Rapoport to
understand the factors at finite places. When I asked Langlands how he came up
with the \textquotedblleft cocycle\textquotedblright\ for the conjugation
conjecture, he just said that it was the only thing he could think of. When he
explained it to Deligne, Deligne realized that it conjecturally gave an
explicit description of the Taniyama group, something that he and others had
been searching for.

\section{Mixed Shimura varieties}

\label{MSV}

To be continued.

\section{Appendix: The cohomological approach.}

Tannakian categories and the functors between them are classified by
cohomology sets. In particular, our conjectures predicts the existence of
certain cohomology classes with particular properties. Here, we briefly look
at three examples. The results are positive, but inconclusive, and do not seem
to be helpful in proving the existence of the categories of motives.

\subsection{Tannakian theory and cohomology (0)}

Recall (Theorem \ref{p1}) that we have a commutative diagram of bands%
\[
\begin{tikzcd}
G\arrow{r}&G_{\mathbb{Q}^{\mathrm{al}}}\\
P\arrow{r}\arrow{u}&P_{\mathbb{Q}^{\mathrm{al}}}\arrow{u}.
\end{tikzcd}
\]
A necessary condition for this to extend to a commutative diagram of tannakian
categories%
\[
\begin{tikzcd}
\Mot^{w}(\mathbb{Q}^{\mathrm{al}})\arrow{r}\arrow{d}
&\Mot(\mathbb{Q}^{\mathrm{al}};\mathbb{Q}_{l})\arrow{d}\\
\Mot(\mathbb{F})\arrow{r}&\Mot(\mathbb{F};\mathbb{Q}_{l})
\end{tikzcd}
\]
is that the class of $\Mot(\mathbb{F}{})$ in $H^{2}(\mathbb{Q}{},P)$ map to
the trivial class in $H^{2}(\mathbb{Q}{},G)$. Certainly, it maps to the
trivial class in $H^{2}(\mathbb{Q}{}_{l},G)$ for all $l$ (including
$l=p,\infty$), so the necessary condition holds if the Hasse principle holds
for $H^{2}(\mathbb{Q}{},G)$. There is an exact sequence%
\[
1\rightarrow G^{\mathrm{der}}\rightarrow G\rightarrow S\rightarrow1.
\]
For any CM field $K$, $H^{2}(G,S^{K})$ satisfies the Hasse principle and we
can apply the following theorem of Douai to the algebraic quotients of
$G^{\mathrm{der}}$.

\begin{theorem}
\label{s0} Let $G$ be a semisimple algebraic group over a field $k$. The
gerbes over $\Aff_{k}$ locally bound by $G$ are trivial in each of the
following cases:

\begin{enumerate}
\item $k$ is a local field with finite residue field;

\item $k$ is a number field with no real prime;

\item $k$ is a global field and $G$ is simply connected;

\item $k$ is a number field and $G$ is a semisimple group of type $^{2}%
\!A_{n}$, $B_{n}$, $C_{n}$, $^{1}D_{2n}$, $^{2}D_{2n+1}$, $^{2}E_{6}$, $E_{7}%
$, $E_{8}$, $F_{4}$, or $G_{2}$.
\end{enumerate}
\end{theorem}

\begin{proof}
See \cite{douai1975}.
\end{proof}

\subsection{Tannakian theory and cohomology (1)}

We apply the theory of tannakian categories to the problem of extending the
reduction functor from $\CM(\mathbb{Q}{}^{\mathrm{al}})$ to $\Mot^{w}%
(\mathbb{Q}{}^{\mathrm{al}})$. We begin with a preliminary.

\begin{proposition}
\label{s1}Let $G$ be a semisimple algebraic group over a field $k$ and $T$ a
torus in $G$. If $G$ is simply connected, then so also is the derived group of
$C_{G}(T)$.
\end{proposition}

\begin{proof}
See, for example, \cite{conrad2014}, 6.5.2(iv).
\end{proof}

Let $\mathsf{T}{}$ be a tannakian category over a field $k$. Then%
\[
\Fib(\mathsf{T}{})\overset{\df}{=}\textsc{Hom}(\mathsf{T}{},\Vc_{k})
\]
is a gerbe over $\Aff_{k}$. We call its band $G$ the band of $\mathsf{T}{}$.
For any fibre functor $\omega$ of $\mathsf{T}{}$, $G$ is represented by
$\mathcal{A}{}ut^{\otimes}(\omega)$ equipped with its natural descent datum up
to inner automorphisms. The gerbe $\Fib(\mathsf{T}{})$ defines a class in
$H^{2}(k,G)$, which is trivial if and only if $\mathsf{T}{}$ is neutral.

More generally, let $\mathsf{C}{}$ and $\mathsf{D}$ be tannakian categories
over $k$, and let $u\colon H\rightarrow G$ be a morphism from the band of
$\mathsf{D}{}$ to that of $\mathsf{C}{}$. The morphisms of tannakian
categories (exact tensor functors) $\mathsf{C}{}\rightarrow\mathsf{D}{}$
banded by $u$ form a gerbe%
\[
\textsc{Hom}_{u}(\mathsf{C}{},\mathsf{D}{})
\]
over $\Aff_{k}$. Its band is the centralizer $C_{u}$ of $u$ (apply
\cite{giraud1971}, IV, 2.3.2, to $\Fib(\mathsf{C}{})$ and $\Fib(\mathsf{D}{}%
)$). The gerbe $\textsc{Hom}_{u}(\mathsf{C}{},\mathsf{D}{})$ defines a class
in $H^{2}(k,C_{u})$, which is trivial if and only if there exists a morphism
of tannakian categories $\mathsf{C}{}\rightarrow\mathsf{D}$ banded by $u$.

Now let $\Mot^{\prime}(\mathbb{Q}{}^{\mathrm{al}})$ denote the tannakian
subcategory of $\Mot^{w}(\mathbb{Q}{}^{\mathrm{al}})$ generated by the abelian
varieties in $\Mot^{w}(\mathbb{Q}{}^{\mathrm{al}})$ whose Mumford--Tate group
has simply connected derived group (cf. Theorem \ref{a2}). The band $G$ of
$\Mot^{\prime}(\mathbb{Q}{}^{\mathrm{al}})$ has simply connected derived
group. Let $u\colon P\rightarrow G$ be the morphism of bands underlying the
canonical morphisms $P_{\mathbb{Q}{}_{l}}\rightarrow G_{\mathbb{Q}{}_{l}}$
(see \ref{p1}). Consider the commutative diagram of gerbes over $\Aff_{k}$,%
\[
\begin{tikzcd}
\textsc{Hom}_{u}(\Mot^{\prime}(\mathbb{Q}^{\mathrm{al}}),\Mot(\mathbb{F}))\arrow{r}\arrow{d}
&\textsc{Hom}_{v}(\CM(\mathbb{Q}^{\mathrm{al}}),\Mot(\mathbb{F})\arrow{d}\\
\textsc{Hom}_{u}(\Mot^{\prime}(\mathbb{Q}{}^{\mathrm{al}}),\mathsf{V}{}_{l}(\mathbb{F}))\arrow{r}
&\textsc{Hom}_{v}(\CM(\mathbb{Q}^{\mathrm{al}}),\mathsf{V}_{l}(\mathbb{F}{})).
\end{tikzcd}
\]
Here $v\colon P\rightarrow S$ is the Shimura--Taniyama homomorphism, the
horizontal morphisms are defined by the inclusion $\CM(\mathbb{Q}%
{}^{\mathrm{al}})\hookrightarrow\Mot^{\prime}(\mathbb{Q}^{\mathrm{al}})$, and
the vertical morphisms are defined by the exact tensor functors $\eta
_{l}\colon\Mot(\mathbb{F}{})\rightarrow\mathsf{V}_{l}(\mathbb{F}{})$. We have
an object $R\colon\CM(\mathbb{Q}{}^{\mathrm{al}})\rightarrow\Mot(\mathbb{F}%
{})$ in the gerbe at top-right and an object $\xi_{l}\colon\Mot^{\prime
}(\mathbb{Q}{}^{\mathrm{al}})\rightarrow\mathsf{V}{}_{l}(\mathbb{F}{})$ in the
gerbe at bottom-left, which map to the same object in the gerbe at
bottom-right, and we seek an object in the gerbe at top-left that maps to both
$R$ and $\xi_{l}$ (for all $l$). Let $C=C_{u}$ (centralizer of $u$). Then $C$
has a filtration whose quotients are%
\[
C^{\mathrm{der}},\quad\text{a torus,}\quad S.
\]
According to Proposition \ref{s1}, $C^{\mathrm{der}}$ is simply connected.

\textit{To simplify things. we now pretend that the torus is trivial. Also, we
ignore some of the complexities of nonabelian cohomology. What follows is only
heuristic.}

\begin{proposition}
[?]\label{s4}There exists a morphism of tannakian categories $R^{\prime}%
\colon\Mot^{\prime}(\mathbb{Q}{}^{\mathrm{al}})\rightarrow\Mot(\mathbb{F}{})$
such that

\begin{enumerate}
\item $R^{\prime}|\CM(\mathbb{Q}{}^{\mathrm{al}})=R$,

\item $R^{\prime}\otimes\mathbb{Q}{}_{l}\approx\xi_{l}$ for all $l$.
\end{enumerate}
\end{proposition}

\begin{proof}
Consider the maps%
\[
H^{2}(\mathbb{Q},P)\rightarrow{}H^{2}(\mathbb{Q}{},C)\rightarrow
H^{2}(\mathbb{Q}{},S).
\]
Because $R\colon\CM(\mathbb{Q}^{\mathrm{al}})\rightarrow\Mot(\mathbb{F}{})$
exists, the class of $\Mot(\mathbb{F}{})$ in $H^{2}(\mathbb{Q}{},P)$ maps to
\textquotedblleft zero\textquotedblright\ in $H^{2}(\mathbb{Q}{},S)$, but, by
Douai's theorem, the map ${}H^{2}(\mathbb{Q}{},C)\rightarrow H^{2}%
(\mathbb{Q}{},S)$ is injective, and so it maps to zero in $H^{2}(\mathbb{Q}%
{},C)$. Thus, $R$ extends to a morphism of tannakian categories $\Mot^{\prime
}(\mathbb{Q}{}^{\mathrm{al}})\rightarrow\Mot(\mathbb{F}{})$. Choose one
extension $R^{\prime}$. On comparing $R^{\prime}\otimes\mathbb{Q}{}_{l}$ to
$\xi_{l}$, we get a class $a_{l}\in H^{1}(\mathbb{Q}{}_{l},C)$. The (fake)
exact sequence%
\[
0\rightarrow C^{\mathrm{der}}\rightarrow C\rightarrow S\rightarrow0,
\]
gives rise to an exact commutative diagram%
\[
\begin{tikzcd}
H^{1}(\mathbb{Q},C^{\mathrm{der}})\arrow{r}\arrow{d}{\simeq}
&H^{1}(\mathbb{Q},C)\arrow[two heads]{r}\arrow{d}
&H^{1}(\mathbb{Q},S)\arrow{d}{\simeq}\\
H^{1}(\mathbb{R},C^{\mathrm{der}})\arrow{r}
&\prod_{l}H^{1}(\mathbb{Q}_{l},C)\arrow{r}
&\prod_{l}H^{1}(\mathbb{Q}_{l},S),
\end{tikzcd}
\]
in which the product signs mean restricted product. As $C^{\mathrm{der}}$ is
simply connected, $H^{1}(\mathbb{Q}{}_{l},C^{\mathrm{der}})=0$ for all
$l\neq\infty$ and the first vertical arrow is an isomorphism (e.g.,
\cite{milne2017c}, 25.61, 25.63). Moreover, the map $H^{1}(\mathbb{Q}%
{},C)\rightarrow H^{1}(\mathbb{Q}{},S)$ is surjective by the theorem of Douai,
and the map $H^{1}(\mathbb{Q}{},S)\rightarrow\prod\nolimits_{l}H^{1}%
(\mathbb{Q}{}_{l},S)$ is an isomorphism, at least on the finite level (see
\cite{milne2003}, \S 3). Now a diagram chase starting from $(a_{l})_{l}\in$
$\prod\nolimits_{l}H^{1}(\mathbb{Q}{}_{l},C)$ gives us an element of
$H^{1}(\mathbb{Q}{},C)$ that can be used to modify $R^{\prime}$ to obtain a
morphism with the required properties.
\end{proof}

Note that Proposition \ref{s4} is not what we need. Specifically, we need the
statement with $\Mot^{w}(\mathbb{Q}{}^{\mathrm{al}})$ for $\Mot^{\prime
}(\mathbb{Q}{}^{\mathrm{al}})$ and, more crucially, with $R^{\prime}%
\otimes\mathbb{Q}{}_{l}=\xi_{l}$ (not isomorphic).

\begin{remark}
\label{s5}It would be better to fix a \textquotedblleft
large\textquotedblright\ CM subfield $K$ of $\mathbb{C}{}$, finite and Galois
over $\mathbb{Q}{}$, and consider only abelian varieties over $\mathbb{Q}%
{}^{\mathrm{al}}$ such that $\End^{0}(A)\otimes_{\mathbb{Q}{}}K$ is a product
of matrix algebras over $K$. Also, things simplify when we consider only
abelian varieties with good ordinary reduction.
\end{remark}

\begin{exercise}
\label{s6}Strengthen the above argument

\begin{enumerate}
\item to take account of the torus;

\item to obtain a morphism $\Mot^{w}(\mathbb{Q}{}^{\mathrm{al}})\rightarrow
\Mot(\mathbb{F}{})$ whose image in HOM$_{v}(\CM(\mathbb{Q}{}^{\mathrm{al}%
}),\Mot(\mathbb{F}{}))$ is equal to $R$ and whose image in HOM$_{u}%
(\Mor^{\prime}(\mathbb{Q}{}^{\mathrm{al}}),\mathsf{V}{}_{l}(\mathbb{F}{}))$ is
equal to $\xi_{l}$.
\end{enumerate}
\end{exercise}

\begin{exercise}
\label{s7}We are given a morphism of tannakian categories over $\mathbb{A}{}$,%
\[
\Mot^{w}(\mathbb{Q}{}^{\mathrm{al}})_{(\mathbb{A}{})}\rightarrow
\Mot(\mathbb{F}{})_{(\mathbb{A}{})}\text{.}%
\]
Use a super-version of Lemma \ref{p2} to deduce that this morphism arises from
a morphism of tannakian categories over $\mathbb{Q}{}$,%
\[
\Mot^{w}(\mathbb{Q}{}^{\mathrm{al}})\rightarrow\Mot(\mathbb{F}{}).
\]

\end{exercise}

\begin{note}
\label{s8}Results obtained using cohomology, while comforting --- if
Proposition \ref{s4} were false, then at least one of the Hodge, Tate, or
standard conjectures would be false --- are not strong enough to allow to
construct a motivic paradise because, as in the above proposition, they only
allow us to prove that two objects are isomorphic whereas we need that they
are equal. Nevertheless, they can be useful, for example, they allowed
Langlands and Rapoport to state their conjecture, and perhaps can be combined
with geometric results to allow us to obtain the true results.
\end{note}

\subsection{Tannakian theory and cohomology (2)}

Recall that we have a morphism of bands $P\rightarrow G$, and hence an action
of $P$ on $\Mot^{w}(\mathbb{Q}{}^{\mathrm{al}})$. A quotient morphism
$R\colon\CM(\mathbb{Q}{}^{\mathrm{al}})\rightarrow\Mot(\mathbb{F}{})$
corresponds to a $\mathbb{Q}{}$-valued fibre functor $\omega_{0}$ on
$\Mot(\mathbb{F}{})^{P}$. To extend $R$ to a quotient morphism $\Mot^{w}%
(\mathbb{Q}{}^{\mathrm{al}})\rightarrow\Mot(\mathbb{F}{})$, we need to extend
$\omega_{0}$ to $\Mot^{w}(\mathbb{Q}{}^{\mathrm{al}})^{P}$. We briefly discuss
this problem.

We begin with some preliminaries. For simplicity, we assume that $k$ has
characteristic zero.

\begin{plain}
\label{s9}Let $G$ be a semisimple algebraic group over a field $k$, and let
$G_{1},\ldots,G_{n}$ be the set of almost-simple normal algebraic subgroups of
$G$. The multiplication map%
\[
G_{1}\times\cdots\times G_{n}\rightarrow G
\]
is a central isogeny (\cite{milne2017c}, 24.3). If $G$ is simply connected,
then the map is an isomorphism and each $G_{i}$ is simply connected; moreover,
every connected normal subgroup $N$ of $G$ is a product of some of the $G_{i}%
$, and so both $N$ and $G/N$ are simply connected semisimple algebraic groups.
\end{plain}

Now let $G$ be a reductive group. Let $T$ be a torus in $G$, and let $N$ be
the smallest normal subgroup of $G$ containing $T$ (intersection of the normal
algebraic subgroups of $G$ containing $T$).

\begin{plain}
\label{s10}The algebraic group $N$ is connected. Indeed, $N^{\circ}$ contains
$T$ and is normal in $G$ (because it is a characteristic subgroup of $N$, ibid.~1.52).
\end{plain}

\begin{plain}
\label{s11}The algebraic group $N$ is reductive. We may suppose that $k$ is
algebraically closed. Let $R_{u}(N)$ be the unipotent radical of $N$ (largest
connected normal unipotent algebraic subgroup of $N$). From its definition,
$R_{u}(N)$ is clearly a characteristic subgroup of $N$, and so it is normal in
$G$. Thus $R_{u}(N)\subset R_{u}(G)=e$.
\end{plain}

Now assume that $G^{\mathrm{der}}$ is simply connected.

\begin{plain}
\label{s12}We have $N^{\mathrm{der}}\subset G^{\mathrm{der}}$ (obviously).
Therefore $N^{\mathrm{der}}$ is simply connected, and $G^{\mathrm{der}%
}/N^{\mathrm{der}}$ is simply connected. If $N^{\mathrm{der}}=G^{\mathrm{der}%
}\cap N$, then we have an exact sequece%
\[
1\rightarrow G^{\mathrm{der}}/N^{\mathrm{der}}\rightarrow G/N\rightarrow
S/\pi_{0}(N)\rightarrow1
\]
with $G/N^{\mathrm{der}}=(G/N)^{\mathrm{der}}$ and simply connected.
\end{plain}

\begin{plain}
\label{s13}Is $G^{\mathrm{der}}\cap N$ connected? If so, then, as it is normal
in $G^{\mathrm{der}}$, the quotient $G^{\mathrm{der}}/G^{\mathrm{der}}\cap N$
is simply connected. Now we can use the exact sequence%
\[
1\rightarrow G^{\mathrm{der}}/G^{\mathrm{der}}\cap N\rightarrow G/N\rightarrow
S/\pi_{0}(N)\rightarrow1
\]
in the next section. Alas, this doesn't seem to be always true. Consider, for
example, $G=\GL_{2}$, and $T=\mathbb{G}_{m}$ the centre of $G$. Then $N=T$ and
$G^{\mathrm{der}}\cap N=\mu_{2}$. Hence $G^{\mathrm{der}}/G^{\mathrm{der}}\cap
N=\SL_{2}/\mu_{2}=\PGL_{2}$, which is not simply connected..
\end{plain}

Let $\mathbb{Q}{}^{\mathrm{al}}$ be the algebraic closure of $\mathbb{Q}{}$ in
$\mathbb{C}{}$, and let $G=\mathcal{A}{}ut^{\otimes}(\omega_{B})$, where
$\omega_{B}$ is the Betti fibre functor.

\begin{theorem}
[?]\label{s14}Let $\omega_{0}$ be the $\mathbb{Q}{}$-valued fibre functor on
$\CM(\mathbb{Q}^{\mathrm{al}})^{P}$ defined by the quotient functor
$R\colon\CM(\mathbb{Q}{}^{\mathrm{al}})\rightarrow\Mot(\mathbb{F})$ (see
\ref{t2}). Up to isomorphism, $\omega_{0}$ extends to a $\mathbb{Q}{}$-valued
fibre functor $\omega$ on $\Mot^{w}(\mathbb{Q}{}^{\mathrm{al}})^{P}$ such that%
\[
\omega\otimes\mathbb{Q}{}_{l}\approx\xi_{l}%
\]
for all $l$.
\end{theorem}

\begin{proof}
Let $N$ be the normal closure of $P$ in $G$. Then $\Mot^{w}(\mathbb{Q}%
{}^{\mathrm{al}})^{P}=\Mot^{w}(\mathbb{Q}{}^{\mathrm{al}})^{N}$. Using the
Betti fibre functors, we can identify fibre functors with torsors, which are
classified by their cohomology classes. When we do this, the theorem becomes
the following statement: let $\gamma_{0}$ be class of $\omega_{0}$ in
$H^{1}(\mathbb{Q},S/P)$; then $\gamma_{0}$ lifts to a class in $\gamma$ in
$H^{1}(\mathbb{Q},G/N)$ such that $\gamma_{l}=\gamma_{l}$ for all $l$
(including $p$ and $\infty$). Let $G^{\prime}$ be the derived group of $G$ and
let $N^{\prime}=G^{\prime}\cap N^{\prime}$. From the snake lemma applied to%
\[
\begin{tikzcd}
&N^{\prime}\arrow{r}\arrow{d}
&N\arrow{r}\arrow{d}&P\arrow{r}\arrow[hook]{d}&1\\
1\arrow{r}&G^{\prime}\arrow{r}&G\arrow{r}&S,
\end{tikzcd}
\]
we get an exact sequence%
\[
1\rightarrow G^{\prime}/N^{\prime}\rightarrow G/N\rightarrow S/P\rightarrow1.
\]
\textit{We assume that} $G^{\prime}/N^{\prime}$ \textit{is simply connected}.
Consider the diagram%
\[
\begin{tikzcd}
H^{1}(\mathbb{Q},G^{\prime}/N^{\prime})\arrow{r}\arrow{d}
&H^{1}(\mathbb{Q},G/N)\arrow{r}\arrow{d}
&H^{1}(\mathbb{Q}{},S/P)\arrow{d}\\
H^{1}(\mathbb{R}{},G^{\prime}/N^{\prime})\arrow{r}
&\prod_{l}H^{1}(\mathbb{Q}_{l},G/N)\arrow{r}
&\prod_{l}H^{1}(\mathbb{Q}_{l},S/P).
\end{tikzcd}
\]
As $G^{\prime}/N^{\prime}$ is simply connected, $H^{1}(\mathbb{Q}{}%
_{l},G^{\prime}/N^{\prime})=0$ for all $l\neq\infty$ and the first vertical
arrow is an isomorphism (\cite{milne2017c}, 25.61, 25.63). Moreover, the map
$H^{1}(\mathbb{Q}{},G/N)\rightarrow H^{1}(\mathbb{Q}{},S/P)$ is surjective by
the theorem of Douai. Start with the given element $a$ of $\prod_{l}%
H^{1}(\mathbb{Q}_{l},G/N)$. Its image $b$ in $\prod_{l}H^{1}(\mathbb{Q}%
_{l},S/P)$ is the image of an element $c$ of $H^{1}(\mathbb{Q},S/P)$.
According to Douai's theorem, this lifts to an element $d$ of $H^{1}%
(\mathbb{Q},G/N)$. Let $d^{\prime}$ be the image of $d$ in $\prod_{l}%
H^{1}(\mathbb{Q}_{l},G/N)$. Now $a-d^{\prime}$ is the image of an element $e$
in $H^{1}(\mathbb{R}{},G^{\prime}/N^{\prime})$, which in turn is the image of
an element $f$ in $H^{1}(\mathbb{Q},G^{\prime}/N^{\prime})$. Let $f^{\prime}$
be the image of $f$ in $H^{1}(\mathbb{Q},G^{\prime}/N^{\prime})$. Now
$d+f^{\prime}$ is the element sought.
\end{proof}

Of course, we want to extend $\omega_{0}$ (on the nose) to a fibre functor
$\omega$ on $\Mot^{w}(\mathbb{Q}^{\mathrm{al}})^{P}$ such that $\omega
\otimes\mathbb{Q}_{l}$ and $\xi_{l}$ are equal (not simply isomorphic).

We now drop the assumption that $G/N^{\prime}$ is simply connected. Assume
$G^{\prime}$ is simply connected. We can still sometimes apply Douai's theorem.

\bibliographystyle{cbe}
\bibliography{svp}

\begin{thebibliography}{}

\bibitem[\protect\astroncite{Andr\'e}{1992}]{andre1992}
{\sc Andr\'e, Y.} 1992.
\newblock Une remarque \`a\ propos des cycles de {H}odge de type {CM}, pp.
  1--7.
\newblock {\em In} S\'eminaire de {T}h\'eorie des {N}ombres, {P}aris, 1989--90,
  volume 102 of {\em Progr. Math.} Birkh\"auser Boston, Boston, MA.

\bibitem[\protect\astroncite{Andr\'e}{2006a}]{andre2006b}
{\sc Andr\'e, Y.} 2006a.
\newblock Cycles de {T}ate et cycles motiv\'es sur les vari\'et\'es
  ab\'eliennes en caract\'eristique {$p>0$}.
\newblock {\em J. Inst. Math. Jussieu} 5:605--627.

\bibitem[\protect\astroncite{Andr\'e}{2006b}]{andre2006a}
{\sc Andr\'e, Y.} 2006b.
\newblock D\'eformation et sp\'ecialisation de cycles motiv\'es.
\newblock {\em J. Inst. Math. Jussieu} 5:563--603.

\bibitem[\protect\astroncite{Blanchard}{1956}]{blanchard1956}
{\sc Blanchard, A.} 1956.
\newblock Sur les vari\'et\'es analytiques complexes.
\newblock {\em Ann. Sci. \'Ecole Norm. Sup. (3)} 73:157--202.

\bibitem[\protect\astroncite{Chai and Faltings}{1990}]{chaiF1990}
{\sc Chai, C.-L. and Faltings, G.} 1990.
\newblock Degeneration of abelian varieties, volume~22 of {\em Ergebnisse der
  Mathematik und ihrer Grenzgebiete (3)}.
\newblock Springer-Verlag, Berlin.

\bibitem[\protect\astroncite{Commelin}{2019}]{commelin2019}
{\sc Commelin, J.~M.} 2019.
\newblock On compatibility of the {$\ell$}-adic realisations of an abelian
  motive.
\newblock {\em Ann. Inst. Fourier (Grenoble)} 69:2089--2120.

\bibitem[\protect\astroncite{Conrad}{2014}]{conrad2014}
{\sc Conrad, B.} 2014.
\newblock Reductive group schemes, pp. 93--444.
\newblock {\em In} Autour des sch\'emas en groupes. {V}ol. {I}, volume 42/43 of
  {\em Panor. Synth\`eses}. Soc. Math. France, Paris.

\bibitem[\protect\astroncite{de~Jong}{1998}]{dejong1998}
{\sc de~Jong, A.~J.} 1998.
\newblock Barsotti-{T}ate groups and crystals.
\newblock {\em In} Proceedings of the {I}nternational {C}ongress of
  {M}athematicians, {V}ol. {II} ({B}erlin, 1998), pp. 259--265.

\bibitem[\protect\astroncite{Deligne}{1968}]{deligne1968}
{\sc Deligne, P.} 1968.
\newblock Th\'eor\`eme de {L}efschetz et crit\`eres de d\'eg\'en\'erescence de
  suites spectrales.
\newblock {\em Inst. Hautes \'Etudes Sci. Publ. Math.} pp. 259--278.

\bibitem[\protect\astroncite{Deligne}{1971}]{deligne1971}
{\sc Deligne, P.} 1971.
\newblock Th\'eorie de {H}odge. {II}.
\newblock {\em Inst. Hautes \'Etudes Sci. Publ. Math.} pp. 5--57.

\bibitem[\protect\astroncite{Deligne}{1979}]{deligne1979}
{\sc Deligne, P.} 1979.
\newblock Vari\'et\'es de {S}himura: interpr\'etation modulaire, et techniques
  de construction de mod\`eles canoniques, pp. 247--289.
\newblock {\em In} Automorphic forms, representations and {$L$}-functions
  ({P}roc. {S}ympos. {P}ure {M}ath., {O}regon {S}tate {U}niv., {C}orvallis,
  {O}re., 1977), {P}art 2, volume XXXIII of {\em Proc. Sympos. Pure Math.}
  Amer. Math. Soc., Providence, RI.

\bibitem[\protect\astroncite{Deligne}{1980}]{deligne1980}
{\sc Deligne, P.} 1980.
\newblock La conjecture de {W}eil. {II}.
\newblock {\em Inst. Hautes \'Etudes Sci. Publ. Math.} pp. 137--252.

\bibitem[\protect\astroncite{Deligne}{1982}]{deligne1982}
{\sc Deligne, P.} 1982.
\newblock Hodge cycles on abelian varieties (notes by {J.}{S.} {M}ilne), pp.
  9--100.
\newblock {\em In} Hodge cycles, motives, and {S}himura varieties, volume 900
  of {\em Lecture Notes in Mathematics}. Springer-Verlag, Berlin-New York.

\bibitem[\protect\astroncite{Deligne}{2006}]{deligne2006}
{\sc Deligne, P.} 2006.
\newblock The {H}odge conjecture, pp. 45--53.
\newblock {\em In} J. Carlson, A. Jaffe, and A. Wiles (eds.), The {M}illennium
  {P}rize {P}roblems. Clay Mathematics Institute, Cambridge, MA; American
  Mathematical Society, Providence, RI.

\bibitem[\protect\astroncite{Douai}{1975}]{douai1975}
{\sc Douai, J.-C.} 1975.
\newblock Cohomologie galoisienne des groupes semi-simples d\'efinis sur les
  corps globaux.
\newblock {\em C. R. Acad. Sci. Paris S\'er. A-B} 281:Ai, A1077--A1080.

\bibitem[\protect\astroncite{Giraud}{1971}]{giraud1971}
{\sc Giraud, J.} 1971.
\newblock Cohomologie non ab\'elienne, volume Band 179 of {\em Die Grundlehren
  der mathematischen Wissenschaften}.
\newblock Springer-Verlag, Berlin-New York.

\bibitem[\protect\astroncite{Griffiths and Harris}{1978}]{GriffithsH1978}
{\sc Griffiths, P. and Harris, J.} 1978.
\newblock Principles of algebraic geometry.
\newblock Pure and Applied Mathematics. Wiley-Interscience, New York.

\bibitem[\protect\astroncite{Grothendieck}{1966}]{grothendieck1966}
{\sc Grothendieck, A.} 1966.
\newblock Un th\'eor\`eme sur les homomorphismes de sch\'emas ab\'eliens.
\newblock {\em Invent. Math.} 2:59--78.

\bibitem[\protect\astroncite{Kahn}{2024}]{kahn2024}
{\sc Kahn, B.} 2024.
\newblock Chow-{L}efschetz motives.
\newblock arXiv:2302.08327.

\bibitem[\protect\astroncite{Kisin}{2020}]{kisin2020}
{\sc Kisin, M.} 2020.
\newblock Integral canonical models of {S}himura varieties: an update, pp.
  151--165.
\newblock {\em In} Shimura varieties, volume 457 of {\em London Math. Soc.
  Lecture Note Ser.} Cambridge Univ. Press, Cambridge.

\bibitem[\protect\astroncite{Kisin et~al.}{2022}]{kisinM2022}
{\sc Kisin, M., Madapusi~Pera, K., and Shin, S.~W.} 2022.
\newblock Honda-{T}ate theory for {S}himura varieties.
\newblock {\em Duke Math. J.} 171:1559--1614.

\bibitem[\protect\astroncite{Kisin and Zhou}{2025a}]{kisinZ2021}
{\sc Kisin, M. and Zhou, R.} 2025a.
\newblock Independence of $\ell$ for {F}robenius conjugacy classes attached to
  abelian varieties.
\newblock arXiv:2103.09945v2.

\bibitem[\protect\astroncite{Kisin and Zhou}{2025b}]{kisinZ2025}
{\sc Kisin, M. and Zhou, R.} 2025b.
\newblock Strongly compatible systems associated with semistable abelian
  varieties.
\newblock arXiv:2505.02165.

\bibitem[\protect\astroncite{Kleiman}{1968}]{kleiman1968}
{\sc Kleiman, S.~L.} 1968.
\newblock Algebraic cycles and the {W}eil conjectures, pp. 359--386.
\newblock {\em In} Dix expos\'es sur la cohomologie des sch\'emas, volume~3 of
  {\em Adv. Stud. Pure Math.} North-Holland, Amsterdam.

\bibitem[\protect\astroncite{Kleiman}{1994}]{kleiman1994}
{\sc Kleiman, S.~L.} 1994.
\newblock The standard conjectures, pp. 3--20.
\newblock {\em In} Motives ({S}eattle, {WA}, 1991), volume 55, Part 1 of {\em
  Proc. Sympos. Pure Math.} Amer. Math. Soc., Providence, RI.

\bibitem[\protect\astroncite{Langlands}{1979}]{langlands1979}
{\sc Langlands, R.~P.} 1979.
\newblock Automorphic representations, {S}himura varieties, and motives. {E}in
  {M}\"archen, pp. 205--246.
\newblock {\em In} Automorphic forms, representations and {$L$}-functions
  ({P}roc. {S}ympos. {P}ure {M}ath., {O}regon {S}tate {U}niv., {C}orvallis,
  {O}re., 1977), {P}art 2, volume XXXIII of {\em Proc. Sympos. Pure Math.}
  Amer. Math. Soc., Providence, RI.

\bibitem[\protect\astroncite{Langlands and Rapoport}{1987}]{langlandsR1987}
{\sc Langlands, R.~P. and Rapoport, M.} 1987.
\newblock Shimuravariet\"{a}ten und {G}erben.
\newblock {\em J. Reine Angew. Math.} 378:113--220.

\bibitem[\protect\astroncite{Laskar}{2014}]{laskar2014}
{\sc Laskar, A.} 2014.
\newblock {$\ell$}-independence for a system of motivic representations.
\newblock {\em Manuscripta Math.} 145:125--142.

\bibitem[\protect\astroncite{Milne}{1968}]{milne1968a}
{\sc Milne, J.~S.} 1968.
\newblock Extensions of abelian varieties defined over a finite field.
\newblock {\em Invent. Math.} 5:63--84.

\bibitem[\protect\astroncite{Milne}{1988}]{milne1988a}
{\sc Milne, J.~S.} 1988.
\newblock Automorphic vector bundles on connected {S}himura varieties.
\newblock {\em Invent. Math.} 92:91--128.

\bibitem[\protect\astroncite{Milne}{1990}]{milne1990}
{\sc Milne, J.~S.} 1990.
\newblock Canonical models of (mixed) {S}himura varieties and automorphic
  vector bundles, pp. 283--414.
\newblock {\em In} Automorphic forms, {S}himura varieties, and {$L$}-functions,
  {V}ol.\ {I} ({A}nn {A}rbor, {MI}, 1988), volume~10 of {\em Perspect. Math.}
  Academic Press, Boston, MA.

\bibitem[\protect\astroncite{Milne}{1992}]{milne1992}
{\sc Milne, J.~S.} 1992.
\newblock The points on a {S}himura variety modulo a prime of good reduction,
  pp. 151--253.
\newblock {\em In} The zeta functions of {P}icard modular surfaces. Univ.
  Montr\'eal, Montreal, QC.

\bibitem[\protect\astroncite{Milne}{1994a}]{milne1994a}
{\sc Milne, J.~S.} 1994a.
\newblock Motives over finite fields, pp. 401--459.
\newblock {\em In} Motives ({S}eattle, {WA}, 1991), volume 55, Part 1 of {\em
  Proc. Sympos. Pure Math.} Amer. Math. Soc., Providence, RI.

\bibitem[\protect\astroncite{Milne}{1994b}]{milne1994b}
{\sc Milne, J.~S.} 1994b.
\newblock Shimura varieties and motives, pp. 447--523.
\newblock {\em In} Motives ({S}eattle, {WA}, 1991), volume 55, Part 2 of {\em
  Proc. Sympos. Pure Math.} Amer. Math. Soc., Providence, RI.

\bibitem[\protect\astroncite{Milne}{1999a}]{milne1999c}
{\sc Milne, J.~S.} 1999a.
\newblock Descent for {S}himura varieties.
\newblock {\em Michigan Math. J.} 46:203--208.

\bibitem[\protect\astroncite{Milne}{1999b}]{milne1999a}
{\sc Milne, J.~S.} 1999b.
\newblock Lefschetz classes on abelian varieties.
\newblock {\em Duke Math. J.} 96:639--675.

\bibitem[\protect\astroncite{Milne}{1999c}]{milne1999b}
{\sc Milne, J.~S.} 1999c.
\newblock Lefschetz motives and the {T}ate conjecture.
\newblock {\em Compositio Math.} 117:45--76.

\bibitem[\protect\astroncite{Milne}{2001}]{milne2001}
{\sc Milne, J.~S.} 2001.
\newblock The {T}ate conjecture for certain abelian varieties over finite
  fields.
\newblock {\em Acta Arith.} 100:135--166.

\bibitem[\protect\astroncite{Milne}{2002}]{milne2002}
{\sc Milne, J.~S.} 2002.
\newblock Polarizations and {G}rothendieck's standard conjectures.
\newblock {\em Ann. of Math. (2)} 155:599--610.

\bibitem[\protect\astroncite{Milne}{2003}]{milne2003}
{\sc Milne, J.~S.} 2003.
\newblock Gerbes and abelian motives.
\newblock arXiv:math/0301304.

\bibitem[\protect\astroncite{Milne}{2007}]{milne2007d}
{\sc Milne, J.~S.} 2007.
\newblock Quotients of {T}annakian categories.
\newblock {\em Theory Appl. Categ.} 18:No. 21, 654--664.

\bibitem[\protect\astroncite{Milne}{2009}]{milne2009}
{\sc Milne, J.~S.} 2009.
\newblock Rational {T}ate classes.
\newblock {\em Mosc. Math. J.} 9:111--141.

\bibitem[\protect\astroncite{Milne}{2013}]{milne2013b}
{\sc Milne, J.~S.} 2013.
\newblock Shimura varieties and moduli.
\newblock arXiv:1105.0887; published with a change of numbering in Handbook of
  moduli. Vol. II, 467--548, Adv. Lect. Math. (ALM), 25, Int. Press,
  Somerville, MA, 2013.

\bibitem[\protect\astroncite{Milne}{2017}]{milne2017c}
{\sc Milne, J.~S.} 2017.
\newblock Algebraic groups: The theory of group schemes of finite type over a
  field, volume 170 of {\em Cambridge Studies in Advanced Mathematics}.
\newblock Cambridge University Press, Cambridge.

\bibitem[\protect\astroncite{Milne}{2020a}]{milne2020b}
{\sc Milne, J.~S.} 2020a.
\newblock Algebraic groups as automorphism groups of algebras.
\newblock arXiv:2012.05708v3.

\bibitem[\protect\astroncite{Milne}{2020b}]{milne2020a}
{\sc Milne, J.~S.} 2020b.
\newblock Hodge classes on abelian varieties.
\newblock arXiv:2010.08857.

\bibitem[\protect\astroncite{Milne and Ramachandran}{2004}]{milneR2004}
{\sc Milne, J.~S. and Ramachandran, N.} 2004.
\newblock Integral motives and special values of zeta functions.
\newblock {\em J. Amer. Math. Soc.} 17:499--555.

\bibitem[\protect\astroncite{Mumford}{1970}]{mumford1970}
{\sc Mumford, D.} 1970.
\newblock Abelian varieties, volume~5 of {\em Tata Institute of Fundamental
  Research Studies in Mathematics}.
\newblock Tata Institute of Fundamental Research, Bombay; by Oxford University
  Press, London.

\bibitem[\protect\astroncite{Noot}{2009}]{noot2009}
{\sc Noot, R.} 2009.
\newblock Classe de conjugaison du {F}robenius d'une vari\'et\'e{} ab\'elienne
  sur un corps de nombres.
\newblock {\em J. Lond. Math. Soc. (2)} 79:53--71.

\bibitem[\protect\astroncite{Noot}{2013}]{noot2013}
{\sc Noot, R.} 2013.
\newblock The system of representations of the {W}eil-{D}eligne group
  associated to an abelian variety.
\newblock {\em Algebra Number Theory} 7:243--281.

\bibitem[\protect\astroncite{Peters}{2017}]{peters2017}
{\sc Peters, C.} 2017.
\newblock Rigidity of spreadings and fields of definition.
\newblock {\em EMS Surv. Math. Sci.} 4:77--100.

\bibitem[\protect\astroncite{Raynaud}{2014}]{raynaud2014}
{\sc Raynaud, M.} 2014.
\newblock Grothendieck et la th\'eorie des sch\'emas, pp. 25--34.
\newblock {\em In} Alexandre {G}rothendieck: a mathematical portrait. Int.
  Press, Somerville, MA.

\bibitem[\protect\astroncite{Sempliner and Taylor}{2025}]{semplinerT2025}
{\sc Sempliner, J. and Taylor, R.} 2025.
\newblock On the formalism of {S}himura varieties.
\newblock Preprint.

\bibitem[\protect\astroncite{Tate}{1965}]{tate1965}
{\sc Tate, J.~T.} 1965.
\newblock Algebraic cycles and poles of zeta functions, pp. 93--110.
\newblock {\em In} Arithmetical {A}lgebraic {G}eometry ({P}roc. {C}onf.
  {P}urdue {U}niv., 1963). Harper \& Row, New York.

\bibitem[\protect\astroncite{Tate}{1967}]{tate1967}
{\sc Tate, J.~T.} 1967.
\newblock {$p$}-divisible groups, pp. 158--183.
\newblock {\em In} Proc. {C}onf. {L}ocal {F}ields ({D}riebergen, 1966).
  Springer, Berlin-New York.

\bibitem[\protect\astroncite{Tate}{1968}]{tate1968}
{\sc Tate, J.~T.} 1968.
\newblock Classes d'isog\'{e}nie des vari\'{e}t\'{e}s ab\'{e}liennes sur un
  corps fini (d'apr\`es {T}. {H}onda), pp. Exp. No. 352, 95--110.
\newblock {\em In} S\'{e}minaire {B}ourbaki. {V}ol. 1968/69: {E}xpos\'{e}s
  347--363. B.

\bibitem[\protect\astroncite{Weil}{1977}]{weil1977}
{\sc Weil, A.} 1977.
\newblock Abelian varieties and the {H}odge ring.
\newblock Talk at a conference held at Harvard in honor of Lars Ahlfors (\OE
  uvres Scientifiques 1977c, 421--429).

\bibitem[\protect\astroncite{Zink}{1983}]{zink1983}
{\sc Zink, T.} 1983.
\newblock Isogenieklassen von {P}unkten von {S}himuramannigfaltigkeiten mit
  {W}erten in einem endlichen {K}\"orper.
\newblock {\em Math. Nachr.} 112:103--124.

\end{thebibliography}

\end{document}